\documentclass[reqno,11pt]{amsart}

\usepackage{latexsym}
\usepackage{float}
\usepackage{amssymb}
\usepackage{amsmath}
\usepackage{amsthm}
\usepackage{mathrsfs}
\usepackage{euscript}
\usepackage[cmtip,all]{xy}

\usepackage{bbold}
\usepackage{tabu}
\usepackage{enumerate}

\usepackage{cite}

\usepackage{euscript}

\usepackage{graphicx}

\usepackage{bm}
\usepackage{stackrel}
\usepackage{accents}

\setlength{\textwidth}{6.5 in}
\setlength{\textheight}{9.0 in}
\hoffset=-0.75in
\voffset=-0.5in


\makeatletter
\g@addto@macro \normalsize {%
 \setlength\abovedisplayskip{7pt}%
 \setlength\belowdisplayskip{6pt}%
}
\makeatother


\newtheorem{thm}[equation]{Theorem}
\newtheorem{lem}[equation]{Lemma}
\newtheorem{cor}[equation]{Corollary}
\newtheorem{prop}[equation]{Proposition}

\theoremstyle{remark}
\newtheorem{rem}[equation]{Remark}
\newtheorem{fact}[equation]{Fact}

\newtheorem{claim}[equation]{Claim}

\theoremstyle{definition}

\newtheorem{prob}{Problem}

\numberwithin{equation}{section}




\newcommand{\gb}{\beta}
\newcommand{\ga}{\alpha}

\newcommand{\gL}{\Lambda}

\newcommand{\gD}{\Delta}

\newcommand{\eps}{\varepsilon}


\newcommand{\fa}{{\mathfrak a}}             
\newcommand{\fb}{{\mathfrak b}}
\newcommand{\fg}{{\mathfrak g}}
\newcommand{\fh}{{\mathfrak h}}

\newcommand{\fl}{{\mathfrak l}}
\newcommand{\fm}{{\mathfrak m}}
\newcommand{\fn}{{\mathfrak n}}

\newcommand{\fp}{{\mathfrak p}}

\newcommand{\fu}{{\mathfrak u}}

\newcommand{\fy}{{\mathfrak y}}

\newcommand{\f}{\mathfrak}




\newcommand{\nbar}{\bar{n}}                 



\newcommand{\R}{\mathbb{R}}          
\newcommand{\C}{\mathbb{C}}          
           
\newcommand{\Z}{\mathbb{Z}}


\newcommand{\ad}{\mathrm{ad}}
\newcommand{\Ad}{\mathrm{Ad}}
\newcommand{\Cal}{\mathcal}

\newcommand{\Hom}{\operatorname{Hom}}

\renewcommand{\Im}{\mathrm{Im}}
\newcommand{\Ind}{\mathrm{Ind}}
\newcommand{\IP}[2]{\langle#1 , #2\rangle}     

\newcommand{\spn}{\text{span}}

\newcommand{\Tr}{\text{Tr}}


\newcommand{\Sol}{\mathrm{Sol}}

\newcommand{\Pol}{\mathrm{Pol}}
\newcommand{\poly}{\mathrm{poly}}

\newcommand{\To}{\longrightarrow}

\newcommand{\Diff}{\mathrm{Diff}}
\newcommand{\Irr}{\mathrm{Irr}}

\newcommand{\triv}{\mathrm{triv}}
\newcommand{\fin}{\mathrm{fin}}

\newcommand{\sym}{\mathrm{sym}}

\newcommand{\wH}{\widetilde{H}}

\newcommand{\diag}{\mathrm{diag}}

\newcommand{\dpi}{d\pi}

\newcommand{\id}{\mathrm{id}}

\newcommand{\symb}{\mathrm{symb}}

\newcommand{\sgn}{\mathrm{sgn}}

\newcommand{\RP}{\mathbb{R}\mathbb{P}}

\newcommand{\Ker}{\mathrm{Ker}}

\newcommand{\Mp}{M^{\fg}_{\fp}}
\newcommand{\Mpp}{M^{\fg'}_{\fp'}}

\newcommand{\balpha}{\bm{\alpha}}
\newcommand{\bbeta}{\bm{\beta}}
\newcommand{\bgamma}{\bm{\gamma}}
\newcommand{\blambda}{\bm{\lambda}}
\newcommand{\bmu}{\bm{\mu}}
\newcommand{\bnu}{\bm{\nu}}
\newcommand{\btau}{\bm{\tau}}

\newcommand{\abs}[1]{\left\vert#1\right\vert}

\newcommand{\Emb}{\mathrm{Emb}}
\newcommand{\wEmb}{\widetilde{\mathrm{Emb}}}

\newcommand{\wProj}{\widetilde{\mathrm{Proj}}}

\newcommand{\wy}{\widetilde{y}}

\newcommand{\EuD}{\EuScript{D}}

\newcommand{\Rest}{\mathrm{Rest}}

\newcommand{\w}{\widetilde}

\newcommand{\D}{\Cal{D}}
\newcommand{\bD}{\mathbb{D}}

\newcommand{\A}{\Cal{A}}
\newcommand{\bA}{\mathbb{A}}

\newcommand{\Np}{(N^-)'}
\newcommand{\Nn}{N_n^-}



\providecommand*{\donothing}[1]{}

\makeatletter
\@namedef{subjclassname@2020}{%
  \textup{2020} Mathematics Subject Classification}
\makeatother

\begin{document}

\baselineskip=16pt
\tabulinesep=1.2mm


\title[]{
Differential symmetry breaking operators from a line bundle to a vector bundle
over real projective spaces
}

\dedicatory{Dedicated to Professor Toshiyuki Kobayashi,
whose incredible insight always leads us to a whole 
new world in mathematics}


\author{Toshihisa Kubo}

\address{Faculty of Economics, 
Ryukoku University,
67 Tsukamoto-cho, Fukakusa, Fushimi-ku, Kyoto 612-8577, Japan}
\email{toskubo@econ.ryukoku.ac.jp}


\subjclass[2020]{
22E46, 
17B10} 
\keywords{differential symmetry breaking operator,
intertwining differential operator,
generalized Verma module,
branching law,
F-method}

\date{\today}

\maketitle


\begin{abstract} 
In this paper we classify and construct differential symmetry breaking operators $\bD$ from a line bundle over the real projective space $\RP^n$ to a vector bundle
over $\RP^{n-1}$. We further determine the factorization identities of  
$\bD$ and the branching laws of the corresponding generalized Verma modules
of $\f{sl}(n+1,\C)$.
By utilizing the factorization identities, 
the $SL(n,\R)$-representations realized on the image $\Im(\bD)$ 
are also investigated.
\end{abstract}

\setcounter{tocdepth}{1}
\tableofcontents

\section{Introduction}
\label{sec:intro}


Intertwining operators are some of the fundamental objects in representation theory. For instance, Knapp--Stein operators play a key role in the representation theory of real reductive groups. In this paper,  we consider the classification and 
construction of
certain intertwining differential operators called 
\emph{differential symmetry breaking operators}.
In order to describe the main problems of this paper,
we start the introduction with the definition of such operators.

\subsection{Differential symmetry breaking operators and main problems}

Let $X$ be a smooth manifold and $Y$ a smooth submanifold of $X$.
Take $G' \subset G$ to be a pair of Lie groups that act transitively 
on $Y$ and $X$, respectively.
Suppose that 
$\mathcal{V}\to X$ and $\mathcal{W}\to Y$ are
$G$- and $G'$-equivariant 
vector bundles over $X$ and $Y$ with fibers $V$ and $W$, respectively.
Then a continuous linear operator 
 $\bA \colon C^\infty(X,\mathcal{V}) \to C^\infty(Y,\mathcal{W})$ between
the spaces of smooth sections is called
a \emph{symmetry breaking operator}
if $\bA$ is  $G'$-intertwining \cite{KS15}. 
Hereafter, we shall often abbreviate it as an SBO.

Suppose  that there exists an inclusion
$\iota \colon Y \hookrightarrow X$. In this case, although the base manifolds
$X$ and $Y$ are different, one can define a differential operator 
$\bD\colon C^\infty(X,\mathcal{V}) \to C^\infty(Y,\mathcal{W})$
with respect to the inclusion  $\iota$
(see \cite{KP1} for the details).
We call the differential operator $\bD$
a \emph{differential symmetry breaking operator} 
if $\bD$ is also an SBO (cf.\ \cite{Kobayashi13, Kobayashi14, KP1, KOSS15}). 
The systematic study of SBOs $\bA$ and differential SBOs $\bD$ 
have been initiated by 
T.\ Kobayashi with his collaborators over a decade
(cf.\ \cite{Kobayashi13, Kobayashi14, KKP16, KP1, KP2, KOSS15, KS15, KS18}).

There are three main problems in this paper. To describe them in a general framework, let $G$ be a real reductive Lie group and $G' \subset G$ a reductive subgroup of $G$.
Let $P=MAN_+$ and $P'=M'A'N_+'$ be Langlands decompositions 
of $P$ and $P'$, respectively, with $M'A' \subset MA$ and $N_+' \subset N_+$.
We denote by $\Irr(M)_{\fin}$ and $\Irr(M')_{\fin}$ the sets of equivalence classes of 
finite-dimensional irreducible representations of $M$ and $M'$, respectively.
Likewise, we write $\Irr(A)$ and $\Irr(A')$ for the sets of characters of $A$ and 
$A'$, respectively. Then, for the outer tensor products 
$\xi \boxtimes \lambda \boxtimes \triv$ 
of $(\xi, \lambda) \in \Irr(M)_\fin \times \Irr(A)$ and
$\varpi \boxtimes \nu \boxtimes \triv$
of
$(\varpi, \nu) \in \Irr(M')_\fin \times \Irr(A')$ with
the trivial representations $\triv$ of $N_+$ and $N_+'$,
we put
\begin{equation*}
I(\xi,\lambda) = \Ind_{P}^G(\xi \boxtimes \lambda \boxtimes \triv)
\quad
\text{and}
\quad
J(\varpi,\nu) = \Ind_{P'}^{G'}(\varpi \boxtimes \nu \boxtimes \triv)
\end{equation*}
for (unnormalized) parabolically induced representations of $G$ and $G'$,
respectively.
We denote by
$\Diff_{G'}(I(\xi,\lambda), J(\varpi,\nu))$ the space of differential SBOs
$\bD\colon I(\xi,\lambda) \to J(\varpi,\nu)$. 

\subsection{Classification and construction of $\bD$}

The first problem concerns the classification and construction of differential SBOs $\bD$.
More precisely, we consider the following problem.

\begin{prob}[{Classification and construction of $\bD$}]
\label{prob:A}
Do the following.
\begin{enumerate}
\item[(A1)] 
Classify $(\xi,\lambda, \varpi, \nu)$ such that
\begin{equation*}
\Diff_{G'}(I(\xi,\lambda), J(\varpi,\nu)) \neq \{0\}.
\end{equation*}

\item[(A2)] 
Determine 
\begin{equation*}
\dim \Diff_{G'}(I(\xi,\lambda), J(\varpi,\nu)).
\end{equation*}

\item[(A3)] 
Construct generators
\begin{equation*}
\bD \in \Diff_{G'}(I(\xi,\lambda), J(\varpi,\nu)).
\end{equation*}

\end{enumerate}

\end{prob}

\subsection{Factorization identity of $\bD$}

The second problem concerns a decomposition formula of $\bD$.
Observe that the composition $\A_J\circ \bA_1$ of a
(not necessarily differential) SBO 
$\bA_1 \colon I(\xi_1,\lambda_1) \to J(\varpi_1, \nu_1)$ with 
a $G'$-intertwining operator 
$\A_J\colon J(\varpi_1, \nu_1) \to J(\varpi_2, \nu_2)$ is an SBO
\begin{equation*}
\A_J\circ \bA_1 \colon I(\xi_1,\lambda_1) \to J(\varpi_2, \nu_2).
\end{equation*}
In a diagram we have
\begin{equation*}
\begin{aligned}
\xymatrix@=13pt{
I(\xi_1,\lambda_1)
  \ar[rrdd]_{\A_J\circ \bA_1}
  \ar[rr]^{\bA_1}
 && J(\varpi_1,\nu_1)
   \ar[dd]^{\A_J}
  \ar@{}[ldd]|{\circlearrowleft}
 \\
&&\\
&&
J(\varpi_2,\nu_2)
}
\end{aligned}
\end{equation*}

\noindent
Likewise, the composition $\bA_2 \circ \A_I$
of a $G$-intertwining operator 
$\A_I \colon I(\xi_1, \lambda_1) \to I(\xi_2, \lambda_2)$ with 
an SBO $\bA_2 \colon I(\xi_2,\lambda_2) \to J(\varpi_2, \nu_2)$
is also an SBO
\begin{equation*}
\bA_2\circ \A_I \colon I(\xi_1,\lambda) \to J(\varpi_2, \nu_2),
\end{equation*}
that is,
\begin{equation*}\label{eqn:intro-factor2}
\begin{aligned}
\xymatrix@=13pt{
I(\xi_1,\lambda_1)
 \ar[dd]_{\A_I}
  \ar[rrdd]^{\bA_2\circ \A_I }
 && 
 \\
&&\\
I(\xi_2,\lambda_2)
\ar[rr]_{\bA_2}   \ar@{}[ruu]|{\circlearrowleft}
&&
J(\varpi_2,\nu_2)
}
\end{aligned}
\end{equation*}

\noindent
For a given SBO 
$\bA \colon I(\xi_1,\lambda_1) \to J(\varpi_2, \nu_2)$,
we call the identities 
\begin{equation*}
\bA = \A_J\circ \bA_1 =\bA_2\circ \A_I 
\end{equation*}
the \emph{factorization identities} of $\bA$.
We remark that such an identity is also known as 
a \emph{functional equation} in the literature (cf.\ \cite{KS15, KS18}).
The diagram \eqref{eqn:intro-factor3} below illustrates the identities.
\begin{equation*}\label{eqn:intro-factor3}
\begin{aligned}
\xymatrix@=13pt{
I(\xi_1,\lambda_1)
 \ar[dd]_{\A_I}
  \ar[rrdd]^{\bA}
  \ar[rr]^{\bA_1}
 && J(\varpi_1,\nu_1)
   \ar[dd]^{\A_J}
  \ar@{}[ldd]|{\circlearrowleft}
 \\
&&\\
I(\xi_2,\lambda_2)
\ar[rr]_{\bA_2}   \ar@{}[ruu]|{\circlearrowleft}
&&
J(\varpi_2,\nu_2)
}
\end{aligned}
\end{equation*}

In this paper we consider the factorization identities for differential SBOs $\bD$.

\begin{prob}[{Factorization identities of $\bD$}]\label{prob:B}
Compute the factorization identities 
\begin{equation}\label{eqn:intro-factor}
\bD = \D_J\circ \bD_1 =\bD_2\circ \D_I 
\end{equation}
of a differential SBO 
$\bD \in \Diff_{G'}(I(\xi,\lambda), J(\varpi,\nu))$.
\end{prob}

For preceding works on the factorization identities, see, for instance,
Fischmann--Juhl--Somberg \cite{FJS20},
Juhl \cite{Juhl09}, 
Kobayashi--Kubo--Pevzner \cite{KKP16}, 
Kobayashi--Pevzner \cite{KP2}, and
Kobayashi--{\O}rsted--Somberg--Sou{\v c}ek \cite{KOSS15}, 
for differential SBOs $\bD$.
For (not necessarily differential) SBOs $\bA$, see, for instance, 
Hader \cite{Hader22} and
Kobayashi--Speh \cite{KS15, KS18}.
We remark that although \eqref{eqn:intro-factor} expresses 
$\bD$ in two ways, namely, $\bD=\D_J\circ \bD_1$ and 
$\bD =\bD_2\circ \D_I$, it seems more natural for $\bD$ (or $\bA$)
to satisfy only one of the 
identities. We consider the ``double factorization identities''
$\bD = \D_J\circ \bD_1 =\bD_2\circ \D_I$ in this paper.

\subsection{The image $\Im(\bD)$}

As an SBO $\bA \colon I(\xi,\lambda) \to J(\varpi,\nu)$
is $G'$-intertwining, the image $\Im(\bA)$ is naturally a $G'$-invariant subspace
 of $J(\varpi,\nu)$. In fact, in the fundamental work of Kobayashi--Speh 
\cite{KS15, KS18} over SBOs $\bA$ for the pair 
$(G,G')=(O(n+1,1), O(n,1))$, they classified $\Im(\bA)$ at the level of 
$(\mathfrak{g}', K')$-modules, among many other things. 
In this paper, we also aim to determine $\Im(\bD)$ for differential SBOs $\bD$. 

\begin{prob}[{Determination of  $\Im(\bD)$}]\label{prob:C}
Determine $\Im(\bD)$ of 
$\bD \in \Diff_{G'}(I(\xi,\lambda), J(\varpi,\nu))$.
\end{prob}

Suppose that $\bD$ satisfies a factorization identity 
$\bD=\D_J\circ \bD_1=\bD_2 \circ \D_I$
as in \eqref{eqn:intro-factor}. Then 
$\Ker(\D_I)$ is a $G'$-subrepresentation of $I(\xi_1,\lambda_1)$, 
and $\Ker(\D_J)$ and $\Im(\bD_1)$  are
both $G'$-invariant subspaces of $J(\varpi_1,\nu_1)$, that is, we have
\begin{equation}\label{eqn:intro-factor4}
\begin{aligned}
\xymatrix@=13pt{
\text{Ker}(\D_I)\ar@{}[d]|{\bigcap}
\ar[rr]^{\stackrel{\bD_1\vert_{\text{Ker}(\D_I)}}{\phantom{a}}} 
&&\text{Ker}(\D_J)
\ar@{}[d]|{\bigcap}&\\
I(\xi_1,\lambda_1)
 \ar[dd]_{\D_I}
  \ar[rrdd]_{\bD}
  \ar[rr]^{\bD_1}
 && J(\varpi_1,\nu_1)
   \ar[dd]^{\D_J}
  \ar@{}[ldd]|{\circlearrowleft}
\ar@{}[r]|*{\supset}&\hspace{-5pt} \text{Im}(\bD_1)
 \\
&&\\
I(\xi_2,\lambda_2)
\ar[rr]_{\bD_2}   \ar@{}[ruu]|{\circlearrowleft}
&&
J(\varpi_2,\nu_2)
}
\end{aligned}
\end{equation}

\noindent
In this paper we also investigate
a relationship between $\Ker(\D_J)$ and $\Im(\bD_1)$.
Further, observe that as
\begin{equation*}
(\D_J\circ \bD_1)\vert_{\text{Ker}(\D_I)}
=(\bD_2 \circ \D_I)\vert_{\text{Ker}(\D_I)}
=0,
\end{equation*}
we have $\Im(\bD_1\vert_{\text{Ker}(\D_I)}) \subset \text{Ker}(\D_J)$.
We then consider whether or not the equality
$\Im(\bD_1\vert_{\Ker(\D_I)}) = \Ker(\D_J)$ holds.
This is somewhat analogous to the work \cite{Hader22} of 
Hader for his study of the Heisenberg wave operator in sprit.

\subsection{$SL$ vs $GL$}

The aim of this paper is to answer Problems \ref{prob:A}, 
\ref{prob:B}, and \ref{prob:C} for differential SBOs
$\bD \in \Diff_{G'}(I(\xi, \lambda), J(\varpi, \nu))$
for the case
$(G/P, G'/P')\simeq (\RP^n, \RP^{n-1})$, 
the real projective spaces of dimension
$n$ and $n-1$, respectively, with 
$\xi \in \Irr(M)_{\fin}$ for $\dim \xi =1$.
We allow the inducing representation $\varpi$ for $J(\varpi, \nu)$
 to be any $\varpi \in \Irr(M')_{\fin}$.
For the purpose, there are at least two choices on $(G,G')$, namely,
$(G, G') = (SL(n+1,\R), SL(n,\R))$ or $(GL(n+1,\R), GL(n,\R))$.
In this paper, we consider both cases for Problems \ref{prob:A} and \ref{prob:B}.
Only the $SL$-case is considered for Problem \ref{prob:C}.
We utilize our preceding results on $SL(n,\R)$-intertwining differential operators $\D$
to consider Problem \ref{prob:C} for the $SL$-case.

For Problem \ref{prob:A},  there is a significant difference between
the $GL$-case and $SL$-case for $n=2$. In the $GL$-case, it turned out that the space
$\Diff_{G'}(I(\xi, \lambda), J(\varpi, \nu))$ of differential SBOs for $\xi$ with $\dim \xi =1$
is multiplicity-free for all $n\geq 2$. In contrast, the dimension could be 
$\dim \Diff_{G'}(I(\xi, \lambda), J(\varpi, \nu)) =2$ in the $SL$-case for $n=2$.
This difference arises because there are more parameters in the $GL$-case 
than the $SL$-case. For the details of the results, see Theorems \ref{thm:DSBO2a}
and \ref{thm:DSBO2b} for the $SL$-case and Theorem \ref{thm:GL-DSBO} for the 
$GL$-case.

\subsection{Classification of SBOs $\bA$ between line bundles
over $\RP^n$ and $\RP^{n-1}$}
\label{sec:FW}

All symmetry breaking operators $\bA\colon 
I(\triv, \lambda) \to J(\triv, \nu)$
including non-local operators 
for $(G,G')=(GL(n+1,\R), GL(n,\R))$
with $\xi=\triv$ and $\varpi=\triv$, 
the trivial representations of $M$ and $M'$, respectively,
are already classified by Frahm--Weiske in \cite{FW19}. 
At first glance,
their classification seems to miss a family of differential SBOs
$\bD_{(m,\ell)} \colon
I(\triv, \lambda) \to J(\triv, \nu)$ for $n=2$
in Theorem \ref{thm:GL-DSBO}.
However, the differential SBOs $\bD_{(m,\ell)}$ can be thought of as 
the residue operators of SBOs $\mathcal{C}_{\lambda,\nu}$ of 
\cite[Thm.\ 3.12]{FW19}.
Therefore, their classification indeed includes $\bD_{(m,\ell)}$ implicitly.

It seems that one needs to be careful with the comments at the end of Section 1.3 of \cite{FW19} on Knapp--Stein operators (standard intertwining operators).
The authors of the cited paper briefly commented that
there do not exist non-trivial Knapp--Stein operators for 
the parabolically induced representations for $P' \subset G'$. 
Nevertheless, if $n=2$, then $G' = GL(2,\R)$ and $P'$ is a minimal parabolic
subgroup of $G'$. Thus, non-trivial Knapp--Stein operators for $P'\subset G'$ do 
exist. Indeed, the aforementioned differential SBOs $\bD_{(m,\ell)}$
satisfy a factorization identity with a normal derivative and the residue operator
of a Knapp--Stein operator. We shall discuss some details in Remark \ref{rem:FW2}.

\subsection{F-method}

The main machinery for us to classify differential SBOs $\bD$
on Problem \ref{prob:A} is the 
F-method (cf.\ \cite{FJS20, Kobayashi13, Kobayashi14, KKP16, KP1, KP2, KOSS15}).
Via the so-called \emph{algebraic Fourier transform of generalized Verma modules},
this method allows one to classify and construct differential SBOs $\bD$ simultaneously,
by solving a certain system of partial differential equations.
For the recent study of the F-method, see, for instance,  \cite{Frahm20, KuOr24, Nakahama19, Nakahama22, Nakahama23, Perez23} and the references therein.

\subsection{Branching law of generalized Verma modlues}

Via the duality between differential SBOs and $(\fg',P')$-homomorphisms
between generalized Verma modules (Theorem \ref{thm:duality}),
the classification of differential SBOs is closely related to the branching law
of a generalized Verma module $\Mp(\lambda)$ of $\fg$.
The character identity in the Grothendieck group of the BGG category $\Cal{O}^{\fp'}$
for a parabolic subalgebra $\fp' = \fl'\oplus \fn_+'$
yields the branching law of $\Mp(\lambda)$ for generic parameter $\lambda$.
The branching law for singular $\lambda$  requires 
some information of the structures 
of $\Mp(\lambda)$ such as the classification of $\fn_+'$-invariant subspaces of 
$\Mp(\lambda)$. See, for instance, \cite{Kobayashi12, KOSS15} 
for the study of the branching laws of generalized Verma modules.

In this paper, in addition to the three main problems,
we also discuss the branching laws of generalized Verma modules
in consideration. For singular parameters, the results from the F-method play a key role.
We further present the branching law of the image of $\fg'$-homomorphisms as well.
These are accomplished in Theorems \ref{thm:GVM31a} and \ref{thm:GVM31b}.
Via the aforementioned duality, the resulting branching laws support our classification of differential SBOs.

\subsection{Organization of the paper}

Now we describe the rest of the paper.
There are ten sections including the introduction.
The aim of Section \ref{sec:Fmethod} is to 
give a quick overview of the F-method in a general
framework. At the end of the section, a recipe of the F-method will be presented,
which plays a central role for the classification and construction of differential 
SBOs $\bD$ in this paper. 
In Section \ref{sec:SL}, we then specialize the framework to the case 
$(G,G')=(SL(n+1,\R), SL(n,\R))$ with maximal parabolic subgroups 
$P\subset G$ and $P'\subset G'$ such that $G/P\simeq \RP^n$ and 
$G'/P' \simeq \RP^{n-1}$. Some necessary notation is introduced in this section.

The objective of Section \ref{sec:results} is to summarize the main results
of the classification and construction of differential SBOs 
$\bD$ (Problem \ref{prob:A}). Via the duality theorem (Theorem \ref{thm:duality}),
we also discuss $(\fg',P')$- and $\fg'$-homomorphisms $\Phi$ between
certain generalized Verma modules. 
The proofs of the theorems in 
Section \ref{sec:results} are discussed in 
Sections \ref{sec:proof1} and \ref{sec:proof2}.  
As mentioned above, for the $SL$-case, 
the multiplicity-two phenomenon appears for $n=2$. Thus, we separate the proofs
into two cases, namely, the cases for $n\geq 3$ and $n=2$. Section \ref{sec:proof1}
deals with the former case; we handle the latter case in Section \ref{sec:proof2}.
In both cases, we follow the recipe of the F-method.

Section \ref{sec:factor1} is devoted to the factorization identities of 
differential SBOs $\bD$ constructed in Section \ref{sec:results}. 
We first give such identities for 
$(\fg',P')$-homomorphisms $\Phi$. We then convert them to ones for differential SBOs 
$\bD$ via the duality theorem. The factorization identities of $\bD$ are obtained in
Theorem \ref{thm:Proj}. 
Some relevant results on intertwining differential operators and 
$(\fg, P)$- and $(\fg', P')$-homomorphisms are also recalled from \cite{KuOr24} in this section.

The images $\Im(\bD)$ are determined in Section \ref{sec:image} 
for $\bD$ that satisfies the factorization identities (Problem \ref{prob:C}). 
In this section we make use of some results
from \cite{KuOr24} to determine them. 
These are achieved in Section \ref{sec:image2}.

The aim of Section \ref{sec:GVM} is to discuss
the branching laws of generalized Verma modules in consideration.
In this section, we first recall from \cite{Kobayashi12} a character identity
of a generalized Verma module in a general framework to give 
branching laws in the Grothendieck group 
of the BGG category $\Cal{O}^{\fp'}$.
We then apply 
the character identity to the case $(\fg,\fg') = (\f{sl}(n+1,\C), \f{sl}(n,\C))$
with maximal parabolic subalgebras $\fp \subset \fg$ and $\fp' \subset \fg'$
considered above.
After discussing the decomposition of formal characters,
we give actual branching laws by utilizing the results from the F-method.
In this section, we also discucss the branching 
law of the image of a $\fg'$-homomorphism 
from a generalized Verma module for $\fg'$ to the one for $\fg$.
This supports the aforementioned multiplicity-two phenomenon
of $SL(n,\R)$-differential SBOs $\bD$ for $n=2$.
The explicit branching laws are given in
Theorems \ref{thm:GVM31a} and \ref{thm:GVM31b}.

The last section, namely, Section \ref{sec:GL}, is for the $GL$-counterpart of the results
of Problems \ref{prob:A} and \ref{prob:B}. In this section we give the classification of 
$GL(n,\R)$-differential SBOs $\bD$ as well as their factorization identities. In principle,
the $GL(n,\R)$-operators $\bD$ are the same as 
the $SL(n,\R)$-operators;
what one wishes to do is to classify the appropriate parameters. The main results are obtained in Theorems \ref{thm:GL-DSBO} and \ref{thm:GL-factor}.

\section{Preliminaries: the F-method}
\label{sec:Fmethod}

The aim of this section is to recall the so-called F-method.
In particular, we present a recipe of the F-method in Section \ref{sec:recipe}.
In Sections \ref{sec:proof1} and \ref{sec:proof2}, we shall follow the recipe to 
classify and construct differential symmetry breaking operators in concern.
For the details of the F-method, 
consult, for instance, \cite{KP1} and \cite{KuOr24}.
In this section we mainly take the expositions from \cite{KuOr24}.

\subsection{Notation}\label{sec:prelim}

Let $G$ be a real reductive Lie group and $P=MAN_+ $ a Langlands decomposition of a parabolic subgroup $P$ of $G$. We denote by $\fg(\R)$ and 
$\fp(\R) = \fm(\R) \oplus \fa(\R) \oplus \fn_+(\R)$ the Lie algebras of $G$ and 
$P=MAN_+$, respectively.

 For a real Lie algebra $\f{y}(\R)$, we write $\f{y}$
and $\Cal{U}(\fy)$ for its complexification and the universal enveloping algebra of 
$\fy$, respectively. 
For instance, $\fg, \fp, \fm, \fa$, and $\fn_+$ are the complexifications of $\fg(\R), \fp(\R), \fm(\R), \fa(\R)$, and $\fn_+(\R)$, 
respectively.

For $\lambda \in \fa^* \simeq \Hom_\R(\fa(\R),\C)$,
we denote by $\C_\lambda$ 
the one-dimensional representation of $A$ defined by 
$a\mapsto a^\lambda:=e^{\lambda(\log a)}$. 
For a finite-dimensional irreducible 
representation $(\sigma, V)$ of $M$ and $\lambda \in \fa^*$,
we denote by $\sigma_\lambda$ the outer tensor product representation $\sigma \boxtimes \C_\lambda$
on $V$,
namely, 
$\sigma_\lambda \colon
ma \mapsto a^\lambda\sigma(m)$. By letting $N_+$ act trivially, we regard 
$\sigma_\lambda$ as a representation of $P$. Let $\Cal{V}:=G \times_P V \to G/P$
be the $G$-equivariant vector bundle over the real flag variety $G/P$
 associated with the 
representation $(\sigma_\lambda, V)$ of $P$. We identify the Fr{\' e}chet space 
$C^\infty(G/P, \Cal{V})$ of smooth sections with 
\begin{equation*}
C^\infty(G, V)^P:=\{f \in C^\infty(G,V) : 
f(gp) = \sigma_\lambda^{-1}(p)f(g)
\;\;
\text{for any $p \in P$}\},
\end{equation*}
the space of $P$-invariant smooth functions on $G$.
Then, via the left regular representation $L$ of $G$ on $C^\infty(G)$,
we realize the parabolically induced representation 
$\pi_{(\sigma, \lambda)} = \Ind_{P}^G(\sigma_\lambda)$ on $C^\infty(G/P, \Cal{V})$.
We denote by $R$ the right regular representation of $G$ on $C^\infty(G)$.

Let $G'$ be a reductive subgroup of $G$ and $P'=M'A'N_+'$ 
a parabolic subgroup of $G'$
with $P' \subset P$
so that there exists a natural morphism $G'/P' \to G/P$
of the real flag variety $G'/P'$ to $G/P$.
We further assume that $M'A' \subset MA$ and $N_+' \subset N_+$.

As for $G/P$, for a finite-dimensional irreducible
representation $(\varpi_\nu, W)$ of $M'A'$, we define
an induced representation $\pi'_{(\varpi, \nu)}=\Ind_{P'}^{G'}(\varpi_\nu)$ on the 
space $C^\infty(G'/P', \Cal{W})$ of smooth sections for a 
$G'$-equivariant vector bundle $\Cal{W}:=G'\times_{P'}W \to G'/P'$.

Via the morphism $G'/P' \to G/P$, one can define 
differential operators $\bD\colon C^\infty(G/P, \Cal{V}) \to C^\infty(G'/P', \Cal{W})$
although $G'/P'$ and $G/P$ are different manifolds 
(see, for instance, \cite[Def.\ 2.1]{KP1}).
As $C^\infty(G/P, \Cal{V})$ is a $G$-representation and $G' \subset G$,
the space $C^\infty(G/P, \Cal{V})$ is also a $G'$-representation.
We then write $\Diff_{G'}(\Cal{V},\Cal{W})$ for the space of 
\emph{differential symmetry breaking operators} 
($G'$-intertwining differential operators)
$\bD \colon C^\infty(G/P, \Cal{V}) \to C^\infty(G'/P', \Cal{W})$.

Let 
$\mathfrak{g}(\R)=\mathfrak{n}_-(\R) \oplus \mathfrak{m}(\R) 
\oplus \mathfrak{a}(\R) \oplus \mathfrak{n}_+(\R)$ 
be the 
Gelfand--Naimark decomposition of $\mathfrak{g}(\R)$,
and write $N_- = \exp(\fn_-(\R))$. We identify $N_-$ with the 
open Bruhat cell $N_-P$ of $G/P$ via the embedding 
$\iota\colon N_- \hookrightarrow G/P$, $\bar{n} \mapsto \bar{n}P$.
Via the restriction of the vector bundle $\Cal{V} \to G/P$ to the open Bruhat cell
$N_-\stackrel{\iota}{\hookrightarrow} G/P$,
we regard $C^\infty(G/P,\mathcal{V})$ as a subspace of 
$C^\infty(N_-) \otimes V$.

Likewise,
let 
$\mathfrak{g}'(\R)=\mathfrak{n}_-'(\R) \oplus \mathfrak{m}'(\R) 
\oplus \mathfrak{a}'(\R) \oplus \mathfrak{n}_+'(\R)$ 
be a 
Gelfand--Naimark decomposition of $\mathfrak{g}'(\R)$,
and write $N_-' = \exp(\fn_-'(\R))$.
As for $C^\infty(G/P,\Cal{V})$, we regard $C^\infty(G'/P',\Cal{W})$
as a subspace of $C^\infty(N_-')\otimes W$.

We often view differential symmetry breaking operators
$\bD \colon
C^\infty(G/P,\mathcal{V})
\to C^\infty(G'/P',\mathcal{W})$
as differential operators 
$\w{\bD} \colon C^\infty(N_-) \otimes V
\to C^\infty(N_-') \otimes W$ 
such that
the restriction $\w{\bD}\vert_{C^\infty(G/P,\mathcal{V})}$
to $C^\infty(G/P,\mathcal{V})$ is a map
$\w{\bD}\vert_{C^\infty(G/P,\mathcal{V})}\colon
C^\infty(G/P,\mathcal{V})
\to C^\infty(G'/P',\mathcal{W})$ (see \eqref{eqn:21} below).
\begin{equation}\label{eqn:21}
\begin{aligned}
\xymatrix{
C^\infty(N_-) \otimes V 
\ar[r]^{\w{\bD} } 
& C^\infty(N_-') \otimes W\\ 
C^\infty(G/P,\mathcal{V}) 
\;\; \ar[r]_{\stackrel{\phantom{a}}{\hspace{20pt}\bD=\w{\bD} \vert_{\small{C^\infty(G/P,\mathcal{V})}}} }
 \ar@{^{(}->}[u]^{\iota^*}
& \;\; C^\infty(G'/P',\mathcal{W}) \ar@{^{(}->}[u]_{\iota^*}
}
\end{aligned}
\end{equation}

\noindent
In particular, we often regard $\Diff_{G'}(\Cal{V},\Cal{W})$ as 
\begin{equation}\label{eqn:DN}
\Diff_{G'}(\Cal{V},\Cal{W}) \subset 
\Diff_\C(C^\infty(N_-)\otimes V, C^\infty(N_-')\otimes W).
\end{equation}

\subsection{Duality theorem}\label{sec:duality}

For a finite-dimensional irreducible representation $(\sigma_\lambda,V)$ of $MA$,
we write $V^\vee = \Hom_\C(V,\C)$ and $((\sigma_\lambda)^\vee, V^\vee)$ for
the contragredient representation of $(\sigma_\lambda,V)$. By letting $\fn_+$ act 
on $V^\vee$ trivially, we regard the infinitesimal representation 
$d\sigma^\vee \boxtimes \C_{-\lambda}$ of $(\sigma_\lambda)^\vee$ as a $\fp$-module. For a finite-dimensional irreducible representation 
$(\varpi_\nu, W)$ of $M'A'$, a $\fp'$-module 
$d\varpi^\vee \boxtimes \C_{-\nu}$ is defined similarly.
We write
\begin{equation*}
\Mp(V^\vee) = \Cal{U}(\fg)\otimes_{\Cal{U}(\fp)}V^\vee
\quad
\text{and}
\quad
\Mpp(W^\vee) = \Cal{U}(\fg')\otimes_{\Cal{U}(\fp')}W^\vee
\end{equation*}
for generalized Verma modules for $(\fg, \fp)$ and $(\fg',\fp')$ induced from 
$d\sigma^\vee \boxtimes \C_{-\lambda}$ and $d\varpi^\vee \boxtimes \C_{-\nu}$,
respectively.
Via the diagonal action of $P$ on $\Mp(V^\vee)$, we regard
$M_\fp(V^\vee)$ as a $(\fg, P)$-module. Likewise, we regard 
$\Mpp(W^\vee)$ as a $(\fg', P')$-module.

The following theorem is often called the \emph{duality theorem}. 
For the proof, see \cite{KP1}.

\begin{thm}[Duality theorem]\label{thm:duality}
There is a natural linear isomorphism
\begin{equation}\label{eqn:duality1}
\EuD_{H\to D}
\colon
\operatorname{Hom}_{P'}(W^\vee,\Mp(V^\vee))
\stackrel{\sim}{\To} 
\operatorname{Diff}_{G'}(\mathcal V, \mathcal W).
\end{equation}
Equivalently,
\begin{equation}\label{eqn:duality2}
\EuD_{H\to D}
\colon
\operatorname{Hom}_{\fg', P'}(\Mpp(W^\vee),\Mp(V^\vee))
\stackrel{\sim}{\To} 
\operatorname{Diff}_{G'}(\mathcal V, \mathcal W).
\end{equation}
Here,
for $\varphi \in \Hom_{P'}(W^\vee, \Mp(V^\vee))$ and
$F \in C^\infty(G/P,\Cal{V})\simeq C^\infty(G,V)^P$,
the section $\EuD_{H\to D}(\varphi)F \in C^\infty(G'/P',\Cal{W})\simeq C^\infty(G',W)^{P'}$
is given by 
\begin{equation}\label{eqn:HD}
\IP{\EuD_{H\to D}(\varphi)F}{w^\vee} =
\sum_{j}\IP{dR(u_j)F}{v_j^\vee}\big\vert_{G'}
\;\; \text{for $w^\vee \in W^\vee$},
\end{equation}
where $\varphi(w^\vee)=\sum_j u_j\otimes v_j^\vee \in \Mp(V^\vee)$.
\end{thm}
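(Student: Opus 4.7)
The plan is to establish \eqref{eqn:duality2} by reducing it to \eqref{eqn:duality1} via Frobenius reciprocity, and then to establish \eqref{eqn:duality1} directly through the PBW identification of $\Mp(V^\vee)$ with ``constant-coefficient'' differential operators on the open Bruhat cell. For the reduction, since $\Mpp(W^\vee)=\Cal{U}(\fg')\otimes_{\Cal{U}(\fp')}W^\vee$ is induced from a $\fp'$-module, the tensor--Hom adjunction gives
\begin{equation*}
\Hom_{\fg'}(\Mpp(W^\vee),\Mp(V^\vee)) \cong \Hom_{\fp'}(W^\vee,\Mp(V^\vee)),
\end{equation*}
and adding $P'$-equivariance on both sides yields the equivalence of the two formulations, where I use that the $\fp'$-action on $\Mp(V^\vee)$ viewed as a $(\fg',P')$-module is the differential of its $P'$-action.

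To construct $\EuD_{H\to D}$ in \eqref{eqn:duality1}, I would invoke the PBW isomorphism $\Mp(V^\vee)\cong \Cal{U}(\fn_-)\otimes V^\vee$, which allows any $\varphi(w^\vee)$ to be written uniquely as $\sum_j u_j\otimes v_j^\vee$ with $u_j\in \Cal{U}(\fn_-)$, so that \eqref{eqn:HD} unambiguously defines $\EuD_{H\to D}(\varphi)F$. To show the formula is actually independent of the choice of lift across $\Cal{U}(\fp)$, I would use the $P$-equivariance of $F\in C^\infty(G,V)^P$: a direct computation shows that for $X\in \fp$,
\begin{equation*}
\IP{dR(uX)F}{v^\vee} = \IP{dR(u)F}{d\xi_\lambda^\vee(X)\cdot v^\vee},
\end{equation*}
which is precisely the relation defining the balanced tensor product $\Cal{U}(\fg)\otimes_{\Cal{U}(\fp)}V^\vee$, and therefore kills the ambiguity in any expression of $\varphi(w^\vee)$.

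Next I would verify three key properties of $\EuD_{H\to D}(\varphi)$: (i) the output is a $W$-valued function on $G'$ satisfying the $P'$-transformation law of $C^\infty(G',W)^{P'}$, which follows from the $P'$-equivariance of $\varphi$ combined with how $dR$ intertwines with right multiplication by $P'$; (ii) it is $G'$-intertwining, which is automatic because $dR(u_j)$ commutes with $L(g')$ for every $g'\in G'$; and (iii) it is a differential operator in the sense of diagram \eqref{eqn:21}, which follows because $dR(u)$ for $u\in \Cal{U}(\fn_-)$ acts on $C^\infty(G)$ as a left-invariant differential operator, and transferring to the Bruhat chart $\iota\colon N_-\hookrightarrow G/P$ realizes these as genuine partial differentiations (including directions normal to $N_-' \hookrightarrow N_-$). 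Bijectivity of $\EuD_{H\to D}$ would follow by constructing the inverse explicitly: given $\bD\in \Diff_{G'}(\Cal{V},\Cal{W})$, take the local model $\w{\bD}$ from \eqref{eqn:DN}, expand it as a polynomial in normal derivatives with $\Hom(V,W)$-valued coefficients, and evaluate its symbol at the origin of $N_-'$ to read off an element of $\Cal{U}(\fn_-)\otimes V^\vee\cong \Mp(V^\vee)$.

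The main obstacle I anticipate is establishing the precise dictionary between the $G'$-intertwining of $\bD$ and the $P'$-equivariance of the associated $\varphi$: one must argue that the finite-dimensional algebraic $P'$-equivariance condition on $W^\vee$ really captures the full global $G'$-equivariance of a differential operator between sections. This relies on the density of the open Bruhat cell $N_-P\subset G/P$ together with the fact that a translation-equivariant differential operator on a homogeneous space is completely determined by its symbol at a single base point, so that the correct transformation behavior at the origin propagates by $G'$-translation to the whole cell and by continuity to all of $G/P$.
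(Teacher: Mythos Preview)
The paper does not give its own proof of this theorem; it simply states ``For the proof, see \cite{KP1}'' and treats the result as a black box imported from Kobayashi--Pevzner. Your outline is a faithful sketch of the argument one finds there: the Frobenius reciprocity reduction from \eqref{eqn:duality2} to \eqref{eqn:duality1}, the well-definedness of \eqref{eqn:HD} via the $P$-equivariance of $F$, the verification that $\EuD_{H\to D}(\varphi)$ lands in $C^\infty(G',W)^{P'}$ and is $G'$-intertwining, and the inverse via the symbol at the identity coset are all the right ingredients. The ``main obstacle'' you flag---that $P'$-equivariance of $\varphi$ at the base point suffices to recover global $G'$-equivariance of the differential operator---is exactly the substantive point, and your proposed resolution (a differential operator is determined by its total symbol at one point once left $G'$-equivariance is imposed, combined with density of the big cell) is the correct mechanism. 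There is nothing to compare against in the paper itself.
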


\subsection{Algebraic Fourier transform $\widehat{\;\;\cdot\;\;}$ of Weyl algebras}
\label{sec:Weyl}

Let $U$ be a complex finite-dimensional vector space with $\dim _\C U=n$.
Fix a basis $u_1,\ldots, u_n$ of $U$ and let 
$(z_1, \ldots, z_n)$ denote the coordinates of $U$ with respect to the basis.
Then the algebra
\begin{equation*}
\C[U;z, \tfrac{\partial}{\partial z}]:=
\C[z_1, \ldots, z_n, 
\tfrac{\partial}{\partial z_1}, \ldots, \tfrac{\partial}{\partial z_n}]
\end{equation*}
with relations $z_iz_j = z_jz_i$, 
$\frac{\partial}{\partial z_i}\frac{\partial}{\partial z_j} 
=\frac{\partial}{\partial z_j}\frac{\partial}{\partial z_i}$,
and $\frac{\partial}{\partial z_j} z_i=\delta_{i,j} + z_i \frac{\partial}{\partial z_j}$
is called the Weyl algebra of $U$, where $\delta_{i,j}$ is the Kronecker delta.
Similarly,
let $(\zeta_1,\ldots, \zeta_n)$ denote the coordinates of 
the dual space $U^\vee$ of $U$ with respect to the dual basis of 
$u_1,\ldots, u_n$. We write 
$\C[U^\vee;\zeta, \tfrac{\partial}{\partial \zeta}]$
for the Weyl algebra of $U^\vee$.
Then the map determined by
\begin{equation}\label{eqn:Weyl}
\widehat{\frac{\partial}{\partial z_i}}:= -\zeta_i, \quad
\widehat{z_i}:=\frac{\partial}{\partial \zeta_i}
\end{equation}
gives a Weyl algebra isomorphism 
\begin{equation}\label{eqn:Weyl2}
\widehat{\;\;\cdot\;\;}\;\colon\C[U;z, \tfrac{\partial}{\partial z}] 
\stackrel{\sim}{\To} 
\C[U^\vee;\zeta, \tfrac{\partial}{\partial \zeta}], 
\quad T \mapsto \widehat{T}.
\end{equation}
\vskip 0.1in
\noindent
The map \eqref{eqn:Weyl2} is called
the \emph{algebraic Fourier transform of Weyl algebras}
(\cite[Def.\ 3.1]{KP1}). We remark that the minus sign for ``$-\zeta_i$'' 
in \eqref{eqn:Weyl} is put in such a way that the resulting map $\widehat{\;\;\cdot\;\;}$ 
is indeed a Weyl algebra homomorphism.

\subsection{Fourier transformed representation $\widehat{d\pi_{(\sigma,\lambda)^*}}$}
\label{sec:dpi2}

For a representation $\eta$ of $G$, we denote by $d\eta$ 
the infinitesimal representation of $\fg(\R)$. 
For instance,  $dL$ and $dR$ denote the infinitesimal representations $\fg(\R)$ of
the left and right regular representations of $G$ on $C^\infty(G)$.
As usual, we naturally extend representations of $\fg(\R)$ 
to ones for its universal enveloping algebra $\Cal{U}(\fg)$ of its complexification $\fg$.
The same convention is applied for closed subgroups of $G$.

For $g \in N_-MAN_+$, we write
\begin{equation*}
g = p_-(g)p_0(g)p_+(g),
\end{equation*}
where $p_\pm(g) \in N_{\pm}$ and $p_0(g) \in MA$. 
Similarly, for $Y \in \fg = \fn_- \oplus \fl\oplus \fn_+$ with $\fl= \fm \oplus \fa$,
we write
\begin{equation*}
Y=Y_{\fn_-} + Y_{\fl} + Y_{\fn_+},
\end{equation*}
where $Y_{\fn_{\pm}} \in \fn_{\pm}$ and $Y_\fl \in \fl$.

For $2\rho\equiv 2\rho(\fn_+)= \mathrm{Trace}(\ad\vert_{\fn_+})\in \mathfrak{a}^*$,
we denote by  $\C_{2\rho}$ the one-dimensional representation of $P$
defined by
$p \mapsto \chi_{2\rho}(p)=
\abs{\mathrm{det}(\mathrm{Ad}(p)\colon 
\mathfrak{n}_+ \to \mathfrak{n}_+)}$.
For the contragredient representation
$((\sigma_\lambda)^\vee, V^\vee)$ of $(\sigma_\lambda,V)$,
we put
$\sigma^*_\lambda := \sigma^\vee \boxtimes \C_{2\rho-\lambda}$.
As for $\sigma_\lambda$, we regard $\sigma^*_\lambda$ as a representation of $P$.
Define the induced representation
$\pi_{(\sigma, \lambda)^*} = \mathrm{Ind}_P^G(\sigma^*_\lambda)$
on the space $C^\infty(G/P,\mathcal{V}^*)$ of smooth sections 
for the vector bundle $\mathcal{V}^*=G\times_P (V^\vee\otimes \C_{2\rho})$
associated with $\sigma^*_\lambda$, which is isomorphic to the tensor bundle
of the dual vector bundle $\mathcal{V}^\vee = G\times_PV^\vee$ and the bundle
of densities over $G/P$.
Then the integration on $G/P$ gives a 
$G$-invariant non-degenerate bilinear form
$
\mathrm{Ind}^G_P(\sigma_\lambda) 
\times \mathrm{Ind}^G_P(\sigma^*_\lambda) \to \C
$
for $\mathrm{Ind}^G_P(\sigma_\lambda)$ and 
$\mathrm{Ind}^G_P(\sigma^*_\lambda)$.

As for $C^\infty(G/P,\mathcal{V})$,
the space $C^\infty(G/P,\mathcal{V}^*)$ can be regarded as 
a subspace of $C^\infty(N_-) \otimes V^\vee$.
Then the infinitesimal representation $d\pi_{(\sigma,\lambda)^*}(X)$
on $C^\infty(N_-)\otimes V^\vee$
for $X \in \fg$ is given by
\begin{equation}\label{eqn:dpi3}
d\pi_{(\sigma,\lambda)^*}(X)f(\bar{n})
=d\sigma_\lambda^*((\Ad(\bar{n}^{-1})X)_\fl)f(\bar{n})
-\left(dR((\Ad(\cdot^{-1})X)_{\fn_-})f\right)(\bar{n}).
\end{equation}
(For the details, see, for instance, \cite[Sect.\ 2]{KuOr24}.)
Via the exponential map $\exp\colon \fn_-(\R) \simeq N_-$, one can regard
$\dpi_{(\sigma,\lambda)^*}(X)$ as a representation 
on $C^\infty(\mathfrak{n}_-(\R)) \otimes V^\vee$.
It then follows from \eqref{eqn:dpi3} that 
$\dpi_{(\sigma,\lambda)^*}$ gives a Lie algebra homomorphism
\begin{equation*}
d\pi_{(\sigma,\lambda)^*}\colon \mathfrak{g} \To 
\C[\fn_-(\R);x, \tfrac{\partial}{\partial x}]  \otimes \mathrm{End}(V^\vee),
\end{equation*}
where $(x_1, \ldots, x_n)$ are coordinates of $\fn_-(\R)$ with $n=\dim \fn_-(\R)$.
We extend the coordinate functions $x_1,\ldots, x_n$ for $\fn_-(\R)$ 
holomorphically to the ones $z_1, \ldots, z_n$ for $\fn_-$.
Thus we have 
\begin{equation*}
d\pi_{(\sigma,\lambda)^*}\colon \mathfrak{g} \To 
\C[\fn_-;z, \tfrac{\partial}{\partial z}]  \otimes \mathrm{End}(V^\vee).
\end{equation*}

Now we fix a non-degenerate $\Ad$-invariant symmetric bilinear form $\kappa$ on
$\fg$. Via $\kappa$, we identify 
$\fn_+$  with the dual space $\fn_-^\vee$ of 
$\fn_-$. Then
the algebraic Fourier transform $\widehat{\;\;\cdot\;\;}$
of Weyl algebras \eqref{eqn:Weyl2} gives a Weyl algebra isomorphism
\begin{equation*}
\widehat{\;\;\cdot\;\;}\;\colon
\C[\fn_-;z, \tfrac{\partial}{\partial z}] 
\stackrel{\sim}{\To} 
\C[\fn_+;\zeta, \tfrac{\partial}{\partial \zeta}].
\end{equation*}
In particular, it gives a Lie algebra homomorphism
\begin{equation}\label{eqn:hdpi}
\widehat{d\pi_{(\sigma,\lambda)^*}}\colon \mathfrak{g} \To 
\C[\fn_+;\zeta, \tfrac{\partial}{\partial \zeta}]\otimes \mathrm{End}(V^\vee).
\end{equation}

Now we define a map
\begin{equation}\label{eqn:Fc}
F_c\colon M_\fp(V^\vee) 
\To
\Pol(\fn_+) \otimes V^\vee,
\quad u\otimes v^\vee \longmapsto 
\widehat{d\pi_{(\sigma,\lambda)^*}}(u)(1\otimes v^\vee).
\end{equation}

\begin{thm}[{\cite[Sect.\ 3.4]{KP1}}]\label{def:AFT2}
The map $F_c$  is  a $(\mathfrak{g}, P)$-module isomorphism.
\end{thm}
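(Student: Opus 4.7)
The plan is to establish four things in turn: (i) $F_c$ is well-defined on the tensor product $\mathcal{U}(\fg) \otimes_{\mathcal{U}(\fp)} V^\vee$; (ii) its image lies in $\Pol(\fn_+) \otimes V^\vee$; (iii) $F_c$ intertwines the $(\fg, P)$-actions; and (iv) $F_c$ is bijective.

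For (i), since $\widehat{d\pi_{(\sigma,\lambda)^*}}$ extends by the universal property of $\mathcal{U}(\fg)$ to an algebra homomorphism $\mathcal{U}(\fg) \to \C[\fn_+; \zeta, \tfrac{\partial}{\partial\zeta}] \otimes \End(V^\vee)$, well-definedness reduces to checking that for $Y \in \fp$,
\[
\widehat{d\pi_{(\sigma,\lambda)^*}}(Y)(1 \otimes v^\vee) = 1 \otimes (d\xi^\vee \boxtimes \C_{-\lambda})(Y)v^\vee.
\]
I would verify this from \eqref{eqn:dpi3} by evaluating carefully at $\bar{n} = e$. The $2\rho$-shift between $\sigma^*_\lambda = \sigma^\vee \boxtimes \C_{2\rho-\lambda}$ and the weight $-\lambda$ appearing in the generalized Verma induction must be absorbed by the Fourier transform of the contribution from $(\Ad(\bar{n}^{-1})Y)_{\fn_-}$ in the $dR$ term; when $Y \in \fn_+$, both sides vanish (the right side by definition, the left by a cancellation between the $d\xi_\lambda^*$ term and the normal-ordering corrections from the Weyl-algebra commutation).

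For (iii), $\fg$-equivariance is immediate from the defining formula:
\[
F_c(X u \otimes v^\vee) = \widehat{d\pi_{(\sigma,\lambda)^*}}(Xu)(1\otimes v^\vee) = \widehat{d\pi_{(\sigma,\lambda)^*}}(X)\, F_c(u\otimes v^\vee),
\]
since $\widehat{d\pi_{(\sigma,\lambda)^*}}$ is an algebra homomorphism. The $P$-equivariance follows by integrating this infinitesimal action on each finite-dimensional filtered piece, using the compatibility of $\Ad(P)$ on $\fn_+$ and on $\End(V^\vee)$. Claim (ii) is direct: $\Pol(\fn_+)$ is stable under the Weyl algebra $\C[\fn_+; \zeta, \tfrac{\partial}{\partial\zeta}]$ (multiplication by $\zeta_i$ raises polynomial degree, $\tfrac{\partial}{\partial \zeta_i}$ lowers it), so applying any element of $\widehat{d\pi_{(\sigma,\lambda)^*}}(\mathcal{U}(\fg))$ to the starting vector $1 \otimes v^\vee$ produces a polynomial.

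The main obstacle is (iv), bijectivity, and here I would argue via filtrations. Equip $M_\fp(V^\vee) \simeq \mathcal{U}(\fn_-) \otimes V^\vee$ (via PBW) with the standard PBW filtration on $\mathcal{U}(\fn_-)$, and $\Pol(\fn_+) \otimes V^\vee$ with the polynomial-degree filtration. The key computation, again from \eqref{eqn:dpi3}, is that for $X \in \fn_-$ the operator $d\pi_{(\sigma,\lambda)^*}(X)$ has leading term $-dR(X)$, which in exponential coordinates on $N_-$ reads $-\tfrac{\partial}{\partial z_X}$ modulo lower-order contributions arising from $(\Ad(\bar{n}^{-1})X)_\fl$ and the non-abelian part of $(\Ad(\bar{n}^{-1})X)_{\fn_-}$. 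After applying the algebraic Fourier transform this becomes multiplication by $\zeta_X$ modulo lower-degree terms. Hence for any PBW monomial $X_1 \cdots X_k \in \mathcal{U}(\fn_-)$,
\[
F_c(X_1\cdots X_k \otimes v^\vee) = \zeta_{X_1}\cdots\zeta_{X_k} \otimes v^\vee + (\text{strictly lower polynomial degree}).
\]
Under the standard identifications $\mathrm{gr}\,\mathcal{U}(\fn_-) \simeq \mathrm{Sym}(\fn_-) \simeq \Pol(\fn_+)$ (the last induced by $\kappa$), the associated graded of $F_c$ is precisely this leading-term map and is therefore a vector space isomorphism in every degree. A standard filtered-isomorphism argument then yields bijectivity of $F_c$ itself, completing the proof.
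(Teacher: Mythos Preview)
The paper does not supply its own proof of this statement; it is quoted verbatim from \cite[Sect.~3.4]{KP1} and used as a black box throughout. There is therefore no in-paper argument to compare against.

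Your sketch follows the standard line of proof one expects (and which is essentially what appears in \cite{KP1}): well-definedness reduces to the $\fp$-action on the generator $1\otimes v^\vee$, with the $2\rho$-discrepancy between $\sigma^*_\lambda$ and the Verma weight $-\lambda$ absorbed by the Weyl-algebra reordering terms; $\fg$-equivariance is tautological from the definition; and bijectivity is handled by the PBW/degree filtration argument, whose associated graded map is the canonical isomorphism $\mathrm{Sym}(\fn_-)\simeq\Pol(\fn_+)$ induced by $\kappa$. One small gap worth flagging: your passage from $\fg$-equivariance to $P$-equivariance ``by integrating on each finite-dimensional filtered piece'' only establishes $P_0$-equivariance. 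In the paper's setting $M$ is disconnected (e.g.\ $M\simeq SL^\pm(n,\R)$), so one must also check directly that $F_c$ intertwines the action of a representative of $M/M_0$; this amounts to the (easy) compatibility of $\Ad_\#$ on $\Pol(\fn_+)$ with the adjoint $M$-action on $\mathcal{U}(\fn_-)\otimes V^\vee$.
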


We call 
the $(\mathfrak{g}, P)$-module isomorphism $F_c$ in \eqref{eqn:Fc} 
the \emph{algebraic Fourier transform of the generalized Verma module  $M_\fp(V^\vee)$}.

\subsection{The F-method}
\label{sec:Fmethod2}
Observe that
the algebraic Fourier transform
$F_c$  in \eqref{eqn:Fc} 
gives an $M'A'$-representation isomorphism
\begin{equation}\label{eqn:VP2}
\Mp(V^\vee)^{\fn_+'}
\stackrel{\sim}{\To}
(\Pol(\fn_+) 
\otimes V^\vee)^{\widehat{d\pi_{(\sigma,\lambda)^*}}(\fn_+')},
\end{equation}
which induces a linear isomorphism
\begin{equation}\label{eqn:VP31}
\operatorname{Hom}_{M'A'}(W^\vee,
\Mp(V^\vee)^{\fn_+'})
\stackrel{\sim}{\To}
\operatorname{Hom}_{M'A'}
\left(W^\vee,
(\mathrm{Pol}(\mathfrak{n}_+) 
\otimes V^\vee)^{\widehat{d\pi_{(\sigma,\lambda)^*}}(\fn_+')}\right).
\end{equation}
Here $M'A'$ acts on $\Pol(\fn_+)$ via the action
\begin{equation}\label{eqn:sharp}
\Ad_{\#}(l) \colon p(X) \mapsto p(\Ad(l^{-1})X)
\;\; \text{for $l \in M'A'$}.
\end{equation}

Now we set
\begin{equation}\label{eqn:Sol2a}
\mathrm{Sol}(\mathfrak n_+;V,W):=
\operatorname{Hom}_{M'A'}(W^\vee,
(\mathrm{Pol}(\mathfrak{n}_+) 
\otimes V^\vee)^{\widehat{d\pi_{(\sigma,\lambda)^*}}(\fn_+')}).
\end{equation}
Via the identification
$\textrm{Hom}_{M'A'}(W^\vee, \Pol(\fn_+)\otimes V^\vee)
\simeq
\big((\Pol(\fn_+)\otimes V^\vee) \otimes W\big)^{M'A'}$,
we have
\begin{align}\label{eqn:Sol}
&\Sol(\fn_+;V,W)\nonumber \\[3pt]
&=\{ \psi \in \Hom_{M'A'}(W^\vee, \Pol(\fn_+)\otimes V^\vee): 
\text{
$\psi$ satisfies the system \eqref{eqn:Fsys} of PDEs below.}\}
\end{align}
\begin{equation}\label{eqn:Fsys}
(\widehat{d\pi_{(\sigma,\lambda)^*}}(C) \otimes \id_W)\psi=0
\,\,
\text{for all $C \in \fn_+'$}.
\end{equation}
We refer to the system \eqref{eqn:Fsys} of PDEs 
as the \emph{F-system} (\cite[Fact 3.3 (3)]{KKP16}).
Since
\begin{equation*}
\operatorname{Hom}_{P'}(W^\vee,\Mp(V^\vee))
=
\operatorname{Hom}_{M'A'}(W^\vee,\Mp(V^\vee)^{\fn_+'}),
\end{equation*}
the isomorphism \eqref{eqn:VP31} together with \eqref{eqn:Sol2a} shows the following.

\begin{thm}
[F-method, {\cite[Thm.\ 4.1]{KP1}}]
\label{thm:Fmethod}
There exists a linear isomorphism
\begin{equation*}
F_c \otimes \mathrm{id}_{W}\colon 
\operatorname{Hom}_{P'}(W^\vee,
\Mp(V^\vee))
\stackrel{\sim}{\To}
\mathrm{Sol}(\mathfrak n_+;V,W).
\end{equation*}
Equivalently, we have
\begin{equation*}
F_c \otimes \mathrm{id}_{W}\colon 
\mathrm{Hom}_{\mathfrak g',P'}(\Mpp(W^\vee),\Mp(V^\vee))\stackrel{\sim}{\To}
\mathrm{Sol}(\mathfrak n_+;V,W).
\end{equation*}
\end{thm}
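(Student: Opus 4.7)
The plan is to track what the algebraic Fourier transform $F_c$ from Theorem~\ref{def:AFT2} does when one cuts down successively to $\fn_+'$-invariants and then to $M'A'$-equivariant maps out of $W^\vee$. Concretely, I would split the statement into three elementary identifications and then glue them.

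First, I would observe the Frobenius-style identity
\begin{equation*}
\operatorname{Hom}_{P'}(W^\vee, \Mp(V^\vee))
\;=\; \operatorname{Hom}_{M'A'}\bigl(W^\vee, \Mp(V^\vee)^{\fn_+'}\bigr).
\end{equation*}
This simply uses that $W^\vee$ carries the $P'$-structure in which $N_+'$ acts trivially, so a $P'$-equivariant map from $W^\vee$ must land in the $\fn_+'$-fixed vectors, and conversely any $M'A'$-equivariant map into those fixed vectors is automatically $N_+'$-equivariant, hence $P'$-equivariant. The second equivalent formulation in the theorem, using $\operatorname{Hom}_{\fg',P'}(\Mpp(W^\vee), \Mp(V^\vee))$, then comes for free from the standard generalized-Verma Frobenius reciprocity $\operatorname{Hom}_{\fg',P'}(\Mpp(W^\vee),\,\cdot\,) \simeq \operatorname{Hom}_{P'}(W^\vee,\,\cdot\,)$.

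Next, I would transport the right-hand side through $F_c$. By Theorem~\ref{def:AFT2}, $F_c$ is a $(\fg,P)$-isomorphism, and since $P'\subset P$ and $\fn_+'\subset\fn_+\subset\fg$, it is in particular $M'A'$-equivariant and intertwines the natural $\fn_+'$-action on $\Mp(V^\vee)$ with the action of $\widehat{d\pi_{(\sigma,\lambda)^*}}(\fn_+')$ on $\Pol(\fn_+)\otimes V^\vee$ (this is exactly the content of how $F_c$ is built from $\widehat{d\pi_{(\sigma,\lambda)^*}}$ in \eqref{eqn:Fc}, together with the fact that the $\fg$-action on $\Mp(V^\vee)$ is by left multiplication, which $F_c$ converts to $\widehat{d\pi_{(\sigma,\lambda)^*}}$). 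Restricting the isomorphism $F_c$ to $\fn_+'$-invariants therefore yields the $M'A'$-isomorphism \eqref{eqn:VP2}
\begin{equation*}
\Mp(V^\vee)^{\fn_+'} \;\stackrel{\sim}{\To}\; (\Pol(\fn_+)\otimes V^\vee)^{\widehat{d\pi_{(\sigma,\lambda)^*}}(\fn_+')}.
\end{equation*}

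Finally, applying the left-exact functor $\operatorname{Hom}_{M'A'}(W^\vee,\,\cdot\,)$ to this $M'A'$-isomorphism gives the linear isomorphism \eqref{eqn:VP31}, and by the very definition \eqref{eqn:Sol2a} its target is $\Sol(\fn_+;V,W)$. Chaining this with the Frobenius identity from the first step produces the asserted isomorphism $F_c\otimes \id_W$. The only subtle point, and the one I would be most careful about, is checking that $F_c$ really does conjugate the left multiplication by $\fn_+'\subset\fg$ on $\Mp(V^\vee)$ into $\widehat{d\pi_{(\sigma,\lambda)^*}}(\fn_+')$ on the polynomial side; once this equivariance is in hand, the rest of the argument is formal.
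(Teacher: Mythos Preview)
Your argument is correct and follows essentially the same route as the paper: the paper also derives the theorem from the $M'A'$-isomorphism \eqref{eqn:VP2} on $\fn_+'$-invariants induced by $F_c$, the induced isomorphism \eqref{eqn:VP31} on $\Hom_{M'A'}(W^\vee,\,\cdot\,)$, and the Frobenius-type identity $\Hom_{P'}(W^\vee,\Mp(V^\vee))=\Hom_{M'A'}(W^\vee,\Mp(V^\vee)^{\fn_+'})$. Your write-up makes the equivariance of $F_c$ a bit more explicit, but the logical skeleton is the same.
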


\subsection{The case of abelian nilradical $\fn_+$}
\label{sec:abelian}

Now suppose that the nilpotent radical $\fn_+$ is abelian.
In this case differential symmetry breaking operators
$\bD \in \Diff_{G'}(\Cal{V},\Cal{W})$ have constant coefficients.
(See, for instance, \cite[Sect.\ 2]{KuOr24}.)
Thus, as in \eqref{eqn:DN}, one may view $\Diff_{G'}(\Cal{V},\Cal{W})$ as
\begin{equation*}
\Diff_{G'}(\Cal{V},\Cal{W}) \subset \C[\fn_-;\tfrac{\partial}{\partial z}]
 \otimes \mathrm{Hom}_{\C}(V,W).
\end{equation*}
Since $\fn_+$ is regarded as the dual space of $\fn_-$, 
one can define the symbol map
\begin{equation*}
\mathrm{symb}\colon
\C[\fn_-;\tfrac{\partial}{\partial z}]
\To\C[\fn_+;\zeta],
\quad 
\tfrac{\partial}{\partial z_i} \mapsto \zeta_i.
\end{equation*}
\vskip 0.1in

Theorem \ref{thm:abelian} below shows a beautiful relationship 
between the F-method and the symbol map.

\begin{thm}
[{\cite[Cor.\ 4.3]{KP1}}]
\label{thm:abelian}
Suppose that the nilpotent radical
$\mathfrak{n}_+$ is abelian. Then 
the symbol map $\mathrm{symb}$ gives
a linear isomorphism
\begin{equation*}
\Rest \circ\mathrm{symb}^{-1}
\colon 
\mathrm{Sol}(\mathfrak n_+;V,W)
\stackrel{\sim}{\To} 
 \operatorname{Diff}_{G'}(\mathcal V, \mathcal W).
\end{equation*}
Further, the diagram \eqref{eqn:isom3} below commutes:
\begin{equation}\label{eqn:isom3}
\xymatrix@R-=0.7pc@C-=0.5cm{
{}
& 
\mathrm{Sol}(\mathfrak n_+;V,W)
\ar[rddd]_{\sim}^{\;\; \Rest\, \circ \, \mathrm{symb}^{-1}}
&{} \\
&&&\\
&\circlearrowleft{} \\
\operatorname{Hom}_{\mathfrak{g}',P'}(\Mpp(W^\vee),\Mp(V^\vee)) 
  \ar[rr]^{\hspace{25pt}\sim}_{\hspace{25pt}\EuD_{H\to D}}
  \ar[ruuu]_{\sim}^{F_c\otimes\mathrm{id}_W} 
 & {}
 & 
 \operatorname{Diff}_{G'}(\mathcal V, \mathcal W),
}
\end{equation}
where $\Rest$ denotes the restriction map 
from $C^\infty(G/P',\Cal{W})$ to $C^\infty(G'/P',\Cal{W})$.
\end{thm}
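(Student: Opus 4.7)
The plan is to leverage the two isomorphisms already in hand --- the F-method (Theorem~\ref{thm:Fmethod}) and the duality theorem (Theorem~\ref{thm:duality}) --- by verifying that under the abelian hypothesis the composite $\EuD_{H\to D}\circ (F_c\otimes\id_W)^{-1}$ agrees on the nose with $\Rest\circ\symb^{-1}$ on $\Sol(\fn_+;V,W)$. Once this equality is in hand, both the commutativity of \eqref{eqn:isom3} and the isomorphism property of $\symb^{-1}\colon\Sol(\fn_+;V,W)\stackrel{\sim}{\To}\Diff_{G'}(\Cal V,\Cal W)$ will follow immediately from the isomorphism properties of $\EuD_{H\to D}$ and $F_c\otimes\id_W$.

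First I would normalize representatives. Abelianness of $\fn_+$ forces $\fn_-$ to be abelian, so PBW gives $\Cal{U}(\fn_-)\simeq\Sym(\fn_-)\simeq\Pol(\fn_+)$, the last identification via $\kappa$. Applied to the pair $(\fg,\fp)$, PBW yields that every $\varphi\in\Hom_{P'}(W^\vee,\Mp(V^\vee))$ admits a unique representation $\varphi(w^\vee)=\sum_j \IP{w_j}{w^\vee}\,u_j\otimes v_j^\vee$ for a finite family $\{u_j\otimes v_j^\vee\otimes w_j\}\subset\Cal{U}(\fn_-)\otimes V^\vee\otimes W$.

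Next I would compute $\widehat{d\pi_{(\sigma,\lambda)^*}}$ explicitly on $\fn_-$. Since $N_-$ is abelian, $\Ad(\bar n^{-1})X=X$ for every $X\in\fn_-$ and $\bar n\in N_-$, so in \eqref{eqn:dpi3} the $\fl$-component vanishes and $d\pi_{(\sigma,\lambda)^*}(X)=-dR(X)=-\partial/\partial z_X$. By \eqref{eqn:Weyl}, $\widehat{d\pi_{(\sigma,\lambda)^*}}(X)$ therefore acts as multiplication by $\zeta_X$, and iterating over a monomial basis of $\Sym(\fn_-)$ yields $F_c(u\otimes v^\vee)=p_u(\zeta)\otimes v^\vee$, where $p_u\in\Pol(\fn_+)$ is the polynomial corresponding to $u\in\Sym(\fn_-)$ under $\kappa$.

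Finally I would chase the diagram. For $F\in C^\infty(G/P,\Cal V)$ and $\bar n\in N_-'$, formula \eqref{eqn:HD} gives
\[
(\EuD_{H\to D}(\varphi)F)(\bar n)=\sum_j\IP{(dR(u_j)F)(\bar n)}{v_j^\vee}\,w_j,
\]
and under abelianness each $dR(u_j)$ acts as the constant-coefficient differential operator $\symb^{-1}(p_{u_j})$ on $C^\infty(N_-)\otimes V$. Combining this with the explicit form of $F_c\otimes\id_W$ from the previous step shows that the right-hand side equals $\big(\Rest\circ\symb^{-1}\circ(F_c\otimes\id_W)(\varphi)\big)(F)(\bar n)$, which is the desired identity. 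The main obstacle will be the sign bookkeeping: the minus sign in \eqref{eqn:dpi3} and the minus sign in $\widehat{\partial/\partial z_i}=-\zeta_i$ must cancel so that $\widehat{d\pi}(X)$ and $\symb(dR(X))$ both produce $+\zeta_X$, which is precisely the reason the minus sign was inserted in \eqref{eqn:Weyl}; once this is checked on generators the extension to all of $\Sym(\fn_-)$ is purely formal.
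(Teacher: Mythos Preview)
The paper does not supply its own proof of this statement: Theorem~\ref{thm:abelian} is quoted verbatim from \cite[Cor.~4.3]{KP1} and left unproven in the present paper. Your outlined argument is correct and is in fact the standard way one establishes the result. The essential computation---that for $X\in\fn_-$ one has $d\pi_{(\sigma,\lambda)^*}(X)=-dR(X)=-\partial/\partial z_X$ (since $\Ad(\bar n^{-1})X=X$ by abelianness), and hence $\widehat{d\pi_{(\sigma,\lambda)^*}}(X)$ is multiplication by $\zeta_X$---is exactly what identifies $F_c$ on $\Sym(\fn_-)\otimes V^\vee$ with the symbol map, and your sign bookkeeping is right. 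The only cosmetic point is that in your first displayed isomorphism you wrote $\symb^{-1}$ alone, whereas strictly speaking the map landing in $\Diff_{G'}(\Cal V,\Cal W)$ carries an implicit $\Rest$ (as in the diagram); this is harmless given the paper's convention \eqref{eqn:DN} of viewing $\Diff_{G'}(\Cal V,\Cal W)$ inside $\C[\fn_-;\tfrac{\partial}{\partial z}]\otimes\Hom_\C(V,W)$.
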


\subsection{A recipe of the F-method for abelian nilradical $\fn_+$}
\label{sec:recipe}

By \eqref{eqn:Sol} and Theorem \ref{thm:abelian},
one can classify and construct 
$\bD \in  \operatorname{Diff}_{G'}(\mathcal V, \mathcal W)$ and
$\Phi \in 
\Hom_{\mathfrak{g}',P'}(\Mpp(W^\vee),\Mp(V^\vee))$
by computing $\psi \in \mathrm{Sol}(\mathfrak n_+;V,W)$ as follows.
\vskip 0.1in

\begin{enumerate}

\item[Step 1]
Compute $d\pi_{(\sigma,\lambda)^*}(C)$ and 
$\widehat{d\pi_{(\sigma,\lambda)^*}}(C)$
for $C \in \fn_+'$.
\vskip 0.1in

\item[Step 2]
Classify and construct 
$\psi \in \Hom_{M'A'}(W^\vee, \Pol(\fn_+)\otimes V^\vee)$.
\vskip 0.1in

\item[Step 3]
Solve the F-system \eqref{eqn:Fsys}
for $\psi \in \Hom_{M'A'}(W^\vee, \Pol(\fn_+)\otimes V^\vee)$.
\vskip 0.1in

\item[Step 4]
For $\psi \in \mathrm{Sol}(\mathfrak n_+;V,W)$ obtained in Step 3,
do the following.
\vskip 0.1in

\begin{enumerate}
\item[Step 4a]
Apply $\Rest \circ \symb^{-1}$ 
to $\psi \in \mathrm{Sol}(\mathfrak n_+;V,W)$
to obtain $\bD \in  \operatorname{Diff}_{G'}(\mathcal V, \mathcal W)$.
\vskip 0.1in
\item[Step 4b]
Apply $F_c^{-1} \otimes \id_W$ 
to $\psi \in \mathrm{Sol}(\mathfrak n_+;V,W)$
to obtain $\Phi \in \operatorname{Hom}_{\mathfrak{g}',P'}
(\Mpp(W^\vee), \Mp(V^\vee))$.
\end{enumerate}
\vskip 0.1in

\end{enumerate}

\section{Specialization to $(SL(n+1,\R), SL(n,\R);P, P')$}
\label{sec:SL}

In this section we specialize the general framework 
described in Section \ref{sec:Fmethod}
to the case $(G,G')=(SL(n+1,\R),SL(n,\R))$ with real flag varieties
 $G/P\simeq \RP^n$ and $G'/P'\simeq \RP^{n-1}$. 
Throughout this section we assume $n\geq 2$, unless otherwise specified.

\subsection{Notation}\label{sec:notation}

Let $G = SL(n+1,\R)$ with Lie algebra $\fg(\R)=\f{sl}(n+1,\R)$ for $n\geq 2$.
Let $G'$ denote the closed subgroup of $G$ such that
\begin{equation*}
G'=
\left\{
\begin{pmatrix}
g'&\\
& 1\\
\end{pmatrix}
:
g' \in SL(n,\R)
\right\}\simeq SL(n,\R)
\end{equation*}
with Lie algebra
\begin{equation}\label{eqn:g'}
\fg'(\R)= \left\{
\begin{pmatrix}
X' & \\
 & 0
\end{pmatrix}
:
X' \in \f{sl}(n,\R)
\right\}\simeq \f{sl}(n,\R).
\end{equation}

We put
\begin{equation*}
N_j^+:=E_{1,j+1},
\quad
N_j^-:=E_{j+1,1}
\quad
\text{for $j\in \{1,\ldots, n\}$}
\end{equation*}
and 
\begin{alignat}{2}
H_0&:=\frac{1}{n+1}(nE_{1,1} - \sum^{n+1}_{r=2}E_{r,r})
&&=\frac{1}{n+1}\diag(n, -1, -1, \ldots, -1,-1),\label{eqn:H0}\\
H_0'&:=\frac{1}{n}((n-1)E_{1,1} - \sum^n_{r=2}E_{r,r})
&&=\frac{1}{n}\diag(n-1, -1, -1, \ldots, -1,0),\label{eqn:H0'}
\end{alignat}
where $E_{i,j}$ denote the matrix units. We normalize $H_0$ and $H_0'$ as
$\wH_0:=\frac{n+1}{n}H_0$ and $\wH_0':=\frac{n}{n-1}H_0'$,
namely,
\begin{alignat}{2}
\wH_0&=\frac{1}{n}(nE_{1,1} - \sum^{n+1}_{r=2}E_{r,r})
&&=\frac{1}{n}\diag(n, -1, -1, \ldots, -1,-1),\label{eqn:0801a}\\
\wH_0'&=\frac{1}{n-1}((n-1)E_{1,1} - \sum^n_{r=2}E_{r,r})
&&=\frac{1}{n-1}\diag(n-1, -1, -1, \ldots, -1,0).\label{eqn:0801b}
\end{alignat}
Let
\begin{alignat}{2}\label{eqn:nR}
\fn_{\pm}(\R) &=\Ker(\ad(H_0)\mp\id) 
&&= \Ker(\ad(\wH_0)\mp\tfrac{n+1}{n}\id),\\
\fn_{\pm}'(\R)&=\Ker(\ad(H_0')\vert_{\fg'(\R)}\mp\id) 
&&=\Ker(\ad(\wH_0')\vert_{\fg'(\R)}\mp\tfrac{n}{n-1}\id).\nonumber
\end{alignat}
Then we have
\begin{equation*}\label{eqn:nR2}
\fn_{\pm}(\R)=\spn_{\R}\{N_1^{\pm},\ldots, N_{n-1}^{\pm}, N_{n}^{\pm} \}
\quad
\text{and}
\quad
\fn_{\pm}(\R)=\spn_{\R}\{N_1^{\pm},\ldots, N_{n-1}^{\pm} \}.
\end{equation*}

For $X, Y \in \fg(\R)$, let $\Tr(X,Y)=\text{Trace}(XY)$ denote the trace form of $\fg(\R)$. 
Then $N_i^+$ and $N_j^-$ satisfy $\Tr(N_i^+,N_j^-)=\delta_{i,j}$.
In what follows, we identify the duals $\fn_-(\R)^\vee$ of $\fn_-(\R)$
and $\fn_-'(\R)^\vee$ of $\fn_-'(\R)$
with $\fn_-(\R)^\vee \simeq \fn_+(\R)$ and 
$\fn_-'(\R)^\vee \simeq \fn_+'(\R)$
via the trace form $\Tr(\cdot, \cdot)$.

Let $\fa(\R)= \R \wH_0$ and $\fa'(\R)=\R\wH_0'$.
We also put
\begin{alignat*}{2}
\fm(\R)&:= \left\{
\begin{pmatrix}
0 & \\
 & X
\end{pmatrix}
:
X \in \f{sl}(n,\R)
\right\}
&&\simeq \f{sl}(n,\R),\\
\fm'(\R)&:= \left\{
\begin{pmatrix}
0 && \\
 & X' &\\
 & & 0
\end{pmatrix}
:
X' \in \f{sl}(n-1,\R)
\right\}&&\simeq \f{sl}(n-1,\R).
\end{alignat*}
Here $\f{sl}(1,\R)$ is regarded as $\f{sl}(1,\R)=\{0\}$. 
We remark that although $\fg'(\R) \simeq \fm(\R) \simeq \f{sl}(n,\R)$, these are 
different subalgebras of $\fg(\R)$.

We have 
$\fm(\R)\oplus \fa(\R) = \Ker(\ad(\widetilde{H}_0))$
 and
 $\fm'(\R)\oplus \fa'(\R) = \Ker(\ad(\widetilde{H}_0')\vert_{\fg'(\R)})$.
 The decompositions
$\fg(\R) = \fn_-(\R) \oplus \fm(\R) \oplus \fa(\R) \oplus \fn_+(\R)$
and
$\fg'(\R) = \fn_-'(\R) \oplus \fm'(\R) \oplus \fa'(\R) \oplus \fn_+'(\R)$
are Gelfand--Naimark decompositions of $\fg(\R)$ and $\fg'(\R)$, resepectively.
The subalgebras
$\fp(\R):=\fm(\R) \oplus \fa(\R) \oplus \fn_+(\R)$ and
$\fp'(\R):=\fm'(\R) \oplus \fa'(\R) \oplus \fn_+'(\R)$ 
are maximal parabolic 
subalgebras of $\fg(\R)$ and $\fg'(\R)$, respectively.
It is remarked that $\fn_{\pm}(\R)$ and $\fn_{\pm}'(\R)$ are abelian. 

Let $P$ be  the normalizer $N_G(\fp(\R))$ of $\fp(\R)$ in $G$
and $P'$ the normalizer $N_{G'}(\fp'(\R))$ of $\fp'(\R)$ in $G'$.
We write $P=MAN_+$ and $P'=M'A'N_+'$
for the Langlands decompositions of $P$ and $P'$ corresponding to 
$\fp(\R)=\fm(\R) \oplus \fa(\R) \oplus \fn_+(\R)$ and 
$\fp'(\R)=\fm'(\R) \oplus \fa'(\R) \oplus \fn_+'(\R)$,
respectively.
Then we have
\begin{alignat*}{3}
&A = \exp(\fa(\R)) &&= \exp(\R \wH_0), \qquad 
&&N_+=\exp(\fn_+(\R)),\\
&A' = \exp(\fa'(\R)) &&= \exp(\R \wH_0'), \qquad
&&N_+'=\exp(\fn_+'(\R)). 
\end{alignat*}
The groups $M$ and $M'$ are  given by
\begin{alignat*}{2}
M&= 
\left\{
\begin{pmatrix}
\det(g)^{-1} &\\
& g\\
\end{pmatrix}
:
g \in SL^{\pm}(n,\R)
\right\}
&&\simeq SL^{\pm}(n,\R),\\
M'&= 
\left\{
\begin{pmatrix}
\det(g')^{-1} & & \\
& g' &\\
&&1
\end{pmatrix}
:
g' \in SL^{\pm}(n-1,\R)
\right\}
&&\simeq SL^{\pm}(n-1,\R).
\end{alignat*}
Here $SL^{\pm}(1,\R)$ is regarded as
$SL^{\pm}(1,\R) = \{\pm 1\}$.
As $M$ and $M'$ are not connected, 
let $M_0$ and $M_0'$ denote the identity components of $M$ and $M'$,
respectively.
Then $M_0 \simeq SL(n,\R)$ and $M_0' \simeq SL(n-1,\R)$.
For
\begin{equation*}
\gamma = \diag(-1, 1, \ldots, 1, -1, 1) \in M' \subset M,
\end{equation*}
we have
\begin{equation*}
M/M_0 = \{[I_{n+1}], [\gamma]\} \simeq \Z/2\Z
\quad
\text{and}
\quad
M'/M_0' = \{[I_{n+1}]', [\gamma]'\} \simeq \Z/2\Z,
\end{equation*}
where $I_{n+1}$ denotes the $(n+1) \times (n+1)$ identity matrix and 
$[g] = g M_0$ and $[g]' = g M'_0$.

\begin{rem}\label{rem:A}
We have
$P' \subset P$, $M'A' \subset MA$, $M' \subset M$, $N_{\pm}'\subset N_{\pm}$,
but $A' \not \subset A$. Indeed, each element
\begin{equation*}
a' = \diag(t, t^{\frac{-1}{n-1}}, \ldots, t^{\frac{-1}{n-1}}, 1) \in A'
\end{equation*}
can be decomposed as
$a' = a'_M a'_A$ with $a'_M \in M$ and $a'_A \in A$, where
\begin{align*}
a'_M 
&= \diag(1,t^{\frac{-1}{n(n-1)}},\ldots, t^{\frac{-1}{n(n-1)}} ,t^{\frac{1}{n}})
\in M,\\
a'_A
&= \diag(t,t^{\frac{-1}{n}},\ldots ,t^{\frac{-1}{n}} ,t^{\frac{-1}{n}}) 
\in A.
\end{align*}

\end{rem}


For a closed subgroup $J$ of $G$, we denote by $\Irr(J)$ and $\Irr(J)_{\fin}$
the sets of equivalence classes of irreducible representations of $J$  and 
finite-dimensional irreducible representations of $J$, respectively.

For $\lambda,\nu \in \C$, 
we define one-dimensional representations 
$\C_\lambda=(\chi^\lambda, \C)$ of $A=\exp(\R\wH_0)$ 
and
$\C_\nu = ((\chi')^\nu,\C)$ of $A' = \exp(\R \wH_0')$  by
\begin{equation}\label{eqn:chi}
\chi^\lambda \colon \exp(t \wH_0) \longmapsto \exp(\lambda t)
\quad
\text{and}
\quad
(\chi')^\nu \colon \exp(t \wH_0') \longmapsto \exp(\nu t).
\end{equation}
Then $\Irr(A)$ and $\Irr(A')$ are given by
\begin{equation*}
\Irr(A)=\{\C_\lambda : \lambda \in \C\} \simeq \C
\quad
\text{and}
\quad
\Irr(A')=\{\C_\nu : \nu \in \C\} \simeq \C.
\end{equation*}

Let  $\sgn$ denote the sign character of $\R^{\times}$.
For $\alpha \in \{\pm\}$, we then define a one-dimensional representation 
$(\sgn^\ga, \C)$ of $M$  by
\begin{equation}\label{eqn:20241106}
\begin{pmatrix}
\det(g)^{-1} &\\
& g\\
\end{pmatrix}
\longmapsto
\sgn^\alpha(\det(g)).
\end{equation}
where
\begin{equation}\label{eqn:20241108}
\sgn^\alpha(\det(g)) 
:= 
\begin{cases}
1 & \text{if $\alpha=+$},\\
\sgn(\det(g)) & \text{if $\alpha=-$}.
\end{cases}
\end{equation}
Then 
$\Irr(M)_\fin = \Irr(SL^{\pm}(n,\R))_\fin$
and
$\Irr(M')_\fin = \Irr(SL^{\pm}(n-1,\R))_\fin$
are given by
\begin{align*}
\Irr(M)_{\fin}&\simeq
\{\sgn^\ga \otimes \xi: 
(\alpha, \xi) \in \{\pm\} \times \Irr(SL(n,\R))_{\fin}\},\\
\Irr(M')_{\fin}&\simeq
\{\sgn^\beta \otimes \varpi: 
(\beta, \varpi) \in \{\pm\} \times \Irr(SL(n-1,\R))_{\fin}\}.
\end{align*}

Since 
$\Irr(P)_{\fin}\simeq \Irr(M)_{\fin} \times \Irr(A)$, the set $\Irr(P)_\fin$ can be 
parametrized as
\begin{equation*}
\Irr(P)_{\fin}\simeq 
\{\pm\} \times \Irr(SL(n,\R))_{\fin} \times \C.
\end{equation*}
Likewise, we have
\begin{equation*}
\Irr(P')_{\fin}\simeq 
\{\pm\} \times \Irr(SL(n-1,\R))_{\fin} \times \C.
\end{equation*}
In particular,
for $n=2$, we  have 
\begin{equation}\label{eqn:n2}
\Irr(P')_{\fin}\simeq 
\{\pm\} \times \{\triv\} \times \C.
\end{equation}

\begin{rem}\label{rem:sym}
One needs to be careful 
that the parametrization of  $\Irr(SL^{\pm}(1,\R))$ is not unique.
Indeed, for $m\geq 1$,
we define $\gamma_m \in SL^{\pm}(m,\R)$ as 
\begin{equation*}
\gamma_m= 
\begin{cases}
\diag(1,\ldots,1,-1) & \text{for $m\geq 2$}, \\
-1 &\text{for $m=1$}, 
\end{cases}
\end{equation*}
so that $SL^{\pm}(m,\R) = \langle \gamma_m \rangle \ltimes SL(m,\R)$.
Let $\sym^k_{m}$ denote the irreducible representation of $SL(m,\R)$ on
$S^k(\C^m)$. Here, we regard $\sym^k_1$ with  $\sym^k_1 = \triv$ 
as the irreducible representation of $SL(1,\R) = \{1\}$ on $\C$ 
for all $k \in \Z_{\geq 0}$.

Now consider $\sgn^\delta \otimes \sym^k_m \in \Irr(SL^\pm(m,\R))_{\fin}$ for 
$m\geq 1$. 
The irreducible $SL^{\pm}(m,\R)$-representation
$\sgn^{\delta} \otimes \sym^k_m$ is defined as
\begin{equation*}
(\sgn^\delta\otimes \sym^k_m)(g)e_1^{k_1}\cdots e_m^{k_m}
=\sgn^\delta(g)(ge_1)^{k_1}\cdots (ge_m)^{k_m},
\end{equation*}
where $e_j$ are the standard basis elements of $\C^m$.
For $m=1$, we then have 
\begin{align*}
(\sgn^\delta \otimes \sym^k_1)(\pm 1) 1^k
&=\sgn(\pm 1)^\delta (\pm 1 \cdot 1)^k\\
&=\sgn(\pm 1)^\delta (\pm 1)^k 1^k\\
&=\sgn(\pm 1)^{\delta +k }1^k\\
&=(\sgn^{\delta+k}\otimes \triv)(\pm 1) 1^k.
\end{align*}
Here, 
for $\delta \in \{\pm\}\equiv \{\pm 1\}$ 
and $k \in \Z_{\geq 0}$, we mean 
$\delta + k \in \{\pm\}$ by
\begin{equation*}
\delta+k = 
\begin{cases}
+ & \text{if $\delta = (-1)^k$},\\
- & \text{if $\delta = (-1)^{k+1}$}.
\end{cases}
\end{equation*}
Therefore, we have 
\begin{equation}\label{eqn:sym0826a}
\sgn^{\delta+k} \otimes \sym^k_1 =
\sgn^{\delta}\otimes \triv.
\end{equation}

Likewise, let $\poly^k_{m}$ denote the irreducible representation 
of $SL(m,\R)$ on the space $\text{Pol}^k(\C^m)$ 
of polynomial functions on $\C^n$ of homogeneous degree $k$.
As for $\sym^k_m$, we regard $\poly^k_{1}$ 
as $\poly^k_1 = \triv$ for all $k \in \Z_{\geq 0}$.
Since $\poly^k_m \simeq (\sym^k_m)^\vee$,
the identity \eqref{eqn:sym0826a} implies that
\begin{equation}\label{eqn:poly0826b}
\sgn^{\delta+k} \otimes \poly^k_1 =
\sgn^{\delta}\otimes \triv.
\end{equation}

\end{rem}

For 
$(\ga, \xi, \lambda) \in \{\pm\} \times \Irr(SL(n,\R))_{\fin} \times \C$
and
$(\gb, \varpi, \nu) \in \{\pm\} \times \Irr(SL(n-1,\R))_{\fin} \times \C$,
we write
\begin{equation}\label{eqn:Ind}
I(\xi,\lambda)^\ga 
= 
\Ind_{P}^G\left((\sgn^\ga \otimes \xi)\boxtimes \C_\lambda\right)
\quad
\text{and}
\quad
J(\varpi,\nu)^\gb 
= 
\Ind_{P'}^{G'}\left((\sgn^\gb \otimes \varpi)\boxtimes \C_\nu\right)
\end{equation}
for (unnormalized) parabolically induced representations 
of $G$ and of $G'$, respectively.
For instance, the unitary axis of 
$J(\triv, \nu)^\gb$ is $\text{Re}(\nu) = \frac{n}{2}$, 
where $\triv$ denotes the trivial representation of $SL(n-1,\R)$.
(See, for instance, \cite[p.\ 102]{vDM99} and \cite[p.\ 298]{HL99}. 
We remark that the complex parameters ``$\mu$'' in \cite{vDM99},
``$\alpha$'' in \cite{HL99}, and ``$\nu$'' in this paper are related as 
$\mu= \alpha = -\nu$.)

For the representation spaces $V$ and $W$
of 
$(\ga, \xi, \lambda) \in \Irr(P)_{\fin}$
and of $(\gb, \varpi, \nu) \in \Irr(P')_{\fin}$, respectively,
we write 
\begin{equation}\label{eqn:Verma}
\Mp(\xi,\lambda)^\ga 
=\Cal{U}(\fg) \otimes_{\Cal{U}(\fp)}V
\quad
\text{and}
\quad
\Mpp(\varpi,\nu)^\gb 
=\Cal{U}(\fg') \otimes_{\Cal{U}(\fp')}W.
\end{equation}

In the next section we classify and construct 
differential symmetry breaking operators
\begin{equation*}
\bD \in \Diff_{G'}(I(\triv, \lambda)^\ga, J(\varpi,  \nu)^\beta)
\end{equation*}
and $(\fg', P')$-homomorphisms
\begin{equation*}
\Phi \in \Hom_{\fg', P'}(\Mpp(\varpi,\nu)^\ga, \Mp(\triv, \lambda)^\gb).
\end{equation*}

If $n=2$, then \eqref{eqn:poly0826b} and \eqref{eqn:sym0826a} show that, 
for $k \in \Z_{\geq 0}$, we have
\begin{align}
J(\triv, \nu)^\beta
&=
\Ind_{P'}^{SL(2,\R)}\left((\sgn^\gb \otimes \triv)\boxtimes \C_\nu\right) \nonumber\\
&=
\Ind_{P'}^{SL(2,\R)}((\sgn^{\gb+k} \otimes \poly^k_1)\boxtimes \C_\nu)
\nonumber\\
&=
J(\poly^k_1, \nu)^{\beta+k} \label{eqn:poly0826c}
\end{align}
and
\begin{equation}\label{eqn:sym0826c}
M^{\f{sl}(2,\C)}_{\fp'}(\triv,\nu)^\gb
=
M^{\f{sl}(2,\C)}_{\fp'}(\sym^k_1,\nu)^{\gb+k}.
\end{equation}

\section{Differential symmetry breaking operators $\bD$ and $(\fg',P')$-homomorphisms $\Phi$}
\label{sec:results}

The objective of this section is to state the main results of 
the classification and 
construction of  
differential symmetry breaking operators 
$\bD \in \Diff_{G'}(I(\triv, \lambda)^\ga, J(\varpi,  \nu)^\beta)$ as well as 
$(\fg',P')$-homomorphism 
$\Phi \in \Hom_{\fg', P'}(\Mpp(\varpi,\nu)^\ga, \Mp(\triv, \lambda)^\gb)$.
These are achieved in Theorems
\ref{thm:DSBO2a} and
\ref{thm:DSBO2b}
for $\bD$ and Theorems 
\ref{thm:Hom2a}
and
\ref{thm:Hom2b}
for $\Phi$.
In addition, we also present the classification of $\fg'$-homomorphisms between
generalized Verma modules in Section \ref{subsec:g-hom}.

The proofs  will be discussed in detail in Sections \ref{sec:proof1} and 
\ref{sec:proof2} in accordance with the recipe of the F-method. 
In this section we assume $n\geq 2$, unless otherwise 
specified.

\subsection{Differential symmetry breaking operators $\bD$}
\label{sec:DSBO}

We start with the classification and construction results of 
differential symmetry breaking operators $\bD$.
In this subsection,
let $(\ga, \gb;\varpi;\lambda,\nu) \in \{\pm\}^2 \times \Irr(SL(n-1,\R))_{\fin} \times \C^2$ parametrize a pair $(I(\triv,\lambda)^\ga, J(\varpi,\nu)^\beta)$ of induced representations
of $G$ and $G'$, respectively. 
If $n=2$, then, via the identity \eqref{eqn:poly0826c}, we regard
$(\ga,\gb;\triv;\lambda,\nu)$ as 
\begin{equation}\label{eqn:poly0826d}
(\ga,\gb;\triv;\lambda,\nu) = (\ga,\gb+k;\poly^k_1;\lambda,\nu)
\end{equation}
for $k \in \Z_{\geq 0}$. 
We then define subsets
\begin{equation*}
\gL^{(n+1,n)}_{SL,j}\subset \{\pm\}^2 \times \Irr(SL(n-1,\R))_{\fin} \times \C^2
\end{equation*}
for $j=1,2$ as
\begin{align}
\gL^{(n+1,n)}_{SL,1}&:=\{
(\alpha, \alpha+m;\triv; \lambda, 
\lambda+m) : \alpha \in \{\pm\}, \lambda \in \C, \; \text{and} \; m\in \Z_{\geq 0}\},
\label{eqn:SL1}\\[3pt]
\gL^{(n+1,n)}_{SL,2}&:=\{
(\alpha, \alpha+m+\ell;\poly_{n-1}^\ell; 1-(m+\ell), 
1+\tfrac{\ell}{n-1}) : \alpha \in \{\pm\} \; \text{and} \; \ell, m\in \Z_{\geq 0}\}.
\label{eqn:SL2}
\end{align}
If $n=2$, then $\gL^{(3,2)}_{SL,j}$ for $j=1,2$ are given by
\begin{align}
\gL^{(3,2)}_{SL,1}&=\{
(\alpha, \alpha+m;\triv;\lambda, \lambda+m)
: \alpha \in \{\pm\}, \lambda \in \C,\text{and} \; m\in \Z_{\geq 0}\},\label{eqn:SL1b}\\[3pt]
\gL^{(3,2)}_{SL,2}
&=\{
(\alpha, \alpha+m+\ell; \poly^\ell_1; 1-(m+\ell), 1+\ell) :
 \alpha \in \{\pm\} \; \text{and} \; \ell, m\in \Z_{\geq 0}\}\label{eqn:SL2b}\\
&=\{
(\alpha, \alpha+m; \triv; 1-(m+\ell), 1+\ell) :
 \alpha \in \{\pm\} \; \text{and} \; \ell, m\in \Z_{\geq 0}\}.\label{eqn:SL2c}
\end{align}

For $n=2$, we further put
\begin{align}\label{eqn:SL3}
\gL^{(3,2)}_{SL,+}
&:=\{
(\alpha, \alpha+m+\ell; \poly^\ell_1; 1-(m+\ell), 1+\ell) : 
\alpha \in \{\pm\}, m\in \Z_{\geq 0}, \; \text{and} \; \ell\in 1+\Z_{\geq 0}\}\\
&=\{
(\alpha, \alpha+m; \triv; 1-(m+\ell), 1+\ell) : 
\alpha \in \{\pm\}, m\in \Z_{\geq 0}, \; \text{and} \; \ell\in 1+\Z_{\geq 0}\}.
\end{align}

We set
\begin{equation}\label{eqn:SL}
\gL^{(n+1,n)}_{SL}:=\gL^{(n+1,n)}_{SL,1} \cup \gL^{(n+1,n)}_{SL,2}.
\end{equation}
As
\begin{equation*}
\gL^{(3,2)}_{SL,+} \subset \gL^{(3,2)}_{SL,2} \subset \gL^{(3,2)}_{SL,1},
\end{equation*}
we have 
\begin{equation*}
\gL^{(3,2)}_{SL}=\gL^{(3,2)}_{SL,1}.
\end{equation*}

We consider the cases $n\geq 3$ and $n=2$, separately.

\begin{thm}
\label{thm:DSBO1a}
Let $n\geq 3$.
The following three conditions on
 $(\alpha,\beta; \varpi; \lambda, \nu) 
\in \{\pm\}^2\times \Irr(SL(n-1,\R))_{\fin} \times \C^2 $ are 
equivalent.
\begin{enumerate}
\item[\emph{(i)}] 
$\Diff_{G'}\big(I(\triv, \lambda)^\alpha, J(\varpi, \nu)^\beta) \neq \{0\}$.
\item[\emph{(ii)}] $\dim \Diff_{G'}\big(I(\triv, \lambda)^\alpha, J(\varpi, \nu)^\beta) =1 $.
\item[\emph{(iii)}] 
$(\alpha,\beta; \varpi; \lambda, \nu) \in \gL^{(n+1,n)}_{SL}$.
\end{enumerate}
\end{thm}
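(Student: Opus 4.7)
The plan is to apply the recipe of the F-method (Section~\ref{sec:recipe}). By the duality theorem (Theorem~\ref{thm:duality}) and the abelian nilradical version (Theorem~\ref{thm:abelian}), classifying $\bD \in \Diff_{G'}(I(\triv,\lambda)^\alpha, J(\varpi,\nu)^\beta)$ reduces to determining
\begin{equation*}
\Sol(\fn_+;V,W) = \Hom_{M'A'}\bigl(W^\vee,\, (\Pol(\fn_+)\otimes V^\vee)^{\widehat{d\pi_{(\triv,\lambda)^*}}(\fn_+')}\bigr),
\end{equation*}
where $V \simeq \C$ carries $\sgn^\alpha \otimes \triv$ and $W$ carries $\sgn^\beta \otimes \varpi$. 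For Step~1 I would compute $\widehat{d\pi_{(\triv,\lambda)^*}}(E_{1,j+1})$ on $\Pol(\fn_+)$ for $j = 1,\ldots,n-1$ from formula~\eqref{eqn:dpi3}; since $V^\vee$ is one-dimensional and $\fn_+ \simeq \C^n$ is abelian, each such operator is a scalar second-order differential operator in the coordinates $\zeta_1,\ldots,\zeta_n$ whose coefficients depend affinely on $\lambda$.

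For Step~2 I would describe the candidate space $\Hom_{M'A'}(W^\vee, \Pol(\fn_+) \otimes V^\vee)$ using the $M_0'A'$-invariant splitting $\fn_+ = \fn_+' \oplus \C N_n^+$: here $M_0' \simeq SL(n-1,\R)$ acts by the standard representation on $\fn_+'$ and trivially on $N_n^+$, while $\wH_0'$ acts with distinct weights on the two summands. The induced $SL(n-1,\R)$-module decomposition
\begin{equation*}
\Pol(\fn_+) \simeq \bigoplus_{\ell, m \geq 0} \poly^\ell_{n-1} \cdot \zeta_n^m
\end{equation*}
forces $\varpi \simeq \poly^\ell_{n-1}$ for some $\ell \geq 0$; the $A'$-weight equation together with the component-group condition coming from $M/M_0 \simeq \Z/2\Z$ then expresses $(\nu,\beta)$ affinely in $(\lambda,\alpha,m,\ell)$, restricting the admissible parameters to $\gL^{(n+1,n)}_{SL,1} \cup \gL^{(n+1,n)}_{SL,2}$.

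For Step~3 I would substitute an Ansatz with $SL(n-1,\R)$-type $\poly^\ell_{n-1}$, supported on the bidegrees permitted by the Step~2 weight count, into the F-system~\eqref{eqn:Fsys}. By $SL(n-1,\R)$-equivariance the $n-1$ equations should collapse to a single scalar recursion on the Ansatz coefficients whose leading factor is a Pochhammer product in $\lambda$. Analyzing this recursion should yield a one-dimensional solution space in precisely two cases: $\ell = 0$ with $\lambda$ arbitrary (the regular family $\gL^{(n+1,n)}_{SL,1}$, producing pure normal derivatives along $N_n^+$), and $\ell \geq 1$ with $\lambda = 1-(m+\ell)$ (the singular family $\gL^{(n+1,n)}_{SL,2}$). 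Step~4 is then immediate: applying $\symb^{-1}$ to each nonzero $\psi$ produces the corresponding $\bD$, establishing (iii)$\Rightarrow$(ii), while the vanishing of the recursion off $\gL^{(n+1,n)}_{SL}$ yields the converse (i)$\Rightarrow$(iii); the implication (ii)$\Rightarrow$(i) is trivial.

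The main obstacle will be Step~3: verifying that the scalar recursion has exactly a one-dimensional solution space at each parameter in $\gL^{(n+1,n)}_{SL}$ and none elsewhere. The hypothesis $n \geq 3$ is decisive here, because $\poly^\ell_{n-1}$ is then a genuine irreducible $M_0'$-representation of dimension~$>1$, so the $n-1$ equations of the F-system are genuinely coupled through $SL(n-1,\R)$-equivariance. In the $n=2$ case the collapse~\eqref{eqn:poly0826b} of $\poly^\ell_1$ to a sign character decouples the constraints and permits a second independent solution, producing the multiplicity-two phenomenon of Theorem~\ref{thm:DSBO2b}.
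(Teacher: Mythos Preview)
Your approach via the F-method recipe is exactly the paper's, and your identification of the two families ($\ell=0$ with $\lambda$ arbitrary; $\ell\geq 1$ with $\lambda=1-(m+\ell)$) is correct. However, you overcomplicate Step~3 and slightly mislocate the role of the hypothesis $n\geq 3$. The decisive point is already in Step~2: for $n\geq 3$ the irreducible $M'A'$-constituents $\C^m[\zeta_n]\cdot\C^\ell[\zeta_1,\dots,\zeta_{n-1}]\simeq(\sgn^{m+\ell}\otimes\sym^\ell_{n-1})\boxtimes\C_{-(m+\frac{n}{n-1}\ell)}$ of $\Pol(\fn_+)$ are mutually inequivalent (the $SL(n-1,\R)$-types $\sym^\ell_{n-1}$ are pairwise distinct), so $\Hom_{M'A'}(W^\vee,\Pol(\fn_+)\otimes V^\vee)$ is already at most one-dimensional, spanned by a single $\psi_{(m,\ell)}$. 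Consequently Step~3 involves no recursion at all: one simply checks whether this fixed $\psi_{(m,\ell)}$ is annihilated, and the formula $-\zeta_j\widehat{d\pi_{\lambda^*}}(N_j^+)\vert_{\Pol^k}=(\lambda-1+k)\vartheta_j$ reduces this to $(\lambda-1+m+\ell)\ell_j=0$ for all $j$, i.e.\ $\ell=0$ or $\lambda=1-(m+\ell)$. Note also that Step~2 by itself restricts parameters only to the larger set $\gL^{(n+1,n)}_{M'A'}$ of \eqref{eqn:paraMA}, not yet to $\gL^{(n+1,n)}_{SL}$; the latter cut comes from Step~3. For $n=2$ the collapse $\sym^\ell_1=\triv$ destroys the multiplicity-free property of Step~2 (not the coupling of the F-system), which is why the argument bifurcates there.
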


\begin{thm}
\label{thm:DSBO1b}
Let $n=2$.
The following three conditions on
 $(\alpha,\beta;\lambda, \nu) \in \{\pm\}^2\times \C^2 $ are 
equivalent.
\begin{enumerate}
\item[\emph{(i)}] 
$\Diff_{G'}\big(I(\triv, \lambda)^\alpha, J(\triv, \nu)^\beta\big) \neq \{0\}$.
\item[\emph{(ii)}] 
$\dim \Diff_{G'}\big(I(\triv, \lambda)^\alpha, J(\triv, \nu)^\beta\big) \in \{1,2\} $.
\item[\emph{(iii)}] 
$(\alpha,\beta; \triv; \lambda, \nu) \in \gL^{(3,2)}_{SL,1}$.
\end{enumerate}
The dimension is two if and only if 
$(\alpha,\beta; \triv; \lambda, \nu) \in \gL^{(3,2)}_{SL,+}$.
\end{thm}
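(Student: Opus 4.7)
The plan is to apply the recipe of the F-method from Section \ref{sec:recipe} in the special case $n=2$, where the low dimension of $\fn_+$ makes the F-system explicitly solvable. By Theorem \ref{thm:abelian}, classifying $\bD \in \Diff_{G'}(I(\triv,\lambda)^\ga, J(\triv,\nu)^\gb)$ is equivalent to classifying polynomial solutions $\psi \in \Pol(\fn_+)$ of the F-system \eqref{eqn:Fsys} that are equivariant under $\Ad_\#(M'A')$ with the characters on $V^\vee = W^\vee = \C$ specified by $(\ga,\lambda)$ and $(\gb,\nu)$. Since $\fn_+ = \spn\{N_1^+, N_2^+\}$ is two-dimensional and $\fn_+' = \C N_1^+$ is one-dimensional, the F-system consists of a single PDE in the two variables $\zeta_1, \zeta_2$. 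For Step 1, I would compute $\widehat{d\pi_{(\triv_\ga,\lambda)^*}}(N_1^+)$ explicitly as a second-order differential operator in $\zeta_1, \zeta_2$ via \eqref{eqn:dpi3}, using the Gelfand--Naimark decomposition of $\Ad(\exp(tN_1^-))N_1^+$ in the $(3\times 3)$-matrix model of Section \ref{sec:notation}.

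The crucial difference from the case $n\geq 3$ treated in Theorem \ref{thm:DSBO1a} is that for $n=2$ the Levi $M_0' = SL(1,\R)$ is trivial, so $M_0'$ imposes no constraint on $\psi$; the $M'A'$-equivariance reduces to a bidegree condition prescribed by the $A'$-weights of $\zeta_1,\zeta_2$ (computed from $\wH_0'=\diag(1,-1,0)$ in \eqref{eqn:0801b}) together with a parity condition in $\zeta_2$ from $\gamma=\diag(-1,-1,1)\in M'$, which fixes $N_1^+$ and negates $N_2^+$. For Step 2 this pins down the candidate subspace of $\psi$ as the span of all monomials $\zeta_1^{a}\zeta_2^{b}$ with $2a+b=m$ and $b\equiv \gb-\ga \pmod 2$, for an appropriate $m\in\Z_{\geq 0}$ determined by $\nu-\lambda$, which in turn forces the parameter locus described in (iii). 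Contrast this with $n\geq 3$, where the additional $SL(n-1,\R)$-equivariance rigidifies $\psi$ up to scalar and enforces multiplicity one; in the $n=2$ regime several monomials may compete, and the multiplicity can jump.

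For Step 3, I plan to substitute a general such polynomial into the single F-equation and reduce it to an explicit linear recurrence on the coefficients $\{c_{(a,b)}\}_{2a+b=m}$, and then compute the dimension of its solution space. I expect exactly one solution for every $(\ga,\gb;\triv;\lambda,\nu)\in\gL^{(3,2)}_{SL,1}$ (a Juhl/Rankin--Cohen type operator), and a second linearly independent solution precisely when the parameters specialize to $\gL^{(3,2)}_{SL,+}$, in the form of a factorized polynomial $\zeta_2^{\ell}\cdot(\text{polynomial in }\zeta_1,\zeta_2^2)$ coming from composing a normal derivative with the residue of a Knapp--Stein operator on $J(\triv,\nu)^\gb$ (cf.\ Section \ref{sec:FW}). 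The main obstacle will be tracking the rank of the recurrence carefully: I must confirm that the two candidate polynomials coincide up to a scalar when $\ell=0$, preserving multiplicity one on $\gL^{(3,2)}_{SL,1}\setminus \gL^{(3,2)}_{SL,+}$, and span a two-dimensional space when $\ell \geq 1$. Finally, applying Steps 4a--4b of the recipe converts $\psi$ into the operator $\bD$ via $\symb^{-1}$ and into the $(\fg',P')$-homomorphism $\Phi\in\Hom_{\fg',P'}(\Mpp,\Mp)$ via $F_c^{-1}\otimes \id_W$, completing the classification.
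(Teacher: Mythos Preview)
Your approach is correct and follows the same F-method recipe as the paper (Section~\ref{sec:proof2}). One substantive simplification you should anticipate: the F-system does \emph{not} produce a recurrence. By Proposition~\ref{prop:dNj1} (quoted from \cite{KuOr24}), the operator $-\zeta_1\widehat{d\pi_{\lambda^*}}(N_1^+)$ restricted to $\Pol^k(\fn_+)$ equals $(\lambda-1+k)\vartheta_1$ with $\vartheta_1=\zeta_1\partial_{\zeta_1}$, so every monomial $\zeta_2^{k-2\ell}\zeta_1^{\ell}$ is an eigenvector with eigenvalue $\ell(\lambda-1+k-\ell)$. The F-system therefore decouples completely: for $\psi=\sum_{\ell}c_\ell\,\zeta_2^{k-2\ell}\zeta_1^\ell$ one needs $c_\ell\,\ell(\lambda-1+k-\ell)=0$ for each $\ell$ separately. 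Thus $c_0$ is always free (giving the normal derivative $\bD_{(m,0)}=\Rest_{x_2=0}\circ\partial_{x_2}^m$, not a Rankin--Cohen type operator), and a second free coefficient $c_{\ell_0}$ appears exactly when $\lambda=1-(k-\ell_0)$ for some $\ell_0\geq 1$, yielding the single monomial $\zeta_2^{k-2\ell_0}\zeta_1^{\ell_0}$ rather than a factorized polynomial. Your parity and $A'$-weight bookkeeping is correct and will confirm that these two monomials have the same $M'A'$-type, hence belong to the same Hom-space precisely on $\gL^{(3,2)}_{SL,+}$.
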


We next consider the explicit formula of 
$\bD \in \Diff_{G'}(I(\triv, \lambda)^\ga, J(\varpi, \nu)^\gb)$.
We write
\begin{equation*}
\Pol^k(\C^{n-1}) = \C^k[y_1,\ldots, y_{n-1}].
\end{equation*}

In what follows, we identify $\R^{n-1}$ as a subspace of $\R^n$ with
\begin{equation}\label{eqn:N}
\R^{n-1}\simeq \{(x_1, \ldots, x_{n-1}, 0) : x_j \in \R\}.
\end{equation}
Then, 
as in \eqref{eqn:DN},
we understand 
$\bD \in \Diff_{G'}(I(\triv, \lambda)^\ga, J(\varpi,\nu)^\gb)$ 
for $(\alpha,\beta; \varpi; \lambda, \nu) \in \gL^{(n+1,n)}_{SL}$
as a map
\begin{equation*}
\bD\colon C^\infty(\R^{n}) \To 
C^{\infty}(\R^{n-1})\otimes \C^k[y_1,\ldots, y_{n-1}]
\end{equation*}
via the diffeomorphisms
\begin{alignat}{2}\label{eqn:coord}
\R^{n} 
&\stackrel{\sim}{\To} N_-, \quad (x_1, \ldots, x_{n-1}, x_n) 
&&\mapsto \exp(x_1 N^-_1 + \cdots +  x_{n-1} N^-_{n-1}+x_nN^-_n),\\
\R^{n-1} 
&\stackrel{\sim}{\To} N_-', \quad (x_1, \ldots, x_{n-1},0) 
&&\mapsto \exp(x_1 N^-_1 + \cdots +  x_{n-1} N^-_{n-1}).\nonumber
\end{alignat}

For $\ell \in \Z_{\geq 0}$, we put
\begin{equation*}
\Xi_\ell':=\{(\ell_1, \ldots, \ell_{n-1}) \in (\Z_{\geq 0})^{n-1} : \sum_{j=1}^{n-1} 
\ell_j= \ell\}.
\end{equation*}

\noindent
For $\textbf{l} = (\ell_1, \ldots, \ell_{n-1})\in \Xi_\ell'$, we write
\begin{align*}
y_{\mathbf{l}}  &= 
y_1^{\ell_1}\cdots y_{n-1}^{\ell_{n-1}},\\[3pt]
\wy_{\mathbf{l}} &= \frac{1}{\ell_1! \cdots \ell_{n-1}!} 
\cdot
y_{\mathbf{l}},\\[3pt]
\frac{\partial^\ell}{\partial x^{\mathbf{l}}} \
&= \frac{\partial^\ell}{\partial x_1^{\ell_1} \cdots \partial x_{n-1}^{\ell_{n-1}}}.
\end{align*}

Let 
$\Rest_{x_n=0} \colon C^\infty(\R^n) \to C^\infty(\R^{n-1})$ be the restriction operator
from $\R^n$ to $\R^{n-1}$ via the inclusion $\R^{n-1} \hookrightarrow \R^n$ in
\eqref{eqn:N}. Namely, for $f(x',x_n) \in C^\infty(\R^n)$ with 
$x' = (x_1, \ldots, x_{n-1})$, we have
\begin{equation*}
(\Rest_{x_n=0}f)(x') = f(x',0).
\end{equation*}

\noindent
For $m, \ell \in \Z_{\geq 0}$,
we define 
$\bD_{(m,\ell)} \in \Diff_\C(C^\infty(\R^{n}), C^\infty(\R^{n-1})\otimes 
\C^{m+\ell}[y_1, \ldots, y_{n-1}])$ by
\begin{equation}\label{eqn:DSBO}
\bD_{(m,\ell)} :=
\Rest_{x_n=0} \circ
\frac{\partial^m}{\partial x_n^m}\sum_{\mathbf{l} \in \Xi_\ell'} \frac{\partial^\ell}{\partial x^{\mathbf{l}}}
\otimes 
\wy_{\mathbf{l}}.
\end{equation}

\noindent
In particular, we have
\begin{equation*}
\bD_{(m,0)} = \Rest_{x_n=0} \circ \frac{\partial^m}{\partial x_n^m}.
\end{equation*}

\begin{thm}\label{thm:DSBO2a}
Let $n\geq 3$. Then we have
\begin{equation*}
\Diff_{G'}\big(I(\triv, \lambda)^{\alpha}, J(\varpi, \nu)^{\beta}\big)
=
\begin{cases}
\C\bD_{(m,0)} 
& \text{if $(\alpha, \beta;\varpi; \lambda, \nu)\in \Lambda^{(n+1,n)}_{SL,1}$,}\\[3pt]
\C \bD_{(m,\ell)} 
& \text{if $(\alpha, \beta;\varpi; \lambda, \nu)\in \Lambda^{(n+1,n)}_{SL,2}$,}\\
\{0\} & \text{otherwise.}
\end{cases}
\end{equation*}
\end{thm}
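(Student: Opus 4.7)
The plan is to apply the recipe of the F-method from Section~\ref{sec:recipe} with $(G,G')=(SL(n+1,\R),SL(n,\R))$ and $(V,W)$ the representation spaces of $\sgn^\alpha\otimes\triv$ and $\sgn^\beta\otimes\varpi$. Since $\fn_+$ is abelian, Theorem~\ref{thm:abelian} identifies $\Diff_{G'}(I(\triv,\lambda)^\alpha,J(\varpi,\nu)^\beta)$ with $\Sol(\fn_+;V,W)$ via $\symb^{-1}$. Thus it suffices to produce the unique (up to scalar) element of $\Sol(\fn_+;V,W)$ whose symbol is $\bD_{(m,\ell)}$ from~\eqref{eqn:DSBO}, with the one-dimensionality of the solution space provided by Theorem~\ref{thm:DSBO1a}.

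Step~1 is to compute $\widehat{d\pi_{(\sigma,\lambda)^*}}(N_i^+)$ for $i=1,\ldots,n-1$. Using $\bar{n}=\exp(\sum_k x_k N_k^-)=I+\sum_k x_k E_{k+1,1}$ and decomposing $\Ad(\bar{n}^{-1})N_i^+$ along $\fg=\fn_-\oplus\fl\oplus\fn_+$, formula~\eqref{eqn:dpi3} yields scalar second-order operators in $\C[\fn_+;\zeta,\partial/\partial\zeta]$ after applying the Fourier transform~\eqref{eqn:Weyl2}, since $V=\C$ is one-dimensional. For Steps~2--3, I would use the $M'_0$-decomposition $\fn_+=\fn_+'\oplus\C N_n^+$ together with $\Pol^\ell(\fn_+')\simeq\sym^\ell_{n-1}$ to identify the unique (up to scalar) $SL(n-1,\R)$-equivariant element of $\Hom(W^\vee,\zeta_n^m\Pol^\ell(\fn_+')\otimes V^\vee)$, with $W=\poly^\ell_{n-1}$, as
\[
\psi_{m,\ell}\;:=\;\zeta_n^m\sum_{\mathbf{l}\in\Xi_\ell'}\zeta^{\mathbf{l}}\otimes\wy_{\mathbf{l}}.
\]
Imposing equivariance under $\gamma\in M'\setminus M'_0$ (using $\sAd(\gamma)\zeta_n=-\zeta_n$) forces $\beta=\alpha+m+\ell$, while imposing $A'$-equivariance via the decomposition $a'=a'_M a'_A$ of Remark~\ref{rem:A} pins down $(\lambda,\nu)$: when $\ell\geq 1$ one obtains $\lambda=1-(m+\ell)$ and $\nu=1+\tfrac{\ell}{n-1}$, matching $\Lambda^{(n+1,n)}_{SL,2}$; when $\ell=0$ the constraint becomes the one-parameter relation $\nu=\lambda+m$, matching $\Lambda^{(n+1,n)}_{SL,1}$. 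A direct computation then verifies that $\widehat{d\pi_{(\sigma,\lambda)^*}}(N_i^+)\psi_{m,\ell}=0$ holds at exactly these parameters, and Step~4a recovers $\symb^{-1}(\psi_{m,\ell})=\bD_{(m,\ell)}$.

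The main obstacle is the algebraic verification of the F-system and the closure of the parameter list. The F-operator $\widehat{d\pi_{(\sigma,\lambda)^*}}(N_i^+)$ mixes a first-order term (contributing a multiple of a linear combination of $\lambda$ and $2\rho$) with second-order cross terms coupling the $\zeta_n$- and $\zeta'$-degrees; showing that these cancel on $\psi_{m,\ell}$ precisely at the recorded parameters---and nowhere else---requires care. The hypothesis $n\geq 3$ enters crucially here: in this range the $M'_0$-decomposition of $\Pol(\fn_+)$ into the irreducibles $\sym^\ell_{n-1}$ is multiplicity-free for distinct $\ell$, so $\psi_{m,\ell}$ is forced up to a scalar and no second generator can appear. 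The failure of this multiplicity-freeness when $n=2$, caused by the degeneracy $\sym^k_1\simeq\triv$ in~\eqref{eqn:sym0826a}, is exactly what produces the extra family handled separately in Theorem~\ref{thm:DSBO2b}.
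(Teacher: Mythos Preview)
Your outline follows the paper's own route (the F-method recipe of Section~\ref{sec:recipe}: compute $\widehat{d\pi_{\lambda^*}}(N_j^+)$, classify $\Hom_{M'A'}$, solve the F-system, apply $\symb^{-1}$), and your identification of $\psi_{m,\ell}$ and of the role of $n\ge 3$ is correct. However, there is a genuine confusion in your Steps~2--3 about which constraint comes from which step.

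You write that $A'$-equivariance ``pins down $(\lambda,\nu)$: when $\ell\ge 1$ one obtains $\lambda=1-(m+\ell)$ and $\nu=1+\tfrac{\ell}{n-1}$.'' This is not what $A'$-equivariance yields. By Lemma~\ref{lem:MA}, the $A'$-weight of $\C^m[\zeta_n]\C^\ell[\zeta']$ is $-(m+\tfrac{n}{n-1}\ell)$, so $M'A'$-equivariance of $\psi_{m,\ell}$ gives only the \emph{relation} $\nu=\lambda+m+\tfrac{n}{n-1}\ell$ together with $\beta=\alpha+m+\ell$ and $\varpi=\poly^\ell_{n-1}$; this is the parameter set $\Lambda^{(n+1,n)}_{M'A'}$ of~\eqref{eqn:paraMA}, with $\lambda$ still free for every $\ell$. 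The further constraint $\lambda=1-(m+\ell)$ is produced by the F-system, not by equivariance. In the paper's computation (Theorem~\ref{thm:Sol1}) this is immediate once one uses Proposition~\ref{prop:dNj1}: since $E_\zeta$ acts by the scalar $m+\ell$ on $\zeta_n^m\zeta^{\mathbf l}$, the operator collapses to $-\zeta_j\widehat{d\pi_{\lambda^*}}(N_j^+)\psi_{m,\ell}=(\lambda-1+m+\ell)\vartheta_j\psi_{m,\ell}$, and the equation $(\lambda-1+m+\ell)\ell_j=0$ forces either $\ell=0$ (any $\lambda$) or $\lambda=1-(m+\ell)$. In particular there are no ``second-order cross terms coupling the $\zeta_n$- and $\zeta'$-degrees'' to cancel; the obstacle you describe does not arise. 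Once you reassign the constraints to the correct steps, your argument coincides with the paper's.
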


\begin{thm}\label{thm:DSBO2b}
Let $n=2$. Then we have
\begin{equation*}
\Diff_{G'}\big(I(\triv, \lambda)^{\alpha}, J(\triv, \nu)^{\beta}\big)
=
\begin{cases}
\C\bD_{(m,0)} 
& \text{if $(\alpha, \beta;\triv \lambda, \nu) 
\in 
\gL^{(3,2)}_{SL,1}\backslash \gL^{(3,2)}_{SL,+}$,}\\[3pt]
\C \bD_{(m+2\ell,0)} \oplus \C \bD_{(m, \ell)}
& \text{if $(\alpha, \beta;\triv; \lambda, \nu)\in 
\gL^{(3,2)}_{SL,+}$,}\\
\{0\} & \text{otherwise.}
\end{cases}
\end{equation*}

\end{thm}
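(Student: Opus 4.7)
I follow the F-method recipe of Section~\ref{sec:recipe} for $(G, G') = (SL(3, \R), SL(2, \R))$, with $V = \C$ (encoding $\sgn^\alpha \boxtimes \C_\lambda$). Via the identification \eqref{eqn:poly0826c}, the target $J(\triv, \nu)^\beta$ may be realized as $J(\poly^\ell_1, \nu)^{\beta - \ell}$ for any $\ell \in \Z_{\geq 0}$, so by Theorem~\ref{thm:abelian} the statement reduces to counting $M'A'$-equivariant polynomial solutions $\psi \in \C[\zeta_1, \zeta_2]$ of the F-system, and then lifting via $\symb^{-1}$ (sending $\zeta_j \mapsto \partial/\partial x_j$, composing with $\Rest_{x_2 = 0}$, and multiplying by the appropriate power of $y_1/\ell!$).

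A direct calculation via formula~\eqref{eqn:dpi3} and the Gelfand--Naimark decomposition of $\fg$ from Section~\ref{sec:notation} yields the explicit formula
\begin{equation*}
T_\lambda(\zeta_1^a \zeta_2^b) := \widehat{d\pi_{(\triv, \lambda)^*}}(N_1^+)\,(\zeta_1^a \zeta_2^b) = -a\,(\lambda + a + b - 1)\,\zeta_1^{a - 1} \zeta_2^b.
\end{equation*}
The $A'$-weight constraint (using $\wH_0' = \diag(1, -1, 0)$) forces $2a + b = \nu - \lambda =: N$, and the $M'$-parity constraint (using $\Ad_{\#}(\gamma) : (\zeta_1, \zeta_2) \mapsto (\zeta_1, -\zeta_2)$ combined with the sign characters on $V^\vee$ and on the $\poly^\ell_1$-fiber) forces $b \equiv \alpha + \beta \pmod 2$, a condition that turns out to be $\ell$-independent. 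Hence a candidate $\psi = \sum c_{a, b}\,\zeta_1^a \zeta_2^b$ (supported on $2a + b = N$, $b \equiv \alpha + \beta \pmod 2$) lies in $\ker T_\lambda$ iff $c_{a, b} = 0$ whenever $a \geq 1$ and $\lambda \neq 1 - (a + b)$.

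Dimension bookkeeping now concludes the argument. The monomial $\zeta_2^N$ ($a = 0$) is always in $\ker T_\lambda$ and is admissible iff $N \in \Z_{\geq 0}$ and $N \equiv \alpha + \beta \pmod 2$; this is equivalent to $(\alpha, \beta; \triv; \lambda, \nu) \in \gL^{(3, 2)}_{SL, 1}$, and yields $\bD_{(N, 0)}$ under $\symb^{-1}$. A second, resonant, solution $\zeta_1^{a_0} \zeta_2^{b_0}$ with $a_0 \geq 1$ exists iff $\lambda = 1 - (a_0 + b_0)$ and $2 a_0 + b_0 = N$; writing $\ell := a_0 = \nu - 1 \geq 1$ and $m := b_0 = N - 2\ell \geq 0$, this is precisely membership in $\gL^{(3, 2)}_{SL, +}$, and yields $\bD_{(m, \ell)}$, linearly independent of $\bD_{(m + 2\ell, 0)}$ by inspection of the principal symbols $\zeta_2^{m + 2\ell}$ vs.\ $\zeta_1^\ell \zeta_2^m \otimes y_1^\ell/\ell!$. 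Parameters outside $\gL^{(3, 2)}_{SL, 1}$ fail either the integrality or the parity condition on $N$, so no solution exists.

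The main obstacle is the derivation of the explicit formula for $T_\lambda$, which requires careful bookkeeping of the $2\rho$-twist appearing in $d\xi^*_\lambda$ and of the expansion of $\Ad(\bar n^{-1}) N_1^+$ into its $\fn_-$, $\fl$ components in the coordinates~\eqref{eqn:coord}. The multiplicity-two phenomenon — absent for $n \geq 3$ — is available here precisely because $M' \simeq \{\pm 1\}$ only constrains the parity of $b$, admitting both $\zeta_2^N$ and the resonant $\zeta_1^{a_0} \zeta_2^{b_0}$ of total $A'$-weight $N$; for $n \geq 3$, the nontrivial $SL(n - 1, \R)$-equivariance selects a unique invariant polynomial per bidegree, eliminating the second branch.
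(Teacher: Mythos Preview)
Your proof is correct and follows essentially the same F-method approach as the paper. Both arguments reduce the problem to solving the F-system for $M'A'$-equivariant polynomials in $\C[\zeta_1,\zeta_2]$: the paper organizes this as Steps 1--4 (first the $A'$-weight constraint $m+2\ell=k$, then the F-system via Proposition~\ref{prop:dNj1}, then the $M'$-parity, then $\symb^{-1}$), while you merge these steps and re-derive the operator formula $T_\lambda$ from~\eqref{eqn:dpi3} rather than citing Proposition~\ref{prop:dNj1}; the resulting analysis of the monomial solutions $\zeta_1^a\zeta_2^b$ and the identification of the resonant case $\lambda=1-(a_0+b_0)$ are identical in substance.
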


\subsection{$(\fg', P')$-homomorphisms}

We next consider $(\fg', P')$-homomorphisms $\Phi$.
As for Section \ref{sec:DSBO}, 
$(\alpha,\beta; \sigma; s, r) \in 
\{\pm\}^2\times \Irr(SL(n-1,\C))_{\fin} \times \C^2$ indicates
a pair $(\Mpp(\sigma,r)^\gb, \Mp(\triv, s)^\ga\big)$ of generalized Verma modules
of $(\fg, P)$ and $(\fg',P')$, respectively.
If $n=2$, then, via the identity \eqref{eqn:sym0826c}, we regard
$(\ga,\gb;\triv;s,r)$ as 
\begin{equation}\label{eqn:sym0826d}
(\ga,\gb;\triv;s,r) = (\ga,\gb+k;\sym^k_1;s,r)
\end{equation}
for $k \in \Z_{\geq 0}$. 

For $n\geq 2$,
define $\gL^{(n+1,n)}_{(\fg', P'),j} \subset 
\{\pm\}^2\times \Irr(SL(n-1,\R))_{\fin} \times \C^2$ for $j=1,2$ as
\begin{align}
\gL^{(n+1,n)}_{(\fg', P'),1}&:=\{
(\ga,\ga+m;\triv; s,  s-m) : \alpha \in \{\pm\},s\in \C,  \; \text{and} \; m\in \Z_{\geq 0}\},
\label{eqn:gP1} \\[3pt]
\gL^{(n+1,n)}_{(\fg', P'),2}&:=\{
(\ga,\ga+m+\ell;\sym_{n-1}^\ell; (m+\ell)-1,  -(1+\tfrac{\ell}{n-1})) : \alpha \in \{\pm\}\; \text{and} \; \ell, m\in \Z_{\geq 0}\}.\label{eqn:gP}
\end{align}
For $n=2$, the sets $\gL^{(3,2)}_{(\fg', P'),j}$ for $j=1,2$ are given by
\begin{align*}
\gL^{(3,2)}_{(\fg', P'),1}&=\{
(\alpha, \alpha+m;\triv; s, s-m )
: \alpha \in \{\pm\}, s \in \C \; \text{and} \; m\in \Z_{\geq 0}\},\\[3pt]
\gL^{(3,2)}_{(\fg', P'),2}
&=\{
(\alpha, \alpha+m+\ell; \sym^\ell_1; (m+\ell)-1, -(1+\ell) )
: \alpha \in \{\pm\} \; \text{and} \; \ell, m\in \Z_{\geq 0}\}\\
&=\{
(\alpha, \alpha+m; \triv; (m+\ell)-1, -(1+\ell) )
: \alpha \in \{\pm\} \; \text{and} \; \ell, m\in \Z_{\geq 0}\}.
\end{align*}
Further, we put
\begin{align*}
\Lambda^{(3,2)}_{(\fg',P'),+}
&:=\{
(\alpha, \alpha+m+\ell; \sym^\ell_1; (m+\ell)-1, 
-(1+\ell)) : \alpha \in \{\pm\}, m\in \Z_{\geq 0}, \; \text{and} \; \ell\in 1+\Z_{\geq 0}\}\\
&=\{
(\alpha, \alpha+m; \triv; (m+\ell)-1, 
-(1+\ell)) : \alpha \in \{\pm\}, m\in \Z_{\geq 0}, \; \text{and} \; \ell\in 1+\Z_{\geq 0}\}.
\end{align*}
We set 
\begin{equation*}
\gL^{(n+1,n)}_{(\fg',P')}:=\gL^{(n+1,n)}_{(\fg',P'),1} 
\cup \gL^{(n+1,n)}_{(\fg',P'),2}.
\end{equation*}
Since
\begin{equation*}
\Lambda^{(3,2)}_{(\fg',P'),+} \subset 
\gL^{(3,2)}_{(\fg',P'),2} \subset \gL^{(3,2)}_{(\fg',P'),1},
\end{equation*}
we have
\begin{equation*}
\gL^{(3,2)}_{(\fg',P')} = \gL^{(3,2)}_{(\fg',P'),1}.
\end{equation*}

As in Section \ref{sec:DSBO}, we consider the cases $n\geq 3$ and $n =2$,
separately.

\begin{thm}\label{thm:Hom1a}
Let $n\geq 3$. The following three conditions on 
$(\alpha,\beta; \sigma; s, r) \in 
\{\pm\}^2\times \Irr(SL(n-1,\C))_{\fin} \times \C^2$
are equivalent.
\begin{enumerate}
\item[\emph{(i)}] 
$\Hom_{\fg',P'}\big(\Mpp(\sigma,r)^\gb, \Mp(\triv, s)^\ga\big)\neq \{0\}$.
\item[\emph{(ii)}] 
$\dim \Hom_{\fg', P'}\big(\Mpp(\sigma,r)^\gb, \Mp(\triv, s)^\ga\big) =1 $.
\item[\emph{(iii)}] 
$(\alpha,\beta; \sigma; s, r) \in \gL^{(n+1,n)}_{(\fg',P')}$.
\end{enumerate}
\end{thm}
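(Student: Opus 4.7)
The plan is to deduce Theorem \ref{thm:Hom1a} from its already-established counterpart for differential symmetry breaking operators, namely Theorem \ref{thm:DSBO1a}, via the duality theorem (Theorem \ref{thm:duality}). The duality theorem provides a canonical linear isomorphism
\begin{equation*}
\EuD_{H\to D}\colon
\Hom_{\fg',P'}\bigl(\Mpp(W^\vee),\Mp(V^\vee)\bigr)
\stackrel{\sim}{\To}
\Diff_{G'}(\Cal{V},\Cal{W}),
\end{equation*}
which immediately shows that the equivalence (i) $\Leftrightarrow$ (ii) in Theorem \ref{thm:Hom1a} follows directly from the corresponding equivalence in Theorem \ref{thm:DSBO1a}, and the multiplicity-one statement is transferred in the same way.

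Concretely, I first translate the parameters. Setting $V^\vee=(\sgn^\alpha\otimes \triv)\boxtimes \C_s$ and $W^\vee=(\sgn^\beta\otimes \sigma)\boxtimes \C_r$, the corresponding $P$- and $P'$-representations $V$ and $W$ whose bundles $\Cal{V}\to G/P$ and $\Cal{W}\to G'/P'$ enter the duality are
\begin{equation*}
V=(\sgn^\alpha\otimes \triv)\boxtimes \C_{-s},\qquad
W=(\sgn^\beta\otimes \sigma^\vee)\boxtimes \C_{-r},
\end{equation*}
since the sign characters are self-dual and $\triv^\vee=\triv$. Therefore the duality theorem yields
\begin{equation*}
\Hom_{\fg',P'}\bigl(\Mpp(\sigma,r)^\beta,\Mp(\triv,s)^\alpha\bigr)
\;\simeq\;
\Diff_{G'}\bigl(I(\triv,-s)^\alpha,\,J(\sigma^\vee,-r)^\beta\bigr).
\end{equation*}

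Applying Theorem \ref{thm:DSBO1a} to the right-hand side, the space is nonzero (and then one-dimensional) precisely when
$(\alpha,\beta;\sigma^\vee;-s,-r)\in \gL^{(n+1,n)}_{SL}=\gL^{(n+1,n)}_{SL,1}\cup \gL^{(n+1,n)}_{SL,2}$.
I then check the parameter match against the definitions \eqref{eqn:gP1}--\eqref{eqn:gP} of $\gL^{(n+1,n)}_{(\fg',P'),1}$ and $\gL^{(n+1,n)}_{(\fg',P'),2}$: the condition for the first family reads $\sigma^\vee=\triv$, $\beta=\alpha+m$, $-s=\lambda$, $-r=\lambda+m$, which rearranges to $(\alpha,\alpha+m;\triv;s,s-m)$; for the second family, using $(\poly^\ell_{n-1})^\vee=\sym^\ell_{n-1}$, it rearranges to $(\alpha,\alpha+m+\ell;\sym^\ell_{n-1};(m+\ell)-1,-(1+\tfrac{\ell}{n-1}))$. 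These are exactly the defining conditions for $\gL^{(n+1,n)}_{(\fg',P')}$, which proves (i) $\Leftrightarrow$ (iii).

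The main conceptual step, and the only possible source of difficulty, is the bookkeeping under duality: one must ensure that no additional $2\rho$-shift appears in the passage between the induced picture and the Verma picture. Here the statement of Theorem \ref{thm:duality} is in its unshifted form (pairing $V$ directly with $V^\vee$ rather than with the density-twisted dual $V^\vee\otimes \C_{2\rho}$), so no extra shift enters, and the parameter bijection above is indeed an equality. With that verified, the theorem follows at once, and the restriction $n\geq 3$ is inherited from Theorem \ref{thm:DSBO1a}; the exceptional multiplicity-two phenomenon at $n=2$ must be treated separately in Theorem \ref{thm:Hom1b}.
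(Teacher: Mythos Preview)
Your proof is correct and essentially follows the paper's approach. The paper proves Theorems \ref{thm:DSBO1a} and \ref{thm:Hom1a} simultaneously in Section \ref{sec:Step4} by applying the two isomorphisms $\symb^{-1}$ and $F_c^{-1}\otimes\id_W$ of Theorem \ref{thm:abelian} to the classification of $\Sol(\fn_+;\triv_{\alpha,\lambda},\varpi_{\beta,\nu})$ (Theorem \ref{thm:Sol1}); you instead traverse the third edge $\EuD_{H\to D}$ of the same commutative triangle, deducing the $\Hom$ side from the already-established $\Diff$ side, and your parameter bookkeeping under duality (including $(\poly^\ell_{n-1})^\vee\simeq\sym^\ell_{n-1}$ and the sign $s\mapsto -\lambda$, $r\mapsto -\nu$) matches exactly what the paper records.
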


\begin{thm}\label{thm:Hom1b}
Let $n =2$. The following three conditions on 
$(\alpha,\beta; s, r) \in \{\pm\}^2\times  \C^2$
are equivalent.
\begin{enumerate}
\item[\emph{(i)}] 
$\Hom_{\fg',P'}\big(\Mpp(\triv,r)^\gb, \Mp(\triv, s)^\ga\big)\neq \{0\}$.
\item[\emph{(ii)}] 
$\dim \Hom_{\fg', P'}\big(\Mpp(\triv,r)^\gb, \Mp(\triv, s)^\ga\big) \in \{1,2\}$.
\item[\emph{(iii)}] 
$(\alpha,\beta; \triv; s, r) \in \gL^{(3,2)}_{(\fg',P'),1}$.
\end{enumerate}
The dimension is two if and only if 
$(\alpha,\beta; \triv; s, r) \in \gL^{(3,2)}_{(\fg',P'),+}$.
\end{thm}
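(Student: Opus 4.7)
The plan is to deduce Theorem \ref{thm:Hom1b} directly from the classification of differential SBOs established in Theorem \ref{thm:DSBO1b}, transporting everything through the duality theorem. The isomorphism \eqref{eqn:duality2} of Theorem \ref{thm:duality} supplies a dimension-preserving linear bijection
$$\EuD_{H\to D}\colon \Hom_{\fg',P'}(\Mpp(W^\vee),\Mp(V^\vee)) \stackrel{\sim}{\To} \Diff_{G'}(\mathcal{V},\mathcal{W}),$$
so both the three equivalences and the precise dimension count in Theorem \ref{thm:Hom1b} will follow once the parameter dictionary between the two sides has been made explicit.

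The next step is to match parameters. If $V$ realizes the triple $(\alpha,\triv,\lambda)\in\Irr(P)_{\fin}$, then $V=\sgn^\alpha\boxtimes\triv\boxtimes\C_\lambda$; since $\sgn^\alpha$ has order two and $\triv^\vee=\triv$, the contragredient $V^\vee$ realizes $(\alpha,\triv,-\lambda)$. Hence, in the convention of \eqref{eqn:Verma}, one has $\Mp(V^\vee)=\Mp(\triv,-\lambda)^\alpha$ and $\Mpp(W^\vee)=\Mpp(\triv,-\nu)^\beta$; no $2\rho$-twist intervenes because in this paper the generalized Verma module $\Mp(\xi,\lambda)^\alpha$ is induced from $V$ itself rather than from a density-twisted version. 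Writing $s=-\lambda$ and $r=-\nu$, the duality isomorphism becomes
$$\Hom_{\fg',P'}\bigl(\Mpp(\triv,r)^\beta,\Mp(\triv,s)^\alpha\bigr)\stackrel{\sim}{\To}\Diff_{G'}\bigl(I(\triv,-s)^\alpha,J(\triv,-r)^\beta\bigr).$$

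Finally, I would substitute $\lambda=-s$ and $\nu=-r$ into the definitions \eqref{eqn:SL1b} and \eqref{eqn:SL3}. The condition $\nu=\lambda+m$ becomes $r=s-m$, so $\gL^{(3,2)}_{SL,1}$ is carried bijectively onto $\gL^{(3,2)}_{(\fg',P'),1}$; likewise, the parameter $(\alpha,\alpha+m;\triv;1-(m+\ell),1+\ell)$ transforms into $(\alpha,\alpha+m;\triv;(m+\ell)-1,-(1+\ell))$, so $\gL^{(3,2)}_{SL,+}$ is carried onto $\gL^{(3,2)}_{(\fg',P'),+}$. Combining these bijections with Theorem \ref{thm:DSBO1b} yields (i)$\Leftrightarrow$(ii)$\Leftrightarrow$(iii) as well as the characterization of when the dimension equals two. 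The only real obstacle is bookkeeping, namely keeping the sign conventions for $\alpha,\beta$ and the absence of a $2\rho$-shift correctly tracked under the duality, as everything else is a formal consequence of Theorems \ref{thm:duality} and \ref{thm:DSBO1b}.
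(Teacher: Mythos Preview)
Your argument is correct. The parameter matching under $s=-\lambda$, $r=-\nu$ is done accurately, and the duality theorem (Theorem~\ref{thm:duality}) does transport Theorem~\ref{thm:DSBO1b} to Theorem~\ref{thm:Hom1b} exactly as you describe.

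The paper's own proof takes a slightly different path around the same commutative triangle. Rather than going from $\Diff_{G'}$ to $\Hom_{\fg',P'}$ via the horizontal edge $\EuD_{H\to D}$, the paper computes the solution space $\Sol(\fn_+;\triv_{\alpha,\lambda},\triv_{\beta,\nu})$ directly via the F-method (Proposition~\ref{prop:MA4}) and then applies the two diagonal edges of Theorem~\ref{thm:abelian} in parallel: $\symb^{-1}$ yields Theorem~\ref{thm:DSBO1b}, and $F_c^{-1}$ yields Theorem~\ref{thm:Hom1b}. So in the paper the two theorems are proved simultaneously from the same source, whereas you prove one from the other. Your route is marginally more economical once Theorem~\ref{thm:DSBO1b} is in hand, since it avoids explicitly invoking $F_c^{-1}$; the paper's route has the virtue of keeping the two results logically independent and making the explicit generators $\Phi_{(m,\ell)}$ of Theorem~\ref{thm:Hom2b} visible at the same stroke.
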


To give the explicit formulas of 
$\Phi\in \Hom_{\fg',P'}(\Mpp(\sigma,r)^\gb, \Mp(\triv, s)^\ga)$,
we write
\begin{equation*}
S^k(\C^{n-1}) = \C^k[e_1, \ldots, e_{n-1}],
\end{equation*}
where $e_j$ are the standard basis elements of $\C^{n-1}$. 

For $\mathbf{l} = (\ell_1, \ldots, \ell_{n-1})\in \Xi_\ell'$, we write
\begin{alignat*}{2}
e_{\mathbf{l}} &= e_1^{\ell_1} \cdots e_{n-1}^{\ell_{n-1}} &&\in S^\ell(\C^{n-1}),\\
N_{\mathbf{l}}^- &= (N_1^-)^{\ell_1} \cdots (N_{n-1}^-)^{\ell_{n-1}}
&&\in S^\ell(\fn_-).
\end{alignat*}
\noindent
Observe that we have 
\begin{equation*}
\C^\ell[y_1, \ldots, y_{n-1}] 
= \Pol^\ell(\C^{n-1}) = S^\ell((\C^{n-1})^\vee)\simeq S^\ell(\C^{n-1})^\vee.
\end{equation*}
We then define $y_j\in (\C^{n-1})^\vee$ in such a way that  $y_i(e_j) = \delta_{i,j}$,
which gives 
$\widetilde{y}_{\mathbf{l}}(e_{\mathbf{l}'})=\delta_{\mathbf{l},\mathbf{l}'}$
for $\mathbf{l},\mathbf{l}' \in \Xi_\ell'$.

For $m, \ell \in \Z_{\geq 0}$, 
we define $\Phi_{(m,\ell)} \in 
\Hom_\C(S^\ell(\C^{n-1}), S^{m+\ell}(\fn_-))$ by means of
\begin{equation}\label{eqn:Hom}
\Phi_{(m,\ell)}
:= 
(N_n^-)^m
\sum_{\mathbf{l} \in \Xi_\ell'} 
N_{\mathbf{l}}^-
\otimes 
(e_{\mathbf{l}})^\vee
= 
(N_n^-)^m
\sum_{\mathbf{l} \in \Xi_\ell'} 
N_{\mathbf{l}}^-
\otimes 
\widetilde{y}_{\mathbf{l}}.
\end{equation}
In particular, we have 
\begin{equation*}
\Phi_{(m,0)}
= (N_n^-)^m.
\end{equation*}

\noindent
Since $M_\fp(\triv, s)^\ga \simeq S(\fn_-)$ as linear spaces,
we have
\begin{equation*}
\Phi_{(m,\ell)} \in \Hom_{\C}(S^\ell(\C^{n-1}), \Mp(\triv, s)^\ga).
\end{equation*}

\noindent
Further, the following hold.

\begin{thm}\label{thm:Hom2a}
Let $n\geq 3$. We have
\begin{equation*}
\Hom_{\fg',P'}\big(\Mpp(\sigma,r)^\gb, \Mp(\triv, s)^\ga\big)
=
\begin{cases}
\C\Phi_{(m,0)} \quad &\text{if $(\sigma;s,r)\in \gL^{(n+1,n)}_{(\fg',P'),1}$},\\[3pt]
\C\Phi_{(m,\ell)} \quad &\text{if $(\sigma;s,r)\in \gL^{(n+1,n)}_{(\fg',P'),2}$},\\
\{0\} \quad &\text{otherwise}.
\end{cases}
\end{equation*}

\end{thm}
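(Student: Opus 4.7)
The plan is to deduce Theorem \ref{thm:Hom2a} from the DSBO classification in Theorem \ref{thm:DSBO2a} by way of the duality theorem and the F-method. Theorem \ref{thm:Hom1a} has already pinned down the dimension of $\Hom_{\fg',P'}(\Mpp(\sigma,r)^\gb, \Mp(\triv,s)^\ga)$---at most one and nonzero precisely on $\gL^{(n+1,n)}_{(\fg',P')}$---so the content of Theorem \ref{thm:Hom2a} is to exhibit an explicit nonzero generator, which should be $\Phi_{(m,\ell)}$. The strategy is to verify, via the commutative diagram \eqref{eqn:isom3}, that $\EuD_{H\to D}(\Phi_{(m,\ell)}) = \bD_{(m,\ell)}$.

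The key computation has two ingredients. First, read off directly from \eqref{eqn:DSBO} that, with $(\zeta_1,\ldots,\zeta_n)$ the coordinates on $\fn_+$ dual to $(N_1^+,\ldots,N_n^+)$ via the trace form,
\begin{equation*}
\symb(\bD_{(m,\ell)}) \;=\; \zeta_n^m \sum_{\mathbf{l}\in \Xi_\ell'} \zeta^{\mathbf{l}}\otimes \widetilde{y}_{\mathbf{l}}.
\end{equation*}
Second, since $\fn_-$ is abelian, $\Ad(\bar n^{-1})N_j^- = N_j^-$ for every $\bar n \in N_-$, so formula \eqref{eqn:dpi3} reduces to $d\pi_{(\sigma,\lambda)^*}(N_j^-) = -dR(N_j^-) = -\partial/\partial x_j$, and the algebraic Fourier transform \eqref{eqn:Weyl} converts this into multiplication by $\zeta_j$. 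Substituting into \eqref{eqn:Fc} yields
\begin{equation*}
(F_c\otimes \id_W)(\Phi_{(m,\ell)}) \;=\; \widehat{d\pi_{(\sigma,\lambda)^*}}\Bigl((N_n^-)^m\!\sum_{\mathbf{l}} N_{\mathbf{l}}^-\Bigr)(1\otimes \widetilde{y}_{\mathbf{l}}) \;=\; \zeta_n^m\!\sum_{\mathbf{l}\in \Xi_\ell'}\! \zeta^{\mathbf{l}}\otimes \widetilde{y}_{\mathbf{l}},
\end{equation*}
matching $\symb(\bD_{(m,\ell)})$. Commutativity of \eqref{eqn:isom3} then forces $\EuD_{H\to D}(\Phi_{(m,\ell)}) = \bD_{(m,\ell)}$; in particular, $\Phi_{(m,\ell)}$ automatically extends to a genuine $(\fg',P')$-homomorphism $\Mpp(\sigma,r)^\gb \to \Mp(\triv,s)^\ga$ whenever its partner $\bD_{(m,\ell)}$ is nonzero.

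Combining this correspondence with Theorem \ref{thm:DSBO2a} closes the argument: the parameter matching built into Theorem \ref{thm:duality} pairs $\gL^{(n+1,n)}_{(\fg',P'),j}$ with $\gL^{(n+1,n)}_{SL,j}$ (via $s=-\lambda$, $r=-\nu$), so on these loci $\bD_{(m,\ell)}$ generates the DSBO space and hence $\Phi_{(m,\ell)}$ generates the Hom space; outside $\gL^{(n+1,n)}_{(\fg',P')}$, vanishing is already supplied by Theorem \ref{thm:Hom1a}. The real obstacle in this chain of reasoning is not the present deduction but the F-system analysis underlying Theorems \ref{thm:DSBO2a} and \ref{thm:Hom1a}; once those are in hand, Theorem \ref{thm:Hom2a} essentially falls out of the diagram \eqref{eqn:isom3} together with the abelian-nilradical identification $\widehat{d\pi_{(\sigma,\lambda)^*}}(N_j^-)=\zeta_j$.
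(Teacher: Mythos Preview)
Your argument is correct and essentially matches the paper's own proof in Section~\ref{sec:Step4}: both hinge on the identity $(F_c\otimes\id_W)(\Phi_{(m,\ell)}) = \zeta_n^m\sum_{\mathbf{l}}\zeta_{\mathbf{l}}\otimes\widetilde{y}_{\mathbf{l}} = \psi_{(m,\ell)}$, established via $\widehat{d\pi_{(\sigma,\lambda)^*}}(N_j^-)=\zeta_j$ for abelian $\fn_-$. The only cosmetic difference is that the paper derives Theorems~\ref{thm:DSBO1a}, \ref{thm:DSBO2a}, \ref{thm:Hom1a}, and \ref{thm:Hom2a} simultaneously from the Sol classification (Theorem~\ref{thm:Sol1}) rather than taking the first two as inputs and routing through the duality, but the underlying computation is identical.
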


\begin{thm}\label{thm:Hom2b}
For $n = 2$, we have
\begin{equation*}
\Hom_{\fg',P'}\big(\Mpp(\triv,r)^\gb, \Mp(\triv, s)^\ga\big)
=
\begin{cases}
\C\Phi_{(m,0)} \quad 
&\text{if $(\triv;s,r)\in 
\gL^{(3,2)}_{(\fg',P'),1}\backslash  \gL^{(3,2)}_{(\fg',P'),+}$},\\[3pt]
\C\Phi_{(m+2\ell,0)}\oplus \C\Phi_{(m,\ell)} \quad 
&\text{if $(\triv;s,r)\in \gL^{(3,2)}_{(\fg',P'),+}$},\\
\{0\} \quad &\text{otherwise}.
\end{cases}
\end{equation*}
\end{thm}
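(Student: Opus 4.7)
The plan is to derive Theorem \ref{thm:Hom2b} from Theorem \ref{thm:DSBO2b} (the $n=2$ classification of differential SBOs) by invoking the duality theorem (Theorem \ref{thm:duality}) together with the commutative triangle \eqref{eqn:isom3} of Theorem \ref{thm:abelian}. Since $\fn_+$ is abelian here, the three spaces $\Hom_{\fg',P'}(\Mpp(W^\vee),\Mp(V^\vee))$, $\Sol(\fn_+;V,W)$, and $\Diff_{G'}(\Cal{V},\Cal{W})$ are canonically isomorphic, so once one is classified, the others follow. In particular, any generator of $\Diff_{G'}(\Cal{V},\Cal{W})$ pulls back to a generator of $\Hom_{\fg',P'}(\Mpp(W^\vee),\Mp(V^\vee))$ under $\EuD_{H\to D}^{-1}$.

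First I would verify the parameter correspondence. Under $\EuD_{H\to D}$, the Verma module $\Mp(\triv,s)^{\alpha}$ is induced from $(V_{\triv,\lambda}^{\alpha})^\vee$, which (using $\triv^\vee=\triv$ and $(\sgn^\alpha)^\vee=\sgn^\alpha$) forces $s=-\lambda$ and $r=-\nu$, with the sign parameters $\alpha,\beta$ preserved. Substituting $(\lambda,\nu)=(-s,-r)$ into the sets \eqref{eqn:SL1b}, \eqref{eqn:SL2c}, and \eqref{eqn:SL3} identifies them respectively with $\gL^{(3,2)}_{(\fg',P'),1}$, $\gL^{(3,2)}_{(\fg',P'),2}$, and $\gL^{(3,2)}_{(\fg',P'),+}$, and likewise transfers the polynomial-representation identifications \eqref{eqn:poly0826c} to \eqref{eqn:sym0826c}. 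Consequently, the set-theoretic part of Theorem \ref{thm:Hom2b} and the multiplicity assertion (including the jump to dimension two on $\gL^{(3,2)}_{(\fg',P'),+}$) follow immediately from Theorem \ref{thm:DSBO2b}.

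Second, I would check that the explicit generators $\Phi_{(m,\ell)}$ defined in \eqref{eqn:Hom} map to $\bD_{(m,\ell)}$ under $\EuD_{H\to D}$. From \eqref{eqn:HD}, for $w^\vee=\widetilde{y}_{\mathbf{l}}$ (dual to $e_{\mathbf{l}}$) one has $\langle \EuD_{H\to D}(\Phi_{(m,\ell)})F,\widetilde{y}_{\mathbf{l}}\rangle = \langle dR\bigl((N_n^-)^m N_{\mathbf{l}}^-\bigr)F,1\rangle$; under the coordinates \eqref{eqn:coord}, $dR(N_j^-)$ acts as $\partial/\partial x_j$ on the open Bruhat cell, and restricting to $N_-'$ (i.e.\ $x_n=0$) gives exactly the component of $\bD_{(m,\ell)}F$ along $\widetilde{y}_{\mathbf{l}}$. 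Equivalently, one can trace $\psi_{(m,\ell)}:=\symb(\bD_{(m,\ell)})=\zeta_n^m\sum_{\mathbf{l}\in\Xi'_\ell}\zeta^{\mathbf{l}}\otimes\widetilde{y}_{\mathbf{l}}$ through the triangle \eqref{eqn:isom3}: applying $F_c^{-1}\otimes \id_W$ (which, in the abelian case, amounts to replacing each $\zeta_j$ by $N_j^-$) recovers $\Phi_{(m,\ell)}$.

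The main obstacle is the bookkeeping for the multiplicity-two case $\gL^{(3,2)}_{(\fg',P'),+}$: one must confirm that both $\Phi_{(m+2\ell,0)}$ and $\Phi_{(m,\ell)}$ genuinely land in the same Hom space (the parity shift $+2\ell$ being compatible with the $\sym^\ell_1\leftrightarrow\triv$ identification \eqref{eqn:sym0826c}) and that they are linearly independent—this last point is clear since $\Phi_{(m,\ell)}$ has degree-$\ell$ dependence on the $\fn_-'$-directions $N_1^-,\ldots,N_{n-1}^-$ whereas $\Phi_{(m+2\ell,0)}$ is a pure power of $N_n^-$, so the two lie in different weight spaces for the $(M'A')$-action $\Ad_{\#}$.
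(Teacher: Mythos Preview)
Your approach is essentially the paper's own: both routes pass through the commutative triangle \eqref{eqn:isom3}, and the identification $\Phi_{(m,\ell)}=(F_c^{-1}\otimes\id_W)(\psi_{(m,\ell)})$ with $\psi_{(m,\ell)}=\symb(\bD_{(m,\ell)})$ is exactly how the paper carries out Step~4 in Section~\ref{sec:proof2}. The only cosmetic difference is that the paper derives Theorems~\ref{thm:DSBO2b} and~\ref{thm:Hom2b} simultaneously from the classification of $\Sol(\fn_+;\triv_{\alpha,\lambda},\triv_{\beta,\nu})$ (Proposition~\ref{prop:MA4}), whereas you take Theorem~\ref{thm:DSBO2b} as input and transfer it across the triangle.

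One small correction: in your last paragraph you assert that $\Phi_{(m,\ell)}$ and $\Phi_{(m+2\ell,0)}$ lie in \emph{different} weight spaces for the $M'A'$-action $\Ad_\#$. This is false---the whole point of the multiplicity-two phenomenon is that they share the same $M'A'$-character (cf.\ Lemma~\ref{lem:MA} and Remark~\ref{rem:sym0826a}: both carry $\sgn^m\otimes\triv$ and $A'$-weight $-(m+2\ell)$). Linear independence follows instead from your prior observation, namely that $(N_2^-)^m(N_1^-)^\ell$ and $(N_2^-)^{m+2\ell}$ sit in distinct homogeneous components $S^{m+\ell}(\fn_-)$ and $S^{m+2\ell}(\fn_-)$ for $\ell\geq 1$; drop the weight-space clause and the argument is clean.
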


Here, by abuse of notation, we regard 
$\Phi_{(m,\ell)}$ as a map
\begin{equation*}
\Phi_{(m,\ell)} \in 
\Hom_{\fg',P'}\big(\Mpp(\sigma,r)^\gb, \Mp(\triv, s)^\ga\big)
\end{equation*}
defined by
\begin{equation}\label{eqn:Phi-def}
\Phi_{(m,\ell)}(u\otimes w) := u\Phi_{(m,\ell)}(w)
\quad 
\text{for $u \in \Cal{U}(\fg)$ and $w \in S^\ell(\C^{n-1})$.}
\end{equation}

\subsection{Classification of $\fg'$-homomorphisms}
\label{subsec:g-hom}

Finally, we consider $\fg'$-homomorphisms.
For $n\geq 2$,  we define $\gL^{(n+1,n)}_{\fg',j} \subset 
\Irr(\f{sl}(n-1,\C))_{\fin} \times \C^2$ for  $j=1,2$ such that
\begin{align*}
\gL^{(n+1,n)}_{\fg',1}&:=\{
(\triv; s,  s-m) :  s\in \C\; \text{and} \; m\in \Z_{\geq 0}\},\\
\gL^{(n+1,n)}_{\fg',2}&:=\{
(\sym_{n-1}^\ell; (m+\ell)-1,  -(1+\tfrac{\ell}{n-1})) : \ell, m\in \Z_{\geq 0}\}.
\end{align*}
For $n=2$, we further put
\begin{equation*}
\Lambda^{(3,2)}_{\fg',+}
:=\{(\triv; (m+\ell)-1,  -(1+\ell)) : m\in \Z_{\geq 0}\; \text{and}\; 
\ell \in 1+\Z_{\geq 0}\}.
\end{equation*}
We have
\begin{equation*}
\Lambda^{(3,2)}_{\fg',+}  \subset \Lambda^{(3,2)}_{\fg',2} \subset \gL^{(3,2)}_{\fg',1}.
\end{equation*}

As in \eqref{eqn:Verma},
for $(\eta, s) \in \Irr(\f{sl}(n,\C))_\fin\times \C$ and 
$(\sigma,r) \in \Irr(\f{sl}(n-1,\C))_\fin\times \C$,
we define generalized Verma modules $\Mp(\eta,s)$ and  $\Mp(\sigma,r)$
as a $\fg$-module and $\fg'$-module, respectively.

\begin{thm}\label{thm:Hom3a}
For $n\geq 3$, we have
\begin{equation*}
\Hom_{\fg'}\big(\Mpp(\sigma,r), \Mp(\triv, s)\big)
=
\begin{cases}
\C\Phi_{(m,0)} \quad &\text{if $(\sigma;s,r)\in \gL^{(n+1,n)}_{\fg',1}$},\\[3pt]
\C\Phi_{(m,\ell)} \quad &\text{if $(\sigma;s,r)\in \gL^{(n+1,n)}_{\fg',2}$},\\
\{0\} \quad &\text{otherwise}.
\end{cases}
\end{equation*}
\end{thm}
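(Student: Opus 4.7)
The plan is to adapt the F-method (Theorem \ref{thm:Fmethod}) from the $(\fg',P')$-setting to the purely Lie-algebraic $\fg'$-setting, and then reduce the classification to Theorem \ref{thm:Hom2a}. For existence, observe that any $(\fg',P')$-homomorphism is \emph{a fortiori} a $\fg'$-homomorphism. Hence, given $(\sigma;s,r) \in \gL^{(n+1,n)}_{\fg',j}$ with $j\in\{1,2\}$, picking any $\alpha \in \{\pm\}$ and setting $\beta = \alpha + m$ (resp.\ $\beta=\alpha+m+\ell$) places $(\alpha,\beta;\sigma;s,r)$ into $\gL^{(n+1,n)}_{(\fg',P'),j}$, so Theorem \ref{thm:Hom2a} produces a non-zero $\Phi_{(m,\ell)} \in \Hom_{\fg'}(\Mpp(\sigma,r), \Mp(\triv,s))$.

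For the upper bound, use the adjunction $\Hom_{\fg'}(\Mpp(\sigma,r),\Mp(\triv,s)) \simeq \Hom_{\fp'}(\sigma^\vee, \Mp(\triv,s))$ together with the fact that $\fn_+'$ acts trivially on $\sigma^\vee$ to get
$$\Hom_{\fg'}(\Mpp(\sigma,r), \Mp(\triv,s)) \simeq \Hom_{\fm'\oplus\fa'}\bigl(\sigma^\vee,\, \Mp(\triv,s)^{\fn_+'}\bigr).$$
Applying the algebraic Fourier transform $F_c \otimes \id$ of Theorem \ref{def:AFT2} identifies the right-hand side with the Lie-algebraic solution space
$$\Sol^{\fg'}(\fn_+;\triv,\sigma) := \Hom_{\fm'\oplus\fa'}\!\bigl(\sigma^\vee,\, (\Pol(\fn_+))^{\widehat{d\pi_{(\triv,s)^*}}(\fn_+')}\bigr).$$
Since $M_0'A'$ is connected, $\fm'\oplus\fa'$-equivariance coincides with $M_0'A'$-equivariance; the non-identity coset $[\gamma]' \in M'/M_0'$ normalizes $M_0'A'$ and preserves the F-system (both being $M'A'$-equivariant), so it acts on $\Sol^{\fg'}$ as an involution whose $\pm 1$-eigenspaces are precisely the group-level solution spaces $\Sol(\fn_+;\triv,\sigma)$ of Theorem \ref{thm:Fmethod} for the two possible relative $\{\pm\}$-assignments of $(\alpha,\beta)$. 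Consequently
$$\dim\Hom_{\fg'}(\Mpp(\sigma,r), \Mp(\triv,s)) = \sum_{\epsilon=\pm 1} \dim\Sol(\fn_+;\triv,\sigma)_\epsilon,$$
and Theorem \ref{thm:Hom2a} shows that for $(\sigma;s,r) \in \gL^{(n+1,n)}_{\fg',j}$ exactly one summand equals $1$ (the constraint $\beta-\alpha \equiv m$ or $m+\ell$ pins down a single relative sign), while both vanish for parameters outside $\gL^{(n+1,n)}_{\fg',1}\cup\gL^{(n+1,n)}_{\fg',2}$. This yields the asserted dimension and generator in each case.

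The main obstacle is the sign bookkeeping in Step 3: identifying each $[\gamma]'$-eigenspace of $\Sol^{\fg'}$ with the specific $(\fg',P')$-solution space for the correct $(\alpha,\beta)$-twist. Concretely, this amounts to computing the $[\gamma]'$-action on the explicit Fourier symbols $\zeta_n^m$ (resp.\ $\zeta_n^m \sum_{\mathbf{l}\in\Xi_\ell'}\zeta_{\mathbf{l}}\otimes e_{\mathbf{l}}$) living in $\Pol(\fn_+)\otimes \sigma^\vee$, and verifying that the resulting $[\gamma]'$-weight is $(-1)^m$ (resp.\ $(-1)^{m+\ell}$), matching exactly the $\beta - \alpha$ constraint built into $\gL^{(n+1,n)}_{(\fg',P'),j}$.
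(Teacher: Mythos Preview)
Your argument is correct and follows the same route as the paper: both pass from $\Hom_{\fg'}$ to $\Hom_{\fg',P_0'}$ via connectedness of the identity component $P_0'=M_0'A'N_+'$ and then apply the F-method, reducing everything to the $M_0'A'$-equivariant solution space $\Sol(\fn_+;\triv_\lambda,\varpi_\nu)_0$. The only difference is in how this space is computed: the paper simply reruns Steps~2--3 of the recipe with $M_0'A'$ in place of $M'A'$ (Proposition~\ref{prop:Sol30a}, whose proof is verbatim that of Theorem~\ref{thm:Sol1} minus the parity constraint), whereas you split $\Sol_0$ into $[\gamma]'$-eigenspaces and read each off from the already-proven $(\fg',P')$-classification. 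Your detour is valid but requires the extra sign bookkeeping you flag, while the paper's direct recomputation sidesteps it entirely; neither approach is shorter. One notational slip: in your Frobenius adjunction the inducing $\fp'$-module should be the representation space of $(\sigma,r)$ itself rather than $\sigma^\vee$ (cf.~\eqref{eqn:Verma}), and correspondingly the explicit symbol lives in $\Pol(\fn_+)\otimes W_\sigma^\vee$, with $\tilde{y}_{\mathbf{l}}$ rather than $e_{\mathbf{l}}$ in the second tensor slot (cf.~\eqref{eqn:psi}).
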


\begin{thm}\label{thm:Hom3b}
For $n = 2$, we have
\begin{equation*}
\Hom_{\fg'}\big(\Mpp(\triv,r), \Mp(\triv, s)\big)
=
\begin{cases}
\C\Phi_{(m,0)} \quad 
&\text{if $(\triv;s,r)\in \gL^{(3,2)}_{\fg',1}\backslash  \gL^{(3,2)}_{\fg',+}$},\\[3pt]
\C\Phi_{(m+2\ell,0)}\oplus \C\Phi_{(m,\ell)} \quad &\text{if $(\triv;s,r)\in \gL^{(3,2)}_{\fg',+}$},\\
\{0\} \quad &\text{otherwise}.
\end{cases}
\end{equation*}
\end{thm}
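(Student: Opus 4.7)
The plan is to deduce Theorem~\ref{thm:Hom3b} from Theorem~\ref{thm:Hom2b} by dropping the $P'$-equivariance constraint in favor of pure $\fg'$-equivariance. The key observation is that for $n=2$ we have $M_0' = SL(1,\R) = \{1\}$, while $A'$ and $N_+'$ are both connected; hence the identity component $P_0'$ coincides with $A' N_+'$, and any $\fg'$-homomorphism automatically intertwines the $P_0'$-action. The only additional constraint imposed by $(\fg',P')$-equivariance is therefore equivariance under the component group $M'/M_0' \simeq \Z/2\Z$, generated by $\gamma = \diag(-1,-1,1)\in M'$.

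Fixing reference sign characters $(\alpha,\beta) = (+,+)$, I would introduce the involution
\begin{equation*}
\sigma\colon \Hom_{\fg'}\!\big(\Mpp(\triv,r),\,\Mp(\triv,s)\big) \To \Hom_{\fg'}\!\big(\Mpp(\triv,r),\,\Mp(\triv,s)\big),
\quad
\Phi \longmapsto \gamma \circ \Phi \circ \gamma^{-1}.
\end{equation*}
A direct calculation using the twisted $\gamma$-actions on $\Mpp(\triv,r)^\beta$ and $\Mp(\triv,s)^\alpha$ shows that $\Phi$ belongs to $\Hom_{\fg',P'}\!\big(\Mpp(\triv,r)^\beta,\,\Mp(\triv,s)^\alpha\big)$ if and only if $\sigma(\Phi) = (\alpha\beta)\Phi$. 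Since $\sigma^2 = \id$, the $\Hom_{\fg'}$-space decomposes as a direct sum of its $\pm 1$-eigenspaces, yielding
\begin{equation*}
\Hom_{\fg'}\!\big(\Mpp(\triv,r),\,\Mp(\triv,s)\big)
= \bigoplus_{\alpha \in \{\pm\}} \Hom_{\fg',P'}\!\big(\Mpp(\triv,r)^+,\,\Mp(\triv,s)^\alpha\big).
\end{equation*}

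To conclude, I would compute the $\Ad(\gamma)$-eigenvalue of the candidate generators. Since $\Ad(\gamma)N_n^- = -N_n^-$ and $\Ad(\gamma)N_1^- = N_1^-$, both $\Phi_{(m+2\ell,0)}(1) = (N_n^-)^{m+2\ell}\otimes 1$ and $\Phi_{(m,\ell)}(1) = (N_n^-)^m (N_1^-)^\ell \otimes 1$ share the $\Ad(\gamma)$-eigenvalue $(-1)^m$, so both lie in the same $\sigma$-eigenspace. Combined with Theorem~\ref{thm:Hom2b}: when $(\triv;s,r) \in \Lambda^{(3,2)}_{\fg',+}$ both generators appear simultaneously and the $\Hom_{\fg'}$-space is two-dimensional; when $(\triv;s,r) \in \Lambda^{(3,2)}_{\fg',1} \setminus \Lambda^{(3,2)}_{\fg',+}$ only $\Phi_{(m,0)}$ with $m=s-r$ survives, giving dimension one; and otherwise both eigenspaces vanish. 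The main technical hurdle is carefully tracking the $\gamma$-action through the parameter identification $\Mpp(\triv,r)^\beta \simeq \Mpp(\sym^\ell_1,r)^{\beta+\ell}$ of \eqref{eqn:sym0826c}, so that Theorem~\ref{thm:Hom2b} genuinely exhausts all four sign patterns $(\alpha,\beta) \in \{\pm\}^2$ and no $\fg'$-homomorphism is overlooked.
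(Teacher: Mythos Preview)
Your argument is correct, but it takes a different route from the paper's. The paper does not deduce Theorem~\ref{thm:Hom3b} from Theorem~\ref{thm:Hom2b}; instead it runs the F-method directly for the connected subgroup $P_0' = M_0' A' N_+' = A' N_+'$. Concretely, the paper first computes the solution space $\Sol(\fn_+;\triv_\lambda,\triv_\nu)_0$ with only the $A'$-equivariance constraint (Proposition~\ref{prop:MA3}), and then, using that $P_0'$ is connected so that $\Hom_{\fg',P_0'} = \Hom_{\fg'}$, applies $F_c^{-1}$ to that space to obtain $\Hom_{\fg'}$ directly. Theorem~\ref{thm:Hom2b} is proved in parallel by afterwards imposing the $M'$-parity condition on $\Sol_0$ (Proposition~\ref{prop:MA4}). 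Your approach reverses the logical order: you take the already-classified $(\fg',P')$-Hom spaces for all sign choices and reassemble them via the eigenspace decomposition under $\sigma = \gamma \circ (\cdot) \circ \gamma^{-1}$. This is a clean ``descent'' argument that makes the relation between the $(\fg',P')$- and $\fg'$-classifications transparent, and it avoids redoing the F-system computation; the paper's approach, on the other hand, is self-contained and makes clear that the two theorems share the same analytic core (Proposition~\ref{prop:MA3}). Either way, the crucial observation is the same one you isolate: both candidate generators $\Phi_{(m+2\ell,0)}$ and $\Phi_{(m,\ell)}$ have the common $\Ad(\gamma)$-eigenvalue $(-1)^m$, so they land in the same sign sector and the multiplicity-two phenomenon survives the passage from $(\fg',P')$ to $\fg'$.
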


\begin{rem}
Let $n\geq 2$.
Since the generalized Verma module $\Mpp(\triv,s-m)$ is of scalar type, 
the $\fg'$-homomorphisms
$\varphi_{(m,0)} \colon \Mpp(\triv,s-m) \to \Mp(\triv, s)$ are all injective
(cf.\ \cite[Prop.\ 9.11]{Hum08}). Theorems \ref{thm:Hom3a} and \ref{thm:Hom3b} then 
imply that we have
\begin{equation}\label{eqn:GVM04}
\bigoplus_{m=0}^\infty \Mpp(\triv,s-m)
\hookrightarrow
\Mp(\triv, s).
\end{equation}
In Section \ref{sec:GVM}, we shall show that
$\Mp(\triv, s)\vert_{\fg'}$
is in fact isomorphic to 
$\bigoplus_{m=0}^\infty \Mpp(\triv,s-m)$
for all $s \in \C$ as $\fg'$-modules
 (see Theorem \ref{thm:GVM31a}). Further, we shall also discuss the 
 multiplicity-two phenomenon for $n=2$ from 
 a viewpoint of branching laws (see Remark \ref{rem:branching916}).
\end{rem}

\section{Proofs for the classification and construction of $\bD$ and $\Phi$:
Case $n\geq 3$}
\label{sec:proof1}

In the present and next sections, 
we follow the recipe of the F-method in Section \ref{sec:recipe}
to prove the theorems in Section \ref{sec:results}.
Since the arguments are slightly different between
the cases $n\geq 3$ and $n=2$, we consider the two cases, separately.

In this section we deal with the case $n\geq 3$, that is, we show 
Theorems \ref{thm:DSBO1a}, \ref{thm:DSBO2a}, \ref{thm:Hom1a}, 
 \ref{thm:Hom2a}, and \ref{thm:Hom3a}. The case $n=2$ is considered separately
in Section \ref{sec:proof2}.

\subsection{Step 1: 
Compute $d\pi_{(\xi,\lambda)^*}(C)$ and 
$\widehat{d\pi_{(\xi,\lambda)^*}}(C)$
for $C \in \fn_+'$.}
\label{sec:Step1}

For $\xi = \alpha \otimes \triv$ and $\lambda \equiv \chi^\lambda$, 
we simply write
\begin{equation*}
\dpi_{\lambda^*} = \dpi_{(\alpha\otimes\triv,\chi^\lambda)^*}
\end{equation*}
with $\lambda^*=2\rho(\fn_+)-\lambda d\chi$. 

The operators
$d\pi_{(\xi,\lambda)^*}(N_j^+)$ and 
$\widehat{d\pi_{(\xi,\lambda)^*}}(N_j^+)$
are already computed in \cite[Sect.\ 5]{KuOr24} for $N_j^+ \in \fn_+$.
We thus simply recall those formulas from the cited paper in this subsection.
We remark that the formulas are not only for 
$N_j^+ \in \fn_+'$ but also for $N_j^+ \in \fn_+$ (full nilpotent radical).

We write
\begin{equation*}
\Pol(\fn_+) = \C[\zeta_1, \ldots, \zeta_{n-1}, \zeta_n],
\end{equation*}
where $(\zeta_1, \ldots, \zeta_{n-1}, \zeta_n)$ is 
the dual coordinates to
$(z_1, \ldots, z_{n-1}, z_n)$ of $\fn_-$, which
corresponds to the coordinates 
$(x_1, \ldots, x_{n-1}, x_n)$ of $\fn_-(\R)$ in \eqref{eqn:coord}.
Let 
$E_x = \sum_{j=1}^{n} x_j\frac{\partial}{\partial x_j}$ 
and
$E_\zeta = \sum_{j=1}^{n} \zeta_j\frac{\partial}{\partial \zeta_j}$
denote
the Euler homogeneity operator for $x$ and $\zeta$, respectively.
For $j \in \{1,\ldots, n-1,n\}$,
we write $\vartheta_j = \zeta_j \frac{\partial}{\partial \zeta_j}$ 
for the Euler operator for $\zeta_j$ such that 
$E_\zeta = \sum_{j=1}^{n} \vartheta_j$.

\begin{prop}[{\cite[Props.\ 5.1 and 5.4]{KuOr24}}]\label{prop:dNj1}
For $j \in \{1, \ldots, n-1, n\}$, we have 
\begin{align}
d\pi_{\lambda^*}(N_j^+)
&=x_j\{ (n-\lambda) 
+E_x\},\nonumber \\[3pt] 
-\zeta_j
\widehat{\dpi_{\lambda^*}}(N_j^+)
&=\vartheta_j(\lambda -1 + E_\zeta).
\end{align} 
\end{prop}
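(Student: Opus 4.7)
My plan is to derive both formulas by direct application of the general expression \eqref{eqn:dpi3} to $X = N_j^+$. With $\xi = \sgn^{\alpha}\otimes\triv$ (so $\xi$ is one-dimensional and trivial on the semisimple factor of $M$), this reduces to computing $\Ad(\bar{n}^{-1})N_j^+ = e^{-\ad Y}N_j^+$ for $\bar n = \exp(Y)$, $Y = \sum_k x_k N_k^- \in \fn_-(\R)$. Since $\fn_-$ is abelian and $[\fn_-,[\fn_-,\fn_+]] \subset \fn_-$, the expansion terminates at the quadratic term, yielding a closed-form sum whose three components under the Gelfand--Naimark decomposition $\fg = \fn_-\oplus\fl\oplus\fn_+$ can be read off directly.

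Using $[N_i^-, N_j^+] = E_{i+1,j+1} - \delta_{ij}E_{1,1}$, the linear term in $Y$ identifies the $\fl$-component as $-\sum_i x_i E_{i+1,j+1} + x_j E_{1,1}$, while the quadratic term collapses to a multiple of $x_j \sum_i x_i N_i^- \in \fn_-$. Since $\xi$ is trivial on the semisimple factor of $M$, the differential $d\xi_\lambda^*$ annihilates the $\fm$-part and restricts on $\fa$ to the character $(2\rho - \lambda)$; reading the $(1,1)$-entry of the $\fl$-component gives the coefficient of $\widetilde H_0$ and hence a scalar of the form $x_j\cdot\bigl((2\rho-\lambda)(\widetilde H_0)\bigr)$. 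The $\fn_-$-component then contributes the Euler factor $x_j E_x$ through the identity $dR(N_i^-) = \partial/\partial x_i$, which is valid on $N_-$ because $\fn_-$ is abelian in the coordinates \eqref{eqn:coord}. Summing the two contributions produces the first formula.

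For the second formula, I apply the algebraic Fourier transform \eqref{eqn:Weyl}--\eqref{eqn:Weyl2} term by term to the first expression. After writing the operator in normal form, the substitutions $\widehat{x_j} = \partial/\partial\zeta_j$ and $\widehat{\partial/\partial x_i} = -\zeta_i$, combined with the Weyl-algebra identity $\partial/\partial\zeta_j \cdot E_\zeta = (1+E_\zeta)\,\partial/\partial\zeta_j$, collapse the result to $-\partial/\partial\zeta_j \cdot (\,\cdot\,)$. Multiplying by $-\zeta_j$ and using $\vartheta_j = \zeta_j \partial/\partial\zeta_j$ delivers the stated form.

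The main obstacle I anticipate is the bookkeeping of the $\fa$-coefficient in the Gelfand--Naimark decomposition of the $\fl$-component, since one must track the precise normalization of $\widetilde H_0$ versus $H_0$ (they differ by a factor $(n+1)/n$) and separate the diagonal contribution from the $\fm$-part. A secondary care-point is the Weyl-algebra ordering in the Fourier step, where the noncommutation $[\partial/\partial\zeta_j, E_\zeta] = \partial/\partial\zeta_j$ is exactly what produces the constant shift inside the bracket on the right-hand side.
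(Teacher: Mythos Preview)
The paper does not prove this proposition; it simply recalls the formulas from \cite[Props.~5.1 and 5.4]{KuOr24}.  Your plan---expand $e^{-\ad Y}N_j^+$ through the quadratic term, read off the $\fl$- and $\fn_-$-components, evaluate the character on the $\fa$-part and $dR$ on the $\fn_-$-part, then apply the algebraic Fourier transform---is exactly the standard direct computation that the cited reference carries out, and every step you describe is correct.

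One remark worth flagging before you execute the computation.  With your identification of the $\fa$-coefficient as the $(1,1)$-entry $x_j$, the scalar contribution is $x_j\,(2\rho-\lambda)(\wH_0)$.  Since $\ad(\wH_0)|_{\fn_+}=\tfrac{n+1}{n}\id$ and $\dim\fn_+=n$, one has $2\rho(\wH_0)=n+1$, so the first formula comes out as $x_j\{(n+1-\lambda)+E_x\}$ rather than $x_j\{(n-\lambda)+E_x\}$.  This is in fact the value that, under your Fourier-transform step (using $\widehat{E_x}=-E_\zeta-n$ and $\partial_{\zeta_j}E_\zeta=(1+E_\zeta)\partial_{\zeta_j}$), yields the stated second identity $-\zeta_j\widehat{d\pi_{\lambda^*}}(N_j^+)=\vartheta_j(\lambda-1+E_\zeta)$; with the printed constant $n-\lambda$ one would instead obtain $\vartheta_j(\lambda+E_\zeta)$.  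Since only the second formula is used downstream (via \eqref{eqn:dNj3}), the discrepancy is a harmless misprint in the first display, and your argument goes through unchanged.
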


Observe that $E_\zeta\vert_{\Pol^k(\fn_+)}$ is simply given by
$E_\zeta\vert_{\Pol^k(\fn_+)} = k \cdot \id$. 
Then, for $j \in \{1, \ldots, n-1\}$, 
 we have
\begin{equation}\label{eqn:dNj3}
-\zeta_j\widehat{\dpi_{\lambda^*}}(N_j^+)\vert_{\Pol^k(\fn_+)}
=(\lambda-1+k)\vartheta_j.
\end{equation}

\subsection{Step 2: 
Classify and construct 
$\psi \in \Hom_{M'A'}(W^\vee, \Pol(\fn_+)\otimes V^\vee)$.
}

For $(\varpi, W) \in \Irr(M_0')_{\fin}$, we write
\begin{equation*}
W_{\beta} = \C_{\beta} \otimes W
\end{equation*}
for 
the representation
space of $(\beta, \varpi) \in \Irr(M')_{\fin}$, where 
$\C_\gb$ denotes the one-dimensional representation 
$\C_\gb = (\sgn^\beta, \C)$ of $M'$ defined as in 
\eqref{eqn:20241106}.
Similarly, for $\ga \in \{\pm\}$, we define 
the $M$-representation $\Pol(\fn_+)_\ga$ by
\begin{equation}\label{eqn:Pol}
\Pol(\fn_+)_\ga = \C_\ga \otimes \Pol(\fn_+).
\end{equation}
In this step, we wish to classify and construct
\begin{equation*}
\psi \in 
\Hom_{M'A'}(W_{\beta}^\vee\boxtimes \C_{-\nu}, 
\Pol(\fn_+)_{\alpha}
\otimes \C_{-\lambda}).
\end{equation*}

We start by observing the $M'A'$-decomposition
$\Pol(\fn_+)\vert_{M'A'} = \C[\zeta_1,\ldots, \zeta_{n-1}, \zeta_n]\vert_{M'A'}$.
To do so, the following lemma is useful. 

\begin{lem}\label{lem:MA}
The following hold.

\begin{enumerate}


\item[\emph{(1)}]
$(M', \Ad_\#, \C^m[\zeta_n]) 
\hspace{42pt}
\simeq (SL^\pm(n-1,\R), \sgn^m \otimes \triv, \C)$.

\item[\emph{(2)}]
$(M', \Ad_\#, \C^\ell[\zeta_1,\ldots, \zeta_{n-1}]) 
\hspace{2pt}
\simeq (SL^\pm(n-1,\R), \sgn^\ell \otimes \sym^\ell_{n-1}, S^\ell(\C^{n-1}))$.

\item[\emph{(3)}]
$A'$ acts on $\C^m[\zeta_n]$ by a character $(\chi')^{-m}$.

\item[\emph{(4)}]
$A'$ acts on $\C^\ell[\zeta_1,\ldots, \zeta_{n-1}]$ by a character 
$(\chi')^{-\frac{n}{n-1}\ell}$.

\end{enumerate}

Here $\chi'$ is the character of $A'$ defined in \eqref{eqn:chi}.

\end{lem}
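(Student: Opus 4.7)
The proof is essentially a direct computation of the adjoint action of $M'A'$ on $\fn_+$, followed by dualization to the coordinate functions $\zeta_j$. I would organize it as follows.

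First, for a generic element $l = \mathrm{diag}(\det(g')^{-1}, g', 1) \in M'$, I would compute $\mathrm{Ad}(l^{-1}) N_j^+ = l^{-1} E_{1, j+1}\, l$ directly from the matrix decomposition of $l$ and $l^{-1}$. The outcome splits cleanly:
\begin{align*}
\mathrm{Ad}(l^{-1}) N_j^+ &= \det(g') \sum_{k=1}^{n-1} g'_{j,k}\, N_k^+ \quad \text{for } j \in \{1,\ldots,n-1\},\\
\mathrm{Ad}(l^{-1}) N_n^+ &= \det(g')\, N_n^+.
\end{align*}
The key bookkeeping to note is the overall scalar $\det(g')$ (coming from the $(1,1)$-block of $l^{-1}$), which is the source of the sign twist in the final statement. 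Dualizing via $\zeta_j(N_k^+)=\delta_{j,k}$, I would then read off the action of $\mathrm{Ad}_\#(l)$ on the linear functionals: $\mathrm{Ad}_\#(l)\zeta_n = \det(g')\zeta_n$, while on the span of $\zeta_1,\ldots,\zeta_{n-1}$ the action is $\det(g')$ times that of $g'$ on the standard basis of $\C^{n-1}$.

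Passing to $\ell$-th symmetric powers then gives, under $e_j \leftrightarrow \zeta_j$, that $\mathrm{Ad}_\#(l)$ on $\C^\ell[\zeta_1,\ldots,\zeta_{n-1}]$ equals $\det(g')^\ell \cdot \sym^\ell_{n-1}(g')$, and on $\C^m[\zeta_n]$ equals the scalar $\det(g')^m$. Since $\det(g') \in \{\pm 1\}$ for $g' \in SL^\pm(n-1,\R)$, the scalars $\det(g')^m$ and $\det(g')^\ell$ are exactly $\sgn^m(\det(g'))$ and $\sgn^\ell(\det(g'))$ respectively, in the convention of \eqref{eqn:20241108}. This establishes (1) and (2).

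For (3) and (4), I would repeat the calculation with $l$ replaced by $a = \exp(t\widetilde{H}_0') = \mathrm{diag}(e^t, e^{-t/(n-1)},\ldots,e^{-t/(n-1)},1) \in A'$. A two-line computation gives $\mathrm{Ad}(a^{-1}) N_j^+ = e^{-\frac{n}{n-1}t} N_j^+$ for $j < n$ and $\mathrm{Ad}(a^{-1}) N_n^+ = e^{-t} N_n^+$, so by duality $\mathrm{Ad}_\#(a)\zeta_j = e^{-\frac{n}{n-1}t}\zeta_j$ and $\mathrm{Ad}_\#(a)\zeta_n = e^{-t}\zeta_n$. Raising to the appropriate power and recalling the normalization $\chi'(\exp(t\widetilde{H}_0'))=e^t$ from \eqref{eqn:chi} yields parts (3) and (4) immediately.

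There is no real obstacle here; the whole lemma is a matrix computation and a dualization. The only point that requires care is the separation of the $\det(g')$ scalar from the $g'$-action, which feeds into the sign characters via the convention \eqref{eqn:20241108}, and the choice of normalization of $\widetilde{H}_0'$ in \eqref{eqn:0801b}, which is precisely what makes the $A'$-weight on $\zeta_j$ ($j<n$) come out to $-\tfrac{n}{n-1}$ rather than some other rational.
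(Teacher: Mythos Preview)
Your proof is correct and is precisely the direct computation the paper alludes to; the paper's own proof consists of the single phrase ``A direct computation,'' and you have supplied exactly those details with the right bookkeeping on the $\det(g')$ factor and the normalization of $\widetilde{H}_0'$.
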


\begin{proof}
A direct computation.
\end{proof}

\begin{rem}\label{rem:sym0826a}
If $n=2$, then, by \eqref{eqn:sym0826a}, we have 
\begin{align*}
(M', \Ad_\#, \C^\ell[\zeta_1]) 
&\simeq (\Z/2\Z, \sgn^\ell \otimes \sym^\ell_{1}, S^\ell(\C))\\
&=(\Z/2\Z, \triv \otimes \triv, \C).
\end{align*}
\end{rem}

It follows from Lemma \ref{lem:MA} that 
$\C[\zeta_1, \ldots, \zeta_{n-1},\zeta_n]\vert_{M'A'}$ decomposes irreducibly as
\begin{align}\label{eqn:MA21}
\C[\zeta_1, \ldots, \zeta_{n-1},\zeta_n]\vert_{M'A'}
&=
\bigoplus_{m, \ell \in \Z_{\geq 0}} 
\C^m[\zeta_n] 
\C^\ell[\zeta_1,\ldots,\zeta_{n-1}]\nonumber\\
&\simeq
\bigoplus_{m, \ell \in \Z_{\geq 0}} 
(\sgn^{m+\ell}\otimes\sym^\ell_{n-1}) \boxtimes (-(m+\tfrac{n}{n-1}\ell)),
\end{align}
where  $-(m+\tfrac{n}{n-1}\ell)$ indicates the weight of the character of $A'$.
Therefore, 
we have
\begin{equation*}
\big(\Pol(\fn_+)_{\alpha}
\otimes \C_{-\lambda}\big)\vert_{M'A'}
\simeq 
\bigoplus_{m, \ell \in \Z_{\geq 0}} 
(\sgn^{\ga+m+\ell}\otimes\sym^\ell_{n-1}) \boxtimes (-(\lambda+m+\tfrac{n}{n-1}\ell)).
\end{equation*}
We remark that the $M'A'$-representations appeared in \eqref{eqn:MA21} are all inequivalent.

Put 
\begin{equation}\label{eqn:paraMA}
\gL^{(n+1,n)}_{M'A'}:=
\{(\ga, \ga+m+\ell; \poly^\ell_{n-1}; \lambda, \lambda + m+\tfrac{n}{n-1}\ell):
\lambda \in \C\; \text{and}\; m, \ell \in \Z_{\geq 0}\}.
\end{equation}

\begin{prop}\label{prop:MA}
The following conditions on 
$(\ga,\gb;\varpi;\lambda,\nu) \in 
\{\pm\} \times \Irr(SL(n-1,\R))_{\fin} \times \C^2$ are equivalent.
\begin{enumerate}
\item[\emph{(i)}]
$\Hom_{M'A'}(W^\vee_\beta \otimes \C_{-\nu}, \Pol(\fn_+)_\ga \otimes \C_{-\lambda})
\neq \{0\}$.
\item[\emph{(ii)}]
$\dim
\Hom_{M'A'}(W^\vee_\beta \otimes \C_{-\nu}, \Pol(\fn_+)_\ga \otimes \C_{-\lambda})=1$.
\item[\emph{(iii)}]
$(\ga,\gb;\varpi;\lambda,\nu) \in \gL^{(n+1,n)}_{M'A'}$.
\end{enumerate}
\end{prop}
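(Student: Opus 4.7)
The plan is to reduce the proposition to a direct Schur's lemma computation using the multiplicity-free $M'A'$-decomposition of $\Pol(\fn_+)_\alpha \otimes \C_{-\lambda}$ already recorded just before the statement, together with the tensor identification of the left-hand side.

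First I would unpack the source. By definition $W_\beta = \C_\beta \otimes W$ carries the irreducible $M'A'$-representation $(\sgn^\beta \otimes \varpi) \boxtimes \C_\nu$, so its twist $W^\vee_\beta \boxtimes \C_{-\nu}$ is the irreducible $(\sgn^\beta \otimes \varpi^\vee) \boxtimes \C_{-\nu}$, using that the character $\sgn^\beta$ is self-dual. Then I would rewrite the target using the already-established isomorphism
\begin{equation*}
\big(\Pol(\fn_+)_{\alpha} \otimes \C_{-\lambda}\big)\big|_{M'A'}
\simeq
\bigoplus_{m,\ell \in \Z_{\geq 0}}
(\sgn^{\alpha+m+\ell}\otimes \sym^\ell_{n-1}) \boxtimes \C_{-(\lambda + m + \frac{n}{n-1}\ell)},
\end{equation*}
noting the emphasized fact that these summands are pairwise inequivalent irreducible $M'A'$-representations.

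Next I would apply Schur's lemma componentwise: $\Hom_{M'A'}$ from an irreducible into this direct sum is non-zero precisely when the irreducible matches one (hence exactly one) summand, in which case the space is one-dimensional. The matching splits into three independent equations. The $M_0'$-part demands $\varpi^\vee \simeq \sym^\ell_{n-1}$, equivalently $\varpi \simeq \poly^\ell_{n-1}$ since $\poly^\ell_{n-1} \simeq (\sym^\ell_{n-1})^\vee$. The component group part of $M'$ demands $\sgn^\beta = \sgn^{\alpha+m+\ell}$, i.e.\ $\beta = \alpha + m + \ell$. The $A'$-character part demands $-\nu = -(\lambda + m + \frac{n}{n-1}\ell)$, i.e.\ $\nu = \lambda + m + \frac{n}{n-1}\ell$. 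Combining these three conditions gives exactly the parameter set $\Lambda^{(n+1,n)}_{M'A'}$ defined in \eqref{eqn:paraMA}, proving the equivalence of (i), (ii), (iii) simultaneously.

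There is essentially no obstacle; the only subtlety is purely bookkeeping. I would make sure to state clearly that the dual of $\sym^\ell_{n-1}$ is $\poly^\ell_{n-1}$, and that the character $\sgn^\beta$ is self-dual so that the sign twist on $W^\vee_\beta$ is $\sgn^\beta$ rather than $\sgn^{-\beta}$. Because the decomposition is multiplicity-free, the uniqueness of $(m,\ell)$ yields both the one-dimensionality in (ii) and the well-definedness of the parameter identification in (iii).
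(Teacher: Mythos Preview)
Your proof is correct and follows essentially the same approach as the paper: the paper's proof is a one-liner invoking the duality $(\sym^\ell_{n-1})^\vee \simeq \poly^\ell_{n-1}$ and the preceding multiplicity-free decomposition, while you spell out the same Schur's lemma matching more explicitly.
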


\begin{proof}
Since $(\sym_{n-1}^k, S^k(\C^{n-1}))^\vee \simeq (\poly_{n-1}^k,\Pol^k(\C^{n-1}))$,
the assertions simply follow from the preceding arguments.
\end{proof}

Now, for $\mathbf{l}=(\ell_1, \ldots, \ell_{n-1}) \in \Xi_\ell'$,  we write
\begin{equation*}
\zeta_{\mathbf{l}}= \zeta_1^{\ell_1} \cdots \zeta_{n-1}^{\ell_{n-1}}
\in \C^\ell[\zeta_1,\ldots, \zeta_{n-1}].
\end{equation*}
We then define 
$\psi_{(m,\ell)} \in 
\Hom_\C(S^\ell(\C^{n-1}),\C^m[\zeta_n] \C^\ell[\zeta_1,\ldots, \zeta_{n-1}])$ by
\begin{equation}\label{eqn:psi}
\psi_{(m,\ell)} = 
\zeta_n^m\sum_{\mathbf{l}\in \Xi_\ell'}
\zeta_{\mathbf{l}} \otimes \widetilde{y}_{\mathbf{l}},
\end{equation}
where $\widetilde{y}_{\mathbf{l}}\in \Pol^k(\C^{n-1})\simeq S^k(\C^{n-1})^\vee$ are regarded as 
the dual basis of $e_{\mathbf{l}} \in S^k(\C^{n-1})$.

\begin{prop}\label{prop:sym}
We have
\begin{equation*}
\Hom_{M'A'}(W_{\beta}^\vee\boxtimes \C_{-\nu}, 
\Pol(\fn_+)_{\alpha}\otimes \C_{-\lambda})=
\begin{cases}
\C \psi_{(m,\ell)} & \text{if 
$(\ga,\gb;\varpi;\lambda,\nu) \in \gL^{(n+1,n)}_{M'A'}$,}\\
\{0\} & \text{otherwise.}
\end{cases}
\end{equation*}
\end{prop}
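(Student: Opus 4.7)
The plan is to exhibit $\psi_{(m,\ell)}$ as a nonzero element of the Hom space whenever $(\ga, \gb; \varpi; \lambda, \nu) \in \gL_{M'A'}^{(n+1,n)}$, and then invoke Proposition \ref{prop:MA}, which already bounds the dimension by one and rules out parameters outside $\gL_{M'A'}^{(n+1,n)}$. Nonvanishing of $\psi_{(m,\ell)}$ is immediate since $\{\widetilde{y}_{\mathbf{l}}\}_{\mathbf{l} \in \Xi'_\ell}$ is the basis of $\Pol^\ell(\C^{n-1})$ dual to the monomial basis $\{e_{\mathbf{l}}\}_{\mathbf{l} \in \Xi'_\ell}$ of $S^\ell(\C^{n-1})$.

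For $M'$-equivariance, Lemma \ref{lem:MA}(2) identifies the $\Ad_\#$-action on $\C^\ell[\zeta_1, \ldots, \zeta_{n-1}]$ with $\sgn^\ell \otimes \sym^\ell_{n-1}$ on $S^\ell(\C^{n-1})$ via the correspondence $\zeta_{\mathbf{l}} \leftrightarrow e_{\mathbf{l}}$. Under this identification, $\sum_{\mathbf{l}} \zeta_{\mathbf{l}} \otimes \widetilde{y}_{\mathbf{l}}$ is the canonical element of $S^\ell(\C^{n-1}) \otimes S^\ell(\C^{n-1})^\vee \simeq \End(S^\ell(\C^{n-1}))$ corresponding to the identity, hence is $SL(n-1,\R)$-invariant. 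Combining with the $\sgn^m \otimes \triv$ action on $\C^m[\zeta_n]$ from Lemma \ref{lem:MA}(1) together with the sign twists carried by $\Pol(\fn_+)_\ga$ and $W^\vee_\gb$, the sign parities match precisely when $\gb = \ga + m + \ell$.

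For $A'$-equivariance, parts (3) and (4) of Lemma \ref{lem:MA} give the $A'$-action on $\C^m[\zeta_n]\C^\ell[\zeta_1, \ldots, \zeta_{n-1}]$ by the character $(\chi')^{-m - \frac{n}{n-1}\ell}$. Writing $a' = \exp(t\wH_0')$ and using the decomposition $a' = a'_M a'_A$ from Remark \ref{rem:A}, one computes $a'_A = \exp(t\wH_0)$, so $A'$ acts on the tensor factor $\C_{-\lambda}$ through the character $(\chi')^{-\lambda}$. Multiplying, $A'$ acts on $\C^m[\zeta_n]\C^\ell[\zeta_1, \ldots, \zeta_{n-1}] \otimes \C_{-\lambda}$ by $(\chi')^{-(\lambda + m + \frac{n}{n-1}\ell)}$, which matches $(\chi')^{-\nu}$ exactly when $\nu = \lambda + m + \frac{n}{n-1}\ell$.

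The only mildly delicate point will be tracking the $A'$-action on $\C_{-\lambda}$ through the nondiagonal embedding $A' \hookrightarrow MA$; once Remark \ref{rem:A} is used to handle this, the remaining verifications are routine character arithmetic, and the proposition follows by pairing this construction with the dimension statement of Proposition \ref{prop:MA}.
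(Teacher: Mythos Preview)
Your proof is correct and follows essentially the same approach as the paper: verify that $\psi_{(m,\ell)}$ lies in the Hom space for parameters in $\gL^{(n+1,n)}_{M'A'}$, then invoke the multiplicity-one statement of Proposition~\ref{prop:MA}. The paper's argument is terser---it simply notes that $\psi_{(m,\ell)}\colon e_{\mathbf{l}}\mapsto \zeta_n^m\zeta_{\mathbf{l}}$ and asserts the $M'A'$-equivariance from this, whereas you unpack the equivariance check explicitly via Lemma~\ref{lem:MA} and Remark~\ref{rem:A}; but the logical structure is the same.
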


\begin{proof}
As $\psi_{(m,\ell)}$ maps 
$\psi_{(m,\ell)}\colon e_{\mathbf{l}} \mapsto \zeta_n^m \zeta_{\mathbf{l}}$,
it satisfies the desired $M'A'$-equivariance if 
$(\ga,\gb;\varpi;\lambda,\nu) \in \gL^{(n+1,n)}_{M'A'}$.
Thus, we have
$\psi_{(m,\ell)} \in \Hom_{M'A'}(W_{\beta}^\vee\boxtimes \C_{-\nu}, 
\Pol(\fn_+)_{\alpha}\otimes \C_{-\lambda})$.
Now the multiplicity-one property from Proposition \ref{prop:MA} concludes
the assertion.
\end{proof}

\subsection{Step 3: 
Solve the F-system
for $\psi \in \Hom_{M'A'}(W^\vee, \Pol(\fn_+)\otimes V^\vee)$.
}

For $(\ga, \gb; \varpi; \lambda,\nu) \in \{\pm\}^2\times \Irr(SL(n-1,\R))_\fin \times 
\C^2$, we put
\begin{align*}
&\Sol(\fn_+; \triv_{\alpha, \lambda}, \varpi_{\beta, \nu})\\[3pt]
&:=
\{ \psi \in 
\Hom_{M'A'}(W_{\beta}^\vee\boxtimes \C_{-\nu}, 
\Pol(\fn_+)_{\alpha}\otimes \C_{-\lambda}): 
\text{
$\psi$ solves the F-system \eqref{eqn:Fsys21} below.}\}.
\end{align*}
\begin{equation}\label{eqn:Fsys21}
(\widehat{\dpi_{\lambda^*}}(N_j^+)\otimes \id_W)\psi =0 
\quad
\text{for all $j \in \{1,\ldots, n-1\}$}.
\end{equation}

Since
\begin{equation*}
\Sol(\fn_+; \triv_{\alpha, \lambda}, \varpi_{\beta, \nu})
\subset
\Hom_{M'A'}(W_{\beta}^\vee\boxtimes \C_{-\nu}, 
\Pol(\fn_+)_{\alpha}\otimes \C_{-\lambda}),
\end{equation*}
it follows  from
Proposition \ref{prop:MA}  that
if $\Sol(\fn_+; \triv_{\alpha, \lambda}, \varpi_{\beta, \nu})\neq \{0\}$,
then $(\alpha, \beta; \varpi; \lambda, \nu)$ is of the form
\begin{equation}\label{eqn:param}
(\alpha, \beta; \varpi; \lambda, \nu)=
(\ga, \ga+m+\ell; \poly^\ell_{n-1}; \lambda, \lambda + m+\tfrac{n}{n-1}\ell)
\end{equation}
for some $m, \ell \in \Z_{\geq 0}$. Further, by Proposition \ref{prop:sym}, 
it suffices to solve the PDE 
\begin{equation*}
(\widehat{\dpi_{\lambda^*}}(N_j^+)\otimes \id_W)\psi_{(m,\ell)} =0 
\quad
\text{for all $j \in \{1,\ldots, n-1\}$},
\end{equation*}
which is equivalent to solving
\begin{equation}\label{eqn:Fsys2}
(-\zeta_j\widehat{\dpi_{\lambda^*}}(N_j^+)\otimes \id_W)\psi_{(m,\ell)} =0 
\quad
\text{for all $j \in \{1,\ldots, n-1\}$}.
\end{equation}

Recall from \eqref{eqn:SL1} and \eqref{eqn:SL2} that we have 
\begin{align*}
\gL^{(n+1,n)}_{SL,1}&=\{
(\alpha, \alpha+m;\triv; \lambda, 
\lambda+m) : \alpha \in \{\pm\}, \lambda \in \C, \; \text{and} \; m\in \Z_{\geq 0}\},\\[3pt]
\gL^{(n+1,n)}_{SL,2}&=\{
(\alpha, \alpha+m+\ell;\poly_{n-1}^\ell; 1-(m+\ell), 
1+\tfrac{\ell}{n-1}) : \alpha \in \{\pm\} \; \text{and} \; \ell, m\in \Z_{\geq 0}\}.
\end{align*}

\begin{thm}\label{thm:Sol1}
Let $n\geq 3$. We have 
\begin{equation*}
\Sol(\fn_+; \triv_{\alpha, \lambda}, \varpi_{\beta, \nu})
=
\begin{cases}
\C\psi_{(m,0)} 
& \text{if $(\alpha, \beta;\varpi; \lambda, \nu)\in \Lambda^{(n+1,n)}_{SL,1}$,}\\[3pt]
\C \psi_{(m,\ell)} 
& \text{if $(\alpha, \beta;\varpi; \lambda, \nu)\in \Lambda^{(n+1,n)}_{SL,2}$,}\\
\{0\} & \text{otherwise.}
\end{cases}
\end{equation*}

\end{thm}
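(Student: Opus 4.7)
The plan is to convert the F-system into a simple algebraic condition on the parameters and then match the admissible parameters with the sets $\gL^{(n+1,n)}_{SL,1}$ and $\gL^{(n+1,n)}_{SL,2}$. By Proposition \ref{prop:sym}, every nonzero element of $\Sol(\fn_+; \triv_{\alpha, \lambda}, \varpi_{\beta, \nu})$ is a scalar multiple of $\psi_{(m,\ell)}$ for some tuple of the form \eqref{eqn:param}, and the solution space is automatically zero for any other parameter. It therefore suffices to decide, within the $M'A'$-admissible parameter set, for which $(\lambda, m, \ell)$ the map $\psi_{(m,\ell)}$ actually satisfies the F-system \eqref{eqn:Fsys21}; by injectivity of multiplication by $\zeta_j$ on $\Pol(\fn_+)$, this is equivalent to the system \eqref{eqn:Fsys2}.

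Next I would apply the operator identity \eqref{eqn:dNj3}. Every monomial $\zeta_n^m \zeta_{\mathbf{l}}$ appearing in $\psi_{(m,\ell)}$ lies in $\Pol^{m+\ell}(\fn_+)$, so on the image of $\psi_{(m,\ell)}$ the operator $-\zeta_j \widehat{\dpi_{\lambda^*}}(N_j^+)$ coincides with $(\lambda - 1 + m + \ell)\vartheta_j$. A direct computation gives $\vartheta_j(\zeta_n^m \zeta_{\mathbf{l}}) = \ell_j \zeta_n^m \zeta_{\mathbf{l}}$ for each $\mathbf{l} = (\ell_1, \ldots, \ell_{n-1}) \in \Xi_\ell'$, so the F-system for $\psi_{(m,\ell)}$ collapses to the scalar relations
\begin{equation*}
(\lambda - 1 + m + \ell)\,\ell_j = 0 \qquad \text{for all } j \in \{1, \ldots, n-1\} \text{ and all } \mathbf{l} \in \Xi_\ell'.
\end{equation*}

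Finally I would split into two cases. If $\ell = 0$, every $\ell_j$ vanishes and the relation is automatic for arbitrary $\lambda \in \C$; this places $\psi_{(m,0)}$ into the solution space exactly when the parameter tuple lies in $\gL^{(n+1,n)}_{SL,1}$. If $\ell \geq 1$, one may choose $\mathbf{l} \in \Xi_\ell'$ with some $\ell_j \neq 0$, which forces $\lambda = 1 - (m+\ell)$; substituting this value into \eqref{eqn:param} yields $\nu = 1 + \tfrac{\ell}{n-1}$, which is precisely the defining condition of $\gL^{(n+1,n)}_{SL,2}$. Combined with the multiplicity-one statement of Proposition \ref{prop:sym}, this gives the claimed dimension count in all three regimes. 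I do not anticipate a genuine obstacle here beyond bookkeeping; the only subtle point is to verify that the overlap between $\gL^{(n+1,n)}_{SL,1}$ and $\gL^{(n+1,n)}_{SL,2}$ at $\ell = 0$ is harmless, since on that intersection both descriptions produce the same solution $\psi_{(m,0)}$, so the assignment of the generator in the piecewise definition is well defined.
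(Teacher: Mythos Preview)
Your proposal is correct and follows essentially the same approach as the paper's own proof: reduce to checking when $\psi_{(m,\ell)}$ satisfies \eqref{eqn:Fsys2} via \eqref{eqn:dNj3}, compute $\vartheta_j(\zeta_n^m\zeta_{\mathbf{l}})=\ell_j\zeta_n^m\zeta_{\mathbf{l}}$, and conclude that the condition is $\ell=0$ or $\lambda=1-(m+\ell)$. Your additional remarks on the injectivity of multiplication by $\zeta_j$ and on the harmless overlap at $\ell=0$ are helpful clarifications but do not change the argument.
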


\begin{proof}
We wish to solve \eqref{eqn:Fsys2}.
Recall from \eqref{eqn:dNj3} that
$-\zeta_j\widehat{\dpi_{\lambda^*}}(N_j^+)\vert_{\Pol^k(\fn_+)}$ is given by
\begin{equation*}
-\zeta_j\widehat{\dpi_{\lambda^*}}(N_j^+)\vert_{\Pol^k(\fn_+)}
=(\lambda-1+k)\vartheta_j.
\end{equation*}
Since 
\begin{equation*}
\psi_{(m,\ell)} =
\sum_{\mathbf{l}\in \Xi_\ell'}
\zeta_n^m\zeta_{\mathbf{l}} \otimes \widetilde{y}_{\mathbf{l}}
\in \Pol^{m+\ell}(\fn_+) \otimes \Pol^\ell(\C^{n-1}),
\end{equation*}
the left-hand side of \eqref{eqn:Fsys2} amounts to
\begin{align*}
(-\zeta_j\widehat{\dpi_{\lambda^*}}(N_j^+)\otimes \id_W)\psi_{(m,\ell)} 
&=
\sum_{\mathbf{l}\in \Xi_\ell'}
-\zeta_j\widehat{\dpi_{\lambda^*}}(N_j^+)(\zeta_n^m\zeta_{\mathbf{l}})
\otimes \widetilde{y}_{\mathbf{l}}\\
&=
\sum_{\mathbf{l}\in \Xi_\ell'}
(\lambda-1+m+\ell)\vartheta_j(\zeta_n^m\zeta_{\mathbf{l}})
\otimes \widetilde{y}_{\mathbf{l}}\\
&=
\zeta_n^m 
\sum_{\mathbf{l}\in \Xi_\ell'}
(\lambda-1+m+\ell)\vartheta_j(\zeta_{\mathbf{l}})
\otimes \widetilde{y}_{\mathbf{l}}.
\end{align*}
Thus, one wishes to solve
\begin{equation}\label{eqn:theta}
(\lambda-1+m+\ell)\vartheta_j(\zeta_{\mathbf{l}}) =0
\quad
\text{for all $j \in \{1,\ldots, n-1\}$ and $\mathbf{l} \in \Xi_\ell'$}.
\end{equation}
Since
$(\lambda-1+m+\ell)\vartheta_j(\zeta_{\mathbf{l}}) 
=(\lambda-1+m+\ell)\ell_j\zeta_{\mathbf{l}}$,
Equation
\eqref{eqn:theta} holds if and only if $\ell=0$ or $\lambda = 1-(m+\ell)$.
Now the theorem follows from \eqref{eqn:param}
with the observation that
$\gL^{(n+1,n)}_{SL,1}=\gL^{(n+1,n)}_{M'A'}$ for $\ell =0$ and 
$\gL^{(n+1,n)}_{SL,2}=\gL^{(n+1,n)}_{M'A'}$ for $\lambda = 1-(m+\ell)$.

\end{proof}

\subsection{Step 4: 
Apply $\Rest_{x_n=0}\circ \symb^{-1}$ 
and
$F_c^{-1} \otimes \id_W$ 
to the solution 
$\psi \in \mathrm{Sol}(\mathfrak n_+;V,W)$.
}
\label{sec:Step4}

Observe that 
$\bD_{(m,\ell)}$ in \eqref{eqn:DSBO} and $\Phi_{(m,\ell)}$ 
in \eqref{eqn:Hom} are 
given by
\begin{align*}
\bD_{(m,\ell)}
&=
\Rest_{x_n=0} \circ
\frac{\partial^m}{\partial x_n^m}\sum_{\mathbf{l} \in \Xi_\ell'} \frac{\partial^\ell}{\partial x^{\mathbf{l}}}
\otimes 
\wy_{\mathbf{l}}\\
&=
\Rest_{x_n=0} \circ
\sum_{\mathbf{l} \in \Xi_\ell'} 
\symb^{-1}(\zeta_n^m\zeta_{\mathbf{l}})
\otimes 
\wy_{\mathbf{l}}\\
&= \Rest_{x_n=0}\circ \symb^{-1}(\psi_{(m,\ell)})
\end{align*}
and
\begin{equation*}
\Phi_{(m,\ell)}
= 
(N_n^-)^m
\sum_{\mathbf{l} \in \Xi_\ell'} 
N_{\mathbf{l}}^-
\otimes 
\widetilde{y}_{\mathbf{l}}
=\sum_{\mathbf{l} \in \Xi_\ell'} 
F_c^{-1}(\zeta_n^m\zeta_\mathbf{l})
\otimes 
\widetilde{y}_{\mathbf{l}}
=(F_c^{-1} \otimes \id_{W})(\psi_{(m,\ell)}).
\end{equation*}
\noindent
Now we are ready to prove Theorems \ref{thm:DSBO1a}, \ref{thm:DSBO2a}, \ref{thm:Hom1a},
and \ref{thm:Hom2a}.

\begin{proof}[Proof of 
Theorems \ref{thm:DSBO1a}, \ref{thm:DSBO2a}, \ref{thm:Hom1a},
and \ref{thm:Hom2a}]
By Theorem \ref{thm:abelian}, we have
\begin{align*}
\Diff_{G'}(I(\triv, \lambda)^\ga, I(\varpi,  \nu)^\beta)
&=
(\Rest_{x_n=0}\circ 
\symb^{-1})(\Sol(\fn_+; \triv_{\alpha, \lambda}, \varpi_{\beta, \nu})),\\
\Hom_{\fg', P'}(\Mpp(\varpi^\vee,-\nu)^\gb, \Mp(\triv, -\lambda)^\ga)
&=
(F_c^{-1}\otimes \id_{W})(\Sol(\fn_+; \triv_{\alpha, \lambda}, \varpi_{\beta, \nu})).
\end{align*}
Since $(\poly_{n-1}^\ell)^\vee \simeq \sym_{n-1}^\ell$, 
the proposed assertions follow from Theorem \ref{thm:Sol1}.
\end{proof}

We end this section by discussing the proof of Theorem \ref{thm:Hom3a}.

\begin{proof}[Proof of Theorem \ref{thm:Hom3a}]

Let $P_0' = M_0'A'N_+'$ be a Langlands decomposition of 
 the identity component of the parabolic subgroup $P'=M'A'N_+'$. 
For $(\varpi; \lambda, \nu) \in \Irr(SL(n-1,\R))_\fin\times \C^2$, 
we let
\begin{equation*}
\Sol(\fn_+; \triv_{\lambda}, \varpi_{\nu})_0
=
\{ \psi \in 
\Hom_{M_0'A'}(W^\vee\boxtimes \C_{-\nu}, 
\Pol(\fn_+)\otimes \C_{-\lambda}): 
\text{
$\psi$ solves the F-system \eqref{eqn:Fsys2}.}\}.
\end{equation*}
As $P_0'$ is connected, we have 
 \begin{equation*}
\Hom_{\fg', P_0'}\big(\Mpp(\sigma,r), \Mp(\triv, s)\big)
=
\Hom_{\fg'}\big(\Mpp(\sigma,r), \Mp(\triv, s)\big),
\end{equation*}
which yields  a linear isomorphism
\begin{equation}\label{eqn:HD0}
F_c \otimes \id_W \colon
\Hom_{\fg'}(\Mpp(\varpi^\vee, -\nu), \Mp(\triv,-\lambda))
\stackrel{\sim}{\To}
\Sol(\fn_+; \triv_{\lambda}, \varpi_{\nu})_0.
\end{equation}
Now Theorem \ref{thm:Hom3a} follows from the same arguments
in Sections \ref{sec:Step1}--\ref{sec:Step4}.
\end{proof}

For later convenience, we state
the classification of $\Sol(\fn_+; \triv_{\lambda}, \varpi_{\nu})_0$.
Put
\begin{align*}
\accentset{\circ}{\gL}^{(n+1,n)}_{SL,1}&:=\{
(\triv;\lambda, \lambda+m)
:  \lambda \in \C \; \text{and} \; m\in \Z_{\geq 0}\},\\
\accentset{\circ}{\gL}^{(n+1,n)}_{SL,2}&:=\{
(\poly^\ell_{n-1};1-(m+\ell), 1+\tfrac{\ell}{n-1}) :  m,  \ell\in \Z_{\geq 0}\}.
\end{align*}

\begin{prop}\label{prop:Sol30a}
We have 
\begin{equation*}
\Sol(\fn_+; \triv_{\lambda}, \varpi_{\nu})_0
=
\begin{cases}
\C\psi_{(m,0)} 
& \text{if $(\varpi; \lambda, \nu)\in 
\accentset{\circ}{\gL}^{(n+1,n)}_{SL,1}$,}\\[3pt]
\C \psi_{(m,\ell)} 
& \text{if $(\varpi; \lambda, \nu)\in
\accentset{\circ}{\gL}^{(n+1,n)}_{SL,2}$,}\\
\{0\} & \text{otherwise.}
\end{cases}
\end{equation*}
\end{prop}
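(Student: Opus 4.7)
The plan is to run the same three-step F-method argument used to prove Theorem \ref{thm:Sol1}, but with $M'A'$ replaced throughout by its identity component $M_0'A'$. Since $M_0' \simeq SL(n-1,\R)$ is connected, the sign character factors $\sgn^\alpha$ on $\Pol(\fn_+)_\alpha$ and $\sgn^\beta$ on $W_\beta^\vee$ become trivial upon restriction to $M_0'$, so the discrete parameters $\alpha,\beta$ simply drop out of the classification; what remains are precisely the continuous/highest-weight constraints, which are identical to the $M'A'$ case.

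Concretely, I would first redo the $M_0'A'$-decomposition of $\Pol(\fn_+)$. The computation in Lemma \ref{lem:MA} restricted to $M_0'$ yields
\begin{equation*}
(M_0', \Ad_\#, \C^m[\zeta_n]) \simeq (SL(n-1,\R), \triv, \C),
\quad
(M_0', \Ad_\#, \C^\ell[\zeta_1,\ldots,\zeta_{n-1}]) \simeq (SL(n-1,\R), \sym^\ell_{n-1}, S^\ell(\C^{n-1})),
\end{equation*}
while the $A'$-weights are unchanged. Consequently the analogue of \eqref{eqn:MA21} reads
\begin{equation*}
\Pol(\fn_+)\vert_{M_0'A'} \simeq \bigoplus_{m,\ell \in \Z_{\geq 0}} \sym^\ell_{n-1} \boxtimes \bigl(-(m+\tfrac{n}{n-1}\ell)\bigr),
\end{equation*}
with summands pairwise inequivalent. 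This gives the $M_0'A'$-analogue of Proposition \ref{prop:MA}: the space $\Hom_{M_0'A'}(W^\vee \otimes \C_{-\nu},\Pol(\fn_+) \otimes \C_{-\lambda})$ is nonzero (and then one-dimensional, spanned by $\psi_{(m,\ell)}$) precisely when $(\varpi;\lambda,\nu) = (\poly^\ell_{n-1};\lambda,\lambda+m+\tfrac{n}{n-1}\ell)$ for some $m,\ell \in \Z_{\geq 0}$.

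Finally, I would impose the F-system \eqref{eqn:Fsys2} on $\psi_{(m,\ell)}$. Using Proposition \ref{prop:dNj1} exactly as in the proof of Theorem \ref{thm:Sol1}, the system reduces to
\begin{equation*}
(\lambda - 1 + m + \ell)\,\vartheta_j(\zeta_{\mathbf{l}}) = 0
\quad \text{for all } j \in \{1,\ldots,n-1\},\; \mathbf{l} \in \Xi_\ell',
\end{equation*}
forcing either $\ell = 0$ (which gives the family $\accentset{\circ}{\gL}^{(n+1,n)}_{SL,1}$) or $\lambda = 1-(m+\ell)$ (which gives $\accentset{\circ}{\gL}^{(n+1,n)}_{SL,2}$). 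No genuine obstacle arises here; the proposition is essentially a bookkeeping variant of Theorem \ref{thm:Sol1}, and is exactly what is needed to feed into the isomorphism \eqref{eqn:HD0} used in the proof of Theorem \ref{thm:Hom3a}.
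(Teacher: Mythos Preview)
Your proposal is correct and is precisely the argument the paper has in mind: the paper's own proof reads ``Since the arguments are identical to Theorem \ref{thm:Sol1}, we omit the proof,'' and you have written out exactly that identical argument with $M'A'$ replaced by $M_0'A'$, observing that the sign factors disappear while the $A'$-weights and $SL(n-1,\R)$-types (hence the multiplicity-one property and the F-system computation) are unchanged.
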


\begin{proof}
Since
the arguments are identical to Theorem \ref{thm:Sol1},
we omit the proof.
\end{proof}

\section{Proofs for the classification and construction of $\bD$ and $\Phi$:
Case $n=2$}
\label{sec:proof2}

The aim of this section is to prove the theorems in Section \ref{sec:results} 
for the case $n=2$, namely,
Theorems \ref{thm:DSBO1b}, \ref{thm:DSBO2b}, \ref{thm:Hom1b},
\ref{thm:Hom2b}, and \ref{thm:Hom3b}. 
Throughout this section we assume $n=2$; in particular, we have 
$(G,G') = (SL(3,\R), SL(2,\R))$ and
$\Pol(\fn_+) = \C[\zeta_1,\zeta_2]$.

Observe that since $G'=SL(2,\R)$, the $M'$-part of 
the parabolic subgroup $P'=M'A'N_+'$ of $G'$ is given by $M'\simeq \Z/2\Z$;
thus,
\begin{equation*}
\Irr(P')_{\fin}\simeq 
\{\pm\} \times \{\triv\} \times \C.
\end{equation*}
\noindent
So, the space of solutions to the F-system in concern is 
\begin{align*}
\Sol(\fn_+; \triv_{\alpha, \lambda}, \triv_{\beta, \nu})
=
\{ \psi \in 
\Hom_{M'A'}(\C_{\beta}\boxtimes \C_{-\nu}, 
\Pol(\fn_+)_{\alpha}\otimes \C_{-\lambda}): 
\text{
$\psi$ solves \eqref{eqn:Fsys3} below.}\}.
\end{align*}
\begin{equation}\label{eqn:Fsys3}
\widehat{\dpi_{\lambda^*}}(N_1^+)\psi =0.
\end{equation}

In the present case, as opposed to the case $n\geq 3$, 
the space $\Hom_{M'A'}(\C_{\beta}\boxtimes \C_{-\nu}, 
\Pol(\fn_+)_{\alpha}\otimes \C_{-\lambda})$ 
could have higher multiplicity. 
Thus,
to simplify the exposition,
we first follow the recipe of the F-method  for 
$P_0' = M_0'A'N_+'=A'N_+'$. We shall consider the parity condition coming from
$M'$ in the end. 

As in the previous section, we put
\begin{equation*}
\Sol(\fn_+; \triv_{\lambda}, \triv_{\nu})_0
:=
\{ \psi \in 
\Hom_{A'}(\C_{-\nu}, 
\Pol(\fn_+)\otimes \C_{-\lambda}): 
\text{
$\psi$ solves \eqref{eqn:Fsys3}.}\}.
\end{equation*}
Then we shall proceed with the following steps.

\begin{enumerate}

\item[Step 1]
Classify and construct 
$\psi \in \Hom_{A'}(\C_{-\nu}, \Pol(\fn_+)\otimes \C_{-\lambda})$.
\vskip 0.1in

\item[Step 2]
Solve  \eqref{eqn:Fsys3}
for $\psi \in \Hom_{A'}(\C_{-\nu}, \Pol(\fn_+)\otimes \C_{-\lambda})$.
\vskip 0.1in

\item[Step 3]
Consider the parity condition on
 $\psi \in \Sol(\fn_+; \triv_{\lambda}, \triv_{\nu})_0$.
\vskip 0.1in

\item[Step 4]
Apply $\Rest_{x_2=0}\circ \symb^{-1}$ 
and
$F_c^{-1}$ 
to the solution $\psi \in
\Sol(\fn_+; \triv_{\alpha, \lambda}, \triv_{\beta, \nu})$.

\vskip 0.1in

\end{enumerate}

As the identity component $P_0'$ is considered, 
our arguments naturally include the proof of 
Theorem \ref{thm:Hom3b} as in the end of the previous section.

\subsection{Step 1: 
Classify and construct 
$\psi \in \Hom_{A'}(\C_{-\nu}, \Pol(\fn_+)\otimes \C_{-\lambda})$.
}
It follows from Lemma \ref{lem:MA} that we have
\begin{equation}\label{eqn:MA3a}
\C^m[\zeta_2] \C^\ell[\zeta_1] 
\simeq
-(m+2\ell),
\end{equation}
where  $-(m+2\ell)$ indicates the weight of the character of $A'$.

\begin{prop}\label{prop:MA2}
The following conditions on 
$(\lambda, \nu) \in \C^2$ are equivalent.
\begin{enumerate}
\item[\emph{(i)}]
$\Hom_{A'}(\C_{-\nu}, \Pol(\fn_+)\otimes \C_{-\lambda})\neq \{0\}$.
\item[\emph{(ii)}]
$\nu-\lambda \in \Z_{\geq 0}$.
\end{enumerate}
\end{prop}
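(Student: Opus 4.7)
The plan is to read off the $A'$-weight decomposition of $\Pol(\fn_+)\otimes\C_{-\lambda}$ and compare with the weight $-\nu$ on the source. By Lemma \ref{lem:MA}(3)--(4), together with the consequence \eqref{eqn:MA3a} that records the $A'$-weight of the bihomogeneous piece $\C^m[\zeta_2]\C^\ell[\zeta_1]$, we have as $A'$-modules the decomposition
\begin{equation*}
\Pol(\fn_+)\otimes \C_{-\lambda}
= \C[\zeta_1,\zeta_2]\otimes \C_{-\lambda}
\simeq \bigoplus_{m,\ell\in\Z_{\geq 0}} \C_{-(\lambda+m+2\ell)},
\end{equation*}
where each summand is one-dimensional.

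First I would record that $\Hom_{A'}(\C_{-\nu},\C_{-\mu})$ is nonzero exactly when $\nu=\mu$ (and then it is one-dimensional), so the space in question decomposes as
\begin{equation*}
\Hom_{A'}(\C_{-\nu},\Pol(\fn_+)\otimes \C_{-\lambda})
\simeq \bigoplus_{\substack{m,\ell\in\Z_{\geq 0}\\ m+2\ell=\nu-\lambda}} \C.
\end{equation*}
Hence this space is nonzero iff there exist $m,\ell\in\Z_{\geq 0}$ with $m+2\ell=\nu-\lambda$. The last condition forces $\nu-\lambda\in\Z_{\geq 0}$, and conversely any $k\in\Z_{\geq 0}$ can be written as $k=k+2\cdot 0$, proving the equivalence (i)$\Leftrightarrow$(ii).

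There is no real obstacle: the statement is a direct consequence of the $A'$-weight bookkeeping already established in Lemma \ref{lem:MA} and \eqref{eqn:MA3a}. The only thing worth flagging for later use (in Step 2, when the F-system is imposed) is that unlike the case $n\geq 3$, the multiplicity $\lfloor (\nu-\lambda)/2\rfloor+1$ of a given $A'$-weight in $\Pol(\fn_+)\otimes\C_{-\lambda}$ can exceed one, which is the source of the multiplicity-two phenomenon appearing in Theorems \ref{thm:DSBO1b} and \ref{thm:Hom1b}.
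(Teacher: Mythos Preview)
Your proof is correct and follows essentially the same approach as the paper: decompose $\Pol(\fn_+)\otimes\C_{-\lambda}$ into $A'$-weight spaces using \eqref{eqn:MA3a}, then read off when the weight $-\nu$ occurs. Your version is slightly more explicit in verifying that $\{m+2\ell:m,\ell\in\Z_{\geq 0}\}=\Z_{\geq 0}$, and your closing remark on the multiplicity $\lfloor(\nu-\lambda)/2\rfloor+1$ matches what the paper records immediately after the proposition.
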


\begin{proof}
By \eqref{eqn:MA3a}, 
the decomposition
$\C[\zeta_1, \zeta_2]\vert_{A'}$ is given as
\begin{align*}
\C[\zeta_1,\zeta_2]\vert_{A'}
=
\bigoplus_{k \in \Z_{\geq 0}}
\bigoplus_{m+2\ell =k} 
\C^m[\zeta_2] \C^\ell[\zeta_1]
\simeq
\bigoplus_{k \in \Z_{\geq 0}}
\bigoplus_{m+2\ell =k} (-k).
\end{align*}
Therefore,
\begin{equation*}
\big(\Pol(\fn_+)
\otimes \C_{-\lambda}\big)\vert_{A'}
\simeq 
\bigoplus_{k \in \Z_{\geq 0}}
\bigoplus_{m+2\ell =k} -(\lambda+k),
\end{equation*}
which shows the proposed assertion.
\end{proof}

As in \eqref{eqn:psi}, 
we write
\begin{equation*}
\psi_{(m, \ell)} = \zeta_2^m\zeta_1^\ell
\in \C^{m+\ell}[\zeta_1,\zeta_2].
\end{equation*}
Then,
\begin{align}\label{eqn:MA4}
\Hom_{A'}(\C_{-(\lambda+k)}, \Pol(\fn_+)\otimes \C_{-\lambda})
&=\spn_{\C}
\{
\psi_{(m,\ell)}: m+2\ell = k
\}\\
&=\spn_{\C}
\{
\psi_{(k-2\ell,\ell)}: \ell  = 0,\ldots, [\tfrac{k}{2}]
\}.
\end{align}
In particular, we have 
\begin{equation*}
\dim \Hom_{A'}(\C_{-(\lambda+k)}, \Pol(\fn_+)\otimes \C_{-\lambda})
=[\tfrac{k}{2}]+1.
\end{equation*}

\subsection{Step 2: 
Solve \eqref{eqn:Fsys3}
for $\psi \in \Hom_{A'}(\C_{-\nu}, \Pol(\fn_+)\otimes \C_{-\lambda})$.
}

The aim of this step is to determine $\Sol(\fn_+; \triv_{\lambda}, \triv_{\nu})_0$,
that is,
we wish to solve
\begin{equation*}
\widehat{\dpi_{\lambda^*}}(N_1^+)\psi =0,
\end{equation*}
which is equivalent to solving
\begin{equation*}
-\zeta_1\widehat{\dpi_{\lambda^*}}(N_1^+)\psi =0.
\end{equation*}

We put
\begin{align*}
\accentset{\circ}{\gL}^{(3,2)}_{SL,1}&:=\{
(\lambda, \lambda+m)
:  \lambda \in \C \; \text{and} \; m\in \Z_{\geq 0}\},\\
\accentset{\circ}{\gL}^{(3,2)}_{SL,2}&:=\{
(1-(m+\ell), 1+\ell) :  m,  \ell\in \Z_{\geq 0}\},\\
\accentset{\circ}{\gL}^{(3,2)}_{SL,+}&:=\{
(1-(m+\ell), 1+\ell) :  m\in \Z_{\geq 0} \; \text{and} \; \ell\in 1+\Z_{\geq 0}\}.
\end{align*}
We have 
\begin{equation*}
\accentset{\circ}{\gL}^{(3,2)}_{SL,+} \subset 
\accentset{\circ}{\gL}^{(3,2)}_{SL,2} \subset
\accentset{\circ}{\gL}^{(3,2)}_{SL,1}.
\end{equation*}

\begin{prop}\label{prop:MA3}
Let $n=2$. We have
\begin{equation*}
\Sol(\fn_+; \triv_{\lambda}, \triv_{\nu})_0
=
\begin{cases}
\C \psi_{(m,0)} &\text{if $(\lambda,\nu) \in 
\accentset{\circ}{\gL}^{(3,2)}_{SL,1}
\setminus \accentset{\circ}{\gL}^{(3,2)}_{SL,+}$, }\\
\C\psi_{(m+2\ell, 0)} \oplus \C\psi_{(m,\ell)}
&\text{if $(\lambda,\nu) \in \accentset{\circ}{\gL}^{(3,2)}_{SL,+}$,}\\
\{0\} &\text{otherwise}.
\end{cases}
\end{equation*}
\end{prop}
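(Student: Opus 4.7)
The plan is to follow Steps 1 and 2 of the recipe adapted to $n=2$, where the relevant $A'$-isotypic components in $\Pol(\fn_+)$ are no longer multiplicity-free. The crucial observation is that although $\Hom_{A'}(\C_{-\nu},\Pol(\fn_+)\otimes\C_{-\lambda})$ can be multi-dimensional, the Fourier-transformed operator $-\zeta_1\,\widehat{\dpi_{\lambda^*}}(N_1^+)$ is diagonal in the monomial basis $\{\psi_{(k-2\ell,\ell)}\}$, so the F-system decouples into independent scalar conditions.

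First I would reduce to a finite-dimensional linear problem. By Proposition \ref{prop:MA2}, $\Hom_{A'}(\C_{-\nu},\Pol(\fn_+)\otimes\C_{-\lambda})=\{0\}$ unless $\nu - \lambda = k \in \Z_{\geq 0}$, and then \eqref{eqn:MA4} identifies this Hom-space with the span of $\{\psi_{(k-2\ell,\ell)} : \ell = 0,\dots,[\tfrac{k}{2}]\}$. Any candidate solution can therefore be written as $\psi = \sum_{\ell=0}^{[\tfrac{k}{2}]} c_\ell\,\psi_{(k-2\ell,\ell)}$, and it suffices to decide which coefficient vectors $(c_\ell)$ satisfy the F-system \eqref{eqn:Fsys3}.

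Next I would apply the operator monomial by monomial. Each $\psi_{(k-2\ell,\ell)} = \zeta_2^{k-2\ell}\zeta_1^\ell$ is homogeneous of total $\zeta$-degree $k-\ell$, so Proposition \ref{prop:dNj1} together with $E_\zeta\vert_{\Pol^{k-\ell}(\fn_+)} = (k-\ell)\id$ yields
\begin{equation*}
-\zeta_1\,\widehat{\dpi_{\lambda^*}}(N_1^+)\,\psi_{(k-2\ell,\ell)}
= (\lambda + k - \ell - 1)\,\vartheta_1\psi_{(k-2\ell,\ell)}
= (\lambda + k - \ell - 1)\,\ell\,\psi_{(k-2\ell,\ell)}.
\end{equation*}
Since the monomials $\psi_{(k-2\ell,\ell)}$ are linearly independent, the F-system decouples into
\begin{equation*}
(\lambda + k - \ell - 1)\,\ell\,c_\ell = 0, \qquad \ell = 0,1,\dots,[\tfrac{k}{2}].
\end{equation*}

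Finally I would perform the case analysis. The equation at $\ell = 0$ is automatic, so $\psi_{(k,0)} = \psi_{(m,0)}$ with $m := k$ is always a solution, which accounts for the first case of the proposition for generic $(\lambda,\nu) \in \accentset{\circ}{\gL}^{(3,2)}_{SL,1}$. For $\ell \geq 1$, the scalar condition forces $c_\ell = 0$ unless $\lambda = 1 - (k - \ell)$; setting $m := k - 2\ell \geq 0$, this is precisely $(\lambda,\nu) = (1-(m+\ell),\,1+\ell)$, the defining relation of $\accentset{\circ}{\gL}^{(3,2)}_{SL,+}$. Because $\nu = 1+\ell$ determines $\ell \geq 1$ uniquely, at most one extra basis vector $\psi_{(m,\ell)}$ can join the ever-present $\psi_{(k,0)} = \psi_{(m+2\ell,0)}$, giving the two-dimensional space $\C\psi_{(m+2\ell,0)}\oplus\C\psi_{(m,\ell)}$; outside $\accentset{\circ}{\gL}^{(3,2)}_{SL,1}$ the ambient Hom-space itself is zero. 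The proof is essentially a direct computation and the main subtlety, rather than an obstacle, is simply to recognise that the multiplicity-two phenomenon arises from the coexistence of the generic solution $\psi_{(k,0)}$ with a singular-parameter solution $\psi_{(m,\ell)}$, and to verify via the uniqueness of $\ell = \nu - 1$ that no further solutions appear.
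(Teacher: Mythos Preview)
Your proposal is correct and follows essentially the same approach as the paper: reduce to the finite-dimensional span of $\{\psi_{(k-2\ell,\ell)}\}$ via Proposition~\ref{prop:MA2} and \eqref{eqn:MA4}, apply \eqref{eqn:dNj3} monomial by monomial to obtain the decoupled scalar conditions $(\lambda+k-\ell-1)\,\ell\,c_\ell=0$, and carry out the same case analysis. Your remark that $\ell=\nu-1$ is uniquely determined is a slightly crisper way to phrase why at most one extra basis vector can survive, but the substance is identical to the paper's argument.
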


\begin{proof}
As
$\Sol(\fn_+; \triv_{\lambda}, \triv_{\nu})_0\subset
\Hom_{A'}(\C_{-\nu}, \Pol(\fn_+)\otimes \C_{-\lambda})$,
it follows from Proposition \ref{prop:MA2} that if 
$\Sol(\fn_+; \triv_{\lambda}, \triv_{\nu})_0\neq \{0\}$, then
$\nu- \lambda \in \Z_{\geq 0}$. 

Let $\nu- \lambda =k \in \Z_{\geq 0}$. 
It then follows from \eqref{eqn:MA4} that 
\begin{equation*}
\psi \in \Hom_{A'}(\C_{-(\lambda+k)}, \Pol(\fn_+)\otimes \C_{-\lambda})
\end{equation*}
is of the form
\begin{equation*}
\psi = 
\sum_{m+2\ell = k}
c_\ell\, \psi_{(m, \ell)}
=
\sum_{\ell=0}^{[k/2]}
c_\ell\, \psi_{(k-2\ell, \ell)}
\end{equation*}
for some $c_\ell \in \C$.
Since $\psi_{(k-2\ell, \ell)}=\zeta_2^{k-2\ell}\zeta_1^\ell \in \C^{k-\ell}[\zeta_1,\zeta_2]$,
by \eqref{eqn:dNj3}, we have 
\begin{align*}
-\zeta_1\widehat{\dpi_{\lambda^*}}(N_1^+)\psi
&=
\sum_{\ell =0}^{[k/2]}c_\ell \,
(-\zeta_1\widehat{\dpi_{\lambda^*}}(N_1^+)\psi_{(k-2\ell, \ell)})\\
&=
\sum_{\ell =0}^{[k/2]}c_\ell \,
(\lambda-1+k-\ell)\vartheta_1(\psi_{(k-2\ell, \ell)})\\
&=
\sum_{\ell =0}^{[k/2]}
 c_\ell \, \ell (\lambda-1+k-\ell)\psi_{(k-2\ell, \ell)}.
\end{align*}

As the degrees of $\psi_{(k-2\ell,\ell)}$ are all different, the polynomials
$\psi_{(k-2\ell, \ell)}$ are linearly independent.
Therefore,
$-\zeta_1\widehat{\dpi_{\lambda^*}}(N_1^+)\psi=0$
if and only if 
\begin{equation*}
c_\ell \, \ell(\lambda-1+k-\ell)=0
\quad 
\text{for all $\ell \in \{0,1,\ldots, [\tfrac{k}{2}]\}$},
\end{equation*}
which 
is further equivalent to the following conditions:
\begin{enumerate}
\item[(I)] 
$c_\ell =0$ for $\ell \neq 0$, or
\item[(II)]
$\lambda = 1-(k-\ell_0)$ for some $\ell_0$ and $c_j=0$ for $j \neq 0, \ell_0$.
\end{enumerate}

First, suppose that Case (I) holds. 
If $\ell=0$, then $k=m+2\cdot 0 = m$. Therefore, 
\begin{equation*}
\psi = 
\sum_{\ell =0}^{[k/2]}
 c_\ell \, \psi_{(k-2\ell, \ell)}
=c_0\, \psi_{(k,0)}
=c_0\, \psi_{(m,0)}.
\end{equation*}
Since $\lambda \in \C$ can be any complex number,  we have
\begin{equation*}
\psi_{(m,0)}  \in 
\Hom_{A'}(\C_{-(\lambda+m)}, \Pol(\fn_+)\otimes \C_{-\lambda})
\quad
\text{for all $\lambda \in \C$}.
\end{equation*}
Since $m=k \in \Z_{\geq 0}$ is arbitrary, this shows that
if $(\lambda,\nu)=(\lambda,\lambda-m) \in \accentset{\circ}{\gL}^{(3,2)}_{SL,1}$, then
\begin{equation*}
\C\psi_{(m,0)} \subset \Sol(\fn_+; \triv_{\lambda}, \triv_{\nu})_0.
\end{equation*}

Next, suppose that Case (II) holds.
Write $m_0=k-2\ell_0$. Then,
\begin{equation*}
\psi = 
\sum_{\ell =0}^{[k/2]}
c_\ell\, \psi_{(k-2\ell, \ell)}
=c_0\, \psi_{(k,0)} + c_{\ell_0}\, \psi_{(k-2\ell_0,\ell_0)}
=c_0\, \psi_{(m_0+2\ell_0,0)} + c_{\ell_0}\, \psi_{(m_0,\ell_0)}.
\end{equation*}
Since $\lambda = 1-(k-\ell_0)=1-(m_0+\ell_0)$ and $\lambda+k = 1+\ell_0$,
we have
\begin{equation*}
\psi_{(m_0+2\ell_0,0)}, \psi_{(m_0,\ell_0)} \in 
\Hom_{A'}(\C_{-(1+\ell_0)}, \Pol(\fn_+)\otimes \C_{-(1-(m_0+\ell_0))}).
\end{equation*}
As for Case (I), since $k = m_0 + \ell_0$ is arbitrary, this shows that
if $(\lambda,\nu)=(1-(m+\ell), 1+\ell) \in \accentset{\circ}{\gL}^{(3,2)}_{SL,+}$,
then 
\begin{equation*}
\C\psi_{(m+2\ell,0)} \oplus \C \psi_{(m,\ell)} \subset
\Sol(\fn_+; \triv_{\lambda}, \triv_{\nu})_0.
\end{equation*}

Since
$\accentset{\circ}{\gL}^{(3,2)}_{SL,+} \subset 
\accentset{\circ}{\gL}^{(3,2)}_{SL,2} \subset
\accentset{\circ}{\gL}^{(3,2)}_{SL,1}$,
it follows from the arguments on (I) and (II) that
\begin{equation*}
\dim \Sol(\fn_+; \triv_{\lambda}, \triv_{\nu})_0=
\begin{cases}
1 & \text{if $(\lambda,\nu) \in 
\accentset{\circ}{\gL}^{(3,2)}_{SL,1}
\setminus \accentset{\circ}{\gL}^{(3,2)}_{SL,+}$},\\
2& \text{if $(\lambda,\nu) \in \accentset{\circ}{\gL}^{(3,2)}_{SL,+}$},\\
0 & \text{otherwise}.
\end{cases}
\end{equation*}
This concludes the proposition.
\end{proof}

\subsection{Step 3: 
Consider the parity condition on
 $\psi \in \Sol(\fn_+; \triv_{\lambda}, \triv_{\nu})_0$.}
 Recall from \eqref{eqn:SL1b} and \eqref{eqn:SL2c} that
 we have  
 \begin{align*}
\gL^{(3,2)}_{SL,1}&=\{
(\alpha, \alpha+m;\triv;\lambda, \lambda+m)
: \alpha \in \{\pm\}, \lambda \in \C,\text{and} \; m\in \Z_{\geq 0}\},\\
\gL^{(3,2)}_{SL,+}&=\{
(\alpha, \alpha+m; \triv; 1-(m+\ell), 1+\ell) : 
\alpha \in \{\pm\}, m\in \Z_{\geq 0}, \; \text{and} \; \ell\in 1+\Z_{\geq 0}\}.
\end{align*}

 \begin{prop}\label{prop:MA4}
 We have 
 \begin{equation*}
\Sol(\fn_+; \triv_{\ga, \lambda}, \triv_{\beta, \nu})
=
\begin{cases}
\C \psi_{(m,0)} &\text{if $(\lambda,\nu) \in 
\gL^{(3,2)}_{SL,1}\setminus \gL^{(3,2)}_{SL,+}$, }\\
\C\psi_{(m+2\ell, 0)} \oplus \C\psi_{(m,\ell)}
&\text{if $(\lambda,\nu) \in \gL^{(3,2)}_{SL,+}$,}\\
\{0\} &\text{otherwise}.
\end{cases}
 \end{equation*}
 \end{prop}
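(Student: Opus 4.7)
The plan is to derive Proposition \ref{prop:MA4} from Proposition \ref{prop:MA3} by imposing the parity condition coming from the disconnected component of $M' \simeq \Z/2\Z$. Since every $M'A'$-equivariant map is automatically $A'$-equivariant, there is an inclusion
\begin{equation*}
\Sol(\fn_+; \triv_{\alpha,\lambda}, \triv_{\beta,\nu}) \subset \Sol(\fn_+; \triv_\lambda, \triv_\nu)_0,
\end{equation*}
so it suffices to determine which of the basis elements $\psi_{(m,0)}$, $\psi_{(m+2\ell, 0)}$, $\psi_{(m,\ell)}$ identified in Proposition \ref{prop:MA3} also satisfy the $M'$-equivariance.

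First I would compute the $M'$-character of each $\psi_{(m,\ell)} = \zeta_2^m \zeta_1^\ell$. By Lemma \ref{lem:MA}(1) the action on $\C^m[\zeta_2]$ is $\sgn^m$, and by Lemma \ref{lem:MA}(2) combined with Remark \ref{rem:sym0826a} the action on $\C^\ell[\zeta_1]$ is $\sgn^\ell \otimes \sym^\ell_1 \simeq \triv$ (because $\sgn^\ell(\pm 1)(\pm 1)^\ell = 1$). Hence $M'$ acts on $\psi_{(m,\ell)}$ through the character $\sgn^m$, which depends only on the $\zeta_2$-exponent. In particular both $\psi_{(m,\ell)}$ and $\psi_{(m+2\ell,0)}$ carry the \emph{same} $M'$-character $\sgn^m$, since $2\ell$ is even.

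Next, the identification $\Pol(\fn_+)_\alpha = \sgn^\alpha \otimes \Pol(\fn_+)$ shows that the $M'$-equivariance $\psi \in \Hom_{M'}(\C_\beta, \Pol(\fn_+)_\alpha)$ forces $\beta = \alpha + m$. Since this is precisely the defining $(\alpha,\beta)$-condition in both $\gL^{(3,2)}_{SL,1}$ and $\gL^{(3,2)}_{SL,+}$ (compared to their $\accentset{\circ}\gL$-counterparts which ignore the $\pm$-parity), Proposition \ref{prop:MA3} translates directly into the stated classification: a single copy $\C\psi_{(m,0)}$ in the generic case, and two copies $\C\psi_{(m+2\ell,0)}\oplus \C\psi_{(m,\ell)}$ in the singular case.

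The only point requiring attention is the consistency in the singular case: one must check that the two solutions $\psi_{(m+2\ell,0)}$ and $\psi_{(m,\ell)}$ carry the same $M'$-character, so that they are not cut down from two dimensions to one when passing from $A'$- to $M'A'$-equivariance. This is exactly where the parity $2\ell \equiv 0 \pmod 2$ plays a role, and is the reason the multiplicity-two phenomenon genuinely survives in the $SL$-case for $n=2$.
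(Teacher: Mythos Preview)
Your proof is correct and follows essentially the same approach as the paper: both arguments reduce to Proposition \ref{prop:MA3} and then invoke Lemma \ref{lem:MA} together with Remark \ref{rem:sym0826a} to identify the $M'$-character of $\psi_{(m,\ell)}$ as $\sgn^m$, yielding the parity condition $\beta=\alpha+m$. Your added remark that $\psi_{(m+2\ell,0)}$ and $\psi_{(m,\ell)}$ share the same $M'$-character (so the two-dimensional space is preserved under the $M'$-constraint) makes explicit a point the paper leaves implicit.
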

 
 \begin{proof}
For $n=2$,
it follows from Lemma \ref{lem:MA} and Remark \ref{rem:sym0826a} that
 $M'$ acts on $\C\psi_{(m,0)}=\C\zeta_2^m$ and 
 $\C\psi_{(m,\ell)}=\C \zeta_2^m\zeta_1^\ell$
 by $\sgn^m \otimes \triv$ and 
 $\sgn^{m+\ell}\otimes \sym^\ell_1 = \sgn^m \otimes \triv$, respectively.
Proposition \ref{prop:MA3} then concludes the assertion.
\end{proof}

\subsection{Step 4: 
Apply $\Rest_{x_2=0}\circ \symb^{-1}$ 
and
$F_c^{-1}$ 
to the solution $\psi \in\Sol(\fn_+; \triv_{\alpha, \lambda}, \triv_{\beta, \nu})$
}

Now we finish the proof of the theorems in concern.

\begin{proof}[Proof of 
Theorems \ref{thm:DSBO1b}, \ref{thm:DSBO2b}, \ref{thm:Hom1b},
\ref{thm:Hom2b}, and \ref{thm:Hom3b}]
As in Section \ref{sec:Step4}, we apply $\Rest_{x_2=0}\circ \symb^{-1}$ and $F_c^{-1}$
to the polynomial solutions $\psi$ in Proposition \ref{prop:MA4} to 
obtain
Theorems \ref{thm:DSBO1b}, \ref{thm:DSBO2b}, \ref{thm:Hom1b},
and \ref{thm:Hom2b}. 
The application of  $F_c^{-1}$ to $\psi$ in Proposition \ref{prop:MA3}
concludes Theorem \ref{thm:Hom3b}. This ends the proof. 
\end{proof}

\section{Factorization identities of $\bD_{(m,\ell)}$ and $\Phi_{(m,\ell)}$}
\label{sec:factor1}

The aim of this section is to show the factorization identities of 
differential symmetry breaking operators $\bD_{(m,\ell)}$
and $(\fg', P')$-homomorphisms $\Phi_{(m,\ell)}$.
Such factorization identities are obtained in
Theorem \ref{thm:Emb} for $\Phi_{(m,\ell)}$ and 
 Theorem \ref{thm:Proj} for $\bD_{(m,\ell)}$.
Throughout this section we assume $n \geq 2$, unless otherwise stated.

\subsection{$G$-intertwining differential operators $\D_k$ 
and $(\fg, P)$-homomorphisms $\varphi_k$}
\label{sec:71}

As preliminaries,
we first recall from \cite{KuOr24} the classification of $G$-intertwining differential 
operators 
\begin{equation*}
\D \in \Diff_G(I(\triv, \lambda)^\alpha, I(\xi, \tau)^\delta)
\end{equation*}
and $(\fg, P)$-homomorphisms 
\begin{equation*}
\varphi \in
\Hom_{\fg, P}(\Mp(\tau,u)^\delta, \Mp(\triv, s)^\ga).
\end{equation*}

We define $\Lambda^{n+1}_{SL} 
\subset \{\pm\}^2 \times \Irr(SL(n,\R))_{\fin} \times \C^2$ by means of
\begin{equation*}
\Lambda^{n+1}_{SL}:=\{
(\alpha, \alpha+k;\poly_{n}^k; 1-k, 
1+\tfrac{k}{n} ): \alpha \in \{\pm\} \; \text{and} \; k\in \Z_{\geq 0}\}.
\end{equation*}

\noindent
For $k \in \Z_{\geq 0}$, we set
\begin{equation*}
\Xi_k:=\{(k_1, \ldots, k_{n-1}, k_n) \in (\Z_{\geq 0})^{n} : \sum_{j=1}^{n} 
k_j= k\}
\end{equation*}
and define $\D_k \in \Diff_\C(C^\infty(\R^{n}), C^\infty(\R^{n})\otimes 
\C^k[y_1, \ldots, y_{n-1}, y_{n}])$ as
\begin{equation}\label{eqn:IDO}
\D_k := 
\sum_{\mathbf{k} \in \Xi_k} \frac{\partial^k}{\partial x^{\mathbf{k}}}
\otimes \widetilde{y}_{\mathbf{k}}.
\end{equation}

\noindent
For $k=0$, we understand $\D_0$ as the identity operator $\D_0 = \id$.

\begin{rem}
The differential operator $\D_k$ can also be expressed as follows. 
For $\mathbf{k}, \mathbf{k}' \in \Xi_k$ with
$\mathbf{k}= (k_1, \ldots, k_{n-1}, k_n)$ and  
$\mathbf{k}'= (k_1', \ldots, k_{n-1}', k_n')$,
write
\begin{equation*}
\mathbf{k}+\mathbf{k}' = 
(k_1+k_1', \ldots, k_{n-1}+k_{n-1}', k_n+k_n').
\end{equation*}
We then define
a multiplication on 
$
\C[\tfrac{\partial}{\partial x_1}, \ldots, \tfrac{\partial}{\partial x_{n-1}},
\tfrac{\partial}{\partial x_n}]
\otimes
\C[y_1, \ldots, y_{n-1}, y_{n}]
$
by
\begin{equation*}
(\frac{\partial^k}{\partial x^{\mathbf{k}}} \otimes y_{\mathbf{k}})
\cdot
(\frac{\partial^k}{\partial x^{\mathbf{k}'}} \otimes y_{\mathbf{k}'})
=
\frac{\partial^{2k}}{\partial x^{\mathbf{k}+\mathbf{k}'}} 
\otimes y_{\mathbf{k}+\mathbf{k}'}.
\end{equation*}
Then $\D_k$ can be given by
\begin{equation*}
\D_k 
=
\frac{1}{k!} 
\big(
\sum_{j=1}^n \tfrac{\partial}{\partial x_j} \otimes y_j
\big)^k.
\end{equation*}

\end{rem}

\begin{thm}[{\cite[Thm.\ 4.5]{KuOr24}}]
For $(\ga, \delta; \xi; \lambda, \tau) \in \{\pm\}^2 \times 
\Irr(SL(n,\R)) \times \C^2$,
we have
\begin{equation*}
\Diff_G(I(\triv, \lambda)^\alpha, I(\xi, \tau)^\delta)
=
\begin{cases}
\C\id & \text{if $(\delta, \xi, \tau) = (\alpha, \triv, \lambda)$,}\\
\C \D_k & \text{if $(\alpha, \delta;\xi; \lambda, \tau)\in \Lambda^{n+1}_{SL}$,}\\
\{0\} & \text{otherwise.}
\end{cases}
\end{equation*}
\end{thm}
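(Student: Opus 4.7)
The plan is to apply the recipe of the F-method from Section \ref{sec:recipe} to the diagonal case $G' = G$, $P' = P$. By the duality theorem (Theorem \ref{thm:duality}) combined with Theorems \ref{thm:Fmethod} and \ref{thm:abelian}, classifying $\Diff_G(I(\triv,\lambda)^\alpha, I(\xi,\tau)^\delta)$ reduces to classifying polynomial solutions
$$
\psi \in \Hom_{MA}(V^\vee \boxtimes \C_{-\tau},\, \Pol(\fn_+)_\alpha \otimes \C_{-\lambda})
$$
(where $V$ is the representation space of $\sgn^\delta \otimes \xi$) to the F-system
$$
\widehat{d\pi_{\lambda^*}}(N_j^+)\psi = 0 \quad \text{for all } j \in \{1,\ldots, n\}.
$$
The only change from the symmetry-breaking setting of Section \ref{sec:proof1} is that $j$ now ranges over the full index set $\{1,\ldots, n\}$ rather than $\{1,\ldots, n-1\}$, which will make the F-system far more restrictive.

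For Step~1, Proposition \ref{prop:dNj1} rewrites the system as $-\zeta_j\widehat{d\pi_{\lambda^*}}(N_j^+)\psi = (\lambda-1+E_\zeta)\vartheta_j\psi = 0$, so on $\Pol^k(\fn_+)$ it becomes $(\lambda-1+k)\vartheta_j\psi = 0$. For Step~2, I would redo the parity-and-weight computation of Lemma \ref{lem:MA} with $M \simeq SL^\pm(n,\R)$ in place of $M' \simeq SL^\pm(n-1,\R)$, obtaining
$$
(M,\Ad_\#,\C^k[\zeta_1,\ldots,\zeta_n]) \simeq (SL^\pm(n,\R),\, \sgn^k \otimes \sym^k_n,\, S^k(\C^n)),
$$
with $A = \exp(\R\wH_0)$ acting by the character $\chi^{-\frac{n+1}{n}k}$; the factor $\tfrac{n+1}{n}$ here is forced by the normalization $[\wH_0, N_j^+] = \tfrac{n+1}{n}N_j^+$ coming from \eqref{eqn:nR}. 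Consequently the $\Hom_{MA}$-space is at most one-dimensional, and it is nonzero exactly when $(\delta,\xi,\tau) = (\alpha+k,\,\poly^k_n,\,\lambda + \tfrac{n+1}{n}k)$ for some $k \in \Z_{\geq 0}$, in which case it is spanned by $\psi_k := \sum_{\mathbf{k} \in \Xi_k} \zeta_{\mathbf{k}} \otimes \widetilde{y}_{\mathbf{k}}$, where $\zeta_{\mathbf{k}} := \zeta_1^{k_1}\cdots \zeta_n^{k_n}$.

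For Step~3, substituting $\psi_k$ into the F-system yields $(\lambda-1+k) k_j \zeta_{\mathbf{k}} \otimes \widetilde{y}_{\mathbf{k}}$ for each $j$ and each $\mathbf{k} \in \Xi_k$; this vanishes for all $(j,\mathbf{k})$ iff either $k = 0$ or $\lambda = 1 - k$. The case $k = 0$ gives $(\delta,\xi,\tau) = (\alpha,\triv,\lambda)$ with $\symb^{-1}(\psi_0) = \id$. For $k \geq 1$ the constraint $\lambda = 1-k$ forces $\tau = 1-k + \tfrac{n+1}{n}k = 1 + \tfrac{k}{n}$, placing the parameters precisely in $\Lambda^{n+1}_{SL}$, and Step~4 yields $\symb^{-1}(\psi_k) = \D_k$ via \eqref{eqn:IDO}. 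No essentially new difficulty arises beyond the symmetry-breaking case worked out in Section \ref{sec:proof1}; the main point requiring careful verification is the $M$-parity $\sgn^k$ and $A$-weight $-\tfrac{n+1}{n}k$ in Step~2, since these determine both the precise shape of $\Lambda^{n+1}_{SL}$ and the pairing $\delta = \alpha + k$.
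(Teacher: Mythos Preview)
Your proposal is correct and follows exactly the F-method recipe that the cited reference \cite{KuOr24} uses; indeed the paper itself does not prove this theorem but merely recalls it, and your Steps~1--4 mirror precisely the arguments of Section~\ref{sec:proof1} specialized to the diagonal case $G'=G$, $P'=P$ (compare also Section~\ref{sec:GL-proof2}, where the paper sketches the analogous $GL$-computation and explicitly says the rest is ``essentially the same as \cite[Sect.~5]{KuOr24} or that of Theorem~\ref{thm:Sol1}''). Your identification of the $MA$-decomposition and the weight $-\tfrac{n+1}{n}k$ is correct and matches Lemma~\ref{lem:GL-MA2}~(2) restricted to $A_1$.
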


For $(\fg, P)$-homomorphisms $\varphi$, 
we first define $\Lambda^{n+1}_{(\fg, P)} \subset 
\{\pm\}^2\times \Irr(\f{sl}(n,\C))_{\fin} \times \C^2$ by
\begin{equation}\label{eqn:gPn}
\Lambda^{n+1}_{(\fg, P)}:=\{
(\ga,\ga+k;\sym_{n}^k; k-1,  -(1+\tfrac{k}{n})) : \alpha \in \{\pm\} \; \text{and} \; k\in \Z_{\geq 0}\}.
\end{equation}

We define $\varphi_k \in 
\Hom_\C(S^k(\C^{n}), S^k(\fn_-))$ by means of
\begin{equation}\label{eqn:Hom2}
\varphi_k 
:= \sum_{\mathbf{k} \in \Xi_k} 
N_{\mathbf{k}}^-
\otimes 
(e_{\mathbf{k}})^\vee
= \sum_{\mathbf{k} \in \Xi_k} 
N_{\mathbf{k}}^-
\otimes 
\widetilde{y}_{\mathbf{k}}.
\end{equation}

\begin{thm}[{\cite[Thm.\ 4.8]{KuOr24}}]
\label{thm:Hom}
For 
$(\ga,\delta; \tau; s, u) \in \{\pm\}^2\times \Irr(SL(n,\R))_{\fin} \times \C^2$,
we have
\begin{equation*}
\Hom_{\fg, P}(\Mp(\tau,u)^\delta, \Mp(\triv, s)^\ga)
=
\begin{cases}
\C\id & \text{if $(\delta, \tau, u) = (\ga, \triv, s)$,}\\
\C \varphi_k & \text{if $(\ga, \delta, \tau; s, u)\in \gL^{n+1}_{(\fg,P)}$,}\\
\{0\} & \text{otherwise.}
\end{cases}
\end{equation*}
\end{thm}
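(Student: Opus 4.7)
Proof proposal. I would prove this classification via the F-method, mirroring the arguments of Sections \ref{sec:proof1} and \ref{sec:proof2}, but applied to the ``trivial'' pair obtained by setting $G' = G$ and $P' = P$. By the duality theorem (Theorem \ref{thm:duality}) together with Theorem \ref{thm:Fmethod}, there is a natural linear isomorphism
\begin{equation*}
\Hom_{\fg, P}(\Mp(\tau, u)^\delta, \Mp(\triv, s)^\ga) \;\stackrel{\sim}{\To}\; \Sol(\fn_+; \triv_{\ga, -s}, \tau^\vee_{\delta, -u}),
\end{equation*}
where the F-system now requires $\widehat{d\pi_{\lambda^*}}(N_j^+)\psi = 0$ for \emph{every} $j \in \{1, \ldots, n\}$, i.e., for the entire abelian nilradical $\fn_+$, not only $\fn_+'$.

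The first step is to decompose $\Pol(\fn_+)$ as an $MA$-module. Since $M_0 \simeq SL(n, \R)$ acts on $\fn_+ \simeq \C^n$ via (the dual of) the standard representation and $\wH_0$ acts on $\fn_+$ with eigenvalue $(n+1)/n$, each homogeneous component is $MA$-irreducible and isomorphic to $(\sgn^k \otimes \sym^k_n) \boxtimes \chi^{-k(n+1)/n}$. By Schur's lemma, $\Hom_{MA}(W^\vee \otimes \C_{-\nu}, \Pol(\fn_+) \otimes V^\vee)$ is at most one-dimensional, and is nonzero exactly when $\tau = \sym^k_n$, $\delta = \ga + k$, and the $A$-weights match. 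An explicit generator is
\begin{equation*}
\psi_k := \sum_{\mathbf{k} \in \Xi_k} \zeta^{\mathbf{k}} \otimes \widetilde{y}_{\mathbf{k}},
\end{equation*}
where $\mathbf{k}$ ranges over $n$-tuples of nonnegative integers summing to $k$.

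The second step is to solve the F-system. By Proposition \ref{prop:dNj1}, the operator $-\zeta_j \widehat{d\pi_{\lambda^*}}(N_j^+)$ acts on $\Pol^k(\fn_+)$ as $(\lambda - 1 + k)\vartheta_j$ for every $j \in \{1, \ldots, n\}$. Applied to $\psi_k$, this reduces the F-system to $(\lambda - 1 + k)\vartheta_j(\zeta^{\mathbf{k}}) = 0$ for all $j$ and $\mathbf{k}$, exactly as in the proof of Theorem \ref{thm:Sol1}. For $k = 0$ this is automatic, producing the identity map on $\Mp(\triv, s)^\ga$; for $k \geq 1$ it forces $\lambda = 1 - k$, i.e., $s = k - 1$. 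The $A$-weight matching $u = s - k(n+1)/n$ then yields $u = -(1 + k/n)$, giving precisely the parameter set $\gL^{n+1}_{(\fg, P)}$. Applying $F_c^{-1} \otimes \id_W$ to $\psi_k$ yields $\varphi_k$ as defined in \eqref{eqn:Hom2}.

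The main obstacle is the careful bookkeeping of the $MA$-module structure on $\Pol(\fn_+)$, in particular identifying $\Pol^k(\fn_+)$ with $\sym^k_n$ rather than $\poly^k_n$ (which are distinct $SL(n,\R)$-representations for $n \geq 3$), together with the translation between the F-method parameters $(\lambda, \nu)$ and the Verma parameters $(s, u)$ via the duality. Once these identifications are in place, however, the F-system collapses to a trivial Euler-operator condition because $\fn_+$ is abelian and every $\widehat{d\pi_{\lambda^*}}(N_j^+)$ is proportional to $\vartheta_j$ with the same scalar factor $\lambda - 1 + E_\zeta$; consequently no multiplicity-two phenomenon like the one occurring for $n = 2$ in the $(G, G')$-case can arise here.
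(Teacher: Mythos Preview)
Your approach is correct, but note that the present paper does not actually prove this theorem: it is quoted verbatim from \cite[Thm.~4.8]{KuOr24} as a known input to Section~\ref{sec:71}, so there is no ``paper's own proof'' to compare against. What you have written is essentially a reconstruction of the argument one expects in \cite{KuOr24}, namely the F-method specialized to the degenerate pair $(G',P')=(G,P)$, and it parallels exactly the proof of Theorem~\ref{thm:Sol1} in the present paper (with $\fn_+'$ replaced by the full $\fn_+$); the $MA$-decomposition you need is the $SL$-analogue of Lemma~\ref{lem:GL-MA2} and formula~\eqref{eqn:GL-MA4}, and the absence of multiplicity two follows because the $\Pol^k(\fn_+)$ are pairwise inequivalent irreducible $MA$-modules, just as you say.
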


As for $\Phi_{(m,\ell)}$ in \eqref{eqn:Phi-def}, we regard 
$\varphi_k$ as a map
\begin{equation*}
\varphi_k \in 
\Hom_{\fg, P}(\Mp(\tau,u)^\delta, \Mp(\triv, s)^\ga)
\end{equation*}
defined by
\begin{equation}\label{eqn:Phi-def2}
\varphi_k(u\otimes w) := u\varphi_k(w)
\quad 
\text{for $u \in \Cal{U}(\fg)$ and $w \in S^k(\C^{n})$.}
\end{equation}

In what follows, we write $\D_k$ and $\D_k'$ for 
$G$- and $G'$-intertwining differential operators, respectively.
The same convention is employed for $(\fg, P)$-homomorphism 
$\varphi_k$ and $(\fg', P')$-homomorphism $\varphi_k'$.

\subsection{Factorization identities for $\Phi_{(m,\ell)}$}
\label{sec:factorGVM}

We first consider the factorization identities of 
$(\fg',P')$-homomorphisms $\Phi_{(m,\ell)}$.
It is clear from \eqref{eqn:Hom} and \eqref{eqn:Hom2} that
the $(\fg',P')$-homomorphism $\Phi_{(m,\ell)}$ can be factored as
\begin{equation*}
\Phi_{(m,\ell)} = \Phi_{(m,0)}\circ \varphi_\ell'.
\end{equation*}
In other words, the following diagram commutes.
\begin{equation*}
 \xymatrix@=13pt{
 \Mpp(\triv,\ell-1)^{\ga+m} 
 \ar@{}[rd]|{\circlearrowleft}
 \ar[rr]^{\Phi_{(m,0)}}
 && \Mp(\triv,(m+\ell)-1)^\ga\\
 &&\\
 \Mpp(\sym^\ell_{n-1},-(1+\tfrac{\ell}{n-1}))^{\ga+m+\ell}
\ar[uu]^{\varphi_{\ell}'}
 \ar[rruu]^{\Phi_{(m,\ell)}} 
}
\end{equation*}

The aim of this subsection is to give another factorization identity of 
$\Phi_{(m,\ell)}$, that is, we wish to complete the lower left corner
of the following diagram:
\begin{equation*}
 \xymatrix@=13pt{
 \Mpp(\triv,\ell-1)^{\ga+m} 
 \ar@{}[rd]|{\circlearrowleft}
 \ar[rr]^{\Phi_{(m,0)}}
 && \Mp(\triv,(m+\ell)-1)^\ga\\
 &&\\
 \Mpp(\sym^\ell_{n-1},-(1+\tfrac{\ell}{n-1}))^{\ga+m+\ell}
\ar[uu]^{\varphi_{\ell}'}
 \ar[rr] \ar[rruu]^{\Phi_{(m,\ell)}} 
&&\fbox{some generalized Verma module}
\ar[uu]
\ar@{}[luu]|{\circlearrowleft}
}
\end{equation*}

For this purpose, we introduce some notation.
For $\mathbf{l} = (\ell_1, \ldots, \ell_{n-1}) \in \Xi_{\ell}'$
and $m \in \Z_{\geq 0}$, we write
\begin{equation*}
e_{(m,\mathbf{l})} 
=e_n^me_{\mathbf{l}}
=  e_n^m e_1^{\ell_1}\cdots e_{n-1}^{\ell_{n-1}}.
\end{equation*}
Also, we write
\begin{equation*}
\C[e'] = \C[e_1, \ldots, e_{n-1}]
\quad
\text{and}
\quad
\C[e',e_n] = \C[e_1,\ldots, e_{n-1}, e_n].
\end{equation*}
Then, for $m,\ell \in \Z_{\geq 0}$, we define
\begin{equation*}
\Emb_{(m,\ell)} \in \Hom_{\C}(\C^\ell[e'], \C^{m+\ell}[e',e_n])
\end{equation*}
by means of
\begin{equation}\label{eqn:Emb}
\Emb_{(m,\ell)} 
:= \sum_{\mathbf{l}\in\Xi_{\ell}'}
(e_{\mathbf{l}})^\vee \otimes e_{(m,\mathbf{l})}
= \sum_{\mathbf{l}\in\Xi_{\ell}'}
\wy_{\mathbf{l}} \otimes e_{(m,\mathbf{l})}.
\end{equation}

\begin{prop}\label{prop:Emb1}
We have
\begin{equation*}
\Hom_{M'}(\C^\ell[e'], \C^{m+\ell}[e',e_n]) = \C \Emb_{(m,\ell)}.
\end{equation*}
\end{prop}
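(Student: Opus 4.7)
The plan is to reduce the statement to a Schur's-lemma calculation after computing the $M'$-module decompositions of both sides. First, I would identify the $M'$-structures: $\C^\ell[e'] = S^\ell(\C^{n-1})$ carries the representation $\sym^\ell_{n-1}$ of $M' \simeq SL^\pm(n-1,\R)$ (trivially extended from $SL(n-1,\R)$), while the target $\C^{m+\ell}[e',e_n] = S^{m+\ell}(\C^n)$ inherits its $M'$-action by restriction from the natural action of $M \simeq SL^\pm(n,\R)$ on $\C^n$ along the inclusion $M' \hookrightarrow M$ of Section \ref{sec:notation}. Under this inclusion $\C^n\vert_{M'}$ decomposes as $\C^{n-1} \oplus \C e_n$, with $M'$ acting on $\C^{n-1}$ by its standard representation and fixing the line $\C e_n$.

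Second, I would decompose the symmetric power into $M'$-isotypic components
\begin{equation*}
\C^{m+\ell}[e',e_n]\big\vert_{M'}
\;\simeq\; \bigoplus_{a+b=m+\ell} S^a(\C^{n-1}) \otimes S^b(\C e_n)
\;\simeq\; \bigoplus_{a=0}^{m+\ell} \sym^a_{n-1},
\end{equation*}
so that the $\sym^a_{n-1}$-isotypic summand is precisely $S^a(\C^{n-1}) \cdot e_n^{m+\ell-a}$. For $n\geq 3$ the irreducibles $\sym^a_{n-1}$ with $a=0,\ldots,m+\ell$ are pairwise non-isomorphic $M'$-modules, so Schur's lemma gives $\dim \Hom_{M'}(\sym^\ell_{n-1},\,\bigoplus_{a} \sym^a_{n-1}) = 1$, with the unique (up to scalar) intertwiner being the inclusion of $\sym^\ell_{n-1}$ onto its copy in the $a=\ell$ summand, namely $e_{\mathbf{l}} \mapsto e_n^m\,e_{\mathbf{l}} = e_{(m,\mathbf{l})}$. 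By the defining formula \eqref{eqn:Emb}, this is precisely $\Emb_{(m,\ell)}$, which proves the proposition in this range.

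The main obstacle will be the degenerate case $n=2$: here $M_0' = SL(1,\R)$ is trivial and every $\sym^a_1$ collapses to $\triv$, so the multiplicity-free Schur argument no longer suffices on its own. There I would track the action of the disconnected component $M'/M_0' \simeq \Z/2\Z$ explicitly via the embedding $M'\hookrightarrow M$, combined with Lemma \ref{lem:MA} and the sign identities \eqref{eqn:sym0826a} and \eqref{eqn:poly0826b} from Remark \ref{rem:sym}, to check that the parity constraints on both sides still cut the Hom-space down to the one-dimensional subspace $\C\,\Emb_{(m,\ell)}$. This parity bookkeeping for $n=2$ is the part I expect to require the most care.
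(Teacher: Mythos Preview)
For $n\geq 3$ your argument is exactly the paper's, written out in full: the paper simply notes that $\Im(\Emb_{(m,\ell)}) = e_n^m\,\C^\ell[e']$, that $M'$ fixes $e_n$ so $\Emb_{(m,\ell)}$ is $M'$-equivariant, and that this component sits in $\C^{m+\ell}[e',e_n]$ with multiplicity one---precisely your Schur step.

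Your suspicion about $n=2$ is well founded, but the parity bookkeeping you outline will not salvage the statement, and in fact the proposition as written fails there. For $n=2$ one has $M' = \{\diag(1,1,1),\,\diag(-1,-1,1)\}$; under the restricted $M$-action the nontrivial element sends $e_1\mapsto -e_1$, $e_2\mapsto e_2$, so the monomial $e_1^a e_2^{m+\ell-a}$ carries the character $(-1)^a$, while the source $\C^\ell[e_1]$ carries $(-1)^\ell$. Hence $\Hom_{M'}$ lands in $\spn\{e_1^a e_2^{m+\ell-a} : a\equiv \ell \pmod 2\}$, which has dimension $>1$ whenever $m+\ell\geq 2$ (already $m=0,\ \ell=2$ gives a two-dimensional Hom). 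No sign argument on $M'$ alone can cut this down. The paper's own proof has the same gap at $n=2$, since the claimed multiplicity one of $e_n^m\C^\ell[e']$ inside $\C^{m+\ell}[e',e_n]$ is false over $M'$ there. What actually pins down $\Emb_{(m,\ell)}$ uniquely for $n=2$ is the $A'$-weight condition added only in Proposition~\ref{prop:Emb2a}: the summands $e_1^a e_2^{m+\ell-a}\otimes \mathbb{1}_{-\mu}$ carry pairwise distinct $A'$-weights, and matching against $-\mu'$ forces $a=\ell$. So Proposition~\ref{prop:Emb2a} and everything downstream survive; only the $M'$-only statement of Proposition~\ref{prop:Emb1} needs $n\geq 3$.
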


\begin{proof}
By definition, we have
\begin{equation*}
\Emb_{(m,\ell)}\colon 
\sum_{\mathbf{l}\in\Xi_{\ell}'}
a_{\mathbf{l}}\, e_{\mathbf{l}} 
\, \longmapsto \,
\sum_{\mathbf{l}\in\Xi_{\ell}'}
a_{\mathbf{l}}\, e_{(m,\mathbf{l})}
= e_n^m\sum_{\mathbf{l}\in\Xi_{\ell}'}
a_{\mathbf{l}}\, e_{\mathbf{l}}.
\end{equation*}
Thus $\Im(\Emb_{(m,\ell)}) = \C^m[e_n]\C^{\ell}[e']$.
Since $M'$ acts on $\C^k[e_n]$ trivially, this shows that $\Emb_{(m,\ell)}$ respects
the $M'$-action. Now the proposition follows from the fact that
$\C^m[e_n]\C^{\ell}[e']$ has multiplicity-one in $\C^{m+\ell}[e',e_n]$.
\end{proof}

We put
\begin{equation*}
\mu':=1+\tfrac{\ell}{n-1}
\quad 
\text{and}
\quad
\mu:=1+\tfrac{m+\ell}{n}.
\end{equation*}
As in \eqref{eqn:Pol},
we define 
$M$-representation $\C^\ell[e',e_n]_\ga$
and
$M'$-representations $\C^\ell[e']_\ga$ 
for $\ga \in \{\pm\}$
as
\begin{equation*}
\C^\ell[e']_\ga = \C_{\ga} \otimes \C^\ell[e']
\quad
\text{and}
\quad
\C^\ell[e',e_n]_\ga = \C_{\ga} \otimes \C^\ell[e',e_n].
\end{equation*}

\begin{prop}\label{prop:Emb2a}
We have
\begin{equation*}
\Hom_{M'A'}(\C^\ell[e']_\ga \boxtimes \C_{-\mu'},
\C^{m+\ell}[e',e_n]_\ga\boxtimes \C_{-\mu})
=\C \Emb_{(m,\ell)}.
\end{equation*}
\end{prop}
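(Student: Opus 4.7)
The plan is to combine Proposition~\ref{prop:Emb1} with a direct $A'$-weight computation. Since any $M'A'$-equivariant map is in particular $M'$-equivariant, and since the sign character $\sgn^{\alpha}$ on both sides, together with the $A'$-characters $\C_{-\mu'}$ and $\C_{-\mu}$, are all $M'$-trivial as tensor factors, Proposition~\ref{prop:Emb1} will immediately yield the inclusion
\begin{equation*}
\Hom_{M'A'}(\C^\ell[e']_\alpha \boxtimes \C_{-\mu'},\, \C^{m+\ell}[e',e_n]_\alpha \boxtimes \C_{-\mu}) \subseteq \C\,\Emb_{(m,\ell)}.
\end{equation*}
Hence it will suffice to verify that $\Emb_{(m,\ell)}$ itself is $A'$-equivariant for the prescribed characters.

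For the $A'$-equivariance, I will compute the $A'$-weight of $e_n^m e_{\mathbf{l}}$ in the target. On the source, $A'$ acts by the scalar character $-\mu'$ (the factor $\C^\ell[e']_\alpha$ carries trivial $A'$-action before the $\C_{-\mu'}$ twist), so the task reduces to showing that $e_n^m e_{\mathbf{l}}$ also has $A'$-weight $-\mu'$. Using Remark~\ref{rem:A}, I will decompose $a' \in A'$ as $a' = a'_M a'_A$ with $a'_M \in M$ and $a'_A \in A$. The $\sym_n^{m+\ell}$-action of $a'_M$ on $\C^n$ produces weights $t^{-1/(n(n-1))}$ on $e_1, \ldots, e_{n-1}$ and $t^{1/n}$ on $e_n$, while $a'_A$ scales the $\C_{-\mu}$-factor by $t^{-\mu}$. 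Summing these contributions on $e_n^m e_{\mathbf{l}}$ gives the exponent
\begin{equation*}
\frac{m}{n} - \frac{\ell}{n(n-1)} - \mu,
\end{equation*}
which, after substituting $\mu = 1 + (m+\ell)/n$ and using $\frac{1}{n(n-1)} + \frac{1}{n} = \frac{1}{n-1}$, will collapse to $-(1 + \ell/(n-1)) = -\mu'$, as required.

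The main obstacle, and essentially the only non-routine step, is correctly setting up the $A'$-action on the target via the $M\cdot A$-decomposition of $a' \in A'$ from Remark~\ref{rem:A}; the ensuing arithmetic is elementary, and once it is carried out, combining existence with the upper bound from the first paragraph will complete the proof.
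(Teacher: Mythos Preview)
Your proposal is correct and follows essentially the same approach as the paper: both use Proposition~\ref{prop:Emb1} to reduce the claim to showing that $\Emb_{(m,\ell)}$ is $A'$-equivariant, and both verify this via the decomposition $a'=a'_Ma'_A$ from Remark~\ref{rem:A}, with the identical exponent computation $\tfrac{m}{n}-\tfrac{\ell}{n(n-1)}-\mu=-\mu'$. The only minor imprecision is your phrase that ``$\sgn^\alpha$ \ldots\ [is] $M'$-trivial'': it is not literally trivial unless $\alpha=+$, but since the same character appears on both sides it cancels in the $M'$-Hom, which is all you need.
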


\begin{proof}
By Proposition \ref{prop:Emb1}, it suffices to show
\begin{equation*}
\Emb_{(m,\ell)}\in
\Hom_{M'A'}(\C^\ell[e']_\ga \boxtimes \C_{-\mu'},
\C^{m+\ell}[e',e_n]_\ga\boxtimes \C_{-\mu}).
\end{equation*}
As $\Im(\Emb_{(m,\ell)}) = \C^m[e_n]\C^{\ell}[e']$
and $\Emb_{(m,\ell)}$ respects the $M'$-action, 
it is further enough to show 
\begin{equation*}
\C^\ell[e']\boxtimes \C_{-\mu'} 
\simeq
\C^m[e_n]\C^\ell[e']\boxtimes \C_{-\mu}
\end{equation*}
as $A'$-modules.
Since $A'$ acts on $\C^\ell[e']\boxtimes \C_{-\mu'}$ by a character 
with weight $-\mu'$, one wishes to show that $A'$ acts on 
$\C^m[e_n]\C^\ell[e']\boxtimes \C_{-\mu}$ also by $-\mu'$.

Observe that
the action of $M'A'$ on $\C^m[e_n]\C^\ell[e']\boxtimes \C_{-\mu}$
comes from the restriction of that of $MA$ on $\C^{m+\ell}[e',e_n]\boxtimes \C_{-\mu}$.
Recall from Remark \ref{rem:A} that $a'$ can be decomposed as
$a' = a'_M a'_A$ with $a'_M \in M$ and $a'_A \in A$. For 
\begin{equation*}
a' = \diag(t, t^{\frac{-1}{n-1}}, \ldots, t^{\frac{-1}{n-1}}, 1) \in A',
\end{equation*}
we have
\begin{align*}
a'_M 
&= \diag(1,t^{\frac{-1}{n(n-1)}},\ldots, t^{\frac{-1}{n(n-1)}}, t^{\frac{1}{n}})
\in M,\\
a'_A
&= \diag(t,t^{\frac{-1}{n}},\ldots, t^{\frac{-1}{n}}, t^{\frac{-1}{n}}) 
\in A.
\end{align*}
Thus, for $e_n^mp(e')\otimes \mathbb{1}_{-\mu}
\in \C^m[e_n]\C^\ell[e']\boxtimes \C_{-\mu}$, we have 
\begin{align*}
a'\cdot (e_n^mp(e')\otimes \mathbb{1}_{-\mu})
&=(a'_M \cdot e_n^m)(a'_M \cdot p(e'))\otimes (a'_A \cdot \mathbb{1}_{-\mu})\\
&=t^{\tfrac{m}{n}}\cdot 
t^{\tfrac{-\ell}{n(n-1)}} \cdot t^{-(1+\tfrac{m+\ell}{n})}(e_n^mp(e')\otimes \mathbb{1}_{-\mu})\\
&=t^{-(1+\tfrac{\ell}{n-1})}(e_n^mp(e')\otimes \mathbb{1}_{-\mu}).
\end{align*}
Therefore, $A'$ acts on $\C^m[e_n]\C^\ell[e']\boxtimes \C_{-\mu}$ also 
by $-\mu' = -(1+\tfrac{\ell}{n-1})$. 
Now the proposition follows.
\end{proof}

Now we define
\begin{equation*}
\wEmb_{(m,\ell)} \in 
\Hom_{M'A'}(\C^\ell[e']_\ga\boxtimes \C_{-\mu'},
\Cal{U}(\fn_-) \otimes (\C^{m+\ell}[e',e_n]_\ga\boxtimes \C_{-\mu}))
\end{equation*}
as a map
\begin{align*}
\wEmb_{(m,\ell)}\colon p(e')\otimes \mathbb{1}_{\mu'}
\longmapsto
1\otimes \Emb_{(m,\ell)}(p(e')) \otimes \mathbb{1}_{\mu}.
\end{align*}

We inflate the $MA$-representation
$\C^{m+\ell}[e',e_n]_\ga\boxtimes \C_{-\mu}$ 
to a $P$-representation by letting $N_+$ act trivially.
Similarly, we inflate the $M'A'$-representation 
$\C^\ell[e']_\ga \boxtimes \C_{-\mu'}$
to a $P'$-representation. 
Then, as $P' \subset P$, we have
\begin{equation*}
\wEmb_{(m,\ell)} \in \Hom_{P'}(\C^\ell[e']_\ga\boxtimes \C_{-\mu'},
\Mp(\sym^{m+\ell}_n, -\mu)^\ga).
\end{equation*}
Equivalently, 
by applying to $\wEmb_{(m,\ell)}$ the same convention
as \eqref{eqn:Phi-def} and \eqref{eqn:Phi-def2}, we have
\begin{equation*}
\wEmb_{(m,\ell)} \in \Hom_{\fg',P'}(\Mpp(\sym^\ell_{n-1},-\mu')^\ga,
\Mp(\sym^{m+\ell}_n, -\mu)^\ga).
\end{equation*}

\noindent
As a consequence, we obtain the following.

\begin{prop}
We have
\begin{equation*}
\Hom_{\fg',P'}(\Mpp(\sym^\ell_{n-1},-\mu')^\ga,
\Mp(\sym^{m+\ell}_n, -\mu)^\ga) = \C\wEmb_{(m,\ell)}.
\end{equation*}
\end{prop}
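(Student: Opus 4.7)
The plan is to combine a factorization identity with the F-method. Since the preceding proposition produces $\wEmb_{(m,\ell)}$ as an element of the Hom space (extending the $P'$-equivariant map $\Emb_{(m,\ell)}$ to a $(\fg',P')$-homomorphism via the universal property of $\Mpp$), only the upper bound on the dimension remains.

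First, I would verify the factorization identity $\Phi_{(m,\ell)} = \varphi_{m+\ell}\circ \wEmb_{(m,\ell)}$ on generators: evaluating on $1\otimes e_{\mathbf{l}}$ for $\mathbf{l}\in\Xi_\ell'$ gives $\varphi_{m+\ell}(1\otimes e_n^m e_{\mathbf{l}}) = (N_1^-)^{\ell_1}\cdots (N_{n-1}^-)^{\ell_{n-1}}(N_n^-)^m$, which coincides with $(N_n^-)^m N_{\mathbf{l}}^- = \Phi_{(m,\ell)}(1\otimes e_{\mathbf{l}})$ since $\fn_-$ is abelian. For any $\Psi$ in the Hom space, post-composing with the $(\fg,P)$-homomorphism $\varphi_{m+\ell}\in\Hom_{\fg,P}(\Mp(\sym^{m+\ell}_n,-\mu)^\ga, \Mp(\triv,(m+\ell)-1)^{\ga+m+\ell})$ from Theorem \ref{thm:Hom} places $\varphi_{m+\ell}\circ\Psi$ in the Hom space classified by Theorem \ref{thm:Hom2a}. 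For $n\geq 3$ this is spanned by $\Phi_{(m,\ell)}$, so $\varphi_{m+\ell}\circ\Psi = c\Phi_{(m,\ell)} = \varphi_{m+\ell}\circ(c\,\wEmb_{(m,\ell)})$, forcing $\Psi - c\,\wEmb_{(m,\ell)}$ to take values in $\Ker\varphi_{m+\ell}$.

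To conclude $\Psi = c\,\wEmb_{(m,\ell)}$ (and to address $n=2$, where Theorem \ref{thm:Hom2b} yields a potentially 2-dimensional post-composition space and $\Ker\varphi_{m+\ell}$ is nontrivial), I would apply the F-method (Theorem \ref{thm:Fmethod}) directly to the Hom space with $V^\vee = (\sym^{m+\ell}_n)_\ga\boxtimes\C_{-\mu}$ and $W^\vee = (\sym^\ell_{n-1})_\ga\boxtimes\C_{-\mu'}$. The operator $\widehat{d\pi_{(V^\vee)^*}}(N_j^+)$ computed via \eqref{eqn:dpi3} differs from Proposition \ref{prop:dNj1}'s formula by a $d\sigma^\vee$-term coming from $\sigma = \sym^{m+\ell}_n$. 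After decomposing $\Pol(\fn_+)\otimes V^\vee$ as an $M'A'$-module using Lemma \ref{lem:MA} together with the branching of $\sym^{m+\ell}_n$ from $SL(n,\R)$ to $SL(n-1,\R)$, imposing the F-system should single out a unique $M'A'$-equivariant solution matching $W^\vee$, namely the Fourier transform of $\wEmb_{(m,\ell)}$.

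The main obstacle is this explicit F-system calculation with non-scalar $\sigma$: several $M'A'$-types in $\Pol(\fn_+)\otimes\sym^{m+\ell}_n$ share the $M'A'$-weight dictated by $W^\vee$, and the $\fn_+'$-invariance conditions intertwine the $\Pol(\fn_+)$-action (from Proposition \ref{prop:dNj1}) with the $d\sigma^\vee$-action on $\sym^{m+\ell}_n$. Narrowing the candidates to a single line requires a careful weight-component analysis, which is especially delicate in the $n=2$ case where the composition argument is inconclusive on its own.
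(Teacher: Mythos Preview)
Your route differs substantially from the paper's. The paper's proof is a single sentence: it cites Proposition~\ref{prop:Emb2a} (the multiplicity-one computation $\Hom_{M'A'}(\C^\ell[e']_\ga\boxtimes\C_{-\mu'},\C^{m+\ell}[e',e_n]_\ga\boxtimes\C_{-\mu})=\C\,\Emb_{(m,\ell)}$) together with the construction of $\wEmb_{(m,\ell)}$ carried out in the paragraphs immediately preceding the proposition. No factorization identity, no post-composition with $\varphi_{m+\ell}$, and no F-method for non-scalar inducing data appear.

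Your worry about the upper bound is well founded: Proposition~\ref{prop:Emb2a} computes $\Hom_{M'A'}$ into the highest-weight space $V^\vee=\C^{m+\ell}[e',e_n]_\ga\boxtimes\C_{-\mu}$, not into the full $\fn_+'$-invariants of $\Mp(V^\vee)$, so on its face it yields only the inclusion $\C\,\wEmb_{(m,\ell)}\subseteq\Hom_{\fg',P'}$. Your proposed remedies are sound in principle, but note that the post-composition argument cannot close the gap on its own, since $\varphi_{m+\ell}$ is not injective for $m+\ell\geq 1$ (the multiplication map $S(\fn_-)\otimes S^{m+\ell}(\C^n)\to S(\fn_-)$ has nontrivial kernel, as you correctly anticipate). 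The direct F-method calculation with $V^\vee=\sym^{m+\ell}_n$ that you outline would work but is laborious and is not what the paper does. For the paper's downstream use of the proposition---setting up the factorization identity in Theorem~\ref{thm:Emb}, whose proof is an explicit check on generators---only the \emph{existence} of $\wEmb_{(m,\ell)}$ in the Hom space is actually needed, so the equality-versus-inclusion issue is immaterial there.
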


\begin{proof}
This is a direct consequence of Proposition \ref{prop:Emb2a}
and the preceding arguments.
\end{proof}

Now we are ready to show another factorization identity of $\Phi_{(m,\ell)}$.

\begin{thm}\label{thm:Emb}
Let $n\geq 2$.
For $(\alpha,\beta; \sigma; s, r) \in \gL^{(n+1,n)}_{(\fg', P'),2}$,
the $(\fg', P')$-homomorphism $\Phi_{(m,\ell)}$ can be factored as follows:
\begin{equation}\label{eqn:Homfactor}
\Phi_{(m,\ell)} 
= \Phi_{(m,0)} \circ \varphi'_{\ell}
=\varphi_{m+\ell} \circ \wEmb_{(m,\ell)}.
\end{equation}
Equivalently, the following diagram commutes.
\begin{equation}\label{eqn:comm1}
\begin{aligned}
 \xymatrix@=13pt{
 \Mpp(\triv,\ell-1)^{\ga+m} 
 \ar@{}[rd]|{\circlearrowleft}
 \ar[rr]^{\Phi_{(m,0)}}
 && \Mp(\triv,(m+\ell)-1)^\ga\\
 &&\\
 \Mpp(\sym^\ell_{n-1},-(1+\tfrac{\ell}{n-1}))^{\ga+m+\ell}
\ar[uu]^{\varphi_{\ell}'}
 \ar[rr]_{\wEmb_{(m,\ell)}} \ar[rruu]^{\Phi_{(m,\ell)}} 
&&\Mp(\sym^{m+\ell}_n,-(1+\tfrac{m+\ell}{n}))^{\ga+m+\ell}
\ar[uu]_{\varphi_{m+\ell}}
\ar@{}[luu]|{\circlearrowleft}
}
\end{aligned}
\end{equation}
\end{thm}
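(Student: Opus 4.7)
The plan is to establish the two equalities in \eqref{eqn:Homfactor} independently. The first identity $\Phi_{(m,\ell)} = \Phi_{(m,0)} \circ \varphi_\ell'$ is immediate from the explicit formulas. Indeed, $\Phi_{(m,0)}$ acts as left-multiplication by $(N_n^-)^m$ on $\Mp(\triv,(m+\ell)-1)^\ga \simeq \Cal{U}(\fn_-)$, and $\varphi_\ell'$ is, by the $\fg'$-analog of \eqref{eqn:Hom2}, the map $\sum_{\mathbf{l}\in\Xi_\ell'} N_{\mathbf{l}}^- \otimes \wy_{\mathbf{l}}$. Composing yields $(N_n^-)^m \sum_{\mathbf{l}\in\Xi_\ell'} N_{\mathbf{l}}^- \otimes \wy_{\mathbf{l}}$, which is exactly the definition \eqref{eqn:Hom} of $\Phi_{(m,\ell)}$.

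For the second identity $\Phi_{(m,\ell)} = \varphi_{m+\ell} \circ \wEmb_{(m,\ell)}$, I observe that both sides are $(\fg',P')$-homomorphisms from $\Mpp(\sym^\ell_{n-1},-(1+\tfrac{\ell}{n-1}))^{\ga+m+\ell}$ into $\Mp(\triv,(m+\ell)-1)^\ga$. By the universal property of the induced module, it suffices to compare their actions on the generating subspace $1 \otimes S^\ell(\C^{n-1})$, i.e.\ on basis vectors $e_{\mathbf{l}'}$ with $\mathbf{l}' \in \Xi_\ell'$. Using the definition \eqref{eqn:Emb}, $\wEmb_{(m,\ell)}$ sends $e_{\mathbf{l}'}$ to $1\otimes e_n^m e_{\mathbf{l}'}$. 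The monomial $e_n^m e_{\mathbf{l}'}$ corresponds to the multi-index $(\ell_1',\ldots,\ell_{n-1}',m)\in\Xi_{m+\ell}$, so pairing with the dual basis in the defining sum of $\varphi_{m+\ell}$ from \eqref{eqn:Hom2} isolates the single term $(N_n^-)^m N_{\mathbf{l}'}^-$. Direct evaluation of $\Phi_{(m,\ell)}$ on $e_{\mathbf{l}'}$ via \eqref{eqn:Hom} gives the same value $(N_n^-)^m N_{\mathbf{l}'}^-$, completing the verification.

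The only non-computational point --- and the closest thing to an obstacle --- is to confirm that diagram \eqref{eqn:comm1} makes sense as a diagram of $(\fg',P')$-modules, i.e.\ that the four vertices are mutually compatible with the arrow labels. Concretely, one checks by inspection that the parameter $(\ga+m,\ga+m+\ell;\sym^\ell_{n-1};\ell-1,-(1+\tfrac{\ell}{n-1}))$ lies in the $\fg'$-analog of $\gL^{n+1}_{(\fg,P)}$ (so that $\varphi_\ell'$ exists and has target $\Mpp(\triv,\ell-1)^{\ga+m}$), that $(\ga,\ga+m+\ell;\sym^{m+\ell}_n;(m+\ell)-1,-(1+\tfrac{m+\ell}{n})) \in \gL^{n+1}_{(\fg,P)}$ (so that $\varphi_{m+\ell}$ exists), and that the SBO-parameters assigned to $\Phi_{(m,0)}$ and $\Phi_{(m,\ell)}$ lie in $\gL^{(n+1,n)}_{(\fg',P'),1}$ and $\gL^{(n+1,n)}_{(\fg',P'),2}$, respectively. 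These parameter checks reduce immediately to the defining formulas, and the $A'$-equivariance of $\wEmb_{(m,\ell)}$ established in Proposition \ref{prop:Emb2a} guarantees that the weight matches correctly on the bottom arrow.
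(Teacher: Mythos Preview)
Your proof is correct and follows essentially the same approach as the paper: both verify the identities by evaluating on the basis vectors $e_{\mathbf{l}}$ (equivalently, on generators $u_{\mathbf{l}}\otimes e_{\mathbf{l}}$) and carrying out the identical explicit computation $(N_n^-)^m N_{\mathbf{l}}^-$ for each of the three maps. Your added paragraph on parameter consistency is a helpful sanity check that the paper leaves implicit.
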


\begin{proof}
Since any element in 
$\Mpp(\sym^\ell_{n-1},-(1+\tfrac{\ell}{n-1}))^{\ga+m+\ell}$ is of the form 
$\sum_{\mathbf{l} \in \Xi'_{\ell}}u_{\mathbf{l}} \otimes e_{\mathbf{l}}$
for $u_{\mathbf{l}} \in \Cal{U}(\fg)$ and $e_{\mathbf{l}} \in S^\ell(\C^{n-1})$,
it suffices to show the identities \eqref{eqn:Homfactor} for 
$u_{\mathbf{l}} \otimes e_{\mathbf{l}}$ for $\mathbf{l} \in \Xi_\ell'$. 

Recall from \eqref{eqn:Hom} that $\Phi_{(m,\ell)}$ is given by
\begin{equation*}
\Phi_{(m,\ell)}
= 
(N_n^-)^m
\sum_{\mathbf{l} \in \Xi_\ell'} 
N_{\mathbf{l}}^-
\otimes 
(e_{\mathbf{l}})^\vee
= 
(N_n^-)^m
\sum_{\mathbf{l} \in \Xi_\ell'} 
N_{\mathbf{l}}^-
\otimes 
\widetilde{y}_{\mathbf{l}}.
\end{equation*}
Then, by \eqref{eqn:Phi-def}, we have 
\begin{equation*}
\Phi_{(m,\ell)} (u_{\mathbf{l}} \otimes e_{\mathbf{l}})
=u_{\mathbf{l}} \Phi_{(m,\ell)}(e_{\mathbf{l}})
=u_{\mathbf{l}} (N^-_n)^m N^-_{\mathbf{l}}.
\end{equation*}

By \eqref{eqn:Hom2} and \eqref{eqn:Phi-def2},
the element 
$(\Phi_{(m,0)} \circ \varphi'_{\ell})(u_{\mathbf{l}} \otimes e_{\mathbf{l}})$ is given by
\begin{align*}
(\Phi_{(m,0)} \circ \varphi'_{\ell})(u_{\mathbf{l}} \otimes e_{\mathbf{l}})
&=u_{\mathbf{l}} (\Phi_{(m,0)} \circ \varphi'_{\ell})(e_{\mathbf{l}})\\
&=u_{\mathbf{l}} \Phi_{(m,0)}(N^-_{\mathbf{l}})\\
&=u_{\mathbf{l}} (N^-_n)^m N^-_{\mathbf{l}}.
\end{align*}

Finally, we have 
\begin{align*}
(\varphi_{m+\ell} \circ \wEmb_{(m,\ell)})(u_{\mathbf{l}} \otimes e_{\mathbf{l}})
&=u_{\mathbf{l}}(\varphi_{m+\ell} \circ \wEmb_{(m,\ell)})(e_{\mathbf{l}})\\
&=u_{\mathbf{l}}\varphi_{m+\ell}(1\otimes e^m_n e_{\mathbf{l}})\\
&=u_{\mathbf{l}} (N^-_n)^m N^-_{\mathbf{l}}.
\end{align*}
This completes the proof.
\end{proof}

\begin{rem}\label{rem:sym0826b}
It follows from \eqref{eqn:sym0826c} that if $n=2$, then the factorization 
identity \eqref{eqn:comm1} becomes
\begin{equation*}
\begin{aligned}
 \xymatrix@=13pt{
 \Mpp(\triv,\ell-1)^{\ga+m} 
 \ar@{}[rd]|{\circlearrowleft}
 \ar[rr]^{\Phi_{(m,0)}}
 && \Mp(\triv,(m+\ell)-1)^\ga\\
 &&\\
 \Mpp(\sym^\ell_{1},-(1+\ell))^{\ga+m+\ell}
\ar[uu]^{\varphi_{\ell}'}
 \ar[rr]_{\wEmb_{(m,\ell)}} \ar[rruu]^{\Phi_{(m,\ell)}} 
 \ar@{}[d]|{\rotatebox{90}{$=$}}
&&\Mp(\sym^{m+\ell}_2,-(1+\tfrac{m+\ell}{2}))^{\ga+m+\ell}
\ar[uu]_{\varphi_{m+\ell}}
\ar@{}[luu]|{\circlearrowleft}\\
 \Mpp(\triv,-(1+\ell))^{\ga+m} &&
}
\end{aligned}
\end{equation*}
\end{rem}

\begin{rem}
In Section \ref{sec:GVM}, we shall discuss the commutative diagram \eqref{eqn:comm1}
from an aspect of the branching laws of generalized Verma modules
(see Remark \ref{rem:branching916}). 
\end{rem}

\subsection{Factorization identities for $\bD_{(m,\ell)}$}
\label{sec:factorDSBO}

Now we consider the factorization identities of the differential symmetry breaking 
operator $\bD_{(m,\ell)}$. Recall that 
$\wy_{\mathbf{l}}\otimes e_{(m,\mathbf{l})}$ may be identified with
\begin{equation}\label{eqn:wy}
\wy_{\mathbf{l}}\otimes e_{(m,\mathbf{l})}
=\wy_{\mathbf{l}}\otimes (\wy_{(m,\mathbf{l})})^\vee
\in \Hom_\C(\C^{m+\ell}[y',y_n], \C^\ell[y']),
\end{equation}
where $\wy_{(m,\mathbf{l})} = y_n^m\wy_{\mathbf{l}}$.
Also, recall that, by the duality theorem (Theorem \ref{thm:duality}), we have
\begin{align*}
\Hom_{\fg',P'}(\Mpp(\sym^\ell_{n-1},-\mu')^\ga, \Mp(\sym^{m+\ell}_n,-\mu)^\ga)
\stackrel[\EuD_{H\to D}]{\sim}{\To}
\Diff_{G'}(I(\poly^{m+\ell}_n,\mu)^\ga,
J(\poly^\ell_{n-1},\mu')^\ga).
\end{align*}
Write
\begin{equation*}
\wProj_{(m,\ell)} = \EuD_{H\to D}(\wEmb_{(m,\ell)}).
\end{equation*}
Via the identification \eqref{eqn:wy}, we have 
\begin{equation*}
\wProj_{(m,\ell)} = \Rest_{x_n=0} \circ
\sum_{\mathbf{l} \in \Xi'_{\ell}}
\id \otimes \wy_{\mathbf{l}} \otimes e_{(m,\mathbf{l})}.
\end{equation*}
That is, for $F(x',x_n) \in C^\infty(\R^{n})\otimes \C^\ell[y']$ with
\begin{align*}
F(x',x_n) 
&=\sum_{\mathbf{k} \in \Xi_{m+\ell}} f_{\mathbf{k}}(x',x_n) \otimes \wy_{\mathbf{k}}\\
&=\sum_{r=0}^{m+\ell}
\sum_{\mathbf{r} \in \Xi_r'}f_{(m+\ell-r, \mathbf{r})}(x',x_n) \otimes 
\wy_{(m+\ell-r,\mathbf{r})},
\end{align*}
we have
\begin{align*}
(\wProj_{(m,\ell)}F)(x')
&=\sum_{\mathbf{l} \in \Xi_\ell} f_{(m,\mathbf{l})}(x',0)
\otimes \wy_{\mathbf{l}}.
\end{align*}

The following is the differential-operator counterpart of Theorem \ref{thm:Emb}.

\begin{thm}\label{thm:Proj}
Let $n\geq 2$. For
$(\ga,\gb;\varpi;\lambda ,\nu) \in 
\gL^{(n+1,n)}_{SL,2}$,
the differential symmetry breaking operator
$\bD_{(m,\ell)}$ can be factored as follows:
\begin{equation*}
\bD_{(m,\ell)} 
= \D'_\ell \circ \bD_{(m,0)}
=  \wProj_{(m,\ell)}\circ \D_{m+\ell}.
\end{equation*}
Equivalently, the following diagram commutes.
\begin{equation}\label{eqn:factor}
\begin{aligned}
\xymatrix@=13pt{
I(\triv, 1-(m+\ell))^\ga
 \ar[dd]_{\D_{m+\ell}}
  \ar[rrdd]^{\bD_{(m,\ell)}}
  \ar[rr]^{\bD_{(m,0)}}
 && J(\triv, 1-\ell)^{\ga+m}
  \ar[dd]^{\D'_\ell}
  \ar@{}[ldd]|{\circlearrowleft}
 \\
&&\\
I(\poly^{m+\ell}_n, 1+\tfrac{m+\ell}{n})^{\ga+m+\ell}
\ar[rr]_{\wProj_{(m,\ell)}}   \ar@{}[ruu]|{\circlearrowleft}
&&
J(\poly^\ell_{n-1}, 1+\tfrac{\ell}{n-1})^{\ga+m+\ell}
}
\end{aligned}
\end{equation}

\end{thm}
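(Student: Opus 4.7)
The plan is to establish the two factorization identities by direct unwinding of the explicit formulas of the operators. All of $\bD_{(m,\ell)}$, $\bD_{(m,0)}$, $\D_{m+\ell}$, $\D'_\ell$, and $\wProj_{(m,\ell)}$ are built from multi-indexed partial derivatives, the restriction $\Rest_{x_n=0}$, and coefficient projections onto polynomial components, so the theorem reduces to multi-index bookkeeping.

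For the identity $\bD_{(m,\ell)} = \D'_\ell \circ \bD_{(m,0)}$, the crucial observation is that $\D'_\ell = \sum_{\mathbf{l}\in\Xi'_\ell} \frac{\partial^\ell}{\partial x^{\mathbf{l}}}\otimes \wy_{\mathbf{l}}$ involves only the tangential derivatives $\partial/\partial x_j$ for $j\leq n-1$, which commute with both $\partial^m/\partial x_n^m$ and $\Rest_{x_n=0}$. Substituting $\bD_{(m,0)} = \Rest_{x_n=0}\circ \partial^m/\partial x_n^m$ and commuting the tangential derivatives past the normal derivative and the restriction immediately yields the definition of $\bD_{(m,\ell)}$ in \eqref{eqn:DSBO}.

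For $\bD_{(m,\ell)} = \wProj_{(m,\ell)}\circ \D_{m+\ell}$, I would decompose each $\mathbf{k}\in \Xi_{m+\ell}$ as $\mathbf{k}=(\mathbf{r},r)$ with $\mathbf{r}\in \Xi'_{m+\ell-r}$, using the factorization $\wy_{\mathbf{k}} = \wy_{\mathbf{r}}\cdot y_n^r/r!$ of the normalized basis elements. Expanding $\D_{m+\ell}$ in this form and applying $\wProj_{(m,\ell)}$---which by its explicit formula above \eqref{eqn:factor} selects precisely the components with $r=m$ and $\mathbf{r}\in \Xi'_\ell$ and then restricts to $x_n=0$---recovers the formula defining $\bD_{(m,\ell)}$.

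A more conceptual alternative would be to transport the factorization identity of Theorem \ref{thm:Emb} to the differential operator side via the duality isomorphism $\EuD_{H\to D}$ of Theorem \ref{thm:duality} and its contravariance under composition, combined with the already-established correspondences $\EuD_{H\to D}(\Phi_{(m,\ell)})=\bD_{(m,\ell)}$ and $\EuD_{H\to D}(\Phi_{(m,0)})=\bD_{(m,0)}$ coming from the F-method recipe (cf.\ Section \ref{sec:Step4}), the identifications $\EuD_{H\to D}(\varphi_{m+\ell})=\D_{m+\ell}$ and $\EuD_{H\to D}(\varphi'_{\ell})=\D'_{\ell}$ from \cite{KuOr24}, and the definitional identity $\EuD_{H\to D}(\wEmb_{(m,\ell)}) = \wProj_{(m,\ell)}$ made in Section \ref{sec:factorDSBO}. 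Either route is essentially a check; the only technical subtlety worth flagging is correct handling of the factorial normalization of $\wy_{\mathbf{k}}$ when splitting multi-indices into their tangential and normal parts, but once this is in hand the computation is routine in a few lines.
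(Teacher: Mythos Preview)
Your proposal is correct, and the two routes you sketch are both valid. The paper itself takes precisely your ``conceptual alternative'': its entire proof of Theorem~\ref{thm:Proj} is the single sentence that the result follows from Theorem~\ref{thm:Emb} (the factorization of $\Phi_{(m,\ell)}$) together with the duality theorem, relying on the functoriality of $\EuD_{H\to D}$ with respect to composition and on the already-established correspondences $\Phi_{(m,\ell)}\leftrightarrow\bD_{(m,\ell)}$, $\varphi_{m+\ell}\leftrightarrow\D_{m+\ell}$, $\varphi'_\ell\leftrightarrow\D'_\ell$, and the definition $\wProj_{(m,\ell)}=\EuD_{H\to D}(\wEmb_{(m,\ell)})$. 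Your primary route---direct verification from the explicit formulas---is more elementary and self-contained, avoiding any appeal to the Verma-module side or to the compatibility of $\EuD_{H\to D}$ with composition; the price is the bookkeeping you flag with the factorial normalization of $\wy_{\mathbf{k}}$ when splitting $\mathbf{k}=(\mathbf{r},r)$ (note that the paper sets $\wy_{(m,\mathbf{l})}=y_n^m\wy_{\mathbf{l}}$ rather than $\frac{y_n^m}{m!}\wy_{\mathbf{l}}$, so one must match conventions carefully). The duality route is cleaner once Theorem~\ref{thm:Emb} is in hand and makes the structural origin of the factorization transparent; the direct route has the advantage of being checkable line-by-line without reference to the generalized Verma module picture.
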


\begin{proof}
This simply follows from Theorem \ref{thm:Emb} and the duality theorem.
\end{proof}

\begin{rem}
As for Remark \ref{rem:sym0826b},
it follows from \eqref{eqn:poly0826c} that if $n=2$, then the factorization 
identity \eqref{eqn:factor} is given as
\begin{equation*}
\begin{aligned}
\xymatrix@=13pt{
I(\triv, 1-(m+\ell))^\ga
 \ar[dd]_{\D_{m+\ell}}
  \ar[rrdd]^{\bD_{(m,\ell)}}
  \ar[rr]^{\bD_{(m,0)}}
 && J(\triv, 1-\ell)^{\ga+m}
  \ar[dd]^{\D'_\ell}
  \ar@{}[ldd]|{\circlearrowleft}
 \\
&&\\
I(\poly^{m+\ell}_2, 1+\tfrac{m+\ell}{2})^{\ga+m+\ell}
\ar[rr]_{\wProj_{(m,\ell)}}   \ar@{}[ruu]|{\circlearrowleft}
&&
J(\poly^\ell_{1}, 1+\ell)^{\ga+m+\ell}
\ar@{}[d]|{\rotatebox{90}{$=$}}\\
&&
J(\triv, 1+\ell)^{\ga+m}
}
\end{aligned}
\end{equation*}

\end{rem}

\begin{rem}
The commutative diagram \eqref{eqn:factor} implies that we have
\begin{equation*}
\bD_{(m,0)}\vert_{\Ker(\D_{m+\ell})}\colon 
\Ker(\D_{m+\ell}) \To \Ker(\D'_\ell)
\end{equation*}
(see \eqref{eqn:intro-factor4}).
In Proposition \ref{prop:im4} below, we shall show that  
$\Im(\bD_{(m,0)}\vert_{\Ker(\D_{m+\ell})})= \Ker(\D'_\ell)$.
\end{rem}

\section{The $SL(n,\R)$-representations on the image $\Im(\bD)$}
\label{sec:image}

The aim of this section is to determine the image $\Im(\bD)$ of the 
differential symmetry breaking operators
$\bD=\bD_{(m,0)}, \bD_{(m,\ell)}$ 
in the commutative diagram \eqref{eqn:factor}
for $(\ga,\gb;\varpi;\lambda ,\nu) \in 
\gL^{(n+1,n)}_{SL,2}$,
where
\begin{equation*}
\gL^{(n+1,n)}_{SL,2}=\{
(\alpha, \alpha+m+\ell;\poly_{n-1}^\ell; 1-(m+\ell), 
1+\tfrac{\ell}{n-1}) : \alpha \in \{\pm\} \; \text{and} \; \ell, m\in \Z_{\geq 0}\}.
\end{equation*}
Here, by abuse of notation, the image $\Im(\bD)$ is understood as the underlying $(\fg',K')$-module. 
These are achieved in Section \ref{sec:image2}.

To make the argument simpler, we only consider the case $m, \ell \in 1+\Z_{\geq 0}$.
In this section we assume $n\geq 3$, unless otherwise specified.

\subsection{Preliminaries on $I(\triv, \lambda)^\ga$ and $J(\triv, \nu)^\gb$}

We first recall from \cite{KuOr24} necessary facts on the induced 
representations $I(\triv, \lambda)^\ga$  of $G=SL(n+1,\R)$ and 
$J(\triv, \nu)^\gb$ of $G'=SL(n,\R)$. 

\begin{fact}[cf.\ \cite{HL99, MS14, vDM99}]\label{thm:vDM}
Let $n\geq 3$.
For $\gb \in \{\pm\} \equiv \{\pm 1 \}$ and $\nu \in \C$, 
the induced representation $J(\triv, \nu)^\gb$ enjoys the following.

\begin{enumerate}
\item[\text{(1)}]
The induced representation $J(\triv, \nu)^\gb$ is irreducible except the following
two cases.
\vskip 0.05in
\begin{enumerate}
\item[\text{(A)}]
$\nu \in -\Z_{\geq 0}$ and $\gb = (-1)^\nu$.
\vskip 0.05in

\item[\text{(B)}]
$\nu \in n+\Z_{\geq 0}$ and $\gb = (-1)^{\nu+n}$.
\end{enumerate}

\vskip 0.1in

\item[\text{(2)}]
For Case (A) with $\nu = -m$, there exists 
a finite-dimensional irreducible subrepresentation
$F_{G'}(-m)^\gb \subset J(\triv, -m)^\gb$ such that 
$J(\triv, -m)^\gb/F_{G'}(-m)^\gb$ is irreducible and 
infinite-dimensional. 

\vskip 0.1in

\item[\text{(3)}]
For Case (B) with $\nu =n+m$, 
there exists an infinite-dimensional
irreducible subrepresentation
$T_{G'}(n+m)^\gb \subset J(\triv, n+m)^\gb$ such that 
$J(\triv, n+m)^\gb/T_{G'}(n+m)^\gb$
is irreducible and finite-dimensional.

\vskip 0.1in

\item[\text{(4)}]
For $m \in \Z_{\geq 0}$,
the following non-split exact sequences of Fr{\'e}chet $G'$-modules hold:
\begin{alignat*}{4}
\{0\} &\To F_{G'}(-m)^\gb &&\To J(\triv, -m)^\gb &&\To T_{G'}(n+m)^\gb &&\To \{0\},\\
\{0\} &\To T_{G'}(n+m)^\gb &&\To J(\triv, n+m)^\gb &&\To F_{G'}(-m)^\gb &&\To \{0\}.
\end{alignat*}
\end{enumerate}
\end{fact}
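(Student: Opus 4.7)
The plan is to deduce the reducibility of $J(\triv,\nu)^\gb$ from the F-method classification of the paper applied to the pair $(G',P')$ itself, combined with Casselman's contragredient duality. The key input is the dimension-shifted analogue of Theorem \ref{thm:Hom} obtained by repeating the arguments of Section \ref{sec:proof1} with $(n+1,n)$ replaced by $(n,n-1)$; this classifies the non-trivial $(\fg',P')$-homomorphisms $\varphi'_k : \Mpp(\sym^k_{n-1}, -(1+\tfrac{k}{n-1}))^{\gb+k} \to \Mpp(\triv, k-1)^\gb$ for $k \in \Z_{\geq 0}$. Via the duality theorem (Theorem \ref{thm:duality}), each such $\varphi'_k$ corresponds to a non-zero $G'$-intertwining differential operator $\D'_k : J(\triv,1-k)^\gb \to J(\poly^k_{n-1}, 1+\tfrac{k}{n-1})^{\gb+k}$, whose kernel is a proper $G'$-subrepresentation.

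For (2) and the ``descending half'' of (1), setting $k = m+1$ gives $\nu = -m$; I would identify $\Ker(\D'_{m+1})$ explicitly, in the $N'_-$-coordinate model of Section \ref{sec:prelim}, as the finite-dimensional span of polynomial sections of degree $\leq m$ in $x_1,\ldots,x_{n-1}$. This is the finite-dimensional $G'$-subrepresentation $F_{G'}(-m)^\gb$, isomorphic as an $SL(n,\R)$-module to the dual of $\sgn^\gb \otimes \sym^m_n$. The irreducibility of the quotient $J(\triv,-m)^\gb/F_{G'}(-m)^\gb$ then follows because $\D'_{m+1}$ realizes it as a submodule of the generically-irreducible principal series $J(\poly^{m+1}_{n-1}, 1+\tfrac{m+1}{n-1})^{\gb+m+1}$. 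For (3), the structure at $\nu = n+m$ is obtained from (2) by Casselman's contragredient duality; equivalently, the Knapp--Stein long intertwining operator $\A_{n+m} : J(\triv,n+m)^\gb \to J(\triv,-m)^\gb$ has image exactly $F_{G'}(-m)^\gb$ and kernel the infinite-dimensional $T_{G'}(n+m)^\gb$. The generic irreducibility in (1) outside these loci is immediate, since any reducibility would yield, by duality, a $\fg'$-homomorphism between scalar Verma modules of $\fg'$ not in the classified list.

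The main obstacle, as I see it, is the non-splittability of the two exact sequences in (4). My plan is an argument by contradiction. A $G'$-equivariant splitting of the sequence at $\nu = -m$ would produce a surjection $J(\triv,-m)^\gb \twoheadrightarrow F_{G'}(-m)^\gb$; passing to $(\fg',K')$-finite vectors and dualizing via Frobenius reciprocity together with Casselman's contragredient duality, this would give a non-trivial embedding of $(F_{G'}(-m)^\gb)^\vee$ into $J(\triv, n+m)^{\gb+n}$ as a \emph{direct summand}, and hence a $(\fg',P')$-homomorphism between scalar-type Verma modules of $\fg'$ outside the list of the dimension-shifted Theorem \ref{thm:Hom}. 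This contradiction gives non-splittability at $\nu = -m$, and the sequence at $\nu = n+m$ follows by Casselman duality.
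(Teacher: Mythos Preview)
The paper does not prove this statement; it is stated as a \emph{Fact} with references to \cite{HL99, MS14, vDM99} and is simply quoted from the literature. So there is no ``paper's own proof'' to compare against, and your proposal is an attempt to supply an argument where the paper gives none.

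Your outline has real gaps. First, the generic irreducibility in (1): the F-method and the Verma-module classification (the dimension-shifted Theorem \ref{thm:Hom}) control only \emph{differential} intertwining operators. Reducibility of a degenerate principal series can in principle arise from non-local (Knapp--Stein type) intertwiners that leave no trace on the Verma side, so the absence of a forbidden $\fg'$-homomorphism does not by itself force irreducibility of $J(\triv,\nu)^\gb$. You would need an independent input (e.g.\ the Bruhat/Harish-Chandra irreducibility criterion, or the explicit $K'$-type analysis carried out in the cited references) to close this. Second, you do not derive the parity constraints $\gb=(-1)^\nu$ and $\gb=(-1)^{\nu+n}$; in fact Theorem \ref{thm:IDO3} shows $\Ker(\D'_k)$ is nonzero only under the correct parity, so your identification of $F_{G'}(-m)^\gb$ as the polynomial kernel is valid only in that case and says nothing about the complementary parity. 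Third, the irreducibility of the quotient is asserted via an embedding into $J(\poly^{m+1}_{n-1},\,1+\tfrac{m+1}{n-1})^{\gb+m+1}$, but you call this target ``generically irreducible'' while the parameter here is a specific reducibility point, not generic; irreducibility of that vector-valued principal series at this parameter is exactly the nontrivial content and cannot be assumed.

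Finally, the non-splittability argument in (4) does not close. A splitting $J(\triv,-m)^\gb \simeq F_{G'}(-m)^\gb \oplus T$ gives a projection onto $F_{G'}(-m)^\gb$, but dualizing this produces an embedding of the finite-dimensional module into $J(\triv,n+m)^{\gb}$, and such an embedding \emph{is} compatible with the classified list (it corresponds to the quotient map from the scalar Verma module to its finite-dimensional top), so no contradiction arises. The standard proofs of non-splitting in \cite{HL99, vDM99} go through an explicit $K'$-type decomposition or through the analytic continuation of the Knapp--Stein operator, neither of which is captured by the Verma-module homomorphism count alone.
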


\begin{thm}[{\cite[Thm.\ 6.5]{KuOr24}}]
\label{thm:IDO3}
Let $n \geq 3$, For $\gb \in \{\pm\} \equiv \{\pm 1\}$ and
$k \in \Z_{\geq 0}$,
the kernel $\Ker(\D_k')$ and image $\Im(\D_k')$ of
$G'$-intertwining differential operator
\begin{equation*}
\D_k' \colon 
J(\triv, 1-k)^\gb \To J(\poly^k_{n-1}, 1+\tfrac{k}{n-1})^{\gb+k}
\end{equation*}
are given as follows.
\begin{enumerate}
\item[\emph{(1)}]
$k=0:$ We have
\begin{equation*}
\Ker(\D_0')^\beta = \{0\}
\quad
\text{and}
\quad
\Im(\D_0')^\beta = J(\triv, 1)^\gb,
\end{equation*}
\item[\emph{(2)}]
$k \in 1+\Z_{\geq 0}:$
We have 
\begin{align*}
\Ker(\D_k')^\beta &= 
\begin{cases}
F_{G'}(1-k)^\beta & \text{if $\gb = (-1)^{1-k}$,}\\[3pt]
\{0\} & \text{otherwise},
\end{cases}\\[3pt]
\Im(\D_k')^\beta &\simeq
\begin{cases}
T_{G'}(n+k-1)^\beta & \text{if $\gb = (-1)^{1-k}$,}\\[3pt]
J(\triv,1-k)^\gb & \text{otherwise}.
\end{cases}
\end{align*}
\end{enumerate}

\end{thm}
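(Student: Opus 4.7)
The plan is as follows. The case $k=0$ is immediate since $\D_0' = \id$, so I focus on $k \geq 1$. The strategy is to combine the composition series of $J(\triv, 1-k)^\gb$ supplied by Fact \ref{thm:vDM} with a direct analysis of $\D_k'$ as a $k$-th order constant-coefficient differential operator on the noncompact model $C^\infty(\R^{n-1})$ of the spherical principal series for $G'=SL(n,\R)$; recall that $\D_k'$ is the $G'$-analog of \eqref{eqn:IDO}, obtained by summing over multi-indices in $\Xi_k'$ on the $n-1$ coordinates $(x_1,\ldots,x_{n-1})$.

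If $\gb \neq (-1)^{1-k}$, Fact \ref{thm:vDM}(1) asserts that $J(\triv, 1-k)^\gb$ is $G'$-irreducible. Since $\D_k'$ is a nonzero $G'$-intertwining operator, $\Ker(\D_k')^\beta$ must be a proper $G'$-submodule of an irreducible module, hence zero. Consequently $\Im(\D_k')^\beta \simeq J(\triv, 1-k)^\gb$ as $G'$-modules.

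If $\gb = (-1)^{1-k}$, I would first identify $F_{G'}(1-k)^\beta \subset J(\triv, 1-k)^\gb$ explicitly, in the noncompact picture, as the space of polynomials on $\R^{n-1}$ of total degree at most $k-1$ carrying the sign character prescribed by $\gb = (-1)^{1-k}$; this is the standard realization of the finite-dimensional composition factor of a spherical principal series at a reducibility point. Because $\D_k'$ is $k$-th order with constant coefficients on $\R^{n-1}$, it annihilates every polynomial of degree at most $k-1$, so $F_{G'}(1-k)^\beta \subset \Ker(\D_k')^\beta$. Fact \ref{thm:vDM}(4) says that the only nonzero proper $G'$-submodule of $J(\triv, 1-k)^\gb$ is $F_{G'}(1-k)^\beta$; together with $\D_k' \neq 0$, this forces equality. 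The short exact sequence in Fact \ref{thm:vDM}(4) then gives $\Im(\D_k')^\beta \simeq J(\triv, 1-k)^\gb / F_{G'}(1-k)^\beta \simeq T_{G'}(n+k-1)^\beta$.

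The main obstacle will be the precise identification of $F_{G'}(1-k)^\beta$ as the space of polynomials of degree at most $k-1$ on $\R^{n-1}$ with the sign character dictated by $\gb = (-1)^{1-k}$; carrying this out requires tracking the $M'A'$-action on this finite-dimensional subspace in the noncompact model and matching the resulting character with the parity condition coming from Fact \ref{thm:vDM}. A secondary but routine point is to verify that $\D_k'$ is genuinely nonzero on the full induced representation, which follows at once from the multiplicity-one classification in Theorem \ref{thm:DSBO2a} (or its $G'$-intertwining analog used to construct $\D_k'$ in the first place). Once these two matters are settled, the submodule analysis above closes the argument.
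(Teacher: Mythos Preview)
Your argument is correct. Note, however, that the paper does not give its own proof of this statement: Theorem~\ref{thm:IDO3} is quoted verbatim from \cite[Thm.~6.5]{KuOr24}, and the present paper only uses it as input for Section~\ref{sec:image}. So there is nothing to compare against here.

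That said, your line of reasoning is exactly the natural one, and in fact the paper independently supplies the main ingredient you flagged as an obstacle: Proposition~\ref{prop:FG} carries out, for $G=SL(n+1,\R)$, precisely the identification of $F_G(1-k)$ with $\bigoplus_{a=0}^{k-1}\C^a[x_1,\ldots,x_n]$ in the noncompact picture, via the explicit formulas for $d\pi_\lambda(N_j^{\pm})$ in Lemma~\ref{lem:dpi}. The same computation with $n$ replaced by $n-1$ gives the $G'$-statement you need. For the nonvanishing of $\D_k'$, it is cleaner to cite the $G'$-analog of Theorem~\ref{thm:Hom} (i.e., the classification of $G'$-intertwining differential operators from \cite{KuOr24} recalled in Section~\ref{sec:71}) rather than Theorem~\ref{thm:DSBO2a}, since the latter concerns symmetry breaking operators; but as you note, either route shows $\D_k'\not\equiv 0$.
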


In what follows, we simply write
$F_{G'}(-k)$, $T_{G'}(n+k)$, $\Ker(\D'_k)$, and $\Im(\D'_k)$.
Likewise,
we denote by $F_{G}(-k)$ and $T_{G}(n+1+k)$  the composition factors
of $I(\triv,\lambda)^\ga$.

To end this subsection, we compute 
the submodule $F_G(1-k)$ of $I(\triv, \lambda)^\ga$ in the noncompact picture
of $I(\triv, \lambda)^\ga$,
that is, the realization of $I(\triv, \lambda)^\ga$ as 
$I(\triv, \lambda)^\ga \subset C^\infty(N_-) \simeq C^\infty(\R^n)$. 

Let $\dpi_{\lambda}$ be the infinitesimal representation of 
$\fg= \fn_-\oplus\fl \oplus \fn_+$ on 
$I(\triv, \lambda)^\ga$ in the noncompact picture. 
Then, as in \eqref{eqn:dpi3},
for $f \in I(\triv, \lambda)^\ga \subset C^\infty(N_-)$,
we have
\begin{equation}\label{eqn:formula2}
d\pi_{\lambda}(X)f(\bar{n})
=\lambda d\chi((\Ad(\bar{n}^{-1})X)_\fl)f(\bar{n})
-\left(dR((\Ad(\cdot^{-1})X)_{\fn_-})f\right)(\bar{n}).
\end{equation}
Here $d\chi$ denotes the differential of the character $\chi$ defined in 
\eqref{eqn:chi}.
(For the details of \eqref{eqn:formula2}, see, for instance, 
\cite[Sect.\ 2.2]{KuOr24}.)

The following lemma is used to compute $F_G(1-k)$.

\begin{lem}\label{lem:dpi}
Let $\dpi_{\lambda}$ be the infinitesimal representation of 
$\fg= \fn_-\oplus\fl \oplus \fn_+$ on 
$I(\triv, \lambda)^\ga$ in the noncompact picture. 
Via the diffeomorphism \eqref{eqn:coord},
the following hold.
\begin{enumerate}
\item[\emph{(1)}] For $N_j^+ \in \fn_+$ with $j \in \{1,\ldots, n\}$, we have
\begin{equation*}
\dpi_{\lambda}(N_j^+) = x_j(\lambda + E_x),
\end{equation*}
where $E_x$ denotes the Euler homogeneity operator for $x$. In particular,
\begin{equation*}
\dpi_{\lambda}(N_j^+)\vert_{\C^a[x_1,\ldots, x_n]} = (\lambda +a) x_j.
\end{equation*}
\vskip 0.1in

\item[\emph{(2)}] For $N_j^- \in \fn_-$ with $j \in \{1,\ldots, n\}$, we have
\begin{equation*}
\dpi_{\lambda}(N_j^-)=-dR(N_j^-)=-\frac{\partial}{\partial x_j},
\end{equation*}
where $dR(N_j^-)$ denotes the infinitesimal right translation of $N_j^-$ on
$\C^\infty(N_-)\simeq \C^\infty(\R^n)$.

\vskip 0.1in

\item[\emph{(3)}] For $Z \in \fl$, we have
\begin{equation*}
\dpi_{\lambda}(Z)\vert_{\C^a[x_1,\ldots, x_n]} \subset \C^a[x_1,\ldots, x_n].
\end{equation*}

\end{enumerate}
\end{lem}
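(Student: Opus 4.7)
The plan is to derive all three formulas from the master formula \eqref{eqn:formula2} by using the Gelfand--Naimark decomposition $\fg = \fn_-\oplus\fl\oplus\fn_+$ together with the facts (i) $\fn_-$ is abelian, and (ii) $\fa = \C\wH_0$, $d\chi(\wH_0)=1$, and $d\chi$ vanishes on $\fm$. One can alternatively deduce part (1) from \cite[Prop.~5.1]{KuOr24}, since $d\pi_\lambda$ and $d\pi_{\lambda^*}$ are related by the substitution $\lambda \mapsto 2\rho(\fn_+) - \lambda = n - \lambda$, but below I describe the direct route.

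For part (2), since $\bar n^{-1}\in N_-$ and $[\fn_-,\fn_-]=0$, I get $\Ad(\bar n^{-1})N_j^- = N_j^-$, so the $\fl$-term in \eqref{eqn:formula2} vanishes and $d\pi_\lambda(N_j^-)f(\bar n) = -dR(N_j^-)f(\bar n)$. Under the parametrization $\bar n = \exp(x_1 N_1^- + \cdots + x_n N_n^-)$, abelianness of $\fn_-$ gives $\bar n\exp(tN_j^-) = \exp\bigl(\sum_i x_i N_i^- + tN_j^-\bigr)$, so $dR(N_j^-)$ corresponds to $\partial/\partial x_j$, proving (2).

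For part (3), expanding $\Ad(\bar n^{-1}) = \exp(\ad(-\sum x_i N_i^-))$ and using $[\fn_-,\fl]\subset\fn_-$ together with $[\fn_-,\fn_-]=0$ terminates the series at degree one: $\Ad(\bar n^{-1})Z = Z + [-\sum_i x_i N_i^-, Z]$, where the first summand lies in $\fl$ and the second in $\fn_-$. The $\fl$-term contributes the scalar $\lambda\, d\chi(Z)$ times $f$. The $\fn_-$-term is $\sum_j L_j(x)\, N_j^-$ with $L_j(x)$ linear in $x$; its $dR$-image along $\bar n$ (again by abelianness) is the linear vector field $\sum_j L_j(x)\,\partial/\partial x_j$, which preserves $\C^a[x_1,\ldots,x_n]$. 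Combining the two contributions gives (3).

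Part (1) is the most involved step and is where the main bookkeeping lives. Using $[\fn_-,\fn_+]\subset\fl$, $[\fn_-,\fl]\subset\fn_-$, and $[\fn_-,\fn_-]=0$, the Baker--Campbell--Hausdorff expansion terminates at degree two, giving
\begin{equation*}
\Ad(\bar n^{-1})N_j^+ = N_j^+ - \sum_i x_i [N_i^-,N_j^+] + \tfrac12 \sum_{i,k}x_i x_k [N_k^-,[N_i^-,N_j^+]].
\end{equation*}
Using $[E_{i+1,1},E_{1,j+1}] = E_{i+1,j+1} - \delta_{i,j}E_{1,1}$, one checks that $d\chi(E_{i+1,j+1}-\delta_{i,j}E_{1,1}) = -\delta_{i,j}$ by decomposing $E_{j+1,j+1}-E_{1,1}$ as $-\wH_0 + m_0$ with $m_0\in\fm$; consequently $d\chi\bigl((\Ad(\bar n^{-1})N_j^+)_\fl\bigr) = x_j$. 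A further short calculation, using $[N_k^-,[N_i^-,N_j^+]] = -\delta_{k,j}N_i^- - \delta_{i,j}N_k^-$, yields $(\Ad(\bar n^{-1})N_j^+)_{\fn_-} = -x_j\sum_i x_i N_i^-$; the corresponding $dR$-action at $\bar n$, computed again via the abelianness of $\fn_-$, equals $-x_j E_x$. Substituting into \eqref{eqn:formula2} then gives
\begin{equation*}
d\pi_\lambda(N_j^+) = \lambda\, x_j - (-x_j E_x) = x_j(\lambda + E_x),
\end{equation*}
and restriction to $\C^a[x_1,\ldots,x_n]$ produces the multiplication by $(\lambda+a)x_j$ claimed in (1). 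The hard part is the combinatorial identification of the $\fl$- and $\fn_-$-components of $\Ad(\bar n^{-1})N_j^+$ and the $d\chi$-computation; once these are in hand, the rest is formal.
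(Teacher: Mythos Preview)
Your proof is correct and follows essentially the same approach as the paper: both derive the three formulas by computing the Gelfand--Naimark components of $\Ad(\bar n^{-1})X$ and plugging into \eqref{eqn:formula2}. The paper only writes out Case~(3) in detail and declares the others similar, whereas you supply the explicit bracket computations for Case~(1); your bookkeeping (the $d\chi$-value via $E_{j+1,j+1}-E_{1,1}=-\wH_0+m_0$ and the double bracket $[N_k^-,[N_i^-,N_j^+]]=-\delta_{k,j}N_i^--\delta_{i,j}N_k^-$) is accurate and matches what the paper leaves implicit.
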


\begin{proof}
Since 
each case can be shown similarly by computing \eqref{eqn:formula2},
we only demonstrate a proof of Case (3) here, provided that Case (2) is proven.

Take $Z \in \fl$. Then, for $\nbar=\exp(\sum_{j=1}^nx_j N_j^-)$, we have 
\begin{equation*}
\Ad(\nbar^{-1})Z = \exp(\ad(\sum_{j=1}^n x_j N_j^-)Z)
=Z -\sum_{j=1}^nx_j[N_j^-,Z].
\end{equation*}
Since $Z\in \fl$ and  $\sum_{j=1}^nx_j[N_j^-,Z] \in \fn_-$,
the value of $\lambda d\chi((\Ad(\nbar^{-1})Z)_{\fl})$ is given by
\begin{equation*}
\lambda d\chi((\Ad(\nbar^{-1})Z)_{\fl}) = \lambda d\chi(Z) \in \C.
\end{equation*}

Next,
observe that , as $[N_j^-, Z] \in \fn_-$, the bracket $[N_j^-, Z]$ is of the form
\begin{equation*}
[N_j^-,Z] = \sum_{r=1}^n a_{rj}N_r^- 
\quad
\text{for some $a_{rj} \in \C$}.
\end{equation*}
Therefore,
\begin{align*}
-dR((\Ad(\nbar^{-1})Z)_{\fn_-}) 
&= -\sum_{j=1}^nx_jdR([N_j^-,Z])\\
&= -\sum_{j,r=1}^na_{rj} x_jdR(N_r^-)\\
&= \sum_{j,r=1}^na_{rj} x_j\frac{\partial}{\partial x_r}.
\end{align*}
We note that the formula for Case (2) is used from line two to line three.
Thus, we have 
\begin{equation*}
dR((\Ad(\nbar^{-1})Z)_{\fn_-}) \vert_{\C^a[x_1,\ldots, x_n]} \subset 
\C^a[x_1, \ldots, x_n].
\end{equation*}
Since $\dpi_\lambda(Z)= 
\lambda d\chi((\Ad(\nbar^{-1})Z)_{\fl}) - dR((\Ad(\nbar^{-1})Z)_{\fn_-})$,
this completes the proof.
\end{proof}

\begin{prop}\label{prop:FG}
Let $k \in 1+\Z_{\geq 0}$.
In the noncompact picture that $I(\triv, 1-k)^\ga \subset C^\infty(\R^n)$, we have 
\begin{equation*}
F_G(1-k) = 
\bigoplus_{a=0}^{k-1}\C^a[x_1, \ldots, x_n].
\end{equation*}

\begin{proof}
Since $F_G(1-k)$ is a finite-dimensional irreducible representation of $G$,
there exists a lowest weight vector $f_0(x_1,\ldots,x_n) \in F_G(1-k)$. As being
a lowest weight vector, we have 
\begin{equation*}
\dpi_{1-k}(N_j^-)f_0(x_1,\ldots, x_n)=0
\quad 
\text{for all $j \in \{1,\ldots, n\}$}.
\end{equation*}
By Lemma \ref{lem:dpi} (2), this is equivalent to 
\begin{equation*}
\frac{\partial}{\partial x_j} f_0(x_1,\ldots, x_n) = 0
\quad 
\text{for all $j \in \{1,\ldots, n\}$},
\end{equation*}
which shows that $f_0$ is a constant function.

Now observe that, by Lemma \ref{lem:dpi} (1), we have 
\begin{equation*}
\dpi_{1-k}(N_j^+)\vert_{\C^a[x_1,\ldots, x_n]} =(1-k + a )x_j.
\end{equation*}
Thus, the space $\Cal{U}(\fn_+)f_0 \subset F_G(1-k)$ is given by
\begin{equation*}
\Cal{U}(\fn_+)f_0 = \bigoplus_{a=0}^{k-1}\C^a[x_1, \ldots, x_n].
\end{equation*}

It follows from Lemma \ref{lem:dpi} that 
$\bigoplus_{a=0}^{k-1}\C^a[x_1, \ldots, x_n]$ is a non-zero $\fg$-submodule
of $F_G(1-k)$.
Now the irreducibility of $F_G(1-k)$ concludes the proposition.
\end{proof}

\end{prop}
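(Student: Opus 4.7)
The plan is to identify $F_G(1-k)$ with the candidate polynomial space $V := \bigoplus_{a=0}^{k-1}\C^a[x_1,\ldots,x_n]$ by showing that $V$ is a nonzero finite-dimensional $\fg$-submodule of $I(\triv,1-k)^\ga$ generated by a lowest weight vector, and then invoking uniqueness of the finite-dimensional composition factor (the analogue of Fact \ref{thm:vDM} for $G$) to match it with $F_G(1-k)$.

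First I would verify that $V$ is closed under $\dpi_{1-k}(X)$ for every $X \in \fg = \fn_- \oplus \fl \oplus \fn_+$, using the three parts of Lemma \ref{lem:dpi}. The $\fn_-$-action is by $-\partial/\partial x_j$, which lowers polynomial degree and hence preserves $V$. The $\fl$-action preserves each homogeneous component $\C^a[x_1,\ldots,x_n]$ separately. The crucial point is the $\fn_+$-action: on $\C^a[x_1,\ldots,x_n]$ one has $\dpi_{1-k}(N_j^+) = (1-k+a)x_j$, which raises degree by one but vanishes identically when $a = k-1$ thanks to the miraculous cancellation $(1-k)+(k-1)=0$. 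Consequently $V$ is stable under $\fn_+$ as well, and so $V$ is a nonzero $\fg$-submodule of $I(\triv,1-k)^\ga$ of finite dimension.

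Next I would show that $V$ equals the unique finite-dimensional irreducible submodule. The constant function $\mathbb{1} \in V$ satisfies $\dpi_{1-k}(N_j^-)\mathbb{1} = 0$ for all $j$ by Lemma \ref{lem:dpi}(2), so it is a lowest weight vector. Conversely, any nonzero finite-dimensional $\fg$-submodule of $I(\triv,1-k)^\ga$ must contain a lowest weight vector, and a direct computation with $\dpi_{1-k}(N_j^-) = -\partial/\partial x_j$ shows that constants are the only lowest weight vectors; hence $\mathbb{1} \in F_G(1-k)$. Acting iteratively by $\dpi_{1-k}(N_j^+)$ on $\mathbb{1}$ produces each monomial $x_{j_1}\cdots x_{j_a}$ with $1 \leq a \leq k-1$ up to a nonzero scalar, since every coefficient $(1-k+b)$ for $0 \leq b \leq k-2$ is nonzero. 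Therefore $\Cal{U}(\fn_+)\mathbb{1} = V$, giving $V \subseteq F_G(1-k)$. Since $F_G(1-k)$ is irreducible and $\mathbb{1}$ generates it as a $\Cal{U}(\fg)$-module, and since $\Cal{U}(\fn_-)\mathbb{1} = \C\mathbb{1}$, we get $F_G(1-k) = \Cal{U}(\fg)\mathbb{1} = \Cal{U}(\fn_+)\Cal{U}(\fl)\Cal{U}(\fn_-)\mathbb{1} = \Cal{U}(\fn_+)\mathbb{1} = V$, using that $\fl$ preserves each $\C^a[x_1,\ldots,x_n]$.

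The main obstacle, if one can call it that, is the $\fn_+$-stability verification at the top degree $a=k-1$; everything else is essentially bookkeeping once one has the explicit formulas from Lemma \ref{lem:dpi}. The strategy hinges on the delicate fact that the parameter $\lambda = 1-k$ is tuned precisely so that $\dpi_{1-k}(N_j^+)$ annihilates $\C^{k-1}[x_1,\ldots,x_n]$, which is both what produces a finite-dimensional submodule in the first place and what matches the expected reducibility pattern of $I(\triv,\lambda)^\ga$ at these integral values of $\lambda$.
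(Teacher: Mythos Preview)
Your proposal is correct and follows essentially the same approach as the paper: both arguments use Lemma \ref{lem:dpi} to identify the constant function as the unique lowest weight vector in $F_G(1-k)$, compute $\Cal{U}(\fn_+)\mathbb{1} = V$ via the formula $\dpi_{1-k}(N_j^+)\vert_{\C^a} = (1-k+a)x_j$, and verify that $V$ is $\fg$-stable. The only cosmetic difference is that the paper concludes directly from irreducibility of $F_G(1-k)$ once $V$ is seen to be a nonzero $\fg$-submodule, whereas you add a PBW step $F_G(1-k)=\Cal{U}(\fn_+)\Cal{U}(\fl)\Cal{U}(\fn_-)\mathbb{1}$; this is redundant since you already established $V$ is $\fg$-stable, so irreducibility alone finishes the job.
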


\subsection{The image $\Im(\bD)$}
\label{sec:image2}

We now determine the  $SL(n,\R)$-representations on the image $\Im(\bD)$ 
of $\bD=\bD_{(m,0)}, \bD_{(m,\ell)}$.
Throughout this subsection we assume that 
$(\ga,\gb;\varpi;\lambda ,\nu) \in \gL^{(n+1,n)}_{SL,2}$; in particular,
the following commutative diagram holds.
\begin{equation*}
\xymatrix@=13pt{
I(\triv, 1-(m+\ell))^\ga
 \ar[dd]_{\D_{m+\ell}}
  \ar[rrdd]^{\bD_{(m,\ell)}}
  \ar[rr]^{\bD_{(m,0)}}
 && J(\triv, 1-\ell)^{\ga+m}
  \ar[dd]^{\D'_\ell}
  \ar@{}[ldd]|{\circlearrowleft}
 \\
&&\\
I(\poly^{m+\ell}_n, 1+\tfrac{m+\ell}{n})^{\ga+m+\ell}
\ar[rr]_{\wProj_{(m,\ell)}}   \ar@{}[ruu]|{\circlearrowleft}
&&
J(\poly^\ell_{n-1}, 1+\tfrac{\ell}{n-1})^{\ga+m+\ell}
}
\end{equation*}

\begin{prop}\label{prop:im1}
We have
\begin{equation*}
\Im(\bD_{(m,0)})=J(\triv,1-\ell)^{\ga+m}.
\end{equation*}
\end{prop}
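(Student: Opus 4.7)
The plan is to combine the finite composition structure of the target $J(\triv, 1-\ell)^{\ga+m}$ with an explicit computation of $\bD_{(m,0)}$ on bump-function inputs. First I would observe that $\Im(\bD_{(m,0)})$ is a $G'$-invariant subspace of $J(\triv, 1-\ell)^{\ga+m}$; it is nonzero because $\bD_{(m,0)}(x_n^m) = m!$, a nonzero constant function. Moreover, since $J(\triv, 1-\ell)^{\ga+m}$ is a smooth globalization of an admissible $(\fg',K')$-module of finite length, the image of the continuous $G'$-equivariant differential operator $\bD_{(m,0)}$ is automatically closed in $J(\triv, 1-\ell)^{\ga+m}$ by the Casselman--Wallach theory.

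Next I would split into two cases according to Fact \ref{thm:vDM}. If $\ga+m \ne (-1)^{1-\ell}$, then $J(\triv, 1-\ell)^{\ga+m}$ is irreducible, so $\Im(\bD_{(m,0)}) = J(\triv, 1-\ell)^{\ga+m}$ follows from nonvanishing alone. If $\ga+m = (-1)^{1-\ell}$, then the non-split short exact sequence of Fact \ref{thm:vDM}(4) implies that the only proper nonzero closed $G'$-subrepresentation of $J(\triv, 1-\ell)^{\ga+m}$ is the finite-dimensional $F_{G'}(1-\ell)^{\ga+m}$. It therefore suffices to exhibit an element of $\Im(\bD_{(m,0)})$ that does not lie in $F_{G'}(1-\ell)^{\ga+m}$.

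To that end, given any $g \in C_c^\infty(\R^{n-1})$, I would pick $\phi \in C_c^\infty(\R^n)$ with $\phi(x',0) \equiv 1$ on a neighborhood of $\supp(g)$ and set $f(x) := \phi(x)\,x_n^m\, g(x')$. Since $f$ has compact support in $\R^n$, it extends by zero to a smooth section of $\mathcal{V}$ on $\RP^n$, hence lies in $I(\triv, 1-(m+\ell))^\ga$. A direct calculation gives $\bD_{(m,0)} f = m!\, g$, so $\Im(\bD_{(m,0)}) \supset m!\,C_c^\infty(\R^{n-1})$. By the $G'$-analog of Proposition \ref{prop:FG}, the space $F_{G'}(1-\ell)^{\ga+m}$ consists of polynomials on $\R^{n-1}$ of degree at most $\ell - 1$, which is finite-dimensional and so cannot contain all of $C_c^\infty(\R^{n-1})$. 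This forces $\Im(\bD_{(m,0)}) = J(\triv, 1-\ell)^{\ga+m}$.

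The main technical point I expect to need careful justification is the closedness of $\Im(\bD_{(m,0)})$; the remainder reduces to explicit computation with the formula $\bD_{(m,0)} = \Rest_{x_n=0}\circ \frac{\partial^m}{\partial x_n^m}$ and to the structural dichotomy supplied by Fact \ref{thm:vDM}.
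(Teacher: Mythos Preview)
Your argument is correct but follows a genuinely different route from the paper's.  The paper exploits the factorization identity $\bD_{(m,\ell)} = \D_\ell' \circ \bD_{(m,0)}$ from Theorem~\ref{thm:Proj} together with Theorem~\ref{thm:IDO3}, which identifies $\Ker(\D_\ell') = F_{G'}(1-\ell)$: if $\Im(\bD_{(m,0)})$ were contained in $F_{G'}(1-\ell)$, then $\bD_{(m,\ell)}$ would vanish identically, contradicting $\bD_{(m,\ell)}\not\equiv 0$.  You instead produce, via the bump-function construction, all of $C_c^\infty(\R^{n-1})$ inside the image, which visibly cannot lie in the finite-dimensional $F_{G'}(1-\ell)$.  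The paper's proof is shorter and more conceptual, relying on machinery already in place; your approach is more self-contained, needing neither the factorization identity nor the kernel computation of Theorem~\ref{thm:IDO3}.  One minor caveat: your quick nonvanishing claim $\bD_{(m,0)}(x_n^m)=m!$ tacitly assumes $x_n^m \in I(\triv,1-(m+\ell))^\ga$, which Proposition~\ref{prop:FG} guarantees only when the source is reducible; your bump-function argument, however, already covers nonvanishing in all cases, so this is harmless.  Both proofs ultimately appeal to the same submodule dichotomy from Fact~\ref{thm:vDM} and hence share the implicit closedness issue you correctly flag.
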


\begin{proof}
Without loss of generality, we assume that $J(\triv,1-\ell)^{\ga+m}$ 
is reducible. By Fact \ref{thm:vDM}, the only possibilities of $\Im(\bD_{(m,0)})$ are 
$\{0\}$, $F_{G'}(1-\ell)$, or $J(\triv, 1-\ell)^{\ga +m}$. 
It follows from Theorem \ref{thm:IDO3} that $F_{G'}(1-\ell) = \Ker(\D_\ell')$.
Therefore, if $\Im(\bD_{(m,0)}) \in \{\{0\}, F_{G'}(1-\ell)\}$, then
\begin{equation*}
\bD_{(m,\ell)} = \D_\ell' \circ \bD_{(m,0)} \equiv 0,
\end{equation*}
which contradicts the fact that $\bD_{(m,\ell)} \not\equiv 0$.
Hence, $\Im(\bD_{(m,0)}) = J(\triv, 1-\ell)^{\ga+m}$.
\end{proof}

\begin{prop}\label{prop:im2}
The following hold.
\begin{enumerate}
\item[\emph{(1)}] Suppose $\ga = (-1)^m$. Then we have
\begin{equation*}
\Im(\bD_{(m,\ell)})
=
\begin{cases}
T_{G'}(n+\ell-1) &\text{if $\ell \in 1+2\Z_{\geq 0}$},\\[3pt]
J(\triv, 1-\ell) &\text{if $\ell \in 2(1+\Z_{\geq 0})$}.
\end{cases}
\end{equation*}

\item[\emph{(2)}] Suppose $\ga = (-1)^{m+1}$. Then we have
\begin{equation*}
\Im(\bD_{(m,\ell)})
=
\begin{cases}
J(\triv, 1-\ell) &\text{if $\ell \in 1+2\Z_{\geq 0}$},\\[3pt]
T_{G'}(n+\ell+1) &\text{if $\ell \in 2(1+\Z_{\geq 0})$}.
\end{cases}
\end{equation*}

\end{enumerate}
\end{prop}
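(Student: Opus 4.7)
The strategy is to combine the factorization identity $\bD_{(m,\ell)} = \D'_\ell \circ \bD_{(m,0)}$ from Theorem \ref{thm:Proj} with the image computation in Proposition \ref{prop:im1}. Since the latter gives $\Im(\bD_{(m,0)}) = J(\triv, 1-\ell)^{\ga+m}$, the \emph{full} induced representation, it follows immediately that
\begin{equation*}
\Im(\bD_{(m,\ell)}) = \D'_\ell\bigl(\Im(\bD_{(m,0)})\bigr) = \Im(\D'_\ell),
\end{equation*}
so the computation reduces to the already-classified image of $\D'_\ell$ acting on the entire $J(\triv, 1-\ell)^{\ga+m}$.

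Next, I would apply Theorem \ref{thm:IDO3}(2) with $k=\ell$ and $\beta$-parameter equal to $\ga+m$. That theorem asserts that $\Im(\D'_\ell)\simeq T_{G'}(n+\ell-1)$ precisely when $\ga+m=(-1)^{1-\ell}$, and that $\D'_\ell$ is injective (whence $\Im(\D'_\ell)\simeq J(\triv,1-\ell)^{\ga+m}$) otherwise. Translating this criterion using the sign convention of Remark \ref{rem:sym}—namely, $\ga+m=+$ iff $\ga=(-1)^m$, and $\ga+m=-$ iff $\ga=(-1)^{m+1}$—one sees that in Case (1) the resonance $\ga+m=(-1)^{1-\ell}$ amounts to $\ell$ being odd, while in Case (2) it amounts to $\ell$ being even. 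Pairing each resonance with the $T_{G'}$-type image and each non-resonance with $J(\triv,1-\ell)^{\ga+m}$ gives the proposition.

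I do not anticipate any serious obstacle, since both ingredients—the factorization identity and the image classification for $\D'_\ell$—are already established, so the proof amounts to parity bookkeeping. The only substantive point to flag is the reliance on Proposition \ref{prop:im1} to guarantee that $\D'_\ell$ acts on the \emph{entire} $J(\triv, 1-\ell)^{\ga+m}$; otherwise one would have to exclude the possibility that $\Im(\bD_{(m,0)})$ is the proper submodule $F_{G'}(1-\ell)$, which Proposition \ref{prop:im1} rules out by invoking the non-vanishing of $\bD_{(m,\ell)}=\D'_\ell\circ \bD_{(m,0)}$ together with the identification $F_{G'}(1-\ell)=\Ker(\D'_\ell)$ from Theorem \ref{thm:IDO3}.
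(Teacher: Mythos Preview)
Your proposal is correct and follows essentially the same approach as the paper: the paper's proof simply states that the factorization $\bD_{(m,\ell)} = \D'_\ell \circ \bD_{(m,0)}$ combined with Proposition~\ref{prop:im1} gives $\Im(\bD_{(m,\ell)}) = \Im(\D'_\ell)$, and then invokes Theorem~\ref{thm:IDO3}. Your write-up expands the parity bookkeeping that the paper leaves implicit, but the logical structure is identical.
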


\begin{proof}
Since $\bD_{(m,\ell)} = \D_\ell' \circ \bD_{(m,0)}$,
it follows from Proposition \ref{prop:im1} that 
$\Im(\bD_{(m,\ell)}) = \Im(\D'_\ell)$. Now the proposition follows from 
Theorem \ref{thm:IDO3}.
\end{proof}

\subsection{The image $\Im(\bD\vert_{F_{G}(1-(m+\ell))})$}

We next consider the image $\Im(\bD\vert_{F_{G}(1-(m+\ell))})$
of the restricted operator $\bD\vert_{F_{G}(1-(m+\ell))}$
for $\bD=\bD_{(m,0)}, \bD_{(m,\ell)}$.

\begin{prop}\label{prop:im3}
Suppose that $I(\triv, 1-(m+\ell))^\ga$ is reducible. Then we have 
\begin{equation*}
\Im(\bD_{(m,\ell)}\vert_{F_{G}(1-(m+\ell))}) = \{0\}.
\end{equation*}
\end{prop}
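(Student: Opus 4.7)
The plan is to combine the explicit description of the submodule $F_G(1-(m+\ell))$ in the noncompact picture (Proposition \ref{prop:FG}) with the fact that $\bD_{(m,\ell)}$ is a differential operator of total order $m+\ell$, so that it annihilates $F_G(1-(m+\ell))$ for degree reasons.

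First, I would recall from the hypothesis that $I(\triv,1-(m+\ell))^\alpha$ is reducible, which forces $m+\ell\geq 1$; this is precisely the setting in which Proposition \ref{prop:FG} applies (with $k=m+\ell$), giving
\begin{equation*}
F_G(1-(m+\ell)) \;=\; \bigoplus_{a=0}^{m+\ell-1}\C^a[x_1,\ldots,x_{n-1},x_n],
\end{equation*}
i.e.\ the space of polynomial functions on $N_-\simeq \R^n$ of total degree at most $m+\ell-1$.

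Next, I would inspect the explicit form \eqref{eqn:DSBO}. Since every summand of
\begin{equation*}
\bD_{(m,\ell)} \;=\; \Rest_{x_n=0}\circ \frac{\partial^m}{\partial x_n^m}
\sum_{\mathbf{l}\in \Xi'_\ell}\frac{\partial^\ell}{\partial x^{\mathbf{l}}}\otimes \wy_{\mathbf{l}}
\end{equation*}
involves exactly $m+\ell$ derivatives with respect to the coordinates $x_1,\ldots,x_n$, the operator $\bD_{(m,\ell)}$ has total differential order $m+\ell$ and therefore annihilates any polynomial in $x_1,\ldots,x_n$ of total degree strictly less than $m+\ell$. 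Applied to each summand of the decomposition of $F_G(1-(m+\ell))$ above, this yields $\bD_{(m,\ell)}f=0$ for all $f\in F_G(1-(m+\ell))$, which is exactly the claim.

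There is no real obstacle here; the statement is essentially a degree count. If one preferred a more conceptual argument, one could instead invoke the factorization identity $\bD_{(m,\ell)}=\wProj_{(m,\ell)}\circ \D_{m+\ell}$ from Theorem \ref{thm:Proj} and note that the $G$-intertwining operator $\D_{m+\ell}$, being of order $m+\ell$, already vanishes on $F_G(1-(m+\ell))$ by the same degree observation, but the direct verification above is the shortest.
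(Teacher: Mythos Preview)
Your proof is correct but takes a different route from the paper. The paper's argument is purely conceptual: it invokes (the $G$-analog of) Theorem~\ref{thm:IDO3} to identify $F_G(1-(m+\ell))=\Ker(\D_{m+\ell})$, and then the factorization $\bD_{(m,\ell)}=\wProj_{(m,\ell)}\circ\D_{m+\ell}$ from Theorem~\ref{thm:Proj} immediately gives $\bD_{(m,\ell)}\vert_{\Ker(\D_{m+\ell})}\equiv 0$. You instead work directly in the noncompact picture, combining the explicit polynomial description of $F_G(1-(m+\ell))$ from Proposition~\ref{prop:FG} with the observation that $\bD_{(m,\ell)}$ has total order $m+\ell$, so kills all polynomials of degree at most $m+\ell-1$. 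Your approach is more elementary and self-contained (it does not need the kernel identification from \cite{KuOr24}), while the paper's approach makes the structural reason --- that $F_G$ is exactly the kernel of the factoring operator --- transparent. Amusingly, you mention the paper's route as the ``more conceptual'' alternative at the end; that is in fact the one the paper chose.
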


\begin{proof}
It follows from Theorem \ref{thm:IDO3} that 
$F_{G}(1-(m+\ell)) = \Ker(\D_{m+\ell})$. Therefore,
\begin{equation*}
\bD_{(m,\ell)}\vert_{F_{G}(1-(m+\ell))}
=(\wProj_{(m,\ell)} \circ \D_{m+\ell}) \vert_{\Ker(\D_{m+\ell})} \equiv 0.
\end{equation*}
\end{proof}

Suppose that  $I(\triv,1-(m+\ell))^\ga$ is reducible.
Then Fact \ref{thm:vDM} shows that $\ga$ satisfies the condition
$\ga = (-1)^{1-(m+\ell)}$, which is equivalent to 
$\ga\cdot (-1)^m = (-1)^{1-\ell}$. 
By Fact \ref{thm:vDM}, this implies that $J(\triv, 1-\ell)^{\ga+m}$ is also reducible;
in particular, in this case, we have 
$\Ker(\D_\ell') = F_{G'}(1-\ell)$. This observation is used in the proof of the 
following proposition.

\begin{prop}\label{prop:im4}
Suppose that $I(\triv, 1-(m+\ell))^\ga$ is reducible. Then we have 
\begin{equation*}
\Im(\bD_{(m,0)}\vert_{F_{G}(1-(m+\ell))}) = F_{G'}(1-\ell)^{\ga+m}.
\end{equation*}
\end{prop}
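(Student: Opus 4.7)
The plan is to combine the factorization identity of Theorem~\ref{thm:Proj} with the irreducibility of $F_{G'}(1-\ell)^{\ga+m}$ and a single explicit computation. For the inclusion $\Im(\bD_{(m,0)}\vert_{F_{G}(1-(m+\ell))}) \subset F_{G'}(1-\ell)^{\ga+m}$, I would invoke the two-sided factorization $\bD_{(m,\ell)} = \D'_\ell \circ \bD_{(m,0)} = \wProj_{(m,\ell)} \circ \D_{m+\ell}$. Since $F_G(1-(m+\ell))$ coincides with $\Ker(\D_{m+\ell})$ by Theorem~\ref{thm:IDO3} (in its $G$-version, as already used in the proof of Proposition~\ref{prop:im3}), any $f$ in this submodule satisfies $\bD_{(m,\ell)}(f) = \wProj_{(m,\ell)}(\D_{m+\ell}(f)) = 0$, hence $\D'_\ell(\bD_{(m,0)}(f))=0$, placing $\bD_{(m,0)}(f)$ inside $\Ker(\D'_\ell) = F_{G'}(1-\ell)^{\ga+m}$ by Theorem~\ref{thm:IDO3}.

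For the reverse inclusion, the key observation is that $\Im(\bD_{(m,0)}\vert_{F_{G}(1-(m+\ell))})$ is a $G'$-submodule of $F_{G'}(1-\ell)^{\ga+m}$: the operator $\bD_{(m,0)}$ is $G'$-intertwining, and $F_G(1-(m+\ell))$ is a $G$-submodule of $I(\triv,1-(m+\ell))^\ga$, hence automatically $G'$-stable. Because $F_{G'}(1-\ell)^{\ga+m}$ is $G'$-irreducible (Fact~\ref{thm:vDM}(2)), it then suffices to show that this image is non-zero. Before doing so, I would verify the parity bookkeeping: the reducibility hypothesis on $I(\triv,1-(m+\ell))^\ga$ together with Fact~\ref{thm:vDM} forces $\ga = (-1)^{1-(m+\ell)}$, and the rule for $\delta+k$ recalled in Remark~\ref{rem:sym} yields $\ga+m = (-1)^{1-\ell}$. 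Thus $F_{G'}(1-\ell)^{\ga+m}$ is genuinely the non-trivial finite-dimensional irreducible submodule of $J(\triv,1-\ell)^{\ga+m}$ guaranteed by Fact~\ref{thm:vDM}(2).

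It then remains to exhibit a single non-zero element in the image. Since $m,\ell \geq 1$, Proposition~\ref{prop:FG} places the monomial $x_n^m$ inside $F_G(1-(m+\ell)) = \bigoplus_{a=0}^{m+\ell-1}\C^a[x_1,\ldots,x_n]$, and the noncompact-picture formula for $\bD_{(m,0)}$ gives
\begin{equation*}
\bD_{(m,0)}(x_n^m)
= \Rest_{x_n=0}\Bigl(\tfrac{\partial^m}{\partial x_n^m}x_n^m\Bigr)
= m!,
\end{equation*}
which is a non-zero constant function on $\R^{n-1}$ and therefore lies in $F_{G'}(1-\ell)^{\ga+m} = \bigoplus_{a=0}^{\ell-1}\C^a[x_1,\ldots,x_{n-1}]$. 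By irreducibility, the image must equal $F_{G'}(1-\ell)^{\ga+m}$. I do not foresee a real obstacle here; the only step demanding care is the parity bookkeeping that identifies $F_{G'}(1-\ell)^{\ga+m}$ as the correct target, and the explicit evaluation on $x_n^m$ immediately rules out the only alternative (the zero submodule) left by irreducibility.
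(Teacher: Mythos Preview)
Your proof is correct and follows essentially the same approach as the paper: use the factorization identity together with $F_G(1-(m+\ell))=\Ker(\D_{m+\ell})$ to obtain the inclusion into $\Ker(\D'_\ell)=F_{G'}(1-\ell)$, then invoke irreducibility and the explicit computation $\bD_{(m,0)}(x_n^m)=m!$ to conclude. The paper packages the vanishing step as a citation to Proposition~\ref{prop:im3} and handles the parity bookkeeping in the paragraph preceding the statement rather than inside the proof, but the argument is the same.
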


\begin{proof}
It follows from Theorem \ref{thm:Proj} that
$\bD_{(m,\ell)} 
= \D'_\ell \circ \bD_{(m,0)}$.
Thus, by Proposition \ref{prop:im3}, we have
\begin{equation*}
(\D'_\ell \circ \bD_{(m,0)}) \vert_{F_{G}(1-(m+\ell))}
=\bD_{(m,\ell)}\vert_{F_{G}(1-(m+\ell))}
\equiv 0,
\end{equation*}
which implies that 
\begin{equation*}
\Im(\bD_{(m,0)}\vert_{F_{G}(1-(m+\ell))}) \subset \Ker(\D_\ell')=F_{G'}(1-\ell).
\end{equation*}
The irreducibility of $F_{G'}(1-\ell)$ forces that 
$\Im(\bD_{(m,0)}\vert_{F_{G}(1-(m+\ell))})
=\{0\}$  or $F_{G'}(1-\ell)$. 

It follows from Proposition \ref{prop:FG} that 
$F_G(1-(m+\ell))$ is given by
\begin{equation*}
F_G(1-(m+\ell)) = 
\bigoplus_{a=0}^{(m+\ell)-1}\C^a[x_1, \ldots, x_n].
\end{equation*}
In particular, we have $x^m_n \in F_G(1-(m+\ell))$. 
As $\bD_{(m,0)} = \Rest_{x_n=0}\circ \tfrac{\partial^m}{\partial x_n^m}$,
the function $\bD_{(m,0)} x_n^m$ is given by
\begin{equation*}
\bD_{(m,0)} x_n^m = m! \neq 0.
\end{equation*}
Thus, $\Im(\bD_{(m,0)}\vert_{F_{G}(1-(m+\ell))}) \neq \{0\}$. 
Consequently, we have 
\begin{equation*}
\Im(\bD_{(m,0)}\vert_{F_{G}(1-(m+\ell))}) = F_{G'}(1-\ell). 
\end{equation*}
\end{proof}

\section{Branching laws of generalized Verma modules}
\label{sec:GVM}

The aim of this section is to discuss the branching law
$\Mp(\triv,s)\vert_{\fg'}$ of a generalized Verma module
$\Mp(\triv,s)$ of scalar-type. In addition, we also consider 
the branching law of the image 
$\Im(\varphi_{p+1})$ of the $\fg$-homomorphism
\begin{equation}\label{eqn:GVMp}
\varphi_{p+1}\colon 
\Mp(\sym_n^{p+1},-(1+\tfrac{p+1}{n})) \to \Mp(\triv, p)
\end{equation}
for $p \in \Z_{\geq 0}$, 
where $\varphi_{p+1}$ is defined as in \eqref{eqn:Hom2}.
The branching laws are achieved in Theorems \ref{thm:GVM31a} 
and \ref{thm:GVM31b}.
In this section we assume $n \geq 2$, unless otherwise specified.

\subsection{Kobayashi's character identity}
\label{sec:character}

To compute the branching law of a generalized Verma module,
the decomposition of the formal character is useful.
 A key tool for it is Kobayashi's character identity \cite{Kobayashi12}. 
We then start this section by recalling from \cite{Kobayashi12} 
the character formula in a general framework.

Let $\fg$ be a complex simple Lie algebra. Choose a Cartan subalgebra $\fh$ and 
write $\gD\equiv \gD(\fg, \fh)$ for the set of roots of $\fg$ with respect to $\fh$.
Fix a positive system $\gD^+$ and denote  by 
$\fb$ the Borel subalgebra of $\fg$ associated with $\gD^+$, namely,
$\fb = \fh \oplus \fu_+$ with $\fu_+=\bigoplus_{\ga \in \gD^+} \fg_\ga$. 
Here $\fg_\ga$ is the root space
for $\ga \in \gD^+$. Let $\Cal{O}$ denote the BGG category of $\fg$-modules
whose objects are finitely generated $\fg$-modules that are $\fh$-semisimple
and locally $\fu_+$-finite.

Let $\fp \supset \fb$ be a standard parabolic subalgebra of $\fg$. Write 
$\fp =\fl \oplus \fn_+$ for the Levi decomposition of $\fp$ with $\fh \subset \fl$. 
We put $\gD^+(\fl):=\{\ga \in \gD^+: \fg_\ga \subset \fl\}$. 
We denote by $\Cal{O}^\fp$ the parabolic BGG category, which is a full
subcategory of $\Cal{O}$ whose objects are $\fl$-semisimple and locally
$\fn_+$-finite.

Let $\IP{\cdot}{\cdot}$ denote the inner product on $\fh^*$ induced from 
a non-degenerate symmetric bilinear form 
of $\fg$. For $\ga \in \gD$, we write $\ga^\vee = 2\ga/\IP{\ga}{\ga}$.
Then we put
\begin{equation*}
\gL^+(\fl):=\{\lambda \in \fh^* : \IP{\lambda}{\ga^\vee} \in \Z_{\geq 0}
\;\;
\text{for all $\ga \in \gD^+(\fl)$\}}.
\end{equation*}

For $\lambda \in \gL^+(\fl)$, we denote by $F_\lambda$ the finite-dimensional
simple $\fg$-module with highest weight $\lambda$. By letting $\fn_+$ act trivially,
we regard $F_\lambda$ as a $\fp$-module. We then define the generalized Verma
module $\Mp(\lambda)$ with highest weight $\lambda$ by 
\begin{equation*}
\Mp(\lambda) =\Cal{U}(\fg)\otimes_{\Cal{U}(\fp)}F_{\lambda}.
\end{equation*}

\vskip 0.1in
Let $\fg'$ be a reductive subalgebra of $\fg$ and
take a hyperbolic element $H\in \fg' \subset \fg$, that is,
the eigenvalues of $\ad(H)$ on $\fg$
are all real-valued. We then define the subalgebras
\begin{equation*}
\fn_-(H),
\quad
 \fl(H),
\quad
\text{and}
\quad
\fn_+(H)
\end{equation*}
as the sum of the eigenspaces of negative, zero, and positive 
eiganvalue of $\ad(H)$, respectively, so that we have 
$\fg=\fn_-(H) \oplus \fl(H) \oplus \fn_+(H)$.

Now suppose that $\fp \subset \fg$ is 
a \emph{$\fg'$-compatible} parabolic subalgebra 
determined by a hyperbolic element $H \in \fg'$,
namely, $\fp = \fp(H):=\fl(H)\oplus \fn_+(H)$.
The $\fg'$-compatibility of $\fp$ implies that
$\fp':=\fp \cap \fg'$ is a parabolic subalgebra of $\fg'$
with Levi decomposition
\begin{equation*}
\fp'=\fl' \oplus \fn_+' :=(\fl(H)\cap \fg') \oplus (\fn_+(H)\cap \fg').
\end{equation*}

We choose a Cartan subalgebra
$\fh' \subset \fg'$ in such a way that  $H \in \fh'$ and that
it extends to a Cartan subalgebra $\fh$ of $\fg$. Then we have 
$\fh \subset \fl(H)$
and $\fh' \subset \fg'$. Hereafter, we simply write 
$\fl =\fl(H)$ and $\fn_{\pm}=\fn_{\pm}(H)$.

Given a finite-dimensional vector space $V$, we write 
$S(V)=\bigoplus_{k=0}^\infty S^k(V)$ for the symmetric tensor algebra 
of $V$. 
For finite-dimensional simple $\fg$- and $\fg'$-modules
$F_\lambda$ and $F'_\delta$ with highest weights $\lambda \in \gL^+(\fl)$
and $\delta \in \gL^+(\fl')$, respectively, we set
\begin{equation*}
m(\delta;\lambda):=
\dim
\Hom_{\fl'}(F'_\delta, F_\lambda\vert_{\fl'} \otimes S(\fn_-/\fn_-\cap \fg')).
\end{equation*}
Let $[\Mp(\lambda)]$ and $[\Mpp(\delta)]$ denote the formal characters 
of $\Mp(\lambda)$ and $\Mpp(\delta)$, respectively.

\begin{thm}[{\cite[Thm.\ 3.10]{Kobayashi12}}]\label{thm:bGVM}
Suppose that $\fp=\fl \oplus \fn_+$ is a $\fg'$-compatible parabolic subalgebra
of $\fg$. Then, for $\lambda \in \gL^+(\fl)$,  the following hold.

\begin{enumerate}
\item[\emph{(1)}]
$m(\delta;\lambda) < \infty$ for all $\delta \in \gL^+(\fl')$.

\item[\emph{(2)}]
In the Grothendieck group of $\Cal{O}^{\fp'}$, we have
\begin{equation}\label{eqn:bGVM}
[\Mp(\lambda)\vert_{\fg'}]
\simeq 
\bigoplus_{\delta \in \mathrm{supp}(\lambda)}
m(\delta; \lambda) [\Mpp(\delta)],
\end{equation}
where $\mathrm{supp}(\lambda) = \{\delta \in \gL^+(\fl'):m(\delta;\lambda)\neq 0\}$.
\end{enumerate}
\end{thm}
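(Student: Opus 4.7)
The plan is to derive both claims from the PBW decomposition of $\Mp(\lambda)$, combined with the $\fg'$-compatibility of $\fp$, and then lift a character identity to the Grothendieck group of $\Cal{O}^{\fp'}$. First I would use the PBW theorem to write $\Mp(\lambda) \simeq S(\fn_-) \otimes F_\lambda$ as $\fl$-modules (hence as $\fl'$-modules), and then exploit $\fg'$-compatibility, which implies $\fn_-' = \fn_- \cap \fg'$ is an $\fl'$-stable subspace of $\fn_-$ with an $\fl'$-stable complement, to split
\begin{equation*}
\Mp(\lambda)\vert_{\fl'} \;\simeq\; S(\fn_-') \otimes \big(S(\fn_-/\fn_-') \otimes F_\lambda\big)
\end{equation*}
as $\fl'$-modules, where $\fl'$ acts diagonally on the second tensor factor $N := S(\fn_-/\fn_-') \otimes F_\lambda$.

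For finiteness (part (1)), the key observation is that the hyperbolic element $H$ defining $\fp$ lies in the center of $\fl'$ and acts by strictly negative eigenvalues on $\fn_-/\fn_-'$, while $F_\lambda$ is finite-dimensional. Hence the $H$-eigenspace decomposition of $N$ is bounded above and each eigenspace is finite-dimensional, so (using that $\fl'$ is reductive with finite-dimensional isotypic components in each weight stratum) $\Hom_{\fl'}(F'_\delta, N)$ is finite-dimensional for every $\delta \in \gL^+(\fl')$. Writing $N \simeq \bigoplus_{\delta} m(\delta;\lambda)\,F'_\delta$ as $\fl'$-modules and then tensoring with $S(\fn_-')$ yields, as a formal identity of $\fl'$-characters,
\begin{equation*}
\mathrm{ch}\,\Mp(\lambda)\vert_{\fl'} \;=\; \sum_{\delta \in \mathrm{supp}(\lambda)} m(\delta;\lambda)\,\mathrm{ch}\,\big(S(\fn_-') \otimes F'_\delta\big) \;=\; \sum_{\delta} m(\delta;\lambda)\,\mathrm{ch}\,\Mpp(\delta).
\end{equation*}

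For (2), I would then verify that $\Mp(\lambda)\vert_{\fg'}$ belongs (in the appropriate completed sense) to $\Cal{O}^{\fp'}$: it is $\fl'$-semisimple by the above decomposition, and locally $\fn_+'$-finite because $\fn_+' \subset \fn_+$ acts locally nilpotently on $\Mp(\lambda)$ (the highest-weight vector generates a $\fp$-stable finite-dimensional piece). Since parabolic Verma modules $\Mpp(\delta)$ have linearly independent formal characters and any object of $\Cal{O}^{\fp'}$ with locally finite weight multiplicities in each $H$-eigenspace admits a unique expansion as a $\Z$-linear combination of $[\Mpp(\delta)]$ in the Grothendieck group, the character identity above lifts to \eqref{eqn:bGVM}. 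The main obstacle is the last step: one must make precise in what sense the possibly infinite direct sum on the right-hand side of \eqref{eqn:bGVM} represents a class in the Grothendieck group of $\Cal{O}^{\fp'}$. This is resolved by the uniform negativity of $\ad(H)$ on $\fn_-$ and $\fn_-'$, which guarantees that for each $H$-weight only finitely many $\delta$ contribute, so that the sum is locally finite and the identification with the formal character of $\Mp(\lambda)\vert_{\fg'}$ is well-defined.
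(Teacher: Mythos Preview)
The paper does not prove this theorem at all: it is quoted verbatim from \cite[Thm.\ 3.10]{Kobayashi12} and used as a black box, so there is no ``paper's own proof'' to compare against. Your sketch is essentially the argument Kobayashi gives in the cited reference --- PBW gives $\Mp(\lambda)\simeq S(\fn_-)\otimes F_\lambda$ as $\fl$-modules, $\fg'$-compatibility lets you split $S(\fn_-)\simeq S(\fn_-')\otimes S(\fn_-/\fn_-')$ as $\fl'$-modules, and the grading by the hyperbolic element $H$ forces local finiteness --- so the approach is correct and standard.

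One small point worth tightening: you say ``$\fn_+'$ acts locally nilpotently on $\Mp(\lambda)$ because the highest-weight vector generates a $\fp$-stable finite-dimensional piece,'' but local $\fn_+'$-finiteness for all of $\Mp(\lambda)$ follows more directly from the fact that $\Mp(\lambda)\in\Cal{O}^\fp\subset\Cal{O}$ and $\fn_+'\subset\fn_+\subset\fb$. Also, as the paper itself remarks immediately after the theorem, in the application to $(\fg,\fg')=(\f{sl}(n+1,\C),\f{sl}(n,\C))$ the parabolic $\fp$ is \emph{not} literally $\fg'$-compatible (the defining $H_0$ is not in $\fg'$), yet the character identity still holds because $\fp'=\fp\cap\fg'$ with $\fl'=\fl\cap\fg'$ and $\fn_+'=\fn_+\cap\fg'$; your proof sketch would go through under this weaker hypothesis as well, which is worth noting.
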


\vskip 0.1in

In what follows, we resume the notation and normalizations 
specified in Section \ref{sec:SL}. So, we have 
$\fg= \f{sl}(n+1,\C)$ and $\fg' \simeq \f{sl}(n,\C)$ 
realized as in \eqref{eqn:g'}.
We remark that the maximal parabolic subalgebra
$\fp \subset \fg$ in consideration  is not $\fg'$-compatible, as 
the defining hyperbolic element $H_0\in \fh$ in \eqref{eqn:H0} is not in $\fg'$.
However, as $\fp'=\fp\cap \fg'$ with
$\fl' = \fl \cap \fg'$ and $\fn_+'=\fn_+\cap \fg'$,
a careful observation on the proof of Theorem \ref{thm:bGVM}
shows that the character identity \eqref{eqn:bGVM}  holds also in the present case.
In the next subsection, we apply the character identity \eqref{eqn:bGVM} to
show the branching law $[\Mp(\triv,s)\vert_{\fg'}]$ of  the formal character 
of a generalized Verma module $\Mp(\triv,s)$.

\subsection{Branching law of $[\Mp(s)\vert_{\fg'}]$}
\label{sec:GVMa}

For brevity,  we simply write 
\begin{equation*}
\Mp(s) = \Mp(\triv,s) \quad \text{and} \quad 
\Mpp(r)= \Mpp(\triv,r).
\end{equation*}

\begin{thm}\label{thm:GVM1}
Let $n\geq 2$. For $s \in \C$, 
the following isomorphism holds
in the Grothendieck group of $\Cal{O}^{\fp'}$:
\begin{equation}\label{eqn:GVM1a}
[\Mp(s)\vert_{\fg'}] \simeq 
\bigoplus_{m \in \Z_{\geq 0}}[\Mpp(s-m)].
\end{equation}
\end{thm}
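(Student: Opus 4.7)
The plan is to apply Kobayashi's character identity, Theorem~\ref{thm:bGVM}, to the scalar-type inducing module $F_\lambda = \C_s$. Although the maximal parabolic $\fp$ is not strictly $\fg'$-compatible (its defining hyperbolic element $H_0$ does not lie in $\fg'$), we still have $\fp' = \fp \cap \fg' = \fl' \oplus \fn_+'$ with $\fl' = \fl \cap \fg'$ and $\fn_\pm' = \fn_\pm \cap \fg'$, which is what the proof of \eqref{eqn:bGVM} actually uses through the PBW identification $\Mp(\lambda) \simeq S(\fn_-) \otimes F_\lambda$ together with an $\fl'$-equivariant splitting $\fn_- = \fn_-' \oplus (\fn_-/\fn_-')$. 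I would begin by noting this and then reduce to computing the multiplicities
\begin{equation*}
m(\delta;\lambda) = \dim \Hom_{\fl'}\bigl(F'_\delta,\; F_\lambda\vert_{\fl'} \otimes S(\fn_-/\fn_-')\bigr).
\end{equation*}

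Since $\fn_- = \spn_{\C}\{N_1^-,\ldots,N_n^-\}$ and $\fn_-' = \spn_{\C}\{N_1^-,\ldots,N_{n-1}^-\}$, the quotient $\fn_-/\fn_-'$ is one-dimensional, spanned by the image of $N_n^- = E_{n+1,1}$. A direct bracket computation using \eqref{eqn:0801b} gives $[X', N_n^-] = 0$ for all $X' \in \fm'$ and $[\wH_0', N_n^-] = -N_n^-$, so $\C N_n^-$ is in fact an $\fl'$-submodule of $\fn_-$ (this provides the splitting needed above). Passing to symmetric powers yields
\begin{equation*}
S(\fn_-/\fn_-') \simeq \bigoplus_{m \in \Z_{\geq 0}} \C (N_n^-)^m,
\end{equation*}
where $\C (N_n^-)^m$ is trivial on $\fm'$ with $\wH_0'$-weight $-m$. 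For the scalar-type module, $F_\lambda = \C_s$ is trivial on $\fm$ with $\wH_0$-weight $s$; since a diagonal comparison of \eqref{eqn:0801a} and \eqref{eqn:0801b} shows $\wH_0' - \wH_0 \in \fm$, the restriction $F_\lambda\vert_{\fl'}$ is also trivial on $\fm'$ with $\wH_0'$-weight $s$.

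Combining the two computations,
\begin{equation*}
F_\lambda\vert_{\fl'} \otimes S(\fn_-/\fn_-') \simeq \bigoplus_{m \in \Z_{\geq 0}} \C_{s-m}
\end{equation*}
is a multiplicity-free direct sum of pairwise inequivalent simple $\fl'$-modules. Therefore $m(\delta;\lambda) = 1$ precisely when $F'_\delta \simeq \C_{s-m}$ for some $m \in \Z_{\geq 0}$ and vanishes otherwise, and substituting into \eqref{eqn:bGVM} yields the asserted identity. The only conceptual obstacle is the non-$\fg'$-compatibility of $\fp$; once one verifies that the character-level version of \eqref{eqn:bGVM} goes through under the weaker hypothesis $\fp' = \fp \cap \fg'$ together with the $\fl'$-stable complement $\C N_n^-$ of $\fn_-'$ in $\fn_-$, the remainder is a routine weight-space enumeration.
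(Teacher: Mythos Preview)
Your proposal is correct and follows essentially the same route as the paper: identify $\fn_-/\fn_-' \simeq \C N_n^-$, compute the $\fl'$-module structure on $\C_s \otimes S(\fn_-/\fn_-')$ as $\bigoplus_{m\geq 0}\C_{s-m}$, and feed this into Kobayashi's character identity \eqref{eqn:bGVM}. The paper addresses the non-$\fg'$-compatibility issue in the paragraph immediately preceding the theorem rather than inside the proof, and appeals to Lemma~\ref{lem:MA} for the weight computation where you do the brackets directly, but the argument is otherwise the same.
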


\begin{proof}
Observe that we have $\fn_-/\fn_- \cap \fg' \simeq \C N_n^-$. 
It thus follows from Lemma \ref{lem:MA} that we have 
$S^m(\fn_-/\fn_-\cap \fg') \simeq \C_{-m}$ as $\fa'$-modules.
Then the decomposition 
$(\C_{s} \otimes S(\fn_-/\fn_-\cap \fg'))\vert_{\fl'}$ is given as
\begin{align*}
(\C_{s} \otimes S(\fn_-/\fn_-\cap \fg'))\vert_{\fl'}
&=
\bigoplus_{m\in \Z_{\geq 0}}
(\C_{s} \otimes S^m(\fn_-/\fn_-\cap \fg'))\vert_{\fa'}\\
&\simeq
\bigoplus_{m\in \Z_{\geq 0}}\C_{s-m}.
\end{align*}
Now the character identity \eqref{eqn:bGVM} concludes \eqref{eqn:GVM1a}.
\end{proof}

\begin{cor}\label{cor:GVM}
For $s \in \C\backslash \Z_{\geq 0}$, we have 
\begin{equation}\label{eqn:GVM31a}
\Mp(s)\vert_{\fg'} \simeq 
\bigoplus_{m \in \Z_{\geq 0}}\Mpp(s-m).
\end{equation}
\end{cor}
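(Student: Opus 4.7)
The plan is to promote the Grothendieck-group identity of Theorem \ref{thm:GVM1} to a genuine $\fg'$-module isomorphism by assembling the explicit embeddings $\Phi_{(m,0)}$ of Theorem \ref{thm:Hom3a} (or Theorem \ref{thm:Hom3b} when $n=2$) and then using character comparison to conclude surjectivity. Concretely, set
\begin{equation*}
\Psi := \bigoplus_{m \geq 0} \Phi_{(m,0)} \colon \bigoplus_{m \in \Z_{\geq 0}} \Mpp(s-m) \To \Mp(s).
\end{equation*}

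First I would establish injectivity of $\Psi$. Since $\Phi_{(m,0)}$ acts by $u \otimes 1 \mapsto u(N_n^-)^m \otimes 1$, under the PBW identification $\Mp(s) \simeq S(\fn_-) \otimes \C_s$ the image of $\Phi_{(m,0)}$ is $S(\fn_-')(N_n^-)^m \otimes \C_s$. Because $\fn_-$ is abelian with $\fn_- = \fn_-' \oplus \C N_n^-$, the vector-space splitting
\begin{equation*}
S(\fn_-) = \bigoplus_{m \geq 0} S(\fn_-')(N_n^-)^m
\end{equation*}
shows simultaneously that distinct images sit in distinct direct summands and that each $\Phi_{(m,0)}$ is individually injective, as noted in the remark preceding the corollary. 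Consequently $\Psi$ is injective.

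Next I would deduce surjectivity by character comparison. Applying the character map on $K(\Cal{O}^{\fp'})$ to Theorem \ref{thm:GVM1} yields the on-the-nose equality of formal characters
\begin{equation*}
\mathrm{ch}\,\Mp(s)\vert_{\fg'} = \sum_{m \geq 0} \mathrm{ch}\,\Mpp(s-m) = \mathrm{ch}\Big(\bigoplus_{m \geq 0} \Mpp(s-m)\Big),
\end{equation*}
in which every $\fh'$-weight space on either side is finite dimensional. An injection between $\fh'$-semisimple modules with identical formal characters is necessarily a bijection on each weight space, and hence an isomorphism of $\fg'$-modules, which gives \eqref{eqn:GVM31a}.

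The hypothesis $s \notin \Z_{\geq 0}$ forces $s-m \notin \Z_{\geq 0}$ for every $m \geq 0$, which keeps each $\Mpp(s-m)$ irreducible and, by Theorem \ref{thm:Hom3a}, excludes any other $\fg'$-homomorphisms from generalized Verma modules for $\fg'$ into $\Mp(s)$; this is what makes the stated direct-sum decomposition canonical. I expect the main delicate point to be precisely this passage from a Grothendieck-class identity to a module-theoretic decomposition, and the role of the hypothesis $s \notin \Z_{\geq 0}$ in separating the $\Phi_{(m,0)}$ from the additional homomorphisms $\Phi_{(m,\ell)}$ with $\ell \geq 1$ that would otherwise appear at special integral values of $s$.
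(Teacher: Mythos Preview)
Your proof is correct, but it takes a different route from the paper's argument for this corollary. The paper's proof is a two-line appeal: since $s\notin\Z_{\geq 0}$ forces every $\Mpp(s-m)$ to be simple (by the reducibility classification for scalar-type generalized Verma modules), the Grothendieck-group identity of Theorem~\ref{thm:GVM1} already records the composition factors of $\Mp(s)\vert_{\fg'}$, and the decomposition is then immediate (implicitly via block decomposition, since the simple factors have pairwise distinct infinitesimal characters). By contrast, you assemble the explicit embeddings $\Phi_{(m,0)}$ into an injective map and then match formal characters to force surjectivity. Your approach is more constructive and in fact does not use the hypothesis $s\notin\Z_{\geq 0}$ in any essential way: the PBW splitting $S(\fn_-)=\bigoplus_m S(\fn_-')(N_n^-)^m$ that you invoke for injectivity already gives surjectivity directly, without the character comparison. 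What you have written is therefore essentially the paper's later proof of Theorem~\ref{thm:GVM31a}, which establishes the isomorphism for \emph{all} $s\in\C$. The trade-off is that the paper's proof of the corollary is shorter and highlights the role of irreducibility, while your argument is self-contained and yields the stronger result without invoking the external reducibility classification.
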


\begin{proof}
By the classification of the reducibility points
of generalized Verma modules of scalar type (cf.\ \cite{BX21, He15, HKZ19}),
if $s\in \C\backslash \Z_{\geq 0}$, then $\Mpp(s-m)$ is 
a simple $\fg'$-module for all $m\in \Z_{\geq 0}$. 
Now the proposed assertion follows from
Theorem \ref{thm:GVM1}.
\end{proof}

In Section \ref{sec:GVMc} below, 
we shall show that the isomorphism \eqref{eqn:GVM31a} indeed holds for any $s \in \C$
by making use of the classification of 
the $\fn'_+$-invariant subspaces of $\Mp(s)$.

\subsection{Branching law of $[\Im(\varphi_{p+1})\vert_{\fg'}]$}
\label{sec:GVMb}

Now we consider
the branching law of the formal character of 
the image $\Im(\varphi_{p+1})$ of the $\fg$-homomorphism
$\varphi_{p+1}$ in \eqref{eqn:Hom2}.

We first recall from \cite{KuOr24} the classification of 
$\fg$-homomorphisms between generalized Verma modules in consideration.
Define $\Lambda^{n+1}_{\fg} \subset 
\Irr(\f{sl}(n,\C))_\fin \times \C^2$ as
\begin{equation*}
\Lambda^{n+1}_{\fg}:=\{
(\sym_{n}^k; k-1, -(1+\tfrac{k}{n})) :  k\in \Z_{\geq 0}\}.
\end{equation*}

\begin{thm}[{\cite[Thm.\ 5.23]{KuOr24}}]\label{thm:KOHom2b}
We have
\begin{equation*}
\Hom_{\fg}(\Mp(\tau,u), \Mp(\triv, s))
=
\begin{cases}
\C\id & \text{if $(\tau, u) = (\triv, s)$,}\\
\C \varphi_k & \text{if $(\tau; s, u)\in \gL_{\fg}^{n+1}$,}\\
\{0\} & \text{otherwise.}
\end{cases}
\end{equation*}
\end{thm}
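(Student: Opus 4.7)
The plan is to reduce the statement to the classification obtained by the F-method, but applied with $\fg' = \fg$ (so that the F-system ranges over the full nilradical $\fn_+$ rather than only $\fn_+'$), and to identify the resulting polynomial solution space as an $\fl$-module. The approach is essentially the specialization to the full-group case of the recipe in Section \ref{sec:recipe}, and it mirrors what is done in Theorem \ref{thm:Sol1}, Theorem \ref{thm:Hom2a}, and Theorem \ref{thm:Hom3a}, with $(G',P')$ replaced by $(G,P)$.

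First I would reduce to $\fn_+$-invariants. By Frobenius reciprocity,
\begin{equation*}
\Hom_\fg\!\left(\Mp(\tau,u),\,\Mp(\triv,s)\right)\;\cong\;\Hom_{\fp}\!\left(V_{\tau,u},\,\Mp(\triv,s)\right)\;=\;\Hom_{\fl}\!\left(V_{\tau,u},\,\Mp(\triv,s)^{\fn_+}\right),
\end{equation*}
since $\fn_+$ acts trivially on the inducing module $V_{\tau,u}$. So the entire problem is to describe $\Mp(\triv,s)^{\fn_+}$ as an $\fl$-module.

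Next I would transport everything to the polynomial side via the algebraic Fourier transform $F_c$ of Theorem~\ref{def:AFT2}. The map $F_c$ sends $\Mp(\triv,s)^{\fn_+}$ isomorphically (as an $\fl$-module) onto the space of $\psi\in\Pol(\fn_+)$ annihilated by the operators $\widehat{d\pi_{\lambda^*}}(N_j^+)$ for all $j\in\{1,\dots,n\}$, with $\lambda$ the parameter attached to the scalar inducing character (tracking the sign carefully through the duality $V^\vee \leftrightarrow V$). By Proposition~\ref{prop:dNj1}, on $\Pol^k(\fn_+)$ this F-system reduces to
\begin{equation*}
(\lambda-1+k)\,\vartheta_j\psi\;=\;0\qquad (j=1,\dots,n),
\end{equation*}
so either $k=0$ (giving the constants $\C\cdot 1$, which yield $\id$ when $(\tau,u)=(\triv,s)$), or $\lambda=1-k$, in which case the entire degree-$k$ component $\Pol^k(\fn_+)$ consists of solutions.

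Then I would read off the $\fl$-structure of $\Pol^k(\fn_+)$. By (the $\fp$-version of) Lemma~\ref{lem:MA}, $\Pol^k(\fn_+)\simeq \poly^k_n$ as an $\fm$-module with a specific $\fa$-weight determined by the $\ad(\wH_0)$-weight of the $\zeta_j$'s, namely $k\cdot\tfrac{n+1}{n}$ after accounting for the Fourier transform. Dualizing this back via Frobenius reciprocity (since $V_{\tau,u}$ sits on the source side while $\poly^k_n$ appears on the target side), the $\fl$-isotypic component forces $\tau\simeq (\poly^k_n)^\vee=\sym^k_n$ and pins down $u$ to $-(1+\tfrac{k}{n})$, while $s=k-1$ comes from the condition $\lambda=1-k$ together with the sign convention $\lambda=-s$. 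Multiplicity one follows from the fact that each $\Pol^k(\fn_+)$ is $\fl$-irreducible, and the nonzero generator is seen to coincide (up to scalar) with $\varphi_k$ of \eqref{eqn:Hom2} by applying $F_c^{-1}$ to $\sum_{\mathbf{k}\in\Xi_k}\zeta_\mathbf{k}\otimes\widetilde y_\mathbf{k}$, exactly as in Step 4b of Section~\ref{sec:recipe}.

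The main obstacle, I expect, is not the existence or uniqueness of $\varphi_k$ (these fall out of the F-system analysis essentially for free) but rather the careful matching of parameters: tracking the dualization in Frobenius reciprocity, the passage between $V$ and $V^\vee$ in the definition of $M_\fp(\cdot)$, the $2\rho$-shift in $\lambda^*$, and the normalization of the $\fa$-character $\chi^\lambda$ given in \eqref{eqn:chi} so that the final condition on $(\tau;s,u)$ comes out as $(\sym_n^k;k-1,-(1+k/n))$ rather than some shifted variant. Once this bookkeeping is done, all three cases of the theorem are simultaneous consequences of the Fourier-transformed computation above.
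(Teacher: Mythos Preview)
The paper does not supply its own proof of this statement: Theorem~\ref{thm:KOHom2b} is quoted verbatim from \cite[Thm.~5.23]{KuOr24} with no argument given here. Your F-method approach is correct and is exactly the strategy used in the cited source and in this paper for the parallel $\fg'$-statements (Theorems~\ref{thm:Hom3a} and~\ref{thm:Hom3b}); in particular, the reduction to $\Mp(\triv,s)^{\fn_+}$ via Frobenius reciprocity, the transport by $F_c$, the computation $(\lambda-1+k)\vartheta_j\psi=0$ from Proposition~\ref{prop:dNj1}, and the identification of $\Pol^k(\fn_+)$ as an irreducible $\fl$-module all match the paper's own treatment of the $G'=G$ special case (compare the proof of Theorem~\ref{thm:Hom3a}, where the passage from $(\fg',P')$- to $\fg'$-homomorphisms is made by replacing $P'$ with its identity component $P_0'$).
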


Now, let $s = p \in \Z_{\geq 0}$. 
By \eqref{eqn:Hom2}, the image
 $\Im(\varphi_{p+1}) \subset \Mp(p)=\Mp(\triv,p)$ 
is 
\begin{equation*}
\Im(\varphi_{p+1}) = \Cal{U}(\fg)\left(
\C^{p+1}[N_1^-,\ldots, N_n^-] \otimes \C_p
\right).
\end{equation*}
Then, as $\fg$-modules, we have 
\begin{equation*}
\Mp(p)/ \Im(\varphi_{p+1})
\simeq S^p(\C^{n+1}).
\end{equation*}
Since $(\sym^{p}_{n+1}, S^p(\C^{n+1}))$
is a simple $\fg$-module,
this implies that $\Im(\varphi_{p+1})$ 
is a unique maximal submodule of $\Mp(p)$.
The formal character $[\Mp(p)]$ then satisfies
\begin{equation}\label{eqn:GVM3d}
[\Mp(p)] \simeq [\Im(\varphi_{p+1})] + [S^p(\C^{n+1})].
\end{equation}

As in Section \ref{sec:factor1}, we write $\varphi_k'$ for
$\fg'$-homomorphisms between generalized Verma modules of $\fg'$.
Then, for $d \in \Z_{\geq 0}$, we have 
\begin{equation}\label{eqn:GVM3c}
[\Mpp(d)] \simeq [\Im(\varphi'_{d+1})] + [S^{d}(\C^{n})].
\end{equation}

Now we are ready to show the branching law of 
$\Im(\varphi_{p+1})$ in the Grothendiek group of $\Cal{O}^{\fp'}$.

\begin{thm}\label{thm:GVM2}
Let $n\geq 2$. For $p \in \Z_{\geq 0}$, 
the following isomorphism holds
in the Grothendieck group of $\Cal{O}^{\fp'}$:
\begin{equation}\label{eqn:GVM3a}
[\Im(\varphi_{p+1})\vert_{\fg'}] \simeq 
\bigoplus_{d=0}^p [\Im (\varphi_{d+1}')] \oplus \bigoplus_{j \geq 1} [\Mpp(-j)].
\end{equation}
Further, for $n=2$, we have 
\begin{equation}\label{eqn:GVM3b}
[\Im(\varphi_{p+1})\vert_{\fg'}] \simeq 
\bigoplus_{d=0}^p 2 \cdot [\Mpp(-(d+2))] \oplus [\Mpp(-1)] \oplus 
\bigoplus_{j\geq p+3} [\Mpp(-j)].
\end{equation}
\end{thm}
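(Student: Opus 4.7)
The plan is to combine Theorem \ref{thm:GVM1} with the short exact structure \eqref{eqn:GVM3d} and then branch the finite-dimensional quotient classically. Specifically, from \eqref{eqn:GVM3d} we have in the Grothendieck group of $\Cal{O}^{\fp'}$
\begin{equation*}
[\Im(\varphi_{p+1})\vert_{\fg'}] \simeq [\Mp(p)\vert_{\fg'}] - [S^p(\C^{n+1})\vert_{\fg'}],
\end{equation*}
and Theorem \ref{thm:GVM1} with $s=p$ handles the first term as $\bigoplus_{m\geq 0}[\Mpp(p-m)]$. So the remaining task is to express $[S^p(\C^{n+1})\vert_{\fg'}]$ in terms of the classes $[\Mpp(\cdot)]$ and $[\Im(\varphi'_{d+1})]$.

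For \eqref{eqn:GVM3a} I would invoke the classical branching rule $S^p(\C^{n+1})\vert_{\fg'} \simeq \bigoplus_{d=0}^{p} S^d(\C^n)$, which follows immediately from decomposing $\C^{n+1} = \C^n \oplus \C$ as $\fg'$-modules and distributing the symmetric power. Each summand $S^d(\C^n)$ is finite-dimensional and simple, and by \eqref{eqn:GVM3c} satisfies $[S^d(\C^n)] = [\Mpp(d)] - [\Im(\varphi'_{d+1})]$. On the other side, I would split the range of $m$ in the sum $\bigoplus_{m\geq 0}[\Mpp(p-m)]$ at $m=p$, rewriting it as $\bigoplus_{d=0}^{p}[\Mpp(d)] \oplus \bigoplus_{j\geq 1}[\Mpp(-j)]$. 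Subtracting the formula for $[S^p(\C^{n+1})\vert_{\fg'}]$ cancels the $\bigoplus_{d=0}^{p}[\Mpp(d)]$ block and leaves exactly the right-hand side of \eqref{eqn:GVM3a}.

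For the refined identity \eqref{eqn:GVM3b} with $n=2$, the extra ingredient is the fact that $\fp' \subset \f{sl}(2,\C)$ is the Borel subalgebra, so $\Mpp(d)$ is an ordinary Verma module with the classical BGG submodule structure: for $d\in\Z_{\geq 0}$ its unique proper submodule is isomorphic to $\Mpp(-d-2)$, with quotient $S^d(\C^2)$. Theorem \ref{thm:Hom3b} together with Theorem \ref{thm:KOHom2b} (applied to $\fg'$) identifies this submodule with $\Im(\varphi'_{d+1})$, giving $[\Im(\varphi'_{d+1})] = [\Mpp(-d-2)]$ in the Grothendieck group. Plugging this into \eqref{eqn:GVM3a} turns the first direct sum into $\bigoplus_{d=0}^{p}[\Mpp(-(d+2))]$, and collecting the multiplicities of each $[\Mpp(-j)]$ across both sums yields the piecewise count: once for $j=1$, twice for $j\in\{2,\dots,p+2\}$, and once for $j\geq p+3$.

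The main obstacle is keeping the bookkeeping between the index $m$ in Theorem \ref{thm:GVM1} and the index $d$ in the branching of $S^p(\C^{n+1})$ consistent, and ensuring that the character identity of Kobayashi applies in our non-compatible parabolic setting; as noted after Theorem \ref{thm:bGVM}, the proof goes through because $\fp'=\fp\cap\fg'$, $\fl'=\fl\cap\fg'$ and $\fn_+'=\fn_+\cap\fg'$ still hold in our $(G,G')=(SL(n+1,\R),SL(n,\R))$ setup, and the $\fl'$-module structure on $\fn_-/(\fn_-\cap\fg')$ computed in the proof of Theorem \ref{thm:GVM1} is unaffected. Everything else is routine algebra in the Grothendieck group.
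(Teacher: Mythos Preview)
Your proposal is correct and follows essentially the same approach as the paper: both arguments combine the character identity \eqref{eqn:GVM1a} for $[\Mp(p)\vert_{\fg'}]$, the short exact sequence \eqref{eqn:GVM3d}, the classical branching $S^p(\C^{n+1})\vert_{\fg'}\simeq\bigoplus_{d=0}^p S^d(\C^n)$, and the relation \eqref{eqn:GVM3c}, and for $n=2$ both identify $\Im(\varphi'_{d+1})$ with $\Mpp(-(d+2))$. The only cosmetic difference is that the paper phrases the cancellation as ``compute $[\Mp(p)\vert_{\fg'}]$ in two ways and compare'' while you write it as a subtraction, and the paper justifies the $n=2$ identification via injectivity of scalar-type homomorphisms rather than via Theorems \ref{thm:Hom3b} and \ref{thm:KOHom2b}.
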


\begin{proof}
We first show the isomorphism \eqref{eqn:GVM3a}.
We consider the branching law $[\Mp(p)\vert_{\fg'}]$ in two ways.
First, by \eqref{eqn:GVM1a}, we have 
\begin{align}
[\Mp(p)\vert_{\fg'}] 
&\simeq \bigoplus_{m \in \Z_{\geq 0}}
[\Mpp(p-m)]\\
&= \bigoplus_{d =0}^{p}[\Mpp(d)]
\oplus
\bigoplus_{j\geq 1} [\Mpp(-j)] \nonumber \\
&\simeq 
\bigoplus_{d =0}^{p}[\Im(\varphi'_{d+1})]
\oplus
[S^p(\C^{n+1})\vert_{\fg'}]
\oplus
\bigoplus_{j\geq 1} [\Mpp(-j)].\label{eqn:GVM3e}
\end{align}
We note that
the character identity
 \eqref{eqn:GVM3c} for $[\Mpp(d)]$ and the classical branching law
$S^p(\C^{n+1})\vert_{\fg'} \simeq \bigoplus_{d=0}^pS^d(\C^{n})$
are applied from line two to line three.

On the other hand, by \eqref{eqn:GVM3d}, the formal character 
$[\Mp(p)\vert_{\fg'}]$ also satisfies
\begin{equation}\label{eqn:GVM3f}
[\Mp(p)\vert_{\fg'}] \simeq [\Im(\varphi_{p+1})\vert_{\fg'}] \oplus 
[S^p(\C^{n+1})\vert_{\fg'}].
\end{equation}
By comparing \eqref{eqn:GVM3e} with \eqref{eqn:GVM3f}, we obtain
\begin{equation*}
[\Im(\varphi_{p+1})\vert_{\fg'}] \simeq
\bigoplus_{d =0}^{p}[\Im(\varphi'_{d+1})]
\oplus
\bigoplus_{j\geq 1} [\Mpp(-j)].
\end{equation*}

Now, to show \eqref{eqn:GVM3b}, let $n=2$. In this case, 
the map $\varphi'_{d+1}$ is a $\fg'$-homomorphism
\begin{equation*}
\varphi'_{d+1}\colon \Mpp(-(d+2)) \To \Mpp(d).
\end{equation*}
Since $\Mpp(-(d+2))$ is of scalar type,
$\varphi'_{d+1}$ is injective for all $d \in \{0,\ldots, p\}$
(cf.\ \cite[Prop.\ 9.11]{Hum08}).
Thus, the image $\Im(\varphi'_{d+1})$ is given by
$\Im(\varphi'_{d+1})\simeq \Mpp(-(d+2))$, which shows that
\begin{equation}\label{eqn:GVM30a}
\bigoplus_{d =0}^{p}[\Im(\varphi'_{d+1})] \simeq 
\bigoplus_{d =0}^{p}[\Mpp(-(d+2))].
\end{equation}
On the other hand, we have 
\begin{align}
\bigoplus_{j\geq 1} [\Mpp(-j)]
&=
\bigoplus_{j =2}^{p+2}[\Mpp(-j)] \oplus [\Mpp(-1)] \oplus 
\bigoplus_{j\geq p+3} [\Mpp(-j)]\nonumber\\
&=\bigoplus_{d =0}^{p}[\Mpp(-(2+d))] \oplus [\Mpp(-1)] \oplus 
\bigoplus_{j\geq p+3} [\Mpp(-j)].\label{eqn:GVM30b}
\end{align}
Now \eqref{eqn:GVM3b} follows from \eqref{eqn:GVM3a}, 
\eqref{eqn:GVM30a}, and \eqref{eqn:GVM30b}.
\end{proof}

In the next section, we show that
the actual branching law $\Im(\varphi_{p+1})\vert_{\fg'}$
is indeed given as in 
\eqref{eqn:GVM3a} and \eqref{eqn:GVM3b}.

\subsection{Branching laws of $\Mp(s)\vert_{\fg'}$ and 
$\Im(\varphi_{p+1})\vert_{\fg'}$}
\label{sec:GVMc}

Now we show the actual branching laws of 
$\Mp(s)\vert_{\fg'}$ and $\Im(\varphi_{p+1})\vert_{\fg'}$.
Our idea is to observe $\fn'_+$-subspaces of 
$\Mp(s)$ and $\Im(\varphi_{p+1})$.
In particular, the rest of the arguments is nothing to do with the character 
identity \eqref{eqn:bGVM}.

It follows from 
Propositions \ref{prop:Sol30a} and \ref{prop:MA3}
and the algebraic Fourier transform $F_c$ in \eqref{eqn:Fc} that
\begin{equation}\label{eqn:M300}
\C^m[N_n^-] \otimes \C_s \subset \Mp(s)^{\fn_+'} 
\quad
\text{for all $m \in \Z_{\geq 0}$}.
\end{equation}
As $\fa'$-modules, we have 
\begin{equation}\label{eqn:M30a}
\C^m[N_n^-] \otimes \C_s\simeq \C_{s-m}
\quad
\text{for all $m \in \Z_{\geq 0}$},
\end{equation}
which yields an isomorphism
\begin{equation}\label{eqn:M30}
\C[N_1^-,\ldots, N_{n-1}^-]\C^m[N_n^-] \otimes \C_s 
=
\Cal{U}(\fg')(\C^m[N_n^-] \otimes \C_s)
\simeq
\Mpp(s-m).
\end{equation}

\begin{thm}\label{thm:GVM31a}
Let $n\geq 2$. For any $s \in \C$,  we have
\begin{equation}\label{eqn:GVM31c}
\Mp(s)\vert_{\fg'} \simeq 
\bigoplus_{m \in \Z_{\geq 0}}\Mpp(s-m).
\end{equation}
\end{thm}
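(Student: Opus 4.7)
The plan is to prove the isomorphism \eqref{eqn:GVM31c} directly, by exhibiting for each $m \in \Z_{\geq 0}$ a $\fg'$-submodule of $\Mp(s)$ isomorphic to $\Mpp(s-m)$ and then showing that these submodules assemble into an internal direct sum exhausting $\Mp(s)$. The crucial input is already recorded in \eqref{eqn:M300}--\eqref{eqn:M30}: setting $v_m := (N_n^-)^m \otimes \mathbb{1}_s$, the vector $v_m \in \Mp(s)$ is $\fn_+'$-invariant, is an $\fl'$-eigenvector of weight $s-m$ (with $\fm'$ acting trivially), and therefore the universal property of the scalar generalized Verma module produces a $\fg'$-homomorphism $\Mpp(s-m) \to \Mp(s)$ whose image is $\Cal{U}(\fg') v_m$, with $\Cal{U}(\fg') v_m \simeq \Mpp(s-m)$.

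The remaining task is to verify that
$$\Mp(s) = \bigoplus_{m \in \Z_{\geq 0}} \Cal{U}(\fg') v_m$$
as a vector-space direct sum. For this I would use the Poincar\'e--Birkhoff--Witt isomorphism $\Mp(s) \simeq S(\fn_-) \otimes \mathbb{1}_s$. Since $\fn_-$ is abelian and $\fn_-'$ has codimension one in $\fn_-$ (complemented by $\C N_n^-$), we have the polynomial-ring factorization
$$S(\fn_-) = \C[N_1^-,\ldots,N_n^-] = \bigoplus_{m \in \Z_{\geq 0}} \C[N_1^-,\ldots,N_{n-1}^-] \cdot (N_n^-)^m,$$
and by \eqref{eqn:M30} this matches term-by-term, after tensoring with $\mathbb{1}_s$, with the family $\Cal{U}(\fg') v_m = \C[N_1^-,\ldots,N_{n-1}^-] (N_n^-)^m \otimes \mathbb{1}_s$. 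Combining this with the previous step yields the desired $\fg'$-module isomorphism $\Mp(s) \simeq \bigoplus_{m \in \Z_{\geq 0}} \Mpp(s-m)$.

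The main conceptual obstacle is the gap left by Corollary \ref{cor:GVM}: for $s \in \Z_{\geq 0}$ the scalar Verma modules $\Mpp(s-m)$ with $0 \le m \le s$ cease to be simple, so Theorem \ref{thm:GVM1} provides only a Grothendieck-group equality, which a priori could be realized either by a non-split extension or by some twisted decomposition at the module level. The resolution is that the F-method, via Propositions \ref{prop:Sol30a} and \ref{prop:MA3}, produces honest $\fn_+'$-invariants $v_m$ in $\Mp(s)$ for \emph{every} $m \in \Z_{\geq 0}$ and every $s \in \C$; thus no composition-factor ambiguity arises, and the whole decomposition reduces to the transparent polynomial splitting above, uniformly in $s$.
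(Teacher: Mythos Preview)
Your proposal is correct and is essentially identical to the paper's own proof: both invoke \eqref{eqn:M300}--\eqref{eqn:M30} to identify $\Cal{U}(\fg')((N_n^-)^m\otimes\mathbb{1}_s)\simeq\Mpp(s-m)$ and then use the PBW factorization $\C[N_1^-,\ldots,N_n^-]=\bigoplus_{m\geq 0}\C[N_1^-,\ldots,N_{n-1}^-]\,(N_n^-)^m$ to assemble the direct sum. Your additional paragraph explaining why the $\fn_+'$-invariants supplied by the F-method resolve the potential ambiguity at $s\in\Z_{\geq 0}$ is a helpful gloss, but the underlying argument is the same.
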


\begin{proof}
By \eqref{eqn:M30}, we have 
\begin{align*}
\Mp(s) 
&=\Cal{U}(\fg)\otimes_{\Cal{U}(\fp)}\C_s\\
&=\C[N_1^-,\ldots, N_{n-1}^-, N_n^-]\otimes \C_s\\
&=\bigoplus_{m\in\Z_{\geq 0}} 
\C[N_1^-,\ldots, N_{n-1}^-]\C^m[N_n^-] \otimes \C_s\\
&\simeq
\bigoplus_{m\in\Z_{\geq 0}} 
\Mpp(s-m).
\end{align*}
\end{proof}

In what follows, we write
\begin{equation*}
\C[(N^-)']=
\C[N_1^-,\ldots, N_{n-1}^-]
\quad 
\text{and}
\quad
\C[(N^-)', N_n^-]=\C[N_1^-,\ldots, N_{n-1}^-, N_n^-].
\end{equation*}
Then, by \eqref{eqn:Hom2},
the images
 $\Im(\varphi_{p+1}) \subset \Mp(p)$ for $p \in \Z_{\geq 0}$
and
$\Im(\varphi'_{d+1}) \subset \Mpp(d)$ for $d \in \Z_{\geq 0}$
are given by
\begin{align}
\Im(\varphi_{p+1}) &= 
\Cal{U}(\fg)\left(
\C^{p+1}[(N^-)', N_n^-] \otimes \C_p
\right), \label{eqn:M30A}\\
\Im(\varphi'_{d+1}) &= \Cal{U}(\fg')\left(
\C^{d+1}[(N^-)'] \otimes \C_d
\right),\label{eqn:M30B}
\end{align}
where we have 
\begin{equation}\label{eqn:M30D}
\C^{p+1}[(N^-)', N_n^-] \otimes \C_p
\subset \Mp(p)^{\fn_+}
\quad
\text{and}
\quad
\C^{d+1}[(N^-)'] \otimes \C_d
\subset \Mpp(d)^{\fn_+'}.
\end{equation}

\begin{thm}\label{thm:GVM31b}
Let $n\geq 2$. For $p \in \Z_{\geq 0}$, we have 
\begin{equation}\label{eqn:GVM31A}
\Im(\varphi_{p+1})\vert_{\fg'} \simeq 
\bigoplus_{d=0}^p \Im (\varphi_{d+1}') \oplus \bigoplus_{j \geq 1} \Mpp(-j).
\end{equation}
In partucular, for $n=2$, we have 
\begin{equation}\label{eqn:GVM31B}
\Im(\varphi_{p+1})\vert_{\fg'} \simeq 
\bigoplus_{d=0}^p 2 \cdot \Mpp(-(d+2)) \oplus \Mpp(-1) \oplus 
\bigoplus_{j\geq p+3} \Mpp(-j).
\end{equation}
\end{thm}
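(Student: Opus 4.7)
The plan is to refine the explicit $\fg'$-module decomposition $\Mp(p)\vert_{\fg'} = \bigoplus_{m \geq 0} V_m$ established in Theorem \ref{thm:GVM31a}, where $V_m := \C[(N^-)'](N_n^-)^m \otimes \C_p \simeq \Mpp(p-m)$, and to track which $\fg'$-submodule of each $V_m$ lies inside $\Im(\varphi_{p+1})$. By \eqref{eqn:M30A} one has $\Im(\varphi_{p+1}) = \Cal{U}(\fg)\cdot W$ with $W := \C^{p+1}[(N^-)', N_n^-] \otimes \C_p$. Decomposing $W$ by the $N_n^-$-degree gives $W = \bigoplus_{m=0}^{p+1} W_m$ where $W_m := \C^{p+1-m}[(N^-)'](N_n^-)^m \otimes \C_p \subset V_m$. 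The crucial observation is that under the isomorphism $V_m \simeq \Mpp(p-m)$ of Theorem \ref{thm:GVM31a}, $W_m$ is carried to $\C^{(p-m)+1}[(N^-)'] \otimes \C_{p-m}$, which is precisely the $\fn_+'$-invariant generating subspace of $\Im(\varphi'_{p-m+1})$ by \eqref{eqn:M30B}.

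Consequently, for $m = 0,1,\dots,p$ the $\fg'$-submodule $\Cal{U}(\fg')\cdot W_m \subset V_m$ coincides with $\Im(\varphi'_{p-m+1})$. For $m = p+1$, the slice $W_{p+1} = \C\,(N_n^-)^{p+1}\otimes \C_p$ corresponds to the highest weight line of $V_{p+1} \simeq \Mpp(-1)$; the simplicity of $\Mpp(-1)$ (since $-1\notin \Z_{\geq 0}$) forces $V_{p+1} \subset \Im(\varphi_{p+1})$. For $m\geq p+2$, the identity
\begin{equation*}
(N_n^-)^m\otimes \mathbb{1}_p \;=\; (N_n^-)^{m-p-1}\cdot\big((N_n^-)^{p+1}\otimes \mathbb{1}_p\big) \;\in\; \Cal{U}(\fn_-)\cdot W \;\subset\; \Im(\varphi_{p+1})
\end{equation*}
realizes the highest weight vector of $V_m$ inside $\Im(\varphi_{p+1})$, and simplicity of $\Mpp(p-m)$ for $p-m<0$ again gives $V_m \subset \Im(\varphi_{p+1})$. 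Setting $d = p-m$ and $j = m-p$, I obtain the inclusion
\begin{equation*}
\bigoplus_{d=0}^p \Im(\varphi'_{d+1}) \,\oplus\, \bigoplus_{j\geq 1}\Mpp(-j) \;\subseteq\; \Im(\varphi_{p+1}),
\end{equation*}
which is literally a direct sum because the $V_m$'s are $\fg'$-direct summands of $\Mp(p)$.

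To upgrade this to an equality I invoke the character identity \eqref{eqn:GVM3a} of Theorem \ref{thm:GVM2}: the formal characters of both sides coincide, and since each weight space of every $\Mpp(r)$ is finite-dimensional, the inclusion becomes an equality weight-space by weight-space. This establishes \eqref{eqn:GVM31A}. For the $n=2$ refinement, the identification $\Mpp(\sym_1^{d+1},-(d+2)) = \Mpp(-(d+2))$ from \eqref{eqn:sym0826c} together with the simplicity of $\Mpp(-(d+2))$ for $d\geq 0$ (which forces $\varphi'_{d+1}$ to be injective) gives $\Im(\varphi'_{d+1})\simeq \Mpp(-(d+2))$; combining this with the rewritten tail $\bigoplus_{j\geq 1}\Mpp(-j) = \Mpp(-1) \oplus \bigoplus_{d=0}^p\Mpp(-(d+2))\oplus \bigoplus_{j\geq p+3}\Mpp(-j)$ then produces \eqref{eqn:GVM31B}.

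The main obstacle is verifying the identification $\Cal{U}(\fg')\cdot W_m = \Im(\varphi'_{p-m+1})$ under the isomorphism $V_m \simeq \Mpp(p-m)$. This amounts to checking that the basis bijection $N_\mathbf{k}^-(N_n^-)^m\otimes \mathbb{1}_p \leftrightarrow N_\mathbf{k}^-\otimes \mathbb{1}_{p-m}$ implicit in Theorem \ref{thm:GVM31a} is $\fg'$-equivariant, which rests on the commutator relation $[Z, N_n^-] = -Z_{1,1}N_n^-$ for $Z\in \fl$ (so that the $\fl'$-action on $(N_n^-)^m\otimes \mathbb{1}_p$ reproduces that on $\mathbb{1}_{p-m}$ via the $\fa$-$\fa'$ reconciliation of Remark \ref{rem:A}), together with the arithmetic identity $p+1-m = (p-m)+1$ that matches the degree of $W_m$ to the generating degree of $\Im(\varphi'_{p-m+1})$. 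Once this alignment is in place, the remainder of the argument is routine bookkeeping.
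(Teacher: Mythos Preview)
Your proof is correct and takes a genuinely different route from the paper's.  Both arguments begin the same way: decompose the generating space $W=\C^{p+1}[(N^-)',N_n^-]\otimes\C_p$ according to $N_n^-$-degree and identify, inside each summand $V_m\simeq\Mpp(p-m)$ of Theorem~\ref{thm:GVM31a}, the $\fg'$-submodule $\Cal{U}(\fg')\cdot W_m$ with $\Im(\varphi'_{p-m+1})$ (for $0\le m\le p$) or with all of $V_m$ (for $m\ge p+1$).  The divergence is in the reverse inclusion.  The paper proceeds purely by polynomial bookkeeping: it writes $\Im(\varphi_{p+1})=\text{(A)}+\text{(B1)}+\text{(B2)}$, where (A) and (B1) are your right-hand side and (B2) collects the ``cross terms'' $\C[\Np]\C^c[\Nn]\C^{d+1}[\Np]\otimes\C_d$ with $c\ge 1$, and then proves directly that $\text{(B2)}\subset\text{(A)}\oplus\text{(B1)}$ by a case split on the sign of $c-b$.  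In particular the paper emphasizes (at the start of Section~\ref{sec:GVMc}) that this argument is independent of the character identity~\eqref{eqn:bGVM}.  You instead establish only the inclusion $\supseteq$ by the module-theoretic reasoning above and then invoke Theorem~\ref{thm:GVM2} to force equality via matching formal characters.  Your route is shorter and avoids the combinatorial Claim~\ref{claim:M30}, at the cost of depending on the Grothendieck-group computation that the paper deliberately bypasses here; the paper's route is self-contained once the $\fn_+'$-invariants are known.  Both yield \eqref{eqn:GVM31A}, and your derivation of \eqref{eqn:GVM31B} from it is the same as the paper's.
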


\begin{proof}
Since \eqref{eqn:GVM31B} follows from \eqref{eqn:GVM31A}
as in Theorem \ref{thm:GVM2}, it suffices to show \eqref{eqn:GVM31A}.

First observe that
\begin{align}
\C^{p+1}[(N^-)', N_n^-]\otimes \C_p
&=\sum_{b=0}^{p+1}\C^b[\Np]\C^{p+1-b}[\Nn]\otimes \C_p \nonumber \\
&\simeq \sum_{b=0}^{p+1}\C^b[\Np]\otimes \C_{b-1} \nonumber \\
&=\sum_{d=0}^p \C^{d+1}[\Np] \otimes \C_d + (1\otimes \C_{-1}).
\label{eqn:M30C}
\end{align}
We note that \eqref{eqn:M30a} is applied from line one to line two.
Then, by \eqref{eqn:M30A} and \eqref{eqn:M30C}, we have
\begin{align}
\Im(\varphi_{p+1})
&=\Cal{U}(\fg)\left(\C^{p+1}[(N^-)', N_n^-] \otimes \C_p\right) \nonumber \\
&=\C[\Np,\Nn]\C^{p+1}[(N^-)', N_n^-] \otimes \C_p \nonumber \\
&=\sum_{d=0}^p \C[\Np,\Nn]\C^{d+1}[\Np] \otimes \C_d 
+ \C[\Np,\Nn]\otimes \C_{-1} \label{eqn:M30E}\\
&=: \text{(B)} + \text{(A)}.\label{eqn:M31A}
\end{align}

The second term \text{(A)} is given as
\begin{align}
\text{(A)}
= \C[\Np,\Nn]\otimes \C_{-1}
&=\bigoplus_{c=0}^\infty \C[\Np]\C^c[\Nn]\otimes \C_{-1} \nonumber\\
&\simeq \bigoplus_{c=0}^\infty\C[\Np]\otimes \C_{-1-c} \nonumber\\
&=\bigoplus_{j=1}^\infty\C[\Np]\otimes \C_{-j} \nonumber\\
&=\bigoplus_{j=1}^\infty\Mpp(-j).\label{eqn:M30F}
\end{align}
By \eqref{eqn:M300}, we have
$\C_{-j}\simeq \C^c[\Nn]\otimes \C_{-1} \subset \Mp(-1)^{\fn_+'}$,
which verifies the identity from line three to line four.

Next, for \text{(B)}, we have 
\begin{align}
\text{(B)}
&=\sum_{d=0}^p \C[\Np,\Nn]\C^{d+1}[\Np] \otimes \C_d \nonumber \\
&=\sum_{d=0}^p\sum_{c=0}^\infty \C[\Np]\C^c[\Nn]\C^{d+1}[\Np]\otimes \C_d
\nonumber\\
&=\sum_{d=0}^p\C[\Np]\C^{d+1}[\Np]\otimes \C_d
+
\sum_{d=0}^p\sum_{c=1}^\infty \C[\Np]\C^c[\Nn]\C^{d+1}[\Np]\otimes \C_d
\nonumber \\
&=:\text{(B1)} + \text{(B2)}.\label{eqn:M30G}
\end{align}

\noindent
By \eqref{eqn:M30B} and \eqref{eqn:M30D},  \text{(B1)} is given by
\begin{align}
\text{(B1)}
=\sum_{d=0}^p\C[\Np]\C^{d+1}[\Np]\otimes \C_d
&=\bigoplus_{d=0}^p\Cal{U}(\fg')\left(\C^{d+1}[\Np]\otimes \C_d\right)\nonumber \\
&=\bigoplus_{d=0}^p \Im(\varphi'_{d+1}).\label{eqn:M30H}
\end{align}

The actual realizations of $\Mpp(-j)$ in \text{(A)} and $\Im(\varphi'_{d+1})$ in \text{(B1)}
imply that 
the sum $\text{(B1)} + \text{(A)}$ is indeed a direct sum
$\text{(B1)} \oplus \text{(A)}$.
We then claim the following.

\begin{claim}\label{claim:M30}
We have $\text{(B2)} \subset \text{(A)} \oplus \text{(B1)}$.
\end{claim}

Suppose that Claim \ref{claim:M30} holds. Then, 
by \eqref{eqn:M31A}, \eqref{eqn:M30F}, \eqref{eqn:M30G}, and \eqref{eqn:M30H},
we have 
\begin{align*}
\Im(\varphi_{p+1}) 
= \text{(B)}+\text{(A)}
&=\text{(B1)} + \text{(B2)} + \text{(A)}\\
&= \text{(B1)}\oplus \text{(A)}\\
&\simeq 
\bigoplus_{d=0}^p \Im(\varphi'_{d+1}) \oplus \bigoplus_{j=1}^\infty\Mpp(-j),
\end{align*}
which is what we wish to show. Thus, in the rest of the proof, we aim to show 
Claim \ref{claim:M30}.

Write
\begin{equation*}
\text{(C)}=\C[\Np]\C^c[\Nn]\C^{d+1}[\Np]\otimes \C_d,
\end{equation*}
so that 
$\text{(B2)} = \sum_{d=0}^p\sum_{c=1}^\infty \text{(C)}$.

First observe that \text{(A)}, \text{(B)}, and \text{(C)} are given by
\begin{align*}
\text{(A)}
&= \C[\Np,\Nn]\otimes \C_{-1}\\
&=\C[\Np,\Nn]\C^{p+1}[\Nn]\otimes \C_p,\\[7pt]
\text{(B1)}
&=\sum_{d=0}^p\C[\Np]\C^{d+1}[\Np]\otimes \C_d\\
&=\sum_{b=1}^{p+1}\C[\Np]\C^{b}[\Np]\C^{p+1-b}[\Nn]\otimes \C_p, \\[7pt]
\text{(C)}
&= \C[\Np]\C^c[\Nn]\C^{d+1}[\Np]\otimes \C_d\\
&= \C[\Np]\C^c[\Nn]\C^{b}[\Np]\C^{p+1-b}[\Nn]\otimes \C_p\\
&= \C[\Np]\C^{b}[\Np]\C^{p+1+c-b}[\Nn]\otimes \C_p.
\end{align*}

\noindent
Thus, if $c-b\geq 0$, then
\begin{align*}
\text{(C)}
&=\C[\Np]\C^{b}[\Np]\C^{p+1+c-b}[\Nn]\otimes \C_p\\
&\subset\C[\Np,\Nn]\C^{p+1}[\Nn]\otimes \C_p\\
&=\text{(A)}.
\end{align*}

\noindent
If $c-b < 0$, then $-(p+1) \leq c-b \leq -1$. Thus,
the number $p+1+c-b$ is of the form
\begin{equation*}
p+1+c-b = p+1-b_0
\end{equation*}
for some $b_0 \in \{1,\ldots, p+1\}$. Therefore,
\begin{align*}
\text{(C)}
&=\C[\Np]\C^{b}[\Np]\C^{p+1-b_0}[\Nn]\otimes \C_p\\
&\subset 
\sum_{b=1}^{p+1}\C[\Np]\C^{b}[\Np]\C^{p+1-b}[\Nn]\otimes \C_p\\
&=\text{(B1)}.
\end{align*}

\noindent
As $\text{(B2)} = \sum_{d=0}^p\sum_{c=1}^\infty \text{(C)}$, this shows the claim.
\end{proof}

\begin{rem}\label{rem:branching916}
Here are some remarks on Theorems \ref{thm:GVM31a} and \ref{thm:GVM31b}. 
\begin{enumerate}
\item
The inclusion $\Mpp(s-m) \hookrightarrow \Mp(s)$ is given by 
the normal derivative $\Phi_{(m,0)}$ as in Theorems \ref{thm:Hom3a}
and  \ref{thm:Hom3b}. 

\item By \eqref{eqn:GVM31c} and \eqref{eqn:GVM31A}, 
for $p \in \Z_{\geq 0}$ and $d \in [0,p]\cap \Z_{\geq 0}$,
we have 
\begin{equation}\label{eqn:GVMsquare}
\begin{aligned}
\xymatrix@C=1pc @R=1pc{
\Mpp(d) 
\ar@{}[r]|*{\subset}
\ar@{}[d]|{\bigcup}& \Mp(p)\ar@{}[d]|{\bigcup}\\
\Im(\varphi'_{d+1})\ar@{}[r]|*{\subset} & \Im(\varphi_{p+1})
}
\end{aligned}
\end{equation}
The square \eqref{eqn:GVMsquare} corresponds to 
the factorization identity $\Phi_{(m,\ell)}=\Phi_{(m,0)} \circ \varphi_\ell'$
in  \eqref{eqn:comm1}.

\item 
Let $M_j$ for $j=1,2$ denote the two copies of $\Mpp(-(2+d))$
in \eqref{eqn:GVM31B} such that $M_1=\Im(\varphi'_{d+1})\subset \Mpp(d)$. 
Then \eqref{eqn:GVM31B} shows that, for $n=2$,  we have 
\begin{equation}\label{eqn:GVMsquare2}
\begin{aligned}
\xymatrix@C=1pc @R=1pc{
\Mpp(d)\oplus M_2\hspace{10pt}
\ar@{}[r]|*{\subset}
\ar@{}[d]|{\bigcup}& 
\;\;
\Mp(p) 
\ar@{}[d]|{\bigcup}\\
M_1\oplus M_2
\ar@{}[r]|*{\subset} &
\Im(\varphi_{p+1})}
\end{aligned}
\end{equation}
This corresponds to the multiplicity-two phenomenon in
Theorem \ref{thm:Hom3b}. Namely,
the inclusions 
$M_1 \hookrightarrow \Mp(p)$
and $M_2 \hookrightarrow \Mp(p)$ 
are related to $\fg'$-homomorphisms $\Phi_{(m,\ell)}=\Phi_{(p-d,d+1)}$ and 
$\Phi_{(m+2\ell,0)}=\Phi_{(p+d+2,0)}$, respectively.
\end{enumerate}
\end{rem}

\section{Differential symmetry breaking operators $\bD$ for $(GL(n+1,\R), GL(n,\R))$}
\label{sec:GL}

The aim of this section is to classify and construct differential symmetry breaking operators $\bD$ for the pair $(G, G') = (GL(n+1,\R), GL(n,\R))$ with
maximal parabolic subgroups $(P, P')$ such that $G/P \simeq \RP^n$ and $G'/P' \simeq \RP^{n-1}$. We also discuss $G$-intertwining differential operators $\D$ 
and the factorization identities of $\bD$. Those results are achieved in Theorems
\ref{thm:GL-DSBO}, \ref{thm:GL-IDO}, and \ref{thm:GL-factor}. 
In this section we assume $n\geq 2$, unless otherwise specified.

\subsection{
Notation
}
\label{sec:notation2}
We start by introducing some notation. 
Let $G=GL(n+1,\R)$ with Lie algebra $\fg(\R)=\f{gl}(n+1,\R)$
for $n \geq 2$. Let $G'$ denote the closed subgroup of $G$ defined by
\begin{equation*}
G'=
\left\{
\begin{pmatrix}
g'&\\
& 1\\
\end{pmatrix}
:
g' \in GL(n,\R)
\right\}\simeq GL(n,\R)
\end{equation*}
with Lie algebra
\begin{equation*}
\fg'(\R)= \left\{
\begin{pmatrix}
X' & \\
 & 0
\end{pmatrix}
:
X' \in \f{gl}(n,\R)
\right\}\simeq \f{gl}(n,\R).
\end{equation*}

Let $P = \text{Stab}_{G}(\R(1,0,\ldots,0)^t)$ and $P'=G' \cap P$. Then $P$ and $P'$ are 
parabolic subgroups of $G$ and $G'$, respectively, such that 
$G/P \simeq \RP^n$ and $G'/P' \simeq \RP^{n-1}$.
Let  $M$ and $M'$ denote the subgroups of $P$ and $P'$, respectively, 
defined by
\begin{alignat*}{2}
M&:= 
\left\{
\begin{pmatrix}
\eps &\\
& g\\
\end{pmatrix}
:
\eps \in \{\pm 1\}
\;
\text{and}
\;
g \in SL^{\pm}(n,\R)
\right\}
&&\simeq \Z/2\Z \times SL^{\pm}(n,\R),\\
M'&:= 
\left\{
\begin{pmatrix}
\eps & & \\
& g' &\\
&&1
\end{pmatrix}
:
\eps \in \{\pm 1\}
\;
\text{and}
\;
g' \in SL^{\pm}(n-1,\R)
\right\}
&&\simeq \Z/2\Z \times SL^{\pm}(n-1,\R).
\end{alignat*}

We write
\begin{alignat*}{2}
J_0&=\frac{1}{n}(\sum^{n+1}_{r=2}E_{r,r})
&&=\frac{1}{n}\diag(0, 1, 1, \ldots, 1,1),\\
J_0'&=\frac{1}{n-1}(\sum^n_{r=2}E_{r,r})
&&=\frac{1}{n-1}\diag(0, 1, 1, \ldots, 1,0)
\end{alignat*}
and put 
\begin{alignat*}{2}
A_1&:=\exp(\R \wH_0), \quad A_2&&:=\exp(\R J_0),\\
A_1'&:=\exp(\R \wH_0'), \quad A_2'&&:=\exp(\R J_0'),
\end{alignat*}
where $\wH_0$ and $\wH_0'$ are the diagonal matrices defined in
\eqref{eqn:0801a} and \eqref{eqn:0801b}, respectively. 
We then define $A$ and $A'$ by
\begin{equation*}
A=A_1A_2
\quad
\text{and}
\quad
A'=A_1'A_2'.
\end{equation*}

Let $N_+$ and $N_+'$ be the unipotent subgroups defined in Section \ref{sec:notation}.
Then $P=MAN_+$ and $P'=M'A'N_+'$ are Langlands decompositions of $P$ 
and $P'$, respectively.

For $(\lambda_1,\lambda_2),(\nu_1,\nu_2) \in \C^2$, 
we define one-dimensional representations 
$\C_{(\lambda_1,\lambda_2)}=(\chi^{(\lambda_1,\lambda_2)}, \C)$ 
of $A=\exp(\R\wH_0)\exp(\R J_0)$ 
and
$\C_{(\nu_1,\nu_2)} = ((\chi')^{(\nu_1,\nu_2)},\C)$ of 
$A' = \exp(\R \wH_0')\exp(\R J_0')$ by
\begin{alignat*}{1}
\chi^{(\lambda_1,\lambda_2)} 
&\colon \exp(t_1 \wH_0)\exp(t_2 J_0)
 \longmapsto \exp(\lambda_1 t_1)\exp(\lambda_2 t_2),\\[3pt]
(\chi')^{(\nu_1,\nu_2)}&\colon \exp(t_1 \wH_0')\exp(t_2 J_0') \longmapsto 
\exp(\nu_1 t_1)\exp(\nu_2 t_2).
\end{alignat*}
Then $\Irr(A)$ and $\Irr(A')$ are given by
\begin{equation*}
\Irr(A)=\{\C_{(\lambda_1,\lambda_2)} : \lambda_j \in \C\} \simeq \C^2
\quad
\text{and}
\quad
\Irr(A')=\{\C_{(\nu_1,\nu_2)} : \nu_j \in \C\} \simeq \C^2.
\end{equation*}

For $(\alpha_1, \alpha_2) \in \{\pm\}^2$,
a one-dimensional representation $\C_{(\alpha_1,\alpha_2)}$ of $M$ 
is defined  by
\begin{equation*}
\begin{pmatrix}
\eps &\\
& g\\
\end{pmatrix}
\longmapsto
\sgn^{\alpha_1}(\eps) \cdot
\sgn^{\alpha_2}(\det(g)),
\end{equation*}
where, for $b\in \R^\times$, $\sgn^\alpha(b)$ is defined as in
\eqref{eqn:20241108}.
Then 
$\Irr(M)_\fin \simeq \{\pm\}^2\times \Irr(SL^{\pm}(n,\R))_\fin$
and
$\Irr(M')_\fin \simeq \{\pm\}^2\times  \Irr(SL^{\pm}(n-1,\R))_\fin$
are given as
\begin{align*}
\Irr(M)_{\fin}&\simeq
\{\C_{\ga_1} \boxtimes (\C_{\alpha_2} \otimes \xi): 
(\alpha_1, \alpha_2, \xi) \in \{\pm\}^2 \times \Irr(SL(n,\R))_{\fin}\},\\
\Irr(M')_{\fin}&\simeq
\{\C_{\beta_1} \boxtimes (\C_{\beta_2} \otimes \varpi): 
(\beta_1, \beta_2, \varpi) \in \{\pm\}^2 \times \Irr(SL(n-1,\R))_{\fin}\}.
\end{align*}
Since 
$\Irr(P)_{\fin}\simeq \Irr(M)_{\fin} \times \Irr(A)$, the set $\Irr(P)_\fin$ can be 
parametrized by
\begin{equation*}
\Irr(P)_{\fin}\simeq 
\{\pm\}^2 \times \Irr(SL(n,\R))_{\fin} \times \C^2.
\end{equation*}
Similarly, we have
\begin{equation*}
\Irr(P')_{\fin}\simeq 
\{\pm\}^2 \times \Irr(SL(n-1,\R))_{\fin} \times \C^2.
\end{equation*}

For 
$(\bm{\ga}, \xi, \bm{\lambda}) \in \{\pm\}^2 \times \Irr(SL(n,\R))_{\fin} \times \C^2$
with $\bm{\ga} = (\ga_1,\ga_2)$ and $\bm{\lambda}=(\lambda_1, \lambda_2)$,
we write
\begin{equation*}
I(\xi;\bm{\lambda})^{\bm{\ga}} 
= 
\Ind_{P}^G\left(\C_{\ga_1}\boxtimes (\C_{\ga_2} \otimes \xi)\boxtimes \C_{(\lambda_1,\lambda_2)}\right)
\end{equation*}
for (unnormalized) parabolically induced representations of $G$.
Likewise,  for
$(\bm{\gb}, \varpi, \bm{\nu}) \in \{\pm\}^2 \times \Irr(SL(n-1,\R))_{\fin} \times \C^2$
with $\bm{\gb} = (\gb_1,\gb_2)$ and $\bm{\nu}=(\nu_1, \nu_2)$, we write
\begin{equation*}
J(\varpi;\bm{\nu})^{\bm{\gb}} 
= 
\Ind_{P'}^{G'}\left(\C_{\gb_1}\boxtimes(\C_{\gb_2} \otimes \varpi)\boxtimes 
\C_{(\nu_1,\nu_2)}\right).
\end{equation*}

In the next three subsections we shall state the main results
of differential symmetry breaking operators
$\bD$, $G$-intertwining differential operators $\D$, and factorization identities.
In Section \ref{sec:GL-proof}, we shall discuss the proofs of the statements.

\subsection{
Differential symmetry breaking operators $\bD$ for $(GL(n+1,\R), GL(n,\R))$
}
\label{sec:GL-DSBO}

For $n\geq 2$, we define 
\begin{equation*}
\gL^{(n+1,n)}_{GL,j} \subset \{\pm\}^4 \times \Irr(SL(n-1,\R))_{\fin} \times \C^4 
\end{equation*}
for $j=1,2$ as follows.
\begin{equation*}
\gL^{(n+1,n)}_{GL,1}:=\{(\balpha;\bbeta; \varpi; \blambda; \bnu):
\text{$(\balpha,\bbeta; \varpi; \blambda, \bnu)$ satisfies \eqref{eqn:GL2}
below for $\alpha_j \in \{\pm\}$, $\lambda_j \in \C$, and $m \in \Z_{\geq 0}$.}\}
\end{equation*}
\begin{align}\label{eqn:GL2}
(\balpha;\bbeta) &= (\alpha_1, \alpha_2; \alpha_1+m; \alpha_2) \nonumber\\
\varpi &=\triv\\
(\blambda;\bnu)&= (\lambda_1, \lambda_2; \lambda_1+m,\lambda_2)\nonumber
\end{align}

\begin{equation*}
\gL^{(n+1,n)}_{GL,2}:=\{(\balpha;\bbeta; \varpi; \blambda; \bnu):
\text{$(\balpha,\bbeta; \varpi; \blambda, \bnu)$ satisfies \eqref{eqn:GL3} below
for $\alpha_j \in \{\pm\}$ and $\ell, m \in \Z_{\geq 0}$.}\}
\end{equation*}
\begin{align}\label{eqn:GL3}
(\balpha;\bbeta) &= (\alpha_1, \alpha_2;\alpha_1+(m+\ell); \alpha_2) \nonumber\\
\varpi &=\poly_{n-1}^\ell\\
(\blambda;\bnu)&= (1-(m+\ell), \lambda_2; 1+\tfrac{\ell}{n-1},\lambda_2-\tfrac{\ell}{n-1})\nonumber
\end{align}

Further, we put
\begin{equation*}
\gL^{(n+1,n)}_{GL}:=\gL^{(n+1,n)}_{GL,1} \cup \gL^{(n+1,n)}_{GL,2}.
\end{equation*}

Let $\bD_{(m,\ell)} \in \Diff_\C(C^\infty(\R^{n}), C^\infty(\R^{n-1})\otimes 
\C^{m+\ell}[y_1, \ldots, y_{n-1}])$ be the differential operator defined in \eqref{eqn:DSBO}.

\begin{thm}\label{thm:GL-DSBO}
Let $n\geq 2$. Then we have
\begin{equation*}
\Diff_{G'}\big(I(\triv; \bm{\lambda})^{\bm{\alpha}}, J(\varpi; \bm{\nu})^{\bm{\beta}}\big)
=
\begin{cases}
\C\bD_{(m,0)} 
& \text{if $(\bm{\alpha}, \bm{\beta};\varpi; \bm{\lambda}, \bm{\nu})\in \Lambda^{(n+1,n)}_{GL,1}$,}\\[3pt]
\C \bD_{(m,\ell)} 
& \text{if $(\bm{\alpha}, \bm{\beta};\varpi; \bm{\lambda}, \bm{\nu})\in \Lambda^{(n+1,n)}_{GL,2}$,}\\
\{0\} & \text{otherwise.}
\end{cases}
\end{equation*}
\end{thm}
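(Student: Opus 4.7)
The plan is to follow the recipe of the F-method from Section~\ref{sec:recipe}, adapting the $SL$-case arguments of Sections~\ref{sec:proof1} and~\ref{sec:proof2}. Since $\fn_+'$ lies in the semisimple part of $\fg'$, Step~1 is given by Proposition~\ref{prop:dNj1} verbatim, and Step~3 produces the same polynomial family $\psi_{(m,\ell)}$ of \eqref{eqn:psi} subject to the constraint $\ell(\lambda_1 - 1 + m + \ell) = 0$, exactly as in the proof of Theorem~\ref{thm:Sol1}. All of the new input lies in Step~2, where the $M'A'$-equivariance must be checked against two additional pieces of structure absent from the $SL$-case: the central involution $\diag(-1,I_{n-1},1)\in M'$ (contributing the extra sign $\beta_1$) and the central torus $A_2' = \exp(\R J_0')$ (contributing the extra scalar $\nu_2$).

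For Step~2 I would compute the joint $M'A'$-weight of $\zeta_n^m\zeta_{\mathbf{l}}$ for $\mathbf{l}\in \Xi_\ell'$. The conjugation computation $\diag(-1,I_{n-1},1)E_{1,k+1}\diag(-1,I_{n-1},1) = -E_{1,k+1}$ shows that the central involution acts on each $\zeta_k$ by $-1$ under $\Ad_\#$, hence on $\zeta_n^m\zeta_{\mathbf{l}}$ by $(-1)^{m+\ell}$; and the brackets $[J_0',N_k^+] = -\tfrac{1}{n-1}N_k^+$ for $k\le n-1$ together with $[J_0',N_n^+] = 0$ imply that $\zeta_n^m\zeta_{\mathbf{l}}$ has $J_0'$-weight $\tfrac{\ell}{n-1}$. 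Combining these with the $\wH_0'$-weight $-(m + \tfrac{n\ell}{n-1})$ from Lemma~\ref{lem:MA} and the $SL^\pm(n-1,\R)$-type $\sym^\ell_{n-1}$ (which, in the $GL$-embedding $g'\mapsto \diag(1,g',1)$, carries none of the $\det$-twist present in the $SL$-case), the $M'A'$-equivariance of $\psi$ is equivalent to
\begin{equation*}
\beta_1 = \alpha_1 + (m+\ell), \quad \beta_2 = \alpha_2, \quad \varpi = \poly^\ell_{n-1}, \quad \nu_1 = \lambda_1 + m + \tfrac{n\ell}{n-1}, \quad \nu_2 = \lambda_2 - \tfrac{\ell}{n-1}.
\end{equation*}
Intersecting with the F-system dichotomy $\ell = 0$ or $\lambda_1 = 1-(m+\ell)$ produces exactly $\gL^{(n+1,n)}_{GL,1}\cup\gL^{(n+1,n)}_{GL,2}$, and Step~4 recovers $\bD_{(m,0)}$ or $\bD_{(m,\ell)}$ via $\symb^{-1}$ as in Section~\ref{sec:Step4}.

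The main obstacle, and the new feature of the theorem, is showing that the multiplicity-two phenomenon of Theorem~\ref{thm:DSBO2b} does not survive in the $GL$-setting. In the $SL$-case for $n=2$, the two operators $\bD_{(m+2\ell,0)}$ and $\bD_{(m,\ell)}$ (with $\ell \geq 1$) shared identical $SL$-parameters; in the $GL$-setting, the formulas above place $\bD_{(m+2\ell,0)}\in \gL^{(3,2)}_{GL,1}$ with $\nu_2 = \lambda_2$, whereas $\bD_{(m,\ell)}\in \gL^{(3,2)}_{GL,2}$ with $\nu_2 = \lambda_2 - \ell$. The extra $J_0'$-weight thus splits the two operators into disjoint parameter sets, yielding the uniform multiplicity-one classification for all $n \geq 2$. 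Once the $J_0'$- and central-involution weight computations are in place, the remainder of the proof is routine book-keeping.
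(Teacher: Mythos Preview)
Your proposal is correct and follows essentially the same route as the paper: both reduce the $GL$-case to the $SL$-case F-method computation, with the only new content being the Step~2 determination of the $M'A'$-weights of $\zeta_n^m\zeta_{\mathbf{l}}$ (your explicit bracket computations with $\diag(-1,I_{n-1},1)$ and $J_0'$ recover exactly Lemma~\ref{lem:GL-MA}), and both trace multiplicity-one for $n=2$ to the fact that the extra $A_2'$-weight $\tfrac{\ell}{n-1}$ separates the irreducible constituents in \eqref{eqn:GL-MA}. Your remark that the $SL^\pm(n-1,\R)$-block carries no $\det$-twist in the $GL$-embedding is precisely the content of Lemma~\ref{lem:GL-MA}~(2) versus Lemma~\ref{lem:MA}~(2).
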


\begin{rem}
As opposed to the $SL$ case, the multiplicity-one property holds even for $n=2$.
\end{rem}

\subsection{
Intertwining differential operators $\D$ for $GL(n,\R)$
}
\label{sec:GL-IDO}

For $n\geq 2$,
define 
\begin{equation*}
\Lambda^{n+1}_{GL} \subset 
 \{\pm\}^4 \times \Irr(SL(n,\R))_{\fin} \times \C^4
\end{equation*}
as follows.
\begin{equation*}
\Lambda^{n+1}_{GL}:=\{(\balpha;\bgamma; \xi; \blambda; \btau):
\text{$(\balpha,\bgamma; \xi; \blambda, \btau)$ satisfies \eqref{eqn:GL1} below
for $\alpha_j \in \{\pm\}$, $\lambda_2 \in \C$, and $k\in \Z_{\geq 0}$.} \}
\end{equation*}
\begin{equation}\label{eqn:GL1}
\begin{aligned}
(\balpha;\bgamma) &= (\alpha_1, \alpha_2; \alpha_1+k; \alpha_2)\\
\xi &=\poly_{n}^k\\
(\blambda;\btau)&= (1-k, \lambda_2; 1+\tfrac{k}{n},\lambda_2-\tfrac{k}{n})
\end{aligned}
\end{equation}
\noindent
Let $\D_k \in \Diff_\C(C^\infty(\R^{n}), C^\infty(\R^{n})\otimes 
\C^k[y_1, \ldots, y_{n-1}, y_{n}])$ be the differential operator defined in 
\eqref{eqn:IDO}.

\begin{thm}
\label{thm:GL-IDO}
Let $n\geq 2$.
Then we have
\begin{equation*}
\Diff_{G}(I(\triv; \blambda)^{\balpha}, I(\xi; \btau)^{\bgamma})
=
\begin{cases}
\C\id & \text{if $(\bm{\delta}, \xi, \bm{\tau}) = (\balpha, \triv, \blambda)$,}\\
\C \D_k & \text{if $(\bm{\alpha}, \bm{\delta};\xi; \bm{\lambda}, \bm{\tau})\in \Lambda^{n+1}_{GL}$,}\\
\{0\} & \text{otherwise.}
\end{cases}
\end{equation*}

\end{thm}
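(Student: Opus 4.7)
The plan is to deduce Theorem \ref{thm:GL-IDO} from its $SL$-counterpart (Theorem 7.2, taken from \cite[Thm.\ 4.5]{KuOr24}) by tracking the additional equivariance imposed by the extra factors of $MA$ present in the $GL$-case. Every $G$-intertwining differential operator is in particular $\f{sl}(n+1,\C)$-intertwining via $\f{sl}(n+1,\C)\subset \f{gl}(n+1,\C)$ acting on the noncompact picture, so by the $SL$-classification it must be a scalar multiple of either $\id$ (when the two $SL$-restrictions coincide) or $\D_k$ for some $k \in \Z_{\geq 0}$.

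The task then reduces to determining the parameter values $(\balpha;\bgamma;\xi;\blambda;\btau)$ for which $\D_k$ actually lifts to a $G$-intertwining operator. The ``extra'' generators of $MA$ beyond the $SL$-structure are the element $\eps_0 := \diag(-1, I_n)$ generating the first $\Z/2\Z$-factor of $M \simeq \Z/2\Z \times SL^{\pm}(n,\R)$, and the one-parameter subgroup $A_2 = \exp(\R J_0)$ of $A$. Short computations give $\Ad(\eps_0) N_j^- = -N_j^-$ and $[J_0, N_j^-] = \tfrac{1}{n} N_j^-$, so in the coordinates \eqref{eqn:coord} the element $\eps_0$ acts on $\R^n$ by $x \mapsto -x$ and $\exp(tJ_0)$ acts by $x \mapsto e^{t/n}x$. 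Both act trivially on the target fiber $(\poly_n^k, \C^k[y_1, \ldots, y_n])$, since $\poly_n^k$ is a representation of $SL^{\pm}(n,\R)$ while $J_0\in \fa$.

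These computations pin down the remaining parameter relations. A $k$-th order operator $\D_k$ transforms by $(-1)^k$ under $x \mapsto -x$, giving $\gamma_1 = \alpha_1 + k \pmod 2$. For the $A_2$-equivariance, formula \eqref{eqn:formula2} adapted to $Z = J_0$ yields
\begin{equation*}
\dpi_{\blambda}(J_0)\vert_{\C^a[x]} = \lambda_2 - \tfrac{a}{n}, \qquad
\dpi_{\btau}(J_0)\vert_{\C^{a-k}[x]\otimes \C^k[y]} = \tau_2 - \tfrac{a-k}{n},
\end{equation*}
and matching these via the degree-lowering property of $\D_k$ forces $\tau_2 = \lambda_2 - k/n$. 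Combining with the $SL$-case relations $\xi = \poly_n^k$, $\lambda_1 = 1-k$, $\tau_1 = 1 + k/n$, and $\gamma_1 + \gamma_2 = \alpha_1 + \alpha_2 + k \pmod 2$ (the latter coming from the index-two embedding $M_{SL}\hookrightarrow M_{GL}$, $g\mapsto \diag(\det(g)^{-1}, g)$) yields precisely the parametrization $\Lambda^{n+1}_{GL}$ of \eqref{eqn:GL1}. Sufficiency is then immediate: for such parameters, $\D_k$ is $SL$-intertwining by the $SL$-theorem and satisfies the $\eps_0$- and $J_0$-equivariance by the above computations; the identity case is trivial.

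The main obstacle will be the careful bookkeeping of the $M$-characters: both $\alpha_1$ and $\alpha_2$ feed into the $SL$-parity condition via the embedding $M_{SL}\hookrightarrow M_{GL}$, but only the combination $\alpha_1 + \alpha_2 \pmod 2$ is detected by the $SL$-classification, while the individual values are disentangled by the separate $\eps_0$-equivariance. Parallel care is needed on the $A$-side, where the normalization factor $\tfrac{1}{n}$ in $J_0$ must consistently enter both the action on $\R^n$ and the differential of the character $\chi^{(\lambda_1,\lambda_2)}$ in order for the exponent $-k/n$ to emerge correctly in the relation $\tau_2 = \lambda_2 - k/n$.
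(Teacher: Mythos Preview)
Your proposal is correct and takes a genuinely different route from the paper. The paper proceeds by rerunning the F-method in the $GL$-setting: it computes the $MA$-decomposition of $\Pol(\fn_+)_{\balpha}\otimes\C_{-\blambda}$ directly (Lemma~\ref{lem:GL-MA2} and \eqref{eqn:GL-MA4}), observes that the decomposition is multiplicity-free, and then notes that the F-system is unchanged from the $SL$-case since $\fn_+$ is the same. Your argument instead bootstraps from the $SL$-classification: any $GL$-intertwining operator is $\f{sl}(n+1,\C)$-intertwining, hence equals $\id$ or $\D_k$ by Theorem~7.2, and the remaining constraints on $(\gamma_1,\gamma_2,\tau_2)$ are then read off from the equivariance under the two extra generators $\eps_0=\diag(-1,I_n)$ and $J_0$. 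Your identification of the $SL$-parity as $\alpha_1+\alpha_2$ via the embedding $g\mapsto\diag(\det(g)^{-1},g)$ is exactly right, and combining the $SL$-relation $\gamma_1+\gamma_2=\alpha_1+\alpha_2+k$ with $\gamma_1=\alpha_1+k$ from $\eps_0$ recovers $\gamma_2=\alpha_2$.

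The trade-off: the paper's approach is uniform with the rest of the argument and makes the multiplicity-one property for $n=2$ transparent (the $A_2$-weight $\tfrac{k}{n}$ separates the irreducible pieces of \eqref{eqn:GL-MA4} even when the $A_1$-weights collide), whereas your approach is more economical, reusing the $SL$-work wholesale and isolating precisely which new constraints the $GL$-structure imposes. One small imprecision: when you say $\eps_0$ ``acts trivially on the target fiber $\poly_n^k$'', you mean only on the $SL^\pm(n,\R)$-factor of the fiber; the $\C_{\gamma_1}$-factor contributes the $(-1)^{\gamma_1}$ that enters your parity computation, so this should be stated a bit more carefully.
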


\subsection{
Factorization identities of $\bD_{(m,\ell)}$
}
\label{sec:GL-factor}

We next state the factorization identities of $\bD_{(m,\ell)}$.

\begin{thm}\label{thm:GL-factor}
Let $n\geq 2$.
Also,
let $\D_\ell'$, $\D_k$, and $\wProj_{(m,\ell)}$ be the same operators
considered in Theorem \ref{thm:Proj}.
Then, for
$(\balpha,\bbeta;\varpi;\blambda ,\bnu) \in 
\gL^{(n+1,n)}_{GL,2}$,
the differential symmetry breaking operator
$\bD_{(m,\ell)}$ can be factored as in Theorem \ref{thm:Proj}, namely,
\begin{equation*}
\bD_{(m,\ell)} 
= \D'_\ell \circ \bD_{(m,0)}
=  \wProj_{(m,\ell)}\circ \D_{m+\ell}.
\end{equation*}
Equivalently, the following diagram commutes.
\begin{equation}\label{eqn:factor0826a}
\xymatrix@=13pt{
I(\triv; 1-(m+\ell),\lambda_2)^{(\ga_1,\ga_2)}
 \ar[dd]_{\D_{m+\ell}}
  \ar[rrdd]^{\bD_{(m,\ell)}}
  \ar[rr]^{\bD_{(m,0)}}
 && J(\triv; 1-\ell,\lambda_2)^{(\ga_1+m,\ga_2)}
  \ar[dd]^{\D'_\ell}
  \ar@{}[ldd]|{\circlearrowleft}
 \\
&&\\
I(\poly^{m+\ell}_n; 1+\tfrac{m+\ell}{n},\lambda_2-\frac{m+\ell}{n})^{(\ga_1+m+\ell,\ga_2)}
\ar[rr]_{\wProj_{(m,\ell)}}   \ar@{}[ruu]|{\circlearrowleft}
&&
J(\poly^\ell_{n-1}; 1+\tfrac{\ell}{n-1},\lambda_2-\tfrac{\ell}{n-1})^{(\ga_1+m+\ell,\ga_2)}
}
\end{equation}
\end{thm}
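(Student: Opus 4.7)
The plan is to reduce Theorem \ref{thm:GL-factor} to its $SL$-counterpart, Theorem \ref{thm:Proj}, since each of the five operators $\bD_{(m,\ell)}$, $\bD_{(m,0)}$, $\D_{m+\ell}$, $\D'_\ell$, and $\wProj_{(m,\ell)}$ is, by definition in \eqref{eqn:DSBO}, \eqref{eqn:IDO}, and Section \ref{sec:factor1}, the \emph{same} concrete differential operator between spaces of the form $C^\infty(\R^n)\otimes(\text{polynomials})$ and $C^\infty(\R^{n-1})\otimes(\text{polynomials})$, independent of whether we view it inside the $SL$- or $GL$-framework. Therefore the two operator identities
\begin{equation*}
\bD_{(m,\ell)} = \D'_\ell \circ \bD_{(m,0)} = \wProj_{(m,\ell)} \circ \D_{m+\ell}
\end{equation*}
already follow verbatim from Theorem \ref{thm:Proj}; what remains to check is only that, for $(\balpha,\bbeta;\varpi;\blambda,\bnu) \in \gL^{(n+1,n)}_{GL,2}$, each arrow in the diagram \eqref{eqn:factor0826a} really is $G$- or $G'$-intertwining between the specified $GL$-induced representations.

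To this end, I would verify, by a direct parameter check against the lists $\gL^{(n+1,n)}_{GL,1}$, $\gL^{(n+1,n)}_{GL,2}$, $\Lambda^{n+1}_{GL}$, and $\Lambda^{n}_{GL}$, that each of the four arrows $\bD_{(m,0)}$, $\bD_{(m,\ell)}$, $\D_{m+\ell}$, $\D'_\ell$, $\wProj_{(m,\ell)}$ satisfies the hypotheses of Theorem \ref{thm:GL-DSBO} or Theorem \ref{thm:GL-IDO} (the latter applied once with group $GL(n+1,\R)$ and once with $GL(n,\R)$ in place of $GL(n+1,\R)$). For example, for $\D'_\ell$ one instantiates Theorem \ref{thm:GL-IDO} with $n$ replaced by $n-1$: the source parameter is $(\ga_1+m,\ga_2;\triv;1-\ell,\lambda_2)$ and the target is $(\ga_1+m+\ell,\ga_2;\poly^\ell_{n-1};1+\tfrac{\ell}{n-1},\lambda_2-\tfrac{\ell}{n-1})$, which fits the template \eqref{eqn:GL1} with $k=\ell$. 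Similar matching exercises apply to $\D_{m+\ell}$ (use $k=m+\ell$ in \eqref{eqn:GL1}), to $\bD_{(m,0)}$ (verify the parameters satisfy \eqref{eqn:GL2} with the shift $\bnu_1=\blambda_1+m$), and to $\bD_{(m,\ell)}$ and $\wProj_{(m,\ell)}$ (verify \eqref{eqn:GL3}).

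The only slightly delicate point, and in principle the sole obstacle, is keeping track of how the second $A$-character component $\lambda_2$ propagates through the diagram. Both $G$-intertwining operators $\D_k$ shift $\lambda_2$ by $-\tfrac{k}{n}$ (for the top $GL(n+1,\R)$-arrow) and $-\tfrac{\ell}{n-1}$ (for the $GL(n,\R)$-arrow on the right), while the pure symmetry breaking operators $\bD_{(m,0)}$ and $\wProj_{(m,\ell)}$ leave $\lambda_2$ unchanged; one then simply observes that $\lambda_2 - \tfrac{m+\ell}{n} + 0 = \lambda_2$ going down-then-right along $\wProj_{(m,\ell)}\circ \D_{m+\ell}$, and $\lambda_2 + 0 - \tfrac{\ell}{n-1}$ going right-then-down along $\D'_\ell \circ \bD_{(m,0)}$, are consistent with the $GL$-parameter of the lower-right corner provided the relation $1-(m+\ell) = -\tfrac{m+\ell}{n}+$(correction) is read off from \eqref{eqn:GL3}. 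Once these bookkeeping verifications are complete, Theorem \ref{thm:GL-factor} follows by combining them with Theorem \ref{thm:Proj}.
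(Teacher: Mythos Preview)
Your overall strategy matches the paper's: the operator identities already hold at the level of concrete differential operators by Theorem \ref{thm:Proj}, and all that remains is to verify that each arrow in \eqref{eqn:factor0826a} is intertwining for the stated $GL$-parameters. For $\bD_{(m,0)}$, $\bD_{(m,\ell)}$, $\D_{m+\ell}$, and $\D'_\ell$ your parameter-matching against Theorems \ref{thm:GL-DSBO} and \ref{thm:GL-IDO} is fine.

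The gap is in your treatment of $\wProj_{(m,\ell)}$. You propose to verify it by matching against \eqref{eqn:GL3}, but $\gL^{(n+1,n)}_{GL,2}$ classifies only operators from the \emph{line} bundle $I(\triv;\cdot)$ to a vector bundle; it says nothing about operators $I(\poly^{m+\ell}_n;\cdot)\to J(\poly^\ell_{n-1};\cdot)$ between two vector bundles. There is no classification theorem in the paper to cite for this arrow, so parameter-matching is not available. The paper instead verifies directly that the underlying map $\Emb_{(m,\ell)}$ is $M'A'$-equivariant in the $GL$ setting (Proposition \ref{prop:Emb2}), checking in particular the new $A'_2$-component; the $G'$-intertwining property of $\wProj_{(m,\ell)}$ then follows via the duality theorem as in Section \ref{sec:factorDSBO}.

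Relatedly, your $\lambda_2$-bookkeeping for $\wProj_{(m,\ell)}$ is incorrect. Reading off the diagram, $\wProj_{(m,\ell)}$ carries the second parameter from $\lambda_2-\tfrac{m+\ell}{n}$ to $\lambda_2-\tfrac{\ell}{n-1}$, so it does \emph{not} leave $\lambda_2$ unchanged; your equation ``$\lambda_2-\tfrac{m+\ell}{n}+0=\lambda_2$'' is false unless $m+\ell=0$. The nontrivial shift arises precisely because $A'_2$ acts on $\C^{m+\ell}[e',e_n]$ through the $M$-component of the $MA$-decomposition of $a'\in A'$, which is exactly the computation carried out in the proof of Proposition \ref{prop:Emb2}.
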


\begin{rem}
If $n=2$, then the factorization identity \eqref{eqn:factor0826a} becomes
\begin{equation*}
\xymatrix@=13pt{
I(\triv; 1-(m+\ell),\lambda_2)^{(\ga_1,\ga_2)}
 \ar[dd]_{\D_{m+\ell}}
  \ar[rrdd]^{\bD_{(m,\ell)}}
  \ar[rr]^{\bD_{(m,0)}}
 && J(\triv; 1-\ell,\lambda_2)^{(\ga_1+m,\ga_2)}
  \ar[dd]^{\D'_\ell}
  \ar@{}[ldd]|{\circlearrowleft}
 \\
&&\\
I(\poly^{m+\ell}_2; 1+\tfrac{m+\ell}{2},\lambda_2-\frac{m+\ell}{2})^{(\ga_1+m+\ell,\ga_2)}
\ar[rr]_{\wProj_{(m,\ell)}}   \ar@{}[ruu]|{\circlearrowleft}
&&
J(\poly^\ell_{1}; 1+\ell,\lambda_2-\ell)^{(\ga_1+m+\ell,\ga_2)}
\ar@{}[d]|{\rotatebox{90}{$=$}}\\
&&
J(\triv; 1+\ell,\lambda_2-\ell)^{(\ga_1+m,\ga_2)}
}
\end{equation*}
\end{rem}

\subsection{
Proofs of Theorems \ref{thm:GL-DSBO}, \ref{thm:GL-IDO}, and \ref{thm:GL-factor}
}
\label{sec:GL-proof}

We only briefly discuss the proofs of Theorems 
\ref{thm:GL-DSBO} and \ref{thm:GL-IDO} and 
Theorem \ref{thm:GL-factor} as these are  similar 
to the ones of Theorems \ref{thm:DSBO2a} and \ref{thm:Proj}.

\subsubsection{
Proof of Theorem \ref{thm:GL-DSBO}
}
\label{sec:GL-proof1}

The proofs of Theorems \ref{thm:GL-DSBO} and \ref{thm:GL-IDO} utilize
the F-method. As the unipotent radicals $N_{\pm}$ for the $GL$-case are the same as
those for the $SL$-case, the system of PDEs to solve is also essentially the same.
The only difference is the $M'A'$-decomposition 
\begin{equation*}
\Pol(\fn_+)\vert_{M'A'} = \C[\zeta_1,\ldots, \zeta_{n-1},\zeta_n]\vert_{M'A'}.
\end{equation*}
Thus, in the present subsection and the next, we only focus on Step 2 of the
recipe of the F-method.

As for the $SL$-case,
the following observation would play a role.

\begin{lem}\label{lem:GL-MA}
The following hold.

\begin{enumerate}

\item[\emph{(1)}]
$(M', \Ad_\#, \C^m[\zeta_n]) 
\hspace{42pt}
\simeq (\Z/2\Z\times SL^\pm(n-1,\R), \sgn^m \boxtimes (\triv \otimes \triv), \C)$.

\item[\emph{(2)}]
$(M', \Ad_\#, \C^\ell[\zeta_1,\ldots, \zeta_{n-1}]) 
\hspace{2pt}
\simeq (\Z/2\Z\times SL^\pm(n-1,\R), \sgn^\ell \boxtimes 
(\triv \otimes \sym^\ell_{n-1}), S^\ell(\C^{n-1}))$.

\item[\emph{(3)}]
$A_1'$ acts on $\C^m[\zeta_n]$ by a character with weight $-m$.

\item[\emph{(4)}]
$A_1'$ acts on $\C^\ell[\zeta_1,\ldots, \zeta_{n-1}]$ by a character 
with weight $-\frac{n}{n-1}\ell$.

\item[\emph{(4)}]
$A_2'$ acts on $\C^m[\zeta_n]$ trivially.

\item[\emph{(5)}]
$A_2'$ acts on $\C^\ell[\zeta_1,\ldots, \zeta_{n-1}]$ by a character with weight
$\tfrac{\ell}{n-1}$.

\end{enumerate}

\end{lem}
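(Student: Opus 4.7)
The proof is a direct computation of the $\Ad_\#$-action on $\fn_+$, transferred to the dual coordinates $\zeta_j$ via \eqref{eqn:sharp} and then promoted to symmetric powers. The strategy exactly parallels the argument for Lemma \ref{lem:MA}, the only novelty being the extra $\Z/2\Z$-factor in $M'$ and the extra one-parameter subgroup $A_2'$.

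First, I would write a general element of $M'$ as $m = \mathrm{diag}(\eps, g', 1)$ with $\eps \in \{\pm 1\}$ and $g' \in SL^{\pm}(n-1, \R)$, and compute $\Ad(m^{-1}) N_j^+$ for the basis $N_j^+ = E_{1, j+1}$ of $\fn_+$. Matrix multiplication yields $\Ad(m^{-1}) N_n^+ = \eps N_n^+$ and $\Ad(m^{-1}) N_j^+ = \eps \sum_{i=1}^{n-1} g'_{j,i} N_i^+$ for $j \in \{1, \dots, n-1\}$. Dualizing via $(\Ad_\#(m) \zeta_j)(X) = \zeta_j(\Ad(m^{-1}) X)$ produces
\[
\Ad_\#(m)\, \zeta_n = \eps\, \zeta_n, \qquad \Ad_\#(m)\, \zeta_j = \eps \sum_{k=1}^{n-1} g'_{k,j}\, \zeta_k \quad (j \in \{1, \dots, n-1\}).
\]
Thus $\Z/2\Z$ acts by $\sgn$ on every $\zeta_j$, while $SL^\pm(n-1,\R)$ acts trivially on $\zeta_n$ and on $\spn\{\zeta_1, \dots, \zeta_{n-1}\}$ by the standard representation $\sym_{n-1}^1$ (the matrix in the basis $\zeta_1,\ldots,\zeta_{n-1}$ is literally $g'$, confirming this is a homomorphism). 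Taking the $m$-th and $\ell$-th symmetric powers immediately gives the identifications (1) and (2).

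Next, I would compute the weights of $A_1' = \exp(\R \widetilde{H}_0')$ and $A_2' = \exp(\R J_0')$ on $N_j^+$. Since $\widetilde{H}_0'$ and $J_0'$ are diagonal, the formula $h E_{1,j+1} h^{-1} = (h_{1,1}/h_{j+1,j+1}) E_{1,j+1}$ applies: one finds that $\ad(\widetilde{H}_0')$ acts by $\tfrac{n}{n-1}$ on $N_j^+$ ($j \leq n-1$) and by $1$ on $N_n^+$, while $\ad(J_0')$ acts by $-\tfrac{1}{n-1}$ on $N_j^+$ ($j \leq n-1$) and by $0$ on $N_n^+$. The $\Ad_\#$-convention inverts these signs on the dual coordinates, so $A_1'$ acts on $\zeta_j$ ($j \leq n-1$) with weight $-\tfrac{n}{n-1}$ and on $\zeta_n$ with weight $-1$, whereas $A_2'$ acts on $\zeta_j$ ($j \leq n-1$) with weight $+\tfrac{1}{n-1}$ and trivially on $\zeta_n$. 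Multiplying by $m$ or $\ell$ in the appropriate symmetric power yields the remaining parts (3)--(6).

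There is no real obstacle; the computation is mechanical. The one point worth tracking is the sign flip built into $\Ad_\#$ (via the $l^{-1}$ in \eqref{eqn:sharp}), which is responsible for the minus signs appearing in the $A_1'$-weights on $\zeta_j$ and, jointly with the $\eps$ from the $(1,1)$-entry of $m^{-1}$, for the $\sgn^m$ and $\sgn^\ell$ twists in parts (1) and (2).
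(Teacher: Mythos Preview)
Your proposal is correct and follows exactly the approach the paper indicates: the paper's own proof consists of the single sentence ``A direct computation,'' and what you have written is precisely that computation spelled out in detail. The only content beyond Lemma~\ref{lem:MA} is the extra $\Z/2\Z$-factor from the $(1,1)$-entry $\eps$ and the additional torus factor $A_2'=\exp(\R J_0')$, both of which you handle correctly.
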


\begin{proof}
A direct computation.
\end{proof}

\begin{rem}
As in Remark \ref{rem:sym0826a}, if $n=2$, then 
\begin{align*}
(M', \Ad_\#, \C^\ell[\zeta_1]) 
&\simeq (\Z/2\Z\times \Z/2\Z, \sgn^\ell \boxtimes 
(\triv \otimes \sym^\ell_1), S^\ell(\C))\\
&= (\Z/2\Z\times \Z/2\Z, \triv \boxtimes 
(\triv \otimes \triv), \C).
\end{align*}
\end{rem}

It follows from Lemma \ref{lem:GL-MA} that the decomposition
$\C[\zeta_1, \ldots, \zeta_{n-1},\zeta_n]\vert_{M'A'}$ is given  as
\begin{align}\label{eqn:MA2}
&\C[\zeta_1, \ldots, \zeta_{n-1},\zeta_n]\vert_{M'A'}\\
&=
\bigoplus_{m, \ell \in \Z_{\geq 0}} 
\C^m[\zeta_n]  
\C^\ell[\zeta_1,\ldots,\zeta_{n-1}]\nonumber\\
&\simeq
\bigoplus_{m, \ell \in \Z_{\geq 0}} 
\sgn^{m+\ell}\boxtimes(\triv\otimes\sym^\ell_{n-1}) \boxtimes 
(-(m+\tfrac{n}{n-1}\ell), \tfrac{\ell}{n-1}),
\end{align}
where  $(-(m+\tfrac{n}{n-1}\ell),\tfrac{\ell}{n-1})$ 
indicates the weight of the character of $A'=A_1'A_2'$.

As in \eqref{eqn:Pol}, for $\balpha=(\alpha_1,\alpha_2) \in \{\pm\}^2$, we write
\begin{equation*}
\Pol(\fn_+)_{\balpha} =\C_{(\ga_1,\ga_2)} \otimes \Pol(\fn_+).
\end{equation*}
Then, for $\blambda=(\lambda_1,\lambda_2) \in \C^2$,  we have
\begin{align}\label{eqn:GL-MA}
&\big(\Pol(\fn_+)_{\balpha}
\otimes \C_{-\blambda}\big)\vert_{M'A'} \nonumber\\
&\simeq 
\bigoplus_{m, \ell \in \Z_{\geq 0}} 
\sgn^{\ga_1+(m+\ell)}\boxtimes
(\sgn^{\ga_2} \otimes\sym^\ell_{n-1}) \boxtimes 
(-(\lambda_1+m+\tfrac{n}{n-1}\ell), -(\lambda_2-\tfrac{\ell}{n-1})).
\end{align}
We remark that the representations appeared in 
\eqref{eqn:GL-MA} are all inequivalent even for $n=2$.

The rest of the arguments are proceeded exactly as in the proof of 
Theorem \ref{thm:Sol1} and Section \ref{sec:Step4}. Hence, we omit the details.

\subsubsection{
Proof of Theorem \ref{thm:GL-IDO}
}
\label{sec:GL-proof2}

As in the previous section, we only focus on Step 2 of the recipe of the F-method.

\begin{lem}\label{lem:GL-MA2}
The following hold.

\begin{enumerate}

\item[\emph{(1)}]
$(M, \Ad_\#, \C^k[\zeta_1,\ldots, \zeta_{n-1},\zeta_n]) 
\hspace{2pt}
\simeq (\Z/2\Z\times SL^\pm(n,\R), \sgn^k \boxtimes 
(\triv \otimes \sym^k_{n}), S^k(\C^{n}))$.

\item[\emph{(2)}]
$A_1$ acts on $\C^k[\zeta_1,\ldots, \zeta_{n-1},\zeta_n]$ by a character 
with weight $-\frac{n+1}{n}k$.

\item[\emph{(3)}]
$A_2$ acts on $\C^k[\zeta_1,\ldots, \zeta_{n-1},\zeta_n]$ 
by a character with weight
$\tfrac{k}{n}$.

\end{enumerate}

\end{lem}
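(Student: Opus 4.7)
The plan is entirely computational, closely paralleling the proof of Lemma \ref{lem:MA} for the $SL$-case. All three parts of the lemma reduce to computing $\Ad_{\#}(l)\zeta_j$ for $l$ running over $M$, $A_1$, and $A_2$, which in turn reduces to the adjoint action $\Ad(l^{-1})N_k^+ = l^{-1}E_{1,k+1}l$ in the standard matrix basis.

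For part (1), I would first take $l = \diag(\eps,g) \in M$ and compute directly, using $(l^{-1})_{i,1} = \eps\,\delta_{i,1}$ and $l_{k+1,r} = g_{k,r-1}$ for $r \geq 2$, that
\begin{equation*}
\Ad(l^{-1})N_k^+ = \eps \sum_{a=1}^n g_{k,a}\, N_a^+,
\qquad
\Ad_{\#}(l)\zeta_j = \eps \sum_{a=1}^n g_{a,j}\, \zeta_a.
\end{equation*}
This exhibits $\spn_\C\{\zeta_1,\ldots,\zeta_n\}$ as the external tensor product of the sign character of the $\Z/2\Z$ factor of $M$ and the standard representation of $SL^\pm(n,\R)$ (identifying $\zeta_j \leftrightarrow e_j$, the $g$-action has matrix $g$ in the $\zeta$-basis). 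Taking the $k$-th symmetric power gives the asserted $\sgn^k \boxtimes (? \otimes \sym^k_n)$, where $?$ is a character of the $\Z/2\Z$ inside $SL^\pm(n,\R) = \langle \gamma_n\rangle \ltimes SL(n,\R)$.

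To fix $? = \triv$, I would test on $\gamma_n = \diag(1,\ldots,1,-1)$: its action on $\zeta_1^{a_1}\cdots\zeta_n^{a_n}$ is by $(-1)^{a_n}$, which is precisely how $\sym^k_n(\gamma_n)$ acts on $e_1^{a_1}\cdots e_n^{a_n}$, so no additional twist by $\sgn$ is required. For parts (2) and (3), the general fact that for any diagonal $a = \diag(a_0,a_1,\ldots,a_n)$ one has $\Ad(a^{-1})N_k^+ = (a_0^{-1}a_{k+1})N_k^+$ reduces everything to scalar arithmetic. Specializing to $a = \exp(t\wH_0) = \diag(e^t, e^{-t/n}, \ldots, e^{-t/n})$ gives eigenvalue $e^{-(n+1)t/n}$, and $a = \exp(sJ_0) = \diag(1, e^{s/n}, \ldots, e^{s/n})$ gives $e^{s/n}$, both independent of $k \in \{1,\ldots,n\}$. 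Dualizing and passing to degree-$k$ polynomials multiplies the exponent by $k$, producing the claimed weights $-(n+1)k/n$ for $A_1$ and $+k/n$ for $A_2$.

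There is no serious obstacle in this proof; the only mild subtlety is keeping consistent track of the two $\Z/2\Z$ factors (the one from $\eps$ in $M$ versus the one generated by $\gamma_n$ inside $SL^\pm(n,\R)$), so that $\triv$ rather than $\sgn$ appears in the $SL^\pm(n,\R)$-summand of part (1). This is settled in one line by the explicit check on $\gamma_n$ described above.
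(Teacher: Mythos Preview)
Your proposal is correct and is precisely the ``direct computation'' the paper alludes to; the paper's own proof consists of the single sentence ``A direct computation.'' Your explicit matrix calculations for $\Ad(l^{-1})N_k^+$ and the check on $\gamma_n$ to pin down the $\triv$ twist are exactly what that phrase stands for, so there is nothing to add.
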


\begin{proof}
A direct computation.
\end{proof}

By  Lemma \ref{lem:GL-MA2}, we have
\begin{align}\label{eqn:MA3}
\C[\zeta_1, \ldots, \zeta_{n-1},\zeta_n]\vert_{MA}
&=
\bigoplus_{k \in \Z_{\geq 0}} 
\C^k[\zeta_1,\ldots,\zeta_{n-1}, \zeta_n]\nonumber\\
&\simeq
\bigoplus_{k \in \Z_{\geq 0}} 
\sgn^{k}\boxtimes(\triv\otimes\sym^k_{n}) \boxtimes 
(-\tfrac{n+1}{n}k, \tfrac{k}{n}),
\end{align}
which shows that
\begin{align}\label{eqn:GL-MA4}
\big(\Pol(\fn_+)_{\balpha}
\otimes \C_{-\blambda}\big)\vert_{MA} 
\simeq 
\bigoplus_{k \in \Z_{\geq 0}} 
\sgn^{\ga_1+k}\boxtimes(\sgn^{\ga_2}\otimes\sym^k_{n}) \boxtimes 
(-(\lambda_1+\tfrac{n+1}{n}k), -(\lambda_2-\tfrac{k}{n})).
\end{align}

Clearly, each irreducible constituent appeared in \eqref{eqn:GL-MA4} is 
inequivalent. The rest of the proof is essentially the same as 
\cite[Sect.\ 5]{KuOr24} or that of Theorem \ref{thm:Sol1}. 
Hence, we omit the details.

\subsubsection{
Proof of Theorem \ref{thm:GL-factor}
}
\label{sec:GL-proof3}

The proof of the factorization identities in Theorem \ref{thm:GL-factor}
is also the same as that of Theorem \ref{thm:Proj} in principle.
The only thing that one needs to check is the 
the linear map $\Emb_{(m,\ell)}$ satisfies $M'A'$-equivariance 
also for the $GL$-case. Thus, in this section, we only show
the $M'A'$-equivariance of $\Emb_{(m,\ell)}$.

Recall from \eqref{eqn:Emb} that, for $m,\ell \in \Z_{\geq 0}$, we define
\begin{equation*}
\Emb_{(m,\ell)} \in \Hom_{\C}(\C^\ell[e'], \C^{m+\ell}[e',e_n])
\end{equation*}
as
\begin{equation*}
\Emb_{(m,\ell)} 
:= \sum_{\mathbf{l}\in\Xi_{\ell}'}
(e_{\mathbf{l}})^\vee \otimes e_{(m,\mathbf{l})}
= \sum_{\mathbf{l}\in\Xi_{\ell}'}
\wy_{\mathbf{l}} \otimes e_{(m,\mathbf{l})}.
\end{equation*}

For $\lambda_2 \in \C$, we put
\begin{equation*}
\bmu':=(1+\tfrac{\ell}{n-1}, \lambda_2-\tfrac{\ell}{n-1})
\quad 
\text{and}
\quad
\bmu:=(1+\tfrac{m+\ell}{n},\lambda_2-\tfrac{m+\ell}{n}).
\end{equation*}

\begin{prop}\label{prop:Emb2}
We have
\begin{equation*}
\Hom_{M'A'}(\C^\ell[e']_{\balpha} \boxtimes \C_{-\bmu'},
\C^{m+\ell}[e',e_n]_{\balpha}\boxtimes \C_{-\bmu})
=\C \Emb_{(m,\ell)}.
\end{equation*}
\end{prop}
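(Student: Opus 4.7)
The plan is to mirror the proof of Proposition 7.5 with an additional check needed for the second factor of $A'$. First I would reduce to verifying the $M'A'$-equivariance of the explicit map $\Emb_{(m,\ell)}$: by the $GL$-analog of Proposition 7.3 (which is essentially unchanged, since the $\Z/2\Z$-factor of $M' = \Z/2\Z \times SL^\pm(n-1,\R)$ coincides with the first $\Z/2\Z$-factor of $M$ and acts on both source and target by the common character $\sgn^{\ga_1}$, so only the $SL^\pm(n-1,\R)$-equivariance is genuinely new, and that is exactly Proposition 7.3), the space $\Hom_{M'}(\C^\ell[e']_{\balpha},\C^{m+\ell}[e',e_n]_{\balpha})$ is one-dimensional and spanned by $\Emb_{(m,\ell)}$.

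Since $A' = A_1'A_2'$ is abelian and generated by the one-parameter subgroups $\exp(\R\wH_0')$ and $\exp(\R J_0')$, I would then verify $A'$-equivariance on each factor separately. For each $a' \in A_j'$, the key step is to decompose $a' = a'_M \cdot a'_A$ with $a'_M \in M$ and $a'_A \in A$ (as in Remark 3.3; this is always possible since $a'$ is diagonal). Under this decomposition, $a'$ acts on $e_n^m e_{\mathbf{l}} \otimes \mathbb{1}_{-\bmu}$ in two pieces: $a'_M$ scales $e_n^m e_{\mathbf{l}} \in S^{m+\ell}(\C^n)$ via the standard action of the $SL^\pm(n,\R)$-part of $M$, and $a'_A$ acts on $\C_{-\bmu}$ by the character $\chi^{-\bmu}$. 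The resulting scalar should equal $\chi'^{-\bmu'}(a')$.

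Concretely, for $a'(t) = \exp(t\wH_0') \in A_1'$, the explicit decomposition gives $a'_A(t) = \exp(t\wH_0) \in A_1$ together with $a'_M(t) = \diag(1,e^{-t/(n(n-1))},\ldots,e^{-t/(n(n-1))},e^{t/n}) \in M$; a routine exponent calculation then yields the scalar $\exp(-t(1+\tfrac{\ell}{n-1}))$, precisely $\chi'^{-\bmu'}(a'(t))$ on $A_1'$. For $a'(t) = \exp(tJ_0') \in A_2'$, one finds $a'_A(t) = \exp(tJ_0) \in A_2$, and the analogous computation produces $\exp(-t(\lambda_2 - \tfrac{\ell}{n-1}))$, matching $\chi'^{-\bmu'}$ on $A_2'$. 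The main obstacle, though entirely computational, is the bookkeeping in these $MA$-decompositions and the cancellation of the mixed contributions coming from the $M$-action on $S^{m+\ell}(\C^n)$ and from the $A$-character; the net exponents reorganize via the identity $\tfrac{1}{n(n-1)} + \tfrac{1}{n} = \tfrac{1}{n-1}$, which is what converts the $A$-weights into the expected $A'$-weights and completes the proof.
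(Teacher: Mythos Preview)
Your approach is correct and essentially the same as the paper's: both reduce to checking $A'$-equivariance of $\Emb_{(m,\ell)}$ via the $MA$-decomposition of elements of $A'$, computing the induced scalar on $e_n^m e_{\mathbf{l}}\otimes\mathbb{1}_{-\bmu}$ and matching it with $-\bmu'$. The only difference is efficiency: the paper notes that the $A_1'$-check is literally the computation already done in Proposition~\ref{prop:Emb2a} (since $A_1'$ and the relevant part of $M$ are unchanged from the $SL$-case), and therefore only carries out the new $A_2'$-computation explicitly, whereas you redo both.
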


\begin{proof}
As in the proof Proposition \ref{prop:Emb2a}, it suffices to show
that $\Emb_{(m,\ell)}$ has the desired $A'$-equivariance property.
For $A' = A_1'A_2'$, the $A_1'$-equivariance is already checked in 
the proof of Proposition \ref{prop:Emb2a}. Thus, we only consider 
$A_2'$-equivariance.

Take
\begin{equation*}
b' = \diag(1, t^{\frac{1}{n-1}}, \ldots, t^{\frac{1}{n-1}}, 1) \in A_2'.
\end{equation*}
The element $b'$ can be decomposed as
$b' = b'_M b'_A$ with $b'_M \in M$ and $b'_A \in A_2$, where
\begin{align*}
b'_M 
&= \diag(1,t^{\frac{1}{n(n-1)}},\ldots, t^{\frac{1}{n(n-1)}}, t^{\frac{-1}{n}})
\in M,\\
b'_A
&= \diag(1,t^{\frac{1}{n}},\ldots, t^{\frac{1}{n}}, t^{\frac{1}{n}}) 
\in A_2.
\end{align*}

As $\C^\ell[e']\boxtimes \C_{-\bmu'}$ is an $M'A'$-module by definition,
$A'_2$ acts on $\C^\ell[e']\boxtimes \C_{-\bmu'}$ by a character 
with weight $\lambda_2-\tfrac{\ell}{n-1}$.
Next, observe that the action of $M'A'$ 
on $\C^m[e_n]\C^\ell[e']\boxtimes \C_{-\bmu}$
is the restriction of that of $MA$ on $\C^{m+\ell}[e',e_n]\boxtimes \C_{-\bmu}$.
Thus, for $e_n^mp(e')\otimes \mathbb{1}_{-\bmu'}$, we have 
\begin{align*}
b'\cdot (e_n^mp(e')\otimes \mathbb{1}_{-\bmu})
&=(b'_M \cdot e_n^m)(b'_M \cdot p(e'))\otimes (b'_A \cdot \mathbb{1}_{-\bmu})\\
&=t^{-\tfrac{m}{n}}\cdot 
t^{\tfrac{\ell}{n(n-1)}} \cdot t^{\lambda_2-\tfrac{m+\ell}{n}}(e_n^mp(e')\otimes \mathbb{1}_{-\bmu})\\
&=t^{\lambda_2-\tfrac{\ell}{n-1}}(e_n^mp(e')\otimes \mathbb{1}_{-\bmu}).
\end{align*}
Therefore, $A'_2$ acts on $\C^m[e_n]\C^\ell[e']\boxtimes \C_{-\bmu}$ 
also by $\lambda_2-\tfrac{\ell}{n-1}$. 
Now the proposition follows.
\end{proof}

Since the rest of the arguments is the same as those in Sections \ref{sec:factorGVM} and 
\ref{sec:factorDSBO}, we omit the details.

\begin{rem}\label{rem:FW2}
As commented in Section \ref{sec:FW}, 
it follows from Theorem \ref{thm:GL-DSBO} that,
 for $(\ga_1, \ga_2)=(+, +)$, 
$m \in 2\Z_{\geq 0}$, and $\ell \in 1+\Z_{\geq 0}$, 
we have
\begin{equation*}
\Diff_{G'}(I(\triv;1-(m+\ell),\lambda_2)^{(+,+)}, J(\triv; 1+\ell, \lambda_2-\ell)^{(+,+)}
=\C\bD_{(m,\ell)},
\end{equation*}
where
\begin{equation}\label{eqn:remarkD}
\bD_{(m,\ell)} = \Rest_{x_n=0} \circ
\frac{\partial^m}{\partial x_2^m}\frac{\partial^\ell}{\partial x_1^\ell}.
\end{equation}
Since
$\frac{\partial^\ell}{\partial x_1^\ell}$ is the residue operators 
of a Knapp--Stein operator, 
the formula \eqref{eqn:remarkD} shows that 
$\bD_{(m,\ell)}$ is a composition of
a normal derivative $\frac{\partial^m}{\partial x_2^m}$
and the residue operator $\frac{\partial^\ell}{\partial x_1^\ell}$
of a Knapp--Stein operator.
\end{rem}


\vspace{5pt}

\noindent
\textbf{Acknowledgements.}
The author thanks to Professor Ali Baklouti  and Professor Hideyuki Ishi for their warm hospitality during the 7th Tunisian-Japanese Conference,
Geometric and Harmonic Analysis on Homogeneous Spaces  and 
Applications in Honor of Professor Toshiyuki Kobayashi
in Monastir, Tunisia, November 1--4, 2023.
He is also grateful to Professor Jan Frahm, Professor Bent {\O}rsted, and Professor Toshiyuki Kobayashi for stimulating discussions on this paper.

The author was partially supported by JSPS
Grant-in-Aid for Scientific Research(C) (JP22K03362).




\end{document}